\def\input@path{{figures/}}\makeatother
\newtheorem{theorem}{Theorem}%[section]
\newtheorem{corollary}[theorem]{Corollary}
\newtheorem{proposition}[theorem]{Proposition}
\newtheorem{lemma}[theorem]{Lemma}
\newtheorem{conjecture}[theorem]{Conjecture}
\crefname{conjecture}{Conjecture}{Conjectures}
\newtheorem*{theorem*}{Theorem}%[section]
\newtheorem*{corollary*}{Corollary}%[section]
\theoremstyle{definition}
\newtheorem{definition}[theorem]{Definition}
\newtheorem{example}[theorem]{Example}
\newtheorem{remark}[theorem]{Remark}
\crefname{notation}{Notation}{Notations}
\crefname{problem}{Problem}{Problems}
\newcommand{\R}{\mathbb{R}} % reals
\newcommand{\N}{\mathbb{N}} % naturals
\newcommand{\Z}{\mathbb{Z}} % integers
\renewcommand{\b}[1]{{\boldsymbol{#1}}} % bold letters
\renewcommand{\c}[1]{\mathcal{#1}} % cal letters
\newcommand{\set}[2]{\left\{ #1 \;\middle|\; #2 \right\}} % set notation
\newcommand{\bigset}[2]{\big\{ #1 \;\big|\; #2 \big\}} % big set notation
\newcommand{\ssm}{\smallsetminus} % small set minus
\newcommand{\dotprod}[2]{\left\langle \, #1 \; \middle| \; #2 \, \right\rangle} % dot product
\newcommand{\one}{\b{1}} % the all one vector
\newcommand{\eqdef}{\mbox{\,\raisebox{0.2ex}{\scriptsize\ensuremath{\mathrm:}}\ensuremath{=}\,}} % \eqdef 
\newcommand{\defeq}{\mbox{~\ensuremath{=}\raisebox{0.2ex}{\scriptsize\ensuremath{\mathrm:}} }} % =:
\renewcommand{\implies}{\Rightarrow} % imply sign
\newcommand{\iscovered}{\lessdot} % cover relation in a poset
\DeclareMathOperator{\conv}{conv} % convex hull
\DeclareMathOperator{\trees}{\mathsf{T}} % trees
\DeclareMathOperator{\leftTree}{\mathsf{L}} % left tree
\DeclareMathOperator{\rightTree}{\mathsf{R}} % right tree
\DeclareMathOperator{\inv}{inv} % inversions
\DeclareMathOperator{\ninv}{ninv} % non-inversions
\newcommand{\ie}{\textit{i.e.}~} % id est
\newcommand{\eg}{\textit{e.g.}~} % exempli gratia
\newcommand{\viceversa}{\textit{vice versa}} % vice versa
\definecolor{darkblue}{rgb}{0,0,0.7} % darkblue color
\definecolor{green}{RGB}{57,181,74} % darkblue color
\definecolor{violet}{RGB}{147,39,143} % darkblue color
\newcommand{\darkblue}{\color{darkblue}} % darkblue command
\newcommand{\defn}[1]{\textsl{\darkblue #1}} % emphasis of a definition
\newcommand{\meet}{\wedge} % meet
\newcommand{\join}{\vee} % join
\newcommand{\bigMeet}{\bigwedge} % meet
\newcommand{\bigJoin}{\bigvee} % join
\newcommandx{\projDown}[1][1={}]{\smash{\pi_\downarrow^{#1}}} % down projection map
\newcommandx{\projUp}[1][1={}]{\smash{\pi^\uparrow_{#1}}} % up projection map
\newcommand{\con}{\mathrm{con}} % congruence
\newcommand{\s}{\b{s}} % s
\newcommandx{\Fan}[1][1=n]{\mathcal{F}(#1)} % fan
\newcommand{\polytope}[1]{\mathds{#1}} % font polytopes
\newcommandx{\Perm}[1][1=d]{\polytope{P}\mathrm{erm}(#1)} % permutahedron
\newcommandx{\Asso}[1][1=d]{\polytope{A}\mathrm{sso}(#1)} % associahedron
\newcommandx{\Zono}[1][1=\s]{\polytope{Z}\mathrm{ono}(#1)} % zonotope
\DeclareMathOperator{\pos}{pos} % position
\DeclareMathOperator{\lpos}{lpos} % left position
\DeclareMathOperator{\rpos}{rpos} % right position
\DeclareMathOperator{\ins}{\mathsf{B}} % insertion
\newcommandx{\sFoam}[1][1=\s]{\c{F}_{#1}} % s-foam
\newcommandx{\quotientFan}[1][1=\equiv]{\c{F}_{#1}} % quotient fan
\newcommandx{\quotientFoam}[1][1=\equiv]{\c{F}_{#1}} % quotient foam
\newcommandx{\shard}[1][1=\alpha]{\Sigma_{#1}} % shard
\newcommandx{\shardPolytope}[1][1=\alpha]{\polytope{SP}_{#1}} % shard polytope
\newcommandx{\localShardPolytope}[2][1=\alpha, 2=\b{q}]{\polytope{S}_{#1}^{#2}} % local shard polytope
\newcommandx{\quotientope}[1][1=\equiv]{\polytope{Q}_{#1}} % quotientope
\newcommandx{\shardoplex}[1][1=\alpha]{\polytope{S}_{#1}} % shardoplex
\newcommandx{\quotientoplex}[1][1=\equiv]{\polytope{Q}_{#1}} % quotientoplex
\newcommandx{\sTrees}[1][1=\s]{\b{\Gamma}_{#1}} % s-tree grid
\newcommandx{\sTrunks}[1][1=\s]{\b{\Lambda}_{#1}} % s-trunk grid
\newcommand{\altmatch}{\mu} % alternate matching
\newcommandx{\altmatchset}[1][1=\alpha]{\mathcal{M}_{#1}} % set of alternate matchings of an s-arc
\newcommand{\charvect}{\point \chi} % characteristic vector
\newcommandx{\tree}[1][1=T]{{\mathsf{#1}}} % tree
\newcommand{\bush}{\tree[B]} % bush
\newcommand{\trunk}{\tree[T]} % trunk
\newcommand{\fiber}[1]{\polytope{F}_{#1}} % fiber insertion algorithm
\newcommand{\closedFiber}[1]{\bar{\polytope{F}}_{#1}} % fiber insertion algorithm
\newcommand{\decoration}{{\b{\delta}}} % decoration
\newcommand{\includeSymbol}[1]{\ensuremath{%
	\mathchoice
		{\raisebox{-.7mm}{\includegraphics[height=2.2ex]{#1}}}	
		{\raisebox{-.7mm}{\includegraphics[height=2.2ex]{#1}}}
		{\raisebox{-.6mm}{\includegraphics[height=1.6ex]{#1}}}
		{\raisebox{-.5mm}{\includegraphics[height=1ex]{#1}}}
}}
\newcommand{\noneCirc}{\includeSymbol{none}}
\newcommand{\upCirc}{\includeSymbol{up}}
\newcommand{\downCirc}{\includeSymbol{down}}
\newcommand{\upDownCirc}{\includeSymbol{updown}}
\newcommand{\Decorations}{\{\noneCirc{}, \downCirc{}, \upCirc{}, \upDownCirc{}\}} % all decorations
\def\tropring{\mathbb{T}} %Tropical semi-ring
\newcommandx{\troppol}[1][1=F]{#1} % tropical polynomial
\newcommand{\trophyp}[1][\troppol]{\mathcal{T}(#1)} % tropical hypersurface associated to a tropical polynomial
\newcommand{\tropdome}[1][\troppol]{\mathcal{D}(#1)} % tropical dome associated to a tropical polynomial
\newcommand{\sprod}[2]{\langle {#1} , {#2} \rangle} 
\newcommand{\point}[1]{\b{#1}} %vector (or point)
\newcommand{\subdiv}[1][S]{\b{\mathcal{#1}}} %subdivision
\newcommand{\lift}{\ell}
\newcommandx{\pc}[1][1=A]{\b{#1}} %point configuration
\def\l@part{\@tocline{1}{8pt}{0pc}{}{}}
\def\l@section{\@tocline{1}{4pt}{0pc}{}{}}
\let\oldtocpart=\tocpart
\renewcommand{\tocpart}[2]{\sc\large\oldtocpart{#1}{#2}}
\let\oldtocsection=\tocsection
\renewcommand{\tocsection}[2]{\bf\oldtocsection{#1}{#2}}
\let\oldtocsubsubsection=\tocsubsubsection
\renewcommand{\tocsubsubsection}[2]{\quad\oldtocsubsubsection{#1}{#2}}
\title{\mbox{Geometric realizations of the $\b{s}$-weak order and its lattice quotients}}
\thanks{
VP was partially supported by the Spanish project PID2022-137283NB-C21 of MCIN/AEI/10.13039/501100011033 / FEDER, UE, by the Spanish--German project COMPOTE (AEI PCI2024-155081-2 \& DFG 541393733), by the Severo Ochoa and María de Maeztu Program for Centers and Units of Excellence in R\&D (CEX2020-001084-M), and by the Departament de Recerca i Universitats de la Generalitat de Catalunya (2021 SGR 00697).
VP and EP were partially supported by the French--Austrian project PAGCAP (ANR-21-CE48-0020 \& FWF I 5788).
}
\author{Eva Philippe}
\address[Eva Philippe]{Sorbonne Université and Universitat de Barcelona.}
\email{eva.philippe@imj-prg.fr}
\urladdr{\url{https://perso.imj-prg.fr/eva-philippe/}}
\author{Vincent Pilaud}
\address[Vincent Pilaud]{Universitat de Barcelona \& Centre de Recerca Matemàtica, Barcelona}
\email{vincent.pilaud@ub.edu}
\urladdr{\url{https://www.ub.edu/comb/vincentpilaud/}}
\begin{document}

\begin{abstract}
For an $n$-tuple~$\s$ of non-negative integers, the $\s$-weak order is a lattice structure on $\s$-trees, generalizing the weak order on permutations.
We first describe the join irreducible elements, the canonical join representations, and the forcing order of the $\s$-weak order in terms of combinatorial objects, generalizing the arcs, the non-crossing arc diagrams, and the subarc order for the weak order.
We then extend the theory of shards and shard polytopes to construct geometric realizations of the $\s$-weak order and all its lattice quotients as polyhedral complexes, generalizing the quotient fans and quotientopes of the weak order.
\end{abstract}

\vspace*{-1.8cm}
\maketitle

\vspace*{-.8cm}
\tableofcontents

%%%%%%%%%%%%%%%%%%%%%%%%%%%%%%%%%%%%%%

\renewcommand{\thetheorem}{\Alph{theorem}}%

\section*{Introduction}
\label{sec:introduction}

The structure of permutations and associations of an $n$-element set is a classical topic of algebraic and geometric combinatorics.
In combinatorics, it is encoded by the Cayley graph of permutations under simple transpositions and the rotation graph on binary trees.
In lattice theory, it materializes in the lattice morphism from the weak order on permutations to the Tamari lattice on binary trees.
In polyhedral geometry, it appears in the braid arrangement and the sylvester fan, and their polar permutahedron and associahedron.
See \cite{PilaudSantosZiegler} for a recent survey on these connections and their influence in mathematics.

This prototype has motivated the study of all lattice congruences of the weak order, pioneered by N.~Reading~\cite{Reading-LatticeCongruences}.
Combinatorially, he provided an elegant combinatorial model for the lattice theory of the weak order in~\cite{Reading-arcDiagrams}.
Namely, the join irreducible permutations are encoded as certain arcs wiggling around the vertical axis, the canonical join representations of the permutations are encoded by non-crossing arc diagrams, and the forcing order on join irreducible permutations is encoded by the subarc order on these arcs.
Geometrically, he showed in~\cite{Reading-HopfAlgebras} that coarsening the braid fan according to the equivalence classes of any congruence of the weak order always yields a complete polyhedral fan.
These quotient fans were shown to be normal fans of so-called quotientopes by V.~Pilaud and F.~Santos~\cite{PilaudSantos-quotientopes}.
Later, A.~Padrol, V.~Pilaud and J.~Ritter~\cite{PadrolPilaudRitter} revisited the problem using the theory of shard polytopes, a family of polytopes indexed by the arcs (or, equivalently, by the join irreducible permutations).

\begin{figure}[b]
	\capstart
	\centerline{\includegraphics[scale=.6]{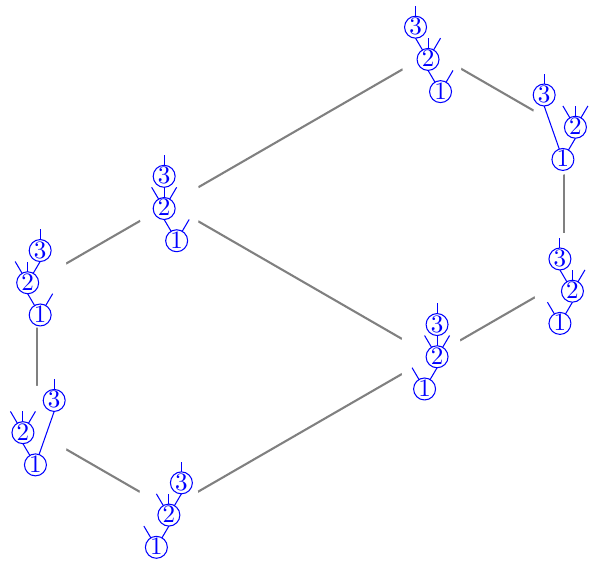} \qquad \includegraphics[scale=.6]{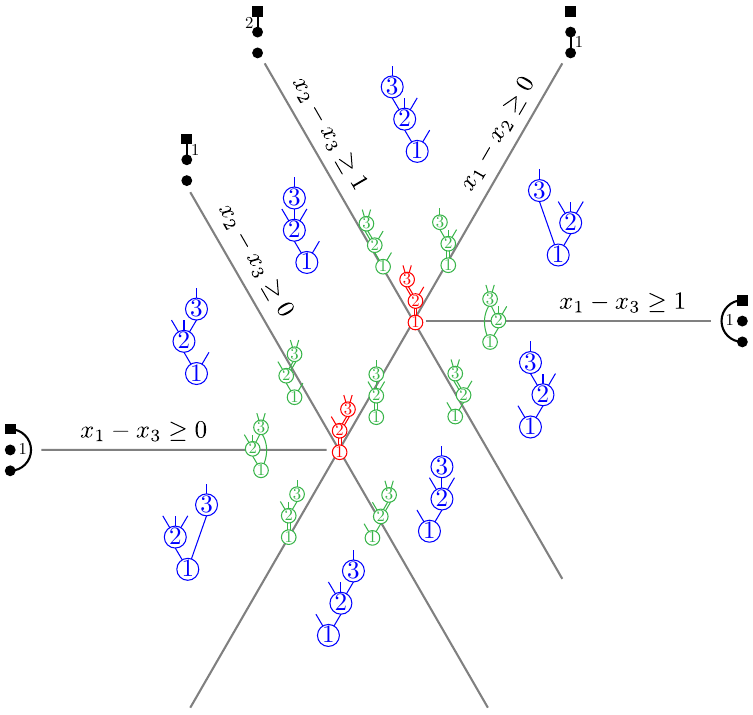}}
	\caption{The $(1,2,0)$-weak order (left) and the $(1,2,0)$-foam (right).}
	\label{fig:mainSWeakOrder}
\end{figure}

C.~Ceballos and V.~Pons introduced in~\cite{CeballosPons-sWeakOrderI,CeballosPons-sWeakOrderII} the $\s$-weak order on $\s$-trees for an $n$-tuple~$\s$ of non-negative integers, generalizing the classical weak order (the case~${\s = (1, \dots, 1)}$).
An $\s$-tree is a rooted plane tree on~$[n]$, where the node~$i$ has $s_i+1$ children that are all either leaves or nodes~$j > i$ (note that we have changed the conventions of~\cite{CeballosPons-sWeakOrderI} to make inductive arguments more transparent).
The order among these $\s$-trees is defined by inequalities between the inversion numbers in~$\s$-trees, generalizing the definition of the weak order by inclusion of inversion sets of permutations.
See \cref{fig:mainSWeakOrder}\,(left).
They proved in~\cite[Sect.~1]{CeballosPons-sWeakOrderI} that the $\s$-weak order is a lattice, described its meet and join operations, and established lattice properties of the $\s$-weak order, in particular congruence uniformity.
They also introduced in~\cite[Sect.~2]{CeballosPons-sWeakOrderI} the $\s$-Tamari lattice, a sublattice of the $\s$-weak order (and also a lattice quotient of the $\s$-weak order when~$\s$ contains no~$0$), which is isomorphic to the $\nu$-Tamari lattice of~\cite{PrevilleRatelleViennot, CeballosPadrolSarmiento-geometryNuTamari}  (for well-chosen $\s$ and~$\nu$).
They conjectured in~\cite[Conj~3.1.2]{CeballosPons-sWeakOrderII} that the Hasse diagram of the $\s$-weak order can be realized as an orientation of the skeleton of a polyhedral subdivision of the permutahedron, in the same spirit C.~Ceballos, A.~Padrol and C.~Sarmiento realized the $\nu$-Tamari lattice as an orientation of the skeleton of a polyhedral complex in~\cite{CeballosPadrolSarmiento-geometryNuTamari}.
In the situation when~$\s$ contains no~$0$, this conjecture was settled by R.~González D'Léon, A.~Morales, E.~Philippe, D.~Tamayo Jiménez, and M.~Yip in~\cite{DLeonMoralesPhilippeTamayoYip} using the theory of flow polytopes.
In this paper, we study the $\s$-weak order combining perspectives from combinatorics, lattice theory and polyhedral geometry.

\subsection*{Combinatorics}

Our first contribution is an algorithmic perspective on the $\s$-weak order.
We consider a natural \defn{insertion algorithm} from $\R^n$ to $\s$-trees, generalizing the insertion of a permutation in an increasing binary tree.
The ambiguities of this algorithm on certain non-generic points of~$\R^n$ force us to define \defn{$\s$-bushes} as certain degenerations of $\s$-trees, generalizing ordered set partitions.
We then define the \defn{$\s$-foam}~$\sFoam$ as the set of fibers of our insertion algorithm in $\s$-bushes, generalizing the braid fan.
See \cref{fig:mainSWeakOrder}\,(right).
The oriented dual graph of this polyhedral complex gives an alternative definition for the $\s$-weak order, equivalent to the original definition of~\cite{CeballosPons-sWeakOrderI}.
Exploiting the properties of this insertion algorithm enables us to prove our first battery of results.

\begin{theorem}
\label{thm:main1}
\textsf{The insertion algorithm in $\s$-bushes knows the $\s$-weak order and the facial $\s$-weak order:}
\begin{enumerate}[(i)]
\item The $\s$-foam~$\sFoam$ is a complete polyhedral complex (\cref{prop:sFoam}), whose oriented dual graph is isomorphic to the Hasse diagram of the $\s$-weak order (\cref{coro:sFoamsWeakOrder}).
\item Decomposing each insertion step yields a natural construction of the $\s$-weak order by interval doublings (\cref{prop:sWeakOrderIntervalDoblings}), thus recovering the result of~\cite[Thms.~1.21 \& 1.40]{CeballosPons-sWeakOrderI} that the $\s$-weak order is a congruence uniform lattice.
\item Considering all faces of the $\s$-foam, there is a natural \defn{facial $\s$-weak order} on $\s$-bushes (\cref{def:sFacialWeakOrder}), which is also constructible by interval doublings (\cref{prop:sFacialWeakOrderIntervalDoblings}).
\end{enumerate}
\end{theorem}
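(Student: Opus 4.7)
The plan is to derive the three statements from a close study of the insertion algorithm, which at each step $k=1,\dots,n$ attaches node~$k$ to the already-built tree on $\{1,\dots,k-1\}$ using comparisons between the $k$\ordinal{} coordinate of the input and the coordinates already read. Since each such decision is encoded by a linear (in)equality — strict in the generic case, weak when a tie occurs — the set of points inserting into a fixed $\s$-bush is cut out by a system of linear inequalities, and the fibers fit together polyhedrally.

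For part~(i), I would first check three local properties of the fibers: each $\fiber{\bush}$ is a relatively open polyhedral cone, its closure $\closedFiber{\bush}$ is the disjoint union of the fibers of all $\s$-bushes refining~$\bush$, and every point of $\R^n$ lands in exactly one fiber. Together these give the complete polyhedral complex structure of $\sFoam$. To identify its oriented dual graph with the Hasse diagram of the $\s$-weak order, I would focus on walls between full-dimensional cones: these correspond to $\s$-bushes with a single tied pair. A direct inspection of the insertion algorithm describes how crossing such a wall modifies the resulting $\s$-tree, and comparing this local move with the cover relations of~\cite{CeballosPons-sWeakOrderI} defined via inversion numbers yields the isomorphism.

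For parts~(ii) and~(iii), I would exploit the step-by-step structure of the insertion algorithm. Let $\c{F}^{(k)}$ denote the coarsening of $\sFoam$ obtained by running the algorithm only up to step~$k$, and let $L^{(k)}$ be its oriented dual. Going from $\c{F}^{(k-1)}$ to $\c{F}^{(k)}$ subdivides each top-dimensional cell into a linearly ordered chain of subcells indexed by the admissible positions for node~$k$. I would verify that the corresponding passage from $L^{(k-1)}$ to $L^{(k)}$ is precisely an interval doubling in the sense of Day: the subdivided cells form a convex interval of $L^{(k-1)}$, and the new cells sit above it as a chain product. Iterating these $n$ doublings from the one-element lattice reconstructs the $\s$-weak order, so Day's criterion delivers congruence uniformity and recovers~\cite[Thms.~1.21 \& 1.40]{CeballosPons-sWeakOrderI}. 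The facial $\s$-weak order of part~(iii) is obtained by the same construction applied to all faces of the foam rather than only top-dimensional ones, and the same doubling sequence establishes its congruence uniformity.

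The principal obstacle will lie in part~(i), namely matching the combinatorial wall-crossing in the foam with the cover relations of the $\s$-weak order defined via inversion numbers. One must verify that in every case — including when $s_i \ge 2$, so that node~$i$ carries several inner subtrees that may be redistributed across a wall — swapping a pair of tied coordinates transforms the $\s$-tree by exactly the local move producing a cover. A secondary technical point, needed for~(ii) and~(iii), is confirming that the subdivisions arising at each insertion step are genuine interval doublings rather than more general refinements: one must identify the interval being doubled and verify its convexity in $L^{(k-1)}$.
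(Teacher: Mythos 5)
Your outline for part~(i) follows the paper closely and is essentially correct: the paper proves that each closed fiber~$\closedFiber{\bush}$ is a polyhedron, that faces of closed fibers are again closed fibers (this is the technical heart, \cref{prop:fiberFaces}), that the fibers cover~$\R^n$, and then identifies walls with stitching/incision moves, matching them with the rotations of~\cite[Thm.~1.32]{CeballosPons-sWeakOrderI}. The concerns you raise (the $s_i\ge 2$ case, and showing the wall-crossing agrees with the cover relation) are genuine technical points that the paper addresses carefully.

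For parts~(ii) and~(iii), however, there is a gap in your sketch that would make it fail as stated. You claim that the passage from the $\s_{\le k-1}$-weak order $L^{(k-1)}$ to the $\s_{\le k}$-weak order $L^{(k)}$ ``is precisely an interval doubling in the sense of Day'' and that ``iterating these $n$ doublings'' from the one-element lattice yields the $\s$-weak order. This cannot be right: inserting node~$k$ subdivides each top cell of~$\c{F}^{(k-1)}$ into a chain whose length is the number of leaves of that cell's $\s_{\le k-1}$-tree (up to $S_{k-1}$ in general), and a single Day doubling only ever produces chains of length two at each doubled element. Replacing a cell by a length-$m$ chain requires $m-1$ doublings, and because the chain lengths vary from cell to cell and must grow compatibly across the whole lattice, the decomposition has to be organized much more finely. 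This is precisely what the paper does in \cref{prop:sWeakOrderIntervalDoblings}: it breaks the insertion of node~$n$ into a two-parameter sequence of intermediate posets $W_\s^{i,j}$ for $i\in[n-1]$, $j\in\llbracket s_{n-i}]$, where passing from $W_\s^{i,j-1}$ to $W_\s^{i,j}$ reveals one additional binary comparison (``is $\pos(\tree,n-i,n)\ge j$?''), and each such step is itself a \emph{set} of interval doublings indexed by partitions $U\sqcup V={]n-i,n[}$. Your final paragraph flags the worry that the subdivision might not be a genuine doubling, but the resolution is not to verify convexity of a single interval; it is to refine the step into many coordinated micro-doublings, one per comparison the insertion algorithm implicitly performs. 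The same refinement is needed for the facial $\s$-weak order in~(iii), where the paper in fact works with interval triplings (each obtained as two successive doublings).
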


\subsection*{Lattice theory}

Our second battery of results generalizes the combinatorial description of~\cite{Reading-arcDiagrams} for the lattice theory of the weak order in terms of arcs, non-crossing arc diagrams, and subarcs.
We define an \defn{$\s$-arc} as an arc of~\cite{Reading-arcDiagrams} together with an integer bounded by~$\s$.
We then extend to all $\s$-arcs the combinatorial notions of \defn{crossings} and of \defn{subarcs} to obtain the following statement.

\begin{theorem}
\label{thm:main2}
\textsf{The $\s$-arcs combinatorially encode the lattice theory of the $\s$-weak order:}
\begin{enumerate}[(i)]
\item The join irreducible $\s$-trees are in bijection with the $\s$-arcs (\cref{prop:sArc2sTree}).
\item The $\s$-trees are in bijection with the non-crossing $\s$-arc diagrams (\cref{prop:bij_sTree2sArcDiagram}), and this bijection encodes the canonical join representations in the $\s$-weak order (\cref{prop:canonicalJoinRepresentation}).
\item The forcing order in the $\s$-weak order is isomorphic to the subarc order on $\s$-arcs (\cref{thm:forcingOrder}), so that the congruence lattice of the $\s$-weak order is isomorphic to the distributive lattice of subsets of $\s$-arcs closed under subarcs (\cref{coro:sWeakOrderCongruenceLattice}).
\end{enumerate}
\end{theorem}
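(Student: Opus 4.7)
The overall strategy is to exploit \cref{thm:main1} and the insertion algorithm as the bridge between geometry and lattice theory, following the framework N.~Reading developed for the classical weak order but adapting it to the $\s$-setting. Since \cref{thm:main1}(ii) shows that the $\s$-weak order is obtained by interval doublings, standard congruence uniform lattice theory guarantees that the join irreducibles are in bijection with the doubling steps, that every element has a canonical join representation, and that the forcing order on join irreducibles is determined by the order of the doubling steps. The combinatorial work is thus to give a bijective identification of each of these structures with the corresponding structure on $\s$-arcs.

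For part (i), the plan is to analyze cover relations in the $\s$-weak order through the $\s$-foam. A join irreducible $\s$-tree $\tree$ has a unique element it covers, corresponding to a unique "descent-type" local move in the insertion algorithm. I would read off from this unique descent a triple consisting of two indices $i < j$ that are being reordered together with the relevant child position at node $i$ (an integer bounded by $s_i$), and package these data as an $\s$-arc. The inverse map reconstructs a join irreducible $\s$-tree from an $\s$-arc by inserting a single inversion in the otherwise minimal $\s$-tree. Verifying this is a bijection is a routine case analysis once the correct notion of $\s$-arc is fixed.

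For part (ii), I would define a map $\Phi$ sending each $\s$-tree $\tree$ to a set of $\s$-arcs by scanning the internal nodes of $\tree$ and extracting, at each inversion generated by the local structure at a node, the corresponding $\s$-arc from part (i). The goal is to show that $\Phi(\tree)$ is non-crossing (for a notion of $\s$-crossing that I would define so that two $\s$-arcs cross exactly when they cannot both appear as inversions of a common $\s$-tree), that $\Phi$ is a bijection onto non-crossing $\s$-arc diagrams, and that the $\s$-arcs in $\Phi(\tree)$ are precisely the join irreducibles whose join is $\tree$ in the canonical join representation. The non-crossing condition will require a careful definition distinguishing the interaction of the underlying arcs from the interaction of their $\s$-decorations; this is where I expect the main obstacle. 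Once the correct definition is in place, bijectivity should follow either by a direct reconstruction procedure (glueing the arcs back into an $\s$-tree one node at a time) or by a counting argument via the canonical join representation.

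For part (iii), congruence uniformity of the $\s$-weak order yields an isomorphism between the forcing order on join irreducibles and the poset of doubling steps in \cref{thm:main1}(ii). The task is to identify this latter poset with the subarc order on $\s$-arcs. I would proceed by showing that the doubling associated to an $\s$-arc $\alpha$ refines the congruence obtained from doubling another $\s$-arc $\beta$ if and only if $\beta$ is a subarc of $\alpha$; one direction is a direct inspection of how interval doublings interact with the insertion algorithm, and the other follows by producing, for each non-subarc pair, an explicit congruence that separates them. The corollary on congruence lattices is then immediate from the standard correspondence between congruences of a congruence uniform lattice and downsets in the forcing order on join irreducibles. The main obstacle throughout is getting the $\s$-decorated crossing and subarc relations to align exactly with the corresponding lattice-theoretic relations; once the definitions are correctly calibrated against the insertion algorithm, each of the three parts should reduce to a bookkeeping verification.
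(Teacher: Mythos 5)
Your proposal outlines a strategy rather than giving a proof, and it diverges from the paper's route in one important place where the divergence is not merely stylistic but hides a gap. For parts (i) and (ii) you are, in spirit, following the same path the paper does: read off an $\s$-arc $(i,j,A,B,r)$ from the unique descent of a join irreducible $\s$-tree (the paper's \cref{def:sArc2sTree} constructs the inverse explicitly via $\s$-combs and verifies bijectivity through the position vectors of \cref{lem:positionsArc}), and then build the non-crossing arc diagram $\delta_\join(\tree)$ from all descents of $\tree$ and prove it encodes the canonical join representation (\cref{def:sTree2sArcDiagram}, \cref{prop:bij_sTree2sArcDiagram}, \cref{prop:canonicalJoinRepresentation}). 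The nontrivial content there is the explicit eight-case definition of non-crossing (\cref{def:non-crossing}), the explicit inverse algorithm reconstructing $\tree_{\!\join}(\delta)$, and the minimality lemma \cref{lem:prepCanonicalJoinRepresentation}; your sketch correctly anticipates that the ``calibration'' of the crossing condition is the bottleneck, but does not supply it.

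The real problem is in part (iii). You claim that ``congruence uniformity of the $\s$-weak order yields an isomorphism between the forcing order on join irreducibles and the poset of doubling steps in \cref{thm:main1}(ii).'' This is not a theorem. A sequence of interval doublings produces join irreducibles in bijection with the doubling steps, but the doubling steps form a \emph{sequence} (a linear order), while the forcing relation is a genuine partial order with many incomparabilities; there is no canonical poset structure on ``doubling steps'' that is automatically isomorphic to the forcing order. Extracting the forcing order from a doubling decomposition is possible in principle, but it requires tracking, for each doubling, exactly which earlier join irreducibles lie inside the doubled interval and how the interval sits in the lattice, and that bookkeeping is at least as hard as computing forcing directly. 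The paper avoids this entirely: it invokes polygonality of the $\s$-weak order (\cref{prop:polygons}, from Lacina and Ceballos--Pons) together with the general fact that in a polygonal lattice forcing is the transitive closure of forcing-in-polygons (\cref{thm:edge_forcing_polygon}), then reduces via \cref{lem:forcing_join-irred_NCAD} to comparing the edges incident to two join irreducibles embedded in canonical join representations, and finally runs an explicit case analysis over the squares, pentagons and hexagons of \cref{fig:polygons} to match forcing with the subarc relation of \cref{def:subarc}. You would either need to prove a precise version of your claimed doubling/forcing correspondence (which does not appear in the paper and is not standard), or switch to the polygonality route. As written, part (iii) of your proposal does not close.
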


We exploit\,\cref{thm:main2}\,(iii) to define relevant congruences of the $\s$-weak order and\,their\,quotients, generalizing the Tamari lattice~\cite{Tamari}, the Cambrian lattices~\cite{Reading-CambrianLattices, LangePilaud, ChatelPilaud}, the permutree lattices~\cite{PilaudPons-permutrees}, and the simple congruences of the weak order~\cite{HoangMutze, DemonetIyamaReadingReitenThomas, BarnardNovelliPilaud}.
This opens appealing conjectures about the combinatorics of these congruences (\cref{conj:Cambrian,,conj:permutrees,,conj:simpleCongruences}).

\begin{figure}[t]
	\capstart
	\centerline{\includegraphics[scale=.8]{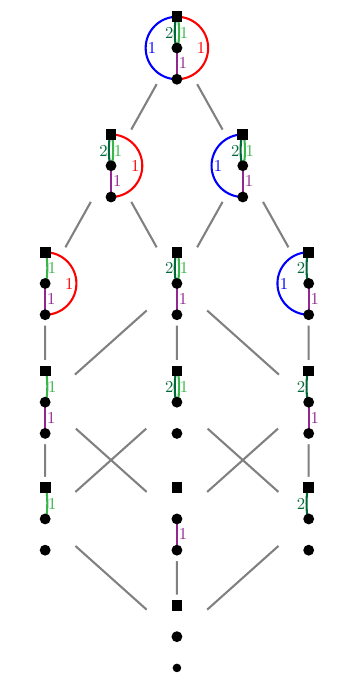} \quad \includegraphics[scale=.8]{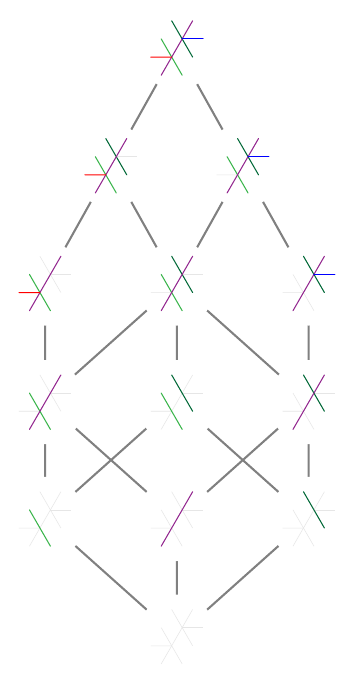} \quad \includegraphics[scale=.8]{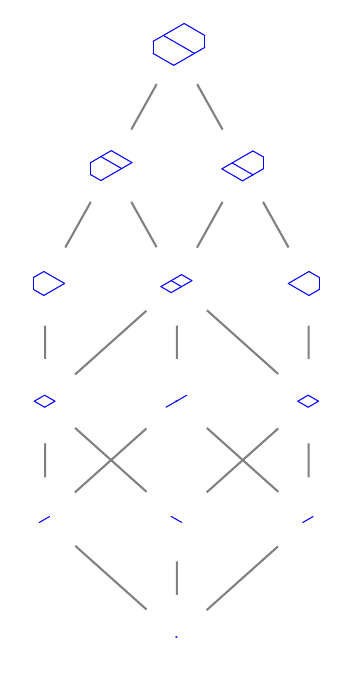}}
	\vspace{-.2cm}
	\caption{The congruence lattice of the $(1,2,0)$-weak order, where each congruence~$\equiv$ is replaced by its $\s$-arc down set (left), its quotient foam~$\quotientFoam$ (middle) and its quotientoplex~$\quotientoplex$~(right).}
	\vspace{-.2cm}
	\label{fig:mainCongruenceLattice}
\end{figure}

\subsection*{Polyhedral geometry}

We then construct geometric realizations of the $\s$-weak order and all its lattice quotients.
Namely, we first use the $\s$-foam~$\sFoam$ to construct polyhedral realizations of all lattice quotients of the $\s$-weak order, generalizing the quotient fans for the quotients of the weak order.
See \cref{fig:mainCongruenceLattice}\,(middle).
This gives our third battery of results.

\begin{theorem}
\label{thm:main3}
\textsf{All lattice quotients of the $\s$-weak order are realized by coarsenings of the $\s$-foam:}
\begin{enumerate}[(i)]
\item For any congruence~$\equiv$ of the $\s$-weak order, gluing together the maximal cells of the $\s$-foam~$\sFoam$ corresponding to $\s$-trees in the same congruence class of~$\equiv$ defines a complete polyhedral complex~$\quotientFoam$ (\cref{prop:quotientFoam2}).
\item The union of the walls of the quotient foam~$\quotientFoam$ coincides with the union of the $\s$-shards of the $\s$-arcs corresponding to~$\equiv$ (\cref{prop:quotientFoam1}), where the \defn{$\s$-shards} are affine cones associated to the $\s$-arcs, generalizing the shards of~\cite{Reading-posetRegions}.
\item The oriented dual graph of the quotient foam~$\quotientFoam$ is isomorphic to the Hasse diagram of the quotient of the $\s$-weak order by~$\equiv$ (\cref{prop:quotientFoam3}).
\end{enumerate}
\end{theorem}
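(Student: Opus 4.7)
The plan is to follow the roadmap N.~Reading developed for the classical quotient fan, adapted here to the $\s$-foam. The preliminary step is to attach a \emph{wall-label} to every codimension-one face $W$ of $\sFoam$ separating two maximal cells whose corresponding $\s$-trees $\tree_1 \iscovered \tree_2$ are adjacent in the $\s$-weak order. Via \cref{thm:main2}(i), this cover determines a unique join irreducible, hence a unique $\s$-arc $\alpha$, and I would label $W$ by $\alpha$ and prove that $W \subseteq \shard$, while conversely every $\s$-shard $\shard$ is tiled by the walls it labels. Both directions come from unpacking $\s$-shards as affine cones supported on the same hyperplanes that carry the walls of $\sFoam$, together with a direct analysis of how the insertion algorithm of \cref{thm:main1} behaves across $W$: the inversion witnessed by $\alpha$ is precisely the one that jumps.

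With this wall-labeling in place, (ii) reduces to bookkeeping. For a congruence $\equiv$, \cref{thm:main2}(iii) identifies the non-contracted join irreducibles of $\equiv$ with a subarc-down-set $\mathcal{A}_\equiv$ of $\s$-arcs. By construction, the walls of $\quotientFoam$ are exactly the walls of $\sFoam$ labeled by arcs in $\mathcal{A}_\equiv$, so their union coincides with $\bigcup_{\alpha \in \mathcal{A}_\equiv} \shard$.

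The heart of the proof is (i). By the congruence uniformity established in \cref{thm:main1}(ii), each class $C$ of $\equiv$ is an interval $[\tree_C^\downarrow, \tree_C^\uparrow]$ of the $\s$-weak order; moreover, a cover relation of the $\s$-weak order is internal to some class iff its wall-label is a contracted arc, and separates two distinct classes iff its label lies in $\mathcal{A}_\equiv$. This already implies that the cell $F_C \eqdef \bigcup_{\tree \in C} \closedFiber{\tree}$ is locally polyhedral at the interior of each of its facets. The main obstacle is to upgrade this local statement to genuine convexity of $F_C$, for which I would show that $F_C$ is star-shaped from any interior point of any $\fiber{\tree}$ with $\tree \in C$: any straight segment aimed at another point of $F_C$ can only cross walls whose labels are contracted, so it remains in $F_C$ throughout, and the retained $\s$-shards bounding $F_C$ fit together as the boundary of a convex polyhedron. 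This rests on the convexity of individual $\s$-shards together with the interval structure of $C$, and is the delicate step of the whole theorem, echoing the classical argument for quotient fans of the weak order.

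Finally, (iii) is immediate: each codimension-one wall of $\quotientFoam$ arises from a cover $\tree_1 \iscovered \tree_2$ with $\tree_1, \tree_2$ in distinct classes $C_1, C_2$, which projects to a cover $C_1 \iscovered C_2$ in the quotient lattice, and the orientation inherited from \cref{thm:main1}(i) matches the quotient order because the canonical projection to the quotient is a surjective lattice morphism.
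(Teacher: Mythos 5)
Your plan for parts (ii) and (iii) — labelling each wall by the $\s$-arc of the corresponding cover relation, then routing through the downset $\c{A}_\equiv$ and the oriented dual graph — matches the paper's approach (\cref{prop:quotientFoam1,prop:quotientFoam3}) closely and is sound modulo the underlying combinatorial lemma about stitchings and $\s$-shards (\cref{lem:shard}).

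The gap is in part (i), exactly where you flag it as delicate, and I do not think your star-shapedness argument closes it as stated. The assertion ``any straight segment aimed at another point of $F_C$ can only cross walls whose labels are contracted'' is the whole of the claim, not a step towards it: a priori a segment joining two points of $F_C$ could leave through a wall labelled by a non-contracted arc and re-enter later, and nothing in the interval structure of $C$ or the convexity of a single $\s$-shard rules this out. In Reading's classical proof the analogue of this step is a genuine theorem about shard geometry in a \emph{linear} hyperplane arrangement (closure under ``cutting'' relations), and that argument does not transfer automatically to the affine, non-conic $\s$-foam. The paper deliberately avoids this by switching frameworks: it shows (\cref{prop:shard_trophyp}) that for each $\s$-arc $\alpha$ there is a tropical polynomial $\troppol_\alpha$ whose vanishing locus $\trophyp[\troppol_\alpha]$ sits between the $\s$-shard $\shard$ and the union of $\s$-shards of subarcs of~$\alpha$. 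Because $\c{A}_\equiv$ is a downset of the subarc order, the union $\bigcup_{\alpha \in \c{A}_\equiv} \trophyp[\troppol_\alpha]$ then coincides with the union of the $\s$-shards, and the quotient foam is identified (\cref{thm:quotientFoamTropical}) with the polyhedral complex induced by this arrangement of tropical hypersurfaces. Convexity of the maximal cells is then automatic from the general theory (\cref{lem:product_troppol,thm:arrangement_tropical_hypersurfaces}), since cells of a tropical hypersurface arrangement are intersections of polyhedra. The ingredient your proposal is missing is precisely this sandwich containment $\shard \subseteq \trophyp[\troppol_\alpha] \subseteq \bigcup_{\beta \text{ subarc of } \alpha} \shard[\beta]$; without it, or some equivalent rigidity statement, the convexity of $F_C$ remains unestablished.
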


We then extend the theory of shard polytopes and quotientopes~\cite{PilaudSantos-quotientopes, PadrolPilaudRitter}.
We define the \defn{shardoplex}~$\shardoplex$ associated to any \mbox{$\s$-arc~$\alpha$}, a polyhedral complex whose geometry is tuned to take care of the $\s$-shard of~$\alpha$.
We then use Minkowski sums of shardoplexes to construct the \defn{quotientoplex}~$\quotientoplex$ of any congruence~$\equiv$ of the $\s$-weak order, a polyhedral complex realizing the quotient of the $\s$-weak order by~$\equiv$.
See \cref{fig:mainCongruenceLattice}\,(right).
This yields our fourth battery of results.

\begin{theorem}
\label{thm:main4}
\textsf{All lattice quotients of the $\s$-weak order are realized as Minkowski sums of shardoplexes:}
\begin{enumerate}[(i)]
\item For each $\s$-arc~$\alpha$, the union of the walls of the dual polyhedral complex of the shardoplex~$\shardoplex$ contains the $\s$-shard~$\shard$ and is contained in the $\s$-shards of the subarcs of~$\alpha$ (\cref{prop:shard_trophyp}).
\item For any congruence~$\equiv$ of the $\s$-weak order, the Hasse diagram of the quotient of the $\s$-weak order by~$\equiv$ is isomorphic to the oriented skeleton of the quotientoplex~$\quotientoplex$ (\cref{prop:quotientoplex2}).
\item The quotientoplex~$\quotientoplex$ is a polytopal subdivision of a quotientope~$\quotientope[\tilde\equiv]$ for the congruence~$\tilde\equiv$ of the weak order obtained by projecting~$\equiv$ (\cref{prop:quotientoplex3}).
\end{enumerate}
\end{theorem}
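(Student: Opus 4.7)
My strategy is to mimic the architecture of the classical case \cite{PadrolPilaudRitter}, using Theorem~\ref{thm:main3} as the bridge between the combinatorics of $\s$-shards and the polyhedral geometry: part (i) is the local statement about a single shardoplex, part (ii) then follows from the general principle that walls of a Minkowski sum are unions of walls of the summands, and part (iii) follows by checking that the projection construction is compatible with taking Minkowski sums.

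For part (i), I would first unpack the definition of~$\shardoplex$ and its dual polyhedral complex, expressing its walls as a disjoint union according to which pair of adjacent maximal cells they separate. The $\s$-shard~$\shard$ itself should be forced to appear because the shardoplex is designed (in the tradition of~\cite{PadrolPilaudRitter}) to have a distinguished edge/ridge whose normal direction is precisely the defining hyperplane of~$\shard$, with the combinatorial data of~$\alpha$ prescribing its location. For the upper bound, the plan is to read off each wall from the vertex description of~$\shardoplex$ and observe that every generator comes from combinatorial data associated to a subarc~$\beta$ of~$\alpha$, so the corresponding wall lies on the hyperplane supporting~$\shard[\beta]$. The subarc order of Theorem~\ref{thm:main2}(iii) ensures the correctness of the labelling.

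For part (ii), I would use the standard fact that the dual polyhedral complex of a Minkowski sum~$\sum_\alpha \shardoplex$ is the common refinement of the dual complexes of the summands; in particular, its union of walls is the union of their walls. Combined with part (i), this union contains exactly the $\s$-shards of the $\s$-arcs~$\alpha$ in the $\s$-arc ideal associated to~$\equiv$, and is contained in the union of $\s$-shards of their subarcs, which, by closure under subarcs (Theorem~\ref{thm:main2}(iii)), is the same set. By Theorem~\ref{thm:main3}(ii), this is precisely the wall complex of the quotient foam~$\quotientFoam$, so the oriented skeleton of~$\quotientoplex$ is isomorphic to the oriented dual graph of~$\quotientFoam$, which by Theorem~\ref{thm:main3}(iii) is the Hasse diagram of the $\s$-weak order quotient by~$\equiv$.

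For part (iii), I would first describe $\tilde\equiv$ explicitly: identify the $\s$-arcs of~$\equiv$ with the arcs of $\tilde\equiv$ after forgetting the integer decoration, so that the summation defining~$\quotientope[\tilde\equiv]$ (as a Minkowski sum of classical shard polytopes \cite{PadrolPilaudRitter}) is obtained from the summation defining~$\quotientoplex$ by pushing each shardoplex~$\shardoplex$ onto the shard polytope~$\shardPolytope[\tilde\alpha]$. Showing that a Minkowski sum of shardoplexes subdivides the Minkowski sum of their projections should reduce, by multilinearity, to the single summand case, where I would argue directly that~$\shardoplex$ subdivides~$\shardPolytope[\tilde\alpha]$ by matching fibers of the projection with cells of~$\shardoplex$. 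The main obstacle will be part (i): controlling all the walls of the dual complex of~$\shardoplex$ demands an explicit combinatorial model for the faces of the shardoplex and a careful case analysis of how the decoration integer interacts with the subarc relation, since spurious walls (not supported on any $\s$-shard of a subarc) would break the whole chain of implications.
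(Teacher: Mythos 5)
Your architecture is the right one and matches the paper's: part (i) is the local key lemma, part (ii) is obtained by taking walls of a Minkowski sum and invoking Theorem~\ref{thm:main3}, and part (iii) is a compatibility of projections. But there is a genuine gap, and it sits exactly where you flag it: you never define or control the ``dual polyhedral complex'' of a shardoplex. A shardoplex is a polytopal \emph{complex}, not a polytope, so there is no normal fan to fall back on, and the affine offsets of the local shard polytopes matter. The paper resolves this by working tropically (Section~\ref{sec:tropicalGeometry}): each $\s$-arc $\alpha$ gets a tropical polynomial $\troppol_\alpha$ whose lifting $\lift_\alpha$ is precisely tuned so that the tropical hypersurface $\trophyp[\troppol_\alpha]$ plays the role of the union of walls, and Joswig's duality (\cref{thm:tropical_dual}) provides the cell-level correspondence you are implicitly assuming. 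Without that machinery, your ``common refinement of the dual complexes of the summands'' is not a theorem you can invoke; it is Lemma~\ref{lem:product_troppol} plus Theorem~\ref{thm:arrangement_tropical_hypersurfaces}, and even the claim that a Minkowski sum of polytopal complexes is again a polytopal complex has to be established (Lemma~\ref{lem:MinkowskiSumPolyhedralComplex}).

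Concretely for part (i), the step you defer — matching each codimension-one wall of the dual complex to an $\s$-shard of a subarc — is the content of \cref{prop:shard_trophyp}. The proof goes through the four-case classification of edges of $\shardPolytope[\tilde\alpha]$ from~\cite[Lem.~49]{PadrolPilaudRitter}, and in each case one has to verify not just the normal direction but that the affine locus (driven by $\lift_\alpha$) lands on the correct translate, i.e.\ on $\shard[\beta]$ and not on some other affine copy; this is where the integer $r$ and the offsets $\sum_k \max(0, s_k - 1)$ must be tracked. Your appeal to Theorem~\ref{thm:main2}(iii) here is a non sequitur: the subarc order governs the congruence lattice, not the identification of walls. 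Finally, for part (iii) your fiber/multilinearity argument is a detour; the paper's proof is a one-liner that the support of each $\shardoplex$ is a translate of $\shardPolytope[\tilde\alpha]$, so supports of Minkowski sums match.
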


We note that the proofs of \cref{thm:main4} are mainly based on tropical geometry~\cite{Joswig21} (in fact, tropical geometry is convenient even to define the notion of dual polyhedral complex).

Applying \cref{thm:main4} to the trivial congruence provides a definitive answer to the geometric conjecture of C.~Ceballos and V.~Pons~\cite[Conj.~3.1.2]{CeballosPons-sWeakOrderII} (which was already partially answered using flow polytopes and tropical geometry in~\cite{DLeonMoralesPhilippeTamayoYip}, in the situation when~$\s$ contains no~$0$).

\begin{corollary*}
For any tuple~$\s$ of non-negative integers, the Hasse diagram of the $\s$-weak order is isomorphic to the oriented skeleton of a polytopal subdivision of a graphical zonotope (\cref{coro:quotientoplexTrivial}).
\end{corollary*}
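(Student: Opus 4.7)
The plan is to deduce the corollary directly from \cref{thm:main4} by specializing to the \emph{trivial congruence} of the $\s$-weak order, whose equivalence classes are all singletons. Under this congruence, the quotient of the $\s$-weak order is the $\s$-weak order itself, so part (ii) of \cref{thm:main4} (\cref{prop:quotientoplex2}) immediately identifies the Hasse diagram of the $\s$-weak order with the oriented skeleton of the corresponding quotientoplex, which is realized as the Minkowski sum of all shardoplexes~$\shardoplex$ ranging over the $\s$-arcs~$\alpha$.

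Next, part (iii) of \cref{thm:main4} (\cref{prop:quotientoplex3}) states that this quotientoplex is a polytopal subdivision of the quotientope~$\quotientope[\tilde{\equiv}]$ for the image~$\tilde{\equiv}$ of the trivial congruence under the projection to congruences of the ordinary weak order. Using the correspondence between congruences and subarc-closed downsets of $\s$-arcs from \cref{thm:main2}(iii), together with the forgetful map from $\s$-arcs to ordinary arcs of~\cite{Reading-arcDiagrams}, one checks that the trivial $\s$-congruence projects to the trivial congruence of the weak order. The quotientope of the trivial weak-order congruence coincides (up to normal equivalence) with the permutahedron~$\Perm[n-1]$ by the classical construction of~\cite{PilaudSantos-quotientopes, PadrolPilaudRitter}, and the permutahedron is the graphical zonotope of the complete graph~$K_n$, which concludes the proof.

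The only new verification is the compatibility between the trivial $\s$-congruence and the trivial weak-order congruence under the projection; this should be immediate from how the projection is defined via the forgetful map on arcs. All substantive content --- the polyhedrality of the subdivision, the refinement of the permutahedral normal fan, and the agreement of the oriented skeleton with the Hasse diagram --- is already packaged into \cref{thm:main4}, so the corollary is a clean specialization rather than a result with its own obstacles.
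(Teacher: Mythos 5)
Your outline matches the paper's strategy in its opening moves — specialize \cref{thm:main4} to the trivial congruence and invoke \cref{prop:quotientoplex2} and \cref{prop:quotientoplex3} — but there is a real error in the second half of your argument, precisely at the step you dismiss as "immediate."

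You claim that the trivial $\s$-congruence projects to the trivial congruence of the weak order, and hence that the quotientope $\quotientope[\tilde\equiv]$ is the permutahedron. This is false whenever $\s$ contains a zero entry. Two things go wrong. First, the forgetful map sends an $\s$-arc $(i,j,A,B,r)$ to a quadruple $(i,j,A,B)$ with $A \sqcup B = \set{k \in {]i,j[}}{s_k \ne 0}$; when some intermediate $s_k = 0$ this is \emph{not} a genuine arc of the weak order (the paper flags this in Remarks~\ref{rem:abusing1} and~\ref{rem:abusing2}: $\tilde\equiv$ is not literally a weak-order congruence and $\shardPolytope[\tilde\alpha]$ is only a pseudoshard polytope). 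Second, and more damagingly, an $\s$-arc $(i,j,A,B,r)$ requires $r \in [s_i]$, so \emph{no} $\s$-arc has bottom endpoint $i$ when $s_i = 0$. Consequently the hyperplanes $x_i = x_j$ with $s_i = 0$ are entirely absent from the normal fan of $\quotientope[\tilde\equiv](\tilde{\b{\lambda}})$. That fan is therefore not the braid arrangement but only the graphical arrangement of the graph $G_\s$ with edges $\set{\{i,j\}}{1 \le i < j \le n,\ s_i \ne 0}$, and the quotientope is (up to normal equivalence) the graphical zonotope $\Zono = \sum_{1\le i<j\le n} s_i \conv\{\b{e}_i,\b{e}_j\}$ of $G_\s$, not $\Perm[n-1]$. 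Your conclusion "the permutahedron, which is the graphical zonotope of $K_n$" is thus a strictly stronger — and wrong — claim; the corollary as stated is saved only because the correct polytope happens also to be a graphical zonotope. You would need to replace that final paragraph with the paper's actual argument: identify the set of hyperplanes appearing in the normal fan and recognize it as a graphical arrangement rather than the braid arrangement.
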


\subsection*{Plan}

\enlargethispage{.3cm}
The paper is organized as follows.
\cref{sec:insertion} describes $\s$-bushes, the insertion algorithm, and the $\s$-foam.
\cref{sec:sWeakOrder} defines the $\s$-weak order and $\s$-facial weak order and shows that they are constructible by interval doublings.
\cref{sec:canonicalComplex} introduces $\s$-arcs and non-crossing \mbox{$\s$-arc} diagrams, and describes canonical representations in the $\s$-weak order.
\cref{sec:quotients} describes the forcing order of the $\s$-weak order in terms of subarcs, and introduces a few relevant congruences of the $\s$-weak order.
\cref{sec:sQuotientopes} constructs the quotient foams~$\quotientFoam$, shardoplexes~$\shardoplex$, and quotientoplexes~$\quotientoplex$.
\cref{sec:tropicalGeometry} uses tropical geometry to prove the results of \cref{sec:sQuotientopes}.
Each section starts with a recollection of definitions and results on the weak order to be extended to the $\s$-weak~order.

\renewcommand{\thetheorem}{\arabic{theorem}}
\setcounter{theorem}{0}

%%%%%%%%%%%%%%%%%%%%%%%%%%%%%%%%%%%%%%

\clearpage

\section{Insertion algorithm in $\s$-bushes}
\label{sec:insertion}

In this section, we describe a family of insertion algorithms whose fibers define polyhedral partitions of~$\R^n$.
We use the notations~$[n] \eqdef \{1, \dots, n\}$, $\llbracket n] \eqdef \{0, \dots, n\}$ and~${{]i,j[} \eqdef \{i+1, \dots, j-1\}}$ throughout the paper.
For a property~$P$, we denote~$\one_P \eqdef 1$ if~$P$ and~$\one_P \eqdef 0$ otherwise.

%%%%%%%%%%%%%%%%%

\subsection{Recollections~\ref{sec:insertion}: Insertion algorithm in increasing binary trees}
\label{subsec:recollectionsInsertion}

We first recall the classical insertion algorithm in increasing binary trees.
An \defn{increasing binary tree} is a rooted plane binary tree whose nodes are labeled by integers such that each node is strictly smaller than all its children.
It is called \defn{standard} if the nodes are bijectively labeled by~$[n]$.
The increasing binary trees are in bijection with words on integers with no repeated letter:
\begin{itemize}
\item we associate a word to any increasing binary tree by reading its node labels in inorder,
\item conversely, we associate an increasing binary tree~$\ins(w)$ to any word~$w$ inductively:
	\begin{itemize}
	\item if~$w = \varnothing$ is the empty word, then~$\ins(\varnothing)$ is the empty binary tree, with just a root, no internal node, and a single leaf,
	\item otherwise~$w = urv$ where~$r$ is the minimal letter of~$w$, and~$\ins(w)$ is the binary tree with root~$r$, left subtree~$\ins(u)$ and right subtree~$\ins(v)$.
	\end{itemize}
\end{itemize}
In particular, the standard increasing binary trees are in bijection with permutations.

\begin{figure}[b]
	\capstart
	\centerline{\includegraphics[scale=.6]{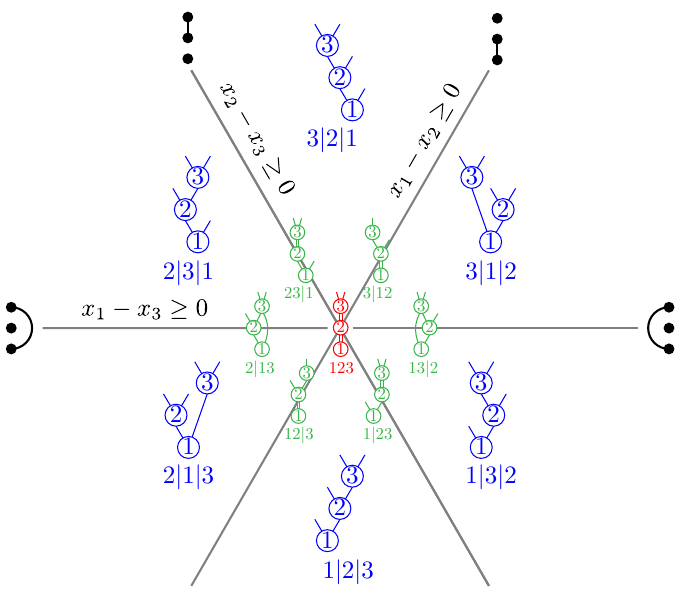}\qquad\includegraphics[scale=.6]{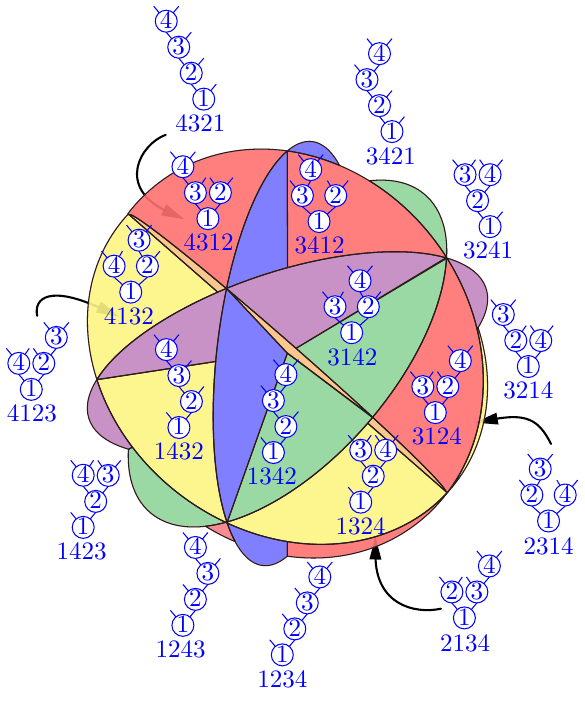}}
	\caption{The braid fan for~$n = 3$ with cones labeled by ordered set partitions and bushes (left) and for~$n = 4$ with maximal cones labeled by permutations and increasing binary trees (right).}
	\label{fig:braidFans}
\end{figure}

Consider now a point~$\b{x} \eqdef (x_1, \dots, x_n) \in \R^n$.
If~$\b{x}$ is generic (no repeated entry), we can consider the permutation~$\sigma$ of~$[n]$ that sorts~$\b{x}$, that is, such that~$x_{\sigma(1)} < \dots < x_{\sigma(n)}$.
Then we denote by~$\ins(\b{x})$ the increasing binary tree obtained by the insertion of~$\sigma$.
In other words, the root of~$\ins(\b{x})$ is~$1$, and its left subtree contains the positions~$j \in [2,n]$ such that~$x_1 > x_j$ while its right subtree contains the positions~$j \in [2,n]$ such that~$x_1 < x_j$.

The fibers of this insertion are the (open) regions of the \defn{braid arrangement}, defined by the hyperplanes~$\set{\b{x} \in \R^n}{x_i = x_j}$ for all~$1 \le i < j \le n$.
See \cref{fig:braidFans} for illustrations when~$n = 3$ and~$n = 4$.
The \defn{braid fan} is the complete simplicial fan defined by the braid arrangement.
It has a cone~$\polytope{C}_\mu$ for each ordered set partition~$\mu$ of~$[n]$, given by the points~$\b{x} \in \R^n$ such that~$x_i \le x_j$ if and only if the part of~$\mu$ containing~$i$ appears weakly before the part of~$\mu$ containing~$j$ (hence~$x_i = x_j$ if~$i$ and~$j$ belong to the same part of~$\mu$).
In particular, it has a region for each permutation of~$[n]$, and two regions are adjacent if the corresponding permutations differ by the transposition of adjacent entries (meaning at two consecutive positions).

In this paper, it will be impossible to work with permutations and ordered set partitions (except if we accept to restrict the generality to strict compositions~$\s$).
We thus describe all the combinatorics of the braid fan with treelike structures.
For this, we associate to each~$\b{x} \in \R^n$ (generic or not) a rooted plane graph~$\ins(\b{x})$ that we call a \defn{bush}.
It is constructed inductively~as~follows:
\begin{itemize}
\item start with the tree with just a root, no internal node, and a single leaf,
\item at step~$j$, attach a new node~$j$ with $2$ leaves
	\begin{itemize}
	\item either to the leaf between the two nodes~$u$ and~$v$ such that~$x_u < x_j < x_v$,
	\item or to the two leaves surrounding the node~$w$ with~$x_j = x_w$.
	\end{itemize}
\end{itemize}
This is illustrated in \cref{fig:braidFans}\,(left).
We naturally orient a bush increasingly, from its root to its leaves.
The fiber~$\fiber{\bush}$ of a bush~$\bush$ is the cone defined by the inequalities~$x_u < x_j < x_v$ and the equalities~$x_j = x_w$ discovered along the insertion (an alternative description will be discussed in \cref{subsec:sInsertionFibers}).
The cones of the braid fan are precisely the closures of the fibers of this insertion.

%%%%%%%%%%%%%%%%%

\subsection{$\s$-bushes, $\s$-trees, and $\s$-trunks}
\label{subsec:bushes}

Fix an $n$-tuple~$\s \eqdef (s_1, \dots, s_n)$ with~$s_i \in \N$ for any~$i \in [n]$ (note that we allow~${s_i = 0}$).
For any~${j \in \llbracket n]}$, we define the $j$-tuple~$\s_{\le j} \eqdef (s_1, \dots, s_j)$ and the numbers~$S_j \eqdef 1 + \sum_{i \le j} s_i$ and $T_j \eqdef 2 - \one_{s_1 = \dots = s_j = 0} +\sum_{i \le j}\max(0, s_i - 1)$.

We now define a family of plane rooted labeled graphs that naturally appear in the $\s$-insertion algorithm of \cref{subsec:insertion}.
See \cref{fig:algo,fig:foams} for illustrations.

\begin{definition}
\label{def:bush}
An \defn{$\s$-bush} is a plane graph with a root, $n$ internal nodes bijectively labeled by~$[n]$, and some leaves, defined inductively as follows:
\begin{itemize}
\item start with the rooted graph with just a root, no internal node, and a single leaf,
\item at step~$j$, attach either to a leaf or to two consecutive leaves, a new node~$j$ with~$s_j + 1$ leaves (except that if~$s_j = 0$ and $j$ is attached to two leaves, then $j$ gets itself two leaves).
\end{itemize}
\end{definition}

\begin{remark}
\label{rem:bush}
A few observations on \cref{def:bush}:
\begin{enumerate}
\item For~$j \in \llbracket n]$, deleting all nodes~$>j$ in an $\s$-bush~$\bush$ gives an $\s_{\le j}$-bush~$\bush_{\le j}$.

\item An $\s$-bush is naturally oriented from its root to its leaves, so that all its edges are increasing.
We draw bushes growing up, with their roots on the bottom and their leaves on top, and such that the node~$j$ is at level~$j$.
See \cref{fig:algo,fig:foams}.

\item For an $\s$-bush~$\bush$ and~$1 \le i < j \le n$, we say that~$i$ is an \defn{ancestor} of~$j$ in~$\bush$ and that $j$ is a \defn{descendant} of~$i$ in~$\bush$ if there is an increasing path from~$i$ to~$j$ in~$\bush$ (we consider~$i$ to be an ancestor and a descendant of itself).

\item The \defn{rank}~$r(\bush)$ of an $\s$-bush is its number of indegree~$1$ nodes. An \defn{$\s$-tree} (resp.~\mbox{\defn{$\s$-trunk}}) is an $\s$-bush of maximal rank~$n$ (resp.~minimal rank~$\min \set{i \in [n]}{s_i \ne 0} \cup \{n\}$).
Note that the $\s$-trees are precisely the rooted plane trees with internal nodes bijectively labeled by~$[n]$ such that the node~$j$ has~$s_j+1$ children that are either leaves or nodes larger than~$j$.

\item An $\s$-bush where~$X \subseteq [n]$ is the set of indegree~$2$ nodes has~$S_n - \#\set{x \in X}{s_x \ne 0}$~leaves.
There are~$\prod_{j \in [n]} \big( S_{j-1} - \#\set{x \in X}{x < j \text{ and } s_x \ne 0} - \one_{j \in X} \big)$ such~$\s$-bushes.
Hence,
\begin{itemize}
\item an $\s$-tree has~$S_n$ leaves, and there are~$\prod_{j \in [n]} S_{j-1}$ $\s$-trees,
\item an $\s$-trunk has~$T_n$ leaves, and there are~$\prod_{j \in [n]} \max(1, T_{j-1}-1)$ $\s$-trunks.
\end{itemize}

\item 
\label{item:parametrization}
In fact, denoting by
\[
\sTrees \eqdef \prod_{j \in [n]} [S_{j-1}]
\qquad\text{and}\qquad
\sTrunks \eqdef \prod_{j \in [n]} [\max(1, T_{j-1}-1)],
\]
we can associate to each~$\b{p} \in \sTrees$ the $\s$-tree~$\tree[S]_\b{p}$ obtained by attaching node~$j$ to leaf~$p_j$, and to each~$\b{q} \in \sTrunks$ the $\s$-trunk~$\tree[T]_\b{q}$ obtained by attaching node~$j$ to leaves~$q_j$ and~$q_j+1$ (or to the only leaf if there is only one).
\end{enumerate}
\end{remark}

\begin{remark}
\label{rem:sTreeVSsDecreasingTrees}
Our $\s$-trees are essentially the $\s$-decreasing trees of~\cite{CeballosPons-sWeakOrderI}.
We have chosen to slightly change their conventions to simplify our presentation, in particular to allow for more natural inductive arguments.
To change conventions, one just needs to reverse~$\s$ and relabel each node~$j$ by~$n-j$.
We will see that our $\s$-bushes provide alternative combinatorial models to the pure intervals of~\cite{CeballosPons-sWeakOrderII}, much more adapted to geometry.
Finally, we note that our insertion algorithm, while never made explicit in~\cite{CeballosPons-sWeakOrderI,CeballosPons-sWeakOrderII}, is somewhat underlying in their work~\cite{Pons-personnalCommunication}.
\end{remark}

%%%%%%%%%%%%%%%%%

\subsection{$\s$-insertion algorithm}
\label{subsec:insertion}

We now describe an iterative insertion algorithm that sends a point~$\b{x}$ in~$\R^n$ to an $\s$-bush~$\ins(\s, \b{x})$.
See \cref{fig:algo}.
To run the algorithm, we actually maintain, in each gap between two consecutive leaves of our bush, a label of the form~$(u,\rho)$, with~$u \in [n]$ and~$\rho \in \N$.
We also include the label~$(0,0)$ before the first leaf and the label~$(n+1,0)$ after the last leaf.

\begin{definition}
\label{def:insertion}
For each~$\b{x} \in \R^n$, we construct an $\s$-bush~$\ins(\s, \b{x})$ inductively as follows:
\begin{itemize}
\item start with the rooted graph with just a root, no internal node, and a single leaf,
\item at step~$j$,
	\begin{itemize}
	\item attach a new node~$j$ either to the leaf between two gap labels~$(u,\rho)$ and~$(v,\sigma)$ such that~$x_u-\rho < x_j < x_v-\sigma$, or to the two leaves around a gap label~$(w,\tau)$ such that~$x_j = x_w-\tau$, (with the convention that~$x_0 = -\infty$ and~$x_{n+1} = +\infty$),
	\item attach $s_j + 1$ leaves to the node~$j$, with gaps labeled by~$(j, s_j-1), \dots, (j,1), (j,0)$ (except, if~$s_j = 0$ and~$j$ has indegree~$2$, then we attach~$2$ leaves with gap label~$(j,0)$),
	\item add~$\max(0, s_j-1)$ to the second entry of all gap labels on the left~of~$j$.
	\end{itemize}
\end{itemize}
\end{definition}

\begin{figure}
	\capstart
	\centerline{\includegraphics[scale=.9]{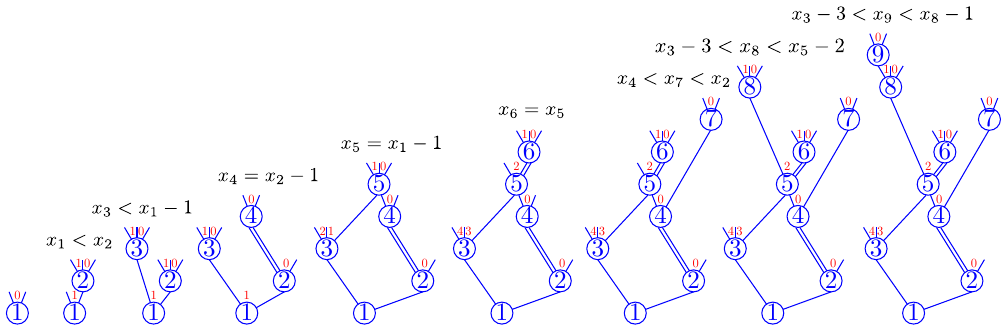}}
	\caption{Flow of the $\s$-insertion algorithm to compute~$\ins(\s, \b{x})$ for $\s = (1, 2, 2, 0, 2, 2, 1, 2, 1)$ and~$\b{x} = (5, 6, 3, 5, 4, 4, 5.5, 1.5, .25)$. We indicate each gap label~$(u,\rho)$ by writing~$\rho$ in red above the node~$u$. The equalities or inequalities that led to this $\s$-tree are indicated at each step of the algorithm.}
	\label{fig:algo}
\end{figure}

This algorithm is illustrated in \cref{fig:algo}.

\begin{remark}
A few observations on \cref{def:insertion}:
\begin{enumerate}
\item The crucial invariant of the algorithm is that the values~$x_u-\rho$ for the gap labels~$(u, \rho)$ are always strictly increasing from left to right. This invariant is maintained by the shifts performed on gap labels at each step. It implies that the attaching step is well-defined.

\item The gap labels only depend on the $\s$-bush~$\ins(\s, \b{x})$, not on the exact values of~$\b{x}$.

\item For~$j \in \llbracket n]$, deleting all nodes~$>j$ in~$\ins(\s, \b{x})$ gives~$\ins(\s_{\le j}, \b{x}_{\le j})$.

\item If~$\b{x}$ is generic enough (\eg if~$x_i-x_j \notin \N$ for~$1 \le i < j \le n$), then~$\ins(\s, \b{x})$ is an $\s$-tree.

\item When~$\s = 1^n$ and~$\b{x}$ is generic, this is the classical insertion in an increasing binary tree.
\end{enumerate}
\end{remark}

%%%%%%%%%%%%%%%%%

\subsection{$\s$-insertion fibers}
\label{subsec:sInsertionFibers}

We now consider the fibers of the $\s$-insertion algorithm of \cref{subsec:insertion}.

\begin{definition}
For an $\s$-bush~$\bush$, the \defn{fiber} of~$\bush$ is~$\fiber{\bush} \eqdef \set{\b{x} \in \R^n}{\ins(\s, \b{x}) = \bush}$, and the \defn{closed fiber} of~$\bush$ is the closure~$\closedFiber{\bush}$ of~$\fiber{\bush}$.
\end{definition}

For instance, the fiber~$\fiber{\bush}$ of the $\s$-bush on the right of \cref{fig:algo} is the set of points~$\b{x} \in \R^9$ that satisfy the indicated equalities and inequalities, and the closed fiber~$\closedFiber{\bush}$ is obtained by replacing the strict inequalities by large inequalities.
We thus get the following observation.

\begin{lemma}
\label{lem:fibersPolyhedra}
The closed fiber~$\closedFiber{\bush}$ of any $\s$-bush~$\bush$ is a polyhedron.
\end{lemma}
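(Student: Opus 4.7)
The plan is to unroll the $\s$-insertion algorithm and observe that each of its $n$ steps imposes linear conditions on~$\b{x}$. Indeed, at step~$j$, attaching node~$j$ to the leaf between two gap labels~$(u,\rho)$ and~$(v,\sigma)$ records the two strict inequalities $x_u - \rho < x_j < x_v - \sigma$, while attaching~$j$ to the two leaves around a gap label~$(w,\tau)$ records the single equality $x_j = x_w - \tau$ (with the conventions $x_0 = -\infty$ and $x_{n+1} = +\infty$, which contribute no actual condition). All of these are affine-linear in the coordinates of~$\b{x}$, with integer coefficients, so $\fiber{\bush}$ is the intersection of finitely many open halfspaces and hyperplanes.

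Next, I would show that $\closedFiber{\bush}$ is obtained by replacing each strict inequality above by its weak counterpart, which is by definition a polyhedron. For this, it suffices to check that $\fiber{\bush}$ is relatively open and non-empty inside the affine subspace~$L$ cut out by the recorded equalities; the closure then simply relaxes the strict inequalities to weak ones. Non-emptiness is immediate: running the algorithm in reverse, one can greedily build an~$\b{x} \in \R^n$ realizing~$\bush$ by choosing, at each step~$j$, any~$x_j$ strictly between~$x_u - \rho$ and~$x_v - \sigma$ (respectively equal to~$x_w - \tau$) according to whether the insertion at~$j$ produces an indegree-one or indegree-two node.

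The only point requiring a bit of care is the bookkeeping to guarantee that the recorded list of conditions indeed characterizes the entire fiber, \emph{i.e.}\ that no further conditions are hidden in the algorithm's later steps beyond the ones I list. This follows from the invariant noted in the remark preceding the lemma: the values $x_u - \rho$ indexing the gap labels are strictly increasing from left to right at every step, so the choice made by the algorithm at step~$j$ is completely determined by the finite set of comparisons between~$x_j$ and the current gap values, each of which is itself an affine function of~$\b{x}_{\le j-1}$. An induction on~$j$ therefore shows that the set of~$\b{x}$ producing the given prefix~$\bush_{\le j}$ is cut out by the conditions from steps~$1, \dots, j$, and for $j = n$ this is exactly~$\fiber{\bush}$. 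The main (mild) obstacle is precisely tracking this induction and the integer shifts on the second coordinates of the gap labels; none of it is deep, but it must be set up cleanly to conclude that~$\closedFiber{\bush}$ is a polyhedron.
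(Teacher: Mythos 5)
Your proposal is correct and takes essentially the same approach as the paper: the fiber is cut out by the affine (in)equalities recorded at each insertion step (which are determined by $\bush$ alone, since the gap labels depend only on $\bush$), so it is a relatively open polyhedron whose closure is a polyhedron. The paper states this in two sentences; you have merely spelled out the supporting points (nonemptiness, that the recorded conditions exactly characterize the fiber) that the paper leaves implicit — note, though, that for the lemma itself nonemptiness is not strictly needed, since the closure of any set defined by finitely many affine equalities and strict inequalities is a polyhedron regardless (it is either empty or the weak-inequality relaxation).
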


\begin{proof}
The fiber~$\fiber{\bush}$ is a (relatively open) polyhedron, defined by the inequalities~$x_u-\rho \! < \! x_j \! < \! x_v-\sigma$ and the equalities~$x_j \! = \! x_w-\tau$ from \cref{def:insertion}.
Hence, the closed fiber~$\closedFiber{\bush}$ is a polyhedron.
\end{proof}

We now aim at an irredundant equality and inequality description of the closed fiber~$\closedFiber{\bush}$ of an $\s$-bush~$\bush$.
It is convenient to introduce the following definitions, illustrated in \cref{exm:holesAscentsDescents}.

\begin{definition}
A \defn{hole} of an $\s$-bush~$\bush$ is a pair~$(i,j)$ of nodes such that $j$ has two incoming edges whose greatest common ancestor is~$i$.
\end{definition}

\begin{definition}
The \defn{left} (resp.~\defn{right}) \defn{zigzag} of a node~$j$ of an $\s$-bush~$\bush$ is the increasing path~$Z$ that follows the leftmost (resp.~rightmost) increasing path of~$\bush$ starting at~$j$ until it reaches either a leaf, or a node~$k$ with two distinct parents, in which case it continues with the right (resp.~left) zigzag of~$k$.
The nodes where~$Z$ exits through the leftmost (resp.~rightmost)  edge are the \defn{zig} (resp.~\defn{zag}) nodes~of~$Z$.
\end{definition}

\begin{definition}
\label{def:ascentDescentBush}
An \defn{ascent} (resp.~\defn{descent}) of an $\s$-bush~$\bush$ is a pair~$(i,j)$ of nodes of~$\bush$ such that 
\begin{itemize}
\item $j$ has indegree~$1$,
\item $i$ is the greatest ancestor of~$j$ such that the leftmost (resp.~rightmost) increasing path from~$i$ to~$j$ in~$\bush$ takes the leftmost (resp.~rightmost) outgoing edge at each node, except~at~node~$i$,
\item either~$s_j = 0$ or all nodes along the left (resp.~right) zigzag of~$j$ have indegree~$2$.
\end{itemize}
\end{definition}

\begin{remark}
\label{rem:ascentDescentTree}
For $\s$-trees, our definition of ascents and descents specializes to~\cite[Def.~1.24]{CeballosPons-sWeakOrderI}.
Namely, $(i,j)$ is an ascent (resp.~descent) of an $\s$-tree~$\tree$ if
\begin{itemize}
\item $i$ is an ancestor of~$j$ and the increasing path from~$i$ to~$j$ in~$\tree$ takes the leftmost (resp.~rightmost) outgoing edge at each node, except~at~node~$i$,
\item either~$s_j = 0$ or the leftmost (resp.~rightmost) edge of~$j$ is a leaf.
\end{itemize}
\end{remark}

\begin{definition}
\label{def:rhs}
Consider an $\s$-bush~$\bush$ and~$1 \le i < j \le n$ such that~$i$ is an ancestor of~$j$.
Let~$\pi$ be the leftmost (resp.~rightmost) increasing path in~$\bush$ from~$i$ to~$j$, and arriving through the right (resp.~left) incoming edge of~$j$ if~$(i,j)$ is a hole of~$\bush$.
We define~\defn{$\mu(\bush, i, j)$} (resp.~\defn{$\nu(\bush, i, j)$}) as $r - 1 + \sum_{k \in K} \max(0, s_k-1)$ where~$r$ is the number of children of~$i$ weakly (resp.~strictly) to the right of~$\pi$, and $K$ is the set of nodes~$i < k < j$ and weakly (resp.~strictly) to the right of~$\pi$.
Note that~$\mu(\bush, i, j) \le \nu(\bush, i, j)$, and~$\mu(\bush, i, j) = \nu(\bush, i, j)$ if~$(i,j)$ is a hole of~$\bush$.
\end{definition}

\begin{example}
\label{exm:holesAscentsDescents}
On the rightmost $\s$-bush~$\bush$ of \cref{fig:algo},
\begin{itemize}
\item the holes are $(2,4)$, $(1,5)$, and $(5,6)$,
\item the left zigzag of~$2$ is~$2 \to 4 \to 5 \to 6$ and the right zigzag of~$3$ is~$3 \to 5 \to 8 \to 9$,
\item the ascents are~$(1,2)$, $(4,7)$, $(3,9)$ and the descents are~$(2,7)$, $(5,8)$ and~$(8,9)$,
\item $\mu(\bush, 1, 5) = \nu(\bush, 1, 5) = 1$, 
$\mu(\bush, 3, 9) = 3$, $\nu(\bush, 2, 7) = 0$,
\item the closed fiber~$\closedFiber{\bush}$ is the polyhedron defined by the equalities~$x_2 - x_4 = 1$, $x_1 - x_5 = 1 $, $x_5 - x_6 = 0$ and the inequalities~$x_1 - x_2 \le 0$, $x_4 - x_7 \le 0$, $x_3 - x_9 \le 3$, $x_2 - x_7 \ge 0$, $x_5 - x_8 \ge 2$, and $x_8 - x_9 \ge 1$.
\end{itemize}
\end{example}

\begin{proposition}
\label{prop:fibers}
The closed fiber~$\closedFiber{\bush}$ of an $\s$-bush~$\bush$ is irredundantly described by
\begin{itemize}
\item an equality~$x_i - x_j = \mu(\bush, i, j)$ for each hole~$(i,j)$ of~$\bush$,
\item an inequality $x_i - x_j \le \mu(\bush, i, j)$ for each ascent~$(i,j)$ of~$\bush$,
\item an inequality $x_i - x_j \ge \nu(\bush, i, j)$ for each descent~$(i,j)$ of~$\bush$.
\end{itemize}
\end{proposition}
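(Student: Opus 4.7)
The plan is to proceed by induction on the number $n$ of internal nodes of $\bush$. The base case $n=1$ is immediate: $\closedFiber{\bush} = \R$ and the unique node has no ancestor, so there are no constraints to verify. For the inductive step, set $\bush' \eqdef \bush_{\le n-1}$. By \cref{def:insertion}, $\closedFiber{\bush}$ is obtained from $\closedFiber{\bush'} \times \R$ by intersecting with the constraints introduced at step $n$: either the single equality $x_n = x_w - \tau$ if $n$ has indegree $2$, or the pair of inequalities $x_u - \rho \le x_n \le x_v - \sigma$ if $n$ has indegree $1$ (with vacuous bounds when $u = 0$ or $v = n+1$).

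The heart of the argument matches these insertion constraints with the hole, ascent, and descent constraints involving $n$. If $n$ has indegree $2$, there is a unique hole $(i,n)$ whose $i$ is the greatest common ancestor of $n$'s two parents; the prescribed equality $x_i - x_n = \mu(\bush, i, n)$ follows from $x_n = x_w - \tau$ by telescoping through the hole equalities encountered along the leftmost (equivalently rightmost) path from $i$ to $n$. If $n$ has indegree $1$, the flanking gap labels $(u, \rho)$ and $(v, \sigma)$ correspond in $\bush$ to ancestors of $n$, and the main combinatorial claim is that they are exactly the indices $i$ singled out in \cref{def:ascentDescentBush} for the ascent and descent involving $n$. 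Careful bookkeeping of the insertion shows that the accumulated shift $\rho$ matches $\mu(\bush, u, n) = r - 1 + \sum_{k \in K} \max(0, s_k - 1)$ term by term: the initial offset at step $u$ accounts for $r - 1$ (children of $u$ weakly right of the leftmost path to $n$), and each node $k$ subsequently inserted strictly to the left of the gap contributes $\max(0, s_k - 1)$, matching a term of the sum over $K$. The symmetric identity $\sigma = \nu(\bush, v, n)$ is proved analogously.

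To close the induction, it remains to check that hole, ascent, and descent pairs $(i,j)$ with $j < n$ have unchanged status and unchanged $\mu, \nu$ when passing from $\bush'$ to $\bush$. Ancestry and the quantities $\mu(\bush, i, j), \nu(\bush, i, j)$ for $j < n$ depend only on $\bush_{\le j} = \bush'_{\le j}$, but the zigzag clause in \cref{def:ascentDescentBush} is more delicate: if the left or right zigzag of some $j < n$ traverses $n$, then $(i,j)$ remains an ascent or descent precisely when $n$ has indegree $2$. This is exactly the role of the zigzag condition: it suppresses inequalities that would become redundant once the new constraint at step $n$ is imposed. Irredundancy of the final list then follows by exhibiting, for each constraint, a point of $\closedFiber{\bush}$ at which that single constraint is tight while all others remain strict, obtained by perturbing a relative interior point of the corresponding face.

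The main obstacle is the bookkeeping in the indegree-$1$ case: proving that the ancestors $u, v$ appearing in the terminal gap labels are indeed the ones selected by \cref{def:ascentDescentBush} (rather than some other ancestors), and matching the accumulated shifts $\rho, \sigma$ with the combinatorial formulas for $\mu, \nu$. The zigzag condition is unusual and becomes natural only once one sees that it encodes precisely when an insertion inequality is not subsumed by later ones, so the matching must coordinate the algorithmic shifts with this global combinatorial condition.
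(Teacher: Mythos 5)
Your induction on $n$ reorganizes but does not fundamentally depart from the paper's proof: the paper establishes (by a parallel induction on the insertion step) the observations that each gap label $(i,\sigma)$ identifies the left/right ancestor of the adjacent leaf and that $\sigma$ equals the combinatorial quantity $r-1+\sum_{k\in K}\max(0,s_k-1)$; this is exactly your "careful bookkeeping" step, so the core combinatorics are the same. The genuine gap is your treatment of redundancy. You correctly identify that when $n$ is appended to the left (resp.\ right) zigzag of some $j<n$ with indegree $1$, the pair $(i,j)$ ceases to be an ascent (resp.\ descent), and you assert that the corresponding inequality "would become redundant once the new constraint at step $n$ is imposed." But this is precisely what must be \emph{proved}, and nothing in your plan supplies the argument. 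The paper does it by exhibiting an explicit implication: for a node $h$ on the left zigzag of $j$, one derives $x_u - \mu(\bush,u,j) - \zeta \le x_h \le x_j - \zeta$ with $\zeta = \sum_{k\in K}\max(0,s_k-1)$ over the appropriate nodes right of the zigzag, and when $h$ has indegree $1$ both bounds are among the retained constraints, so $x_u - \mu(\bush,u,j) \le x_j$ follows. In your inductive frame, with $h=n$, you would need to show that the step-$n$ inequalities together with the surviving $\bush'$-inequalities imply the dropped one; without that derivation the induction does not close, since you would otherwise only obtain a polyhedron potentially larger than $\closedFiber{\bush}$ after discarding constraints.

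Two smaller points. The "telescoping through the hole equalities" in the indegree-$2$ case is a misdirection: the gap label $(w,\tau)$ straddled by $n$ already satisfies $w=i$ (the greatest common ancestor) and $\tau=\mu(\bush,i,n)$ directly by the ancestor/shift bookkeeping, so no telescoping occurs — the work is in proving $\tau=\mu(\bush,i,n)$, not in chaining equalities. And the irredundancy step, "perturbing a relative interior point of the corresponding face," is too vague to be checkable: the paper starts from a point of the \emph{open} fiber and perturbs in the direction $\sum_{j\in J}\b{e}_j$ along the zig nodes $J$ of the relevant zigzag, which is a specific construction verifying that exactly one inequality becomes tight; you should either reproduce this or give an equally explicit recipe. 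Finally, the identity $\closedFiber{\bush}=(\closedFiber{\bush'}\times\R)\cap\{\text{step-$n$ closed constraints}\}$ requires a word on why the closure of the intersection of relatively open polyhedra equals the intersection of the closures here; this is implicitly \cref{lem:fibersPolyhedra}, but worth flagging since it is what licenses the inductive step.
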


\begin{proof}
Fix~$j \in [n]$ and consider the $\s_{\le j-1}$-bush~$\bush_{\le j-1}$ with gaps labeled according to the insertion~$\ins(\s_{\le j-1}, \b{x}_{\le j-1})$ of any~$\b{x} \in \fiber{\bush}$.
Observe (by induction on~$j$) that, if~$(i, \sigma)$ is the gap label immediately to the left (resp.~right) of a leaf~$\ell$ of~$\bush_{\le j-1}$, then
\begin{enumerate}[(a)]
\item The leftmost (resp.~rightmost) increasing path from~$i$ to~$\ell$ takes the leftmost (resp.~rightmost) outgoing edge at each node, except~at~$i$. We say that $i$ is the \defn{left} (resp.~\defn{right}) \defn{ancestor} of~$\ell$.
\item $\sigma = r - 1 + \sum_{k \in K} \max(0, s_k-1)$ where $r$ is the number of children of~$i$ and~$K$ the set of nodes~${i < k < j}$, all located weakly (resp.~strictly) to the right of the leftmost (resp.~rightmost) increasing path in~$\bush$ from~$i$ to~$\ell$.
\end{enumerate}

Assume now that $j$ is attached to the two leaves~$\ell, \ell'$ around a gap label~$(w, \tau)$ of~$\bush_{\le j-1}$.
Then $w$ is the greatest common ancestor of~$\ell$ and~$\ell'$ in~$\bush$ (by~(a)), and~$\tau = \mu(\bush, w, j) = \nu(\bush, w, j)$ (by~(b)).
Hence, the corresponding equation~$x_j = x_w - \tau$ of~$\closedFiber{\bush}$ is indeed of the form~$x_i - x_j = \mu(\bush, i, j)$ for a hole~$(i,j)$ of~$\bush$.

Assume now that~$j$ is attached to a leaf~$\ell$ between two gap labels~$(u, \rho)$ and~$(v, \sigma)$~of~$\bush_{\le j-1}$.
Then~$u$ is the left ancestor of~$j$ (by~(a)), and~$\rho = \mu(\bush, u, j)$ (by~(b)).
Moreover, if~$s_j \ne 0$ and~$h$ lies on the left zigzag~$Z$ of~$j$, then~$x_u - \mu(\bush, u, j) - \zeta \le x_h \le x_j - \zeta$, with~$\zeta = \sum_{k \in K} \max(0, s_k-1)$ where~$K$ is the set of nodes~$j \le k < h$ on the right of~$Z$, including the zig nodes of~$Z$, but not the zag nodes of~$Z$.
If the node~$h$ has indegree~$1$, then both inequalities are strict on~$\fiber{\bush}$, so that the inequality~$x_u - \mu(\bush, u, j) \le x_j$ is actually redundant.
The same arguments hold for the inequality~$x_j \le x_v - \nu(\bush, v, j)$.
We conclude that all facet defining inequalities of~$\closedFiber{\bush}$ are of the form~$x_i - x_j \le \mu(\bush, i, j)$ for an ascent~$(i,j)$ of~$\bush$, or $x_i - x_j \ge \nu(\bush, i, j)$ for a descent~$(i,j)$ of~$\bush$.

Finally, we need to prove that our system of equalities and inequalities is irredundant.
The equalities are irredundant because they are echeloned.
For the inequalities, we choose for instance an ascent~$(i,j)$ of~$\bush$, and we prove that there is~$\b{x} \in \closedFiber{\bush}$ for which the inequality~$x_i - x_j \le \mu(\bush, i, j)$ is the only tight inequality in our system.
For this, start with~$\b{y}$ in the interior of~$\fiber{\bush}$ (so that all our inequalities are strict), and let~$\delta = y_i - y_j - \mu(\bush, i, j)$.
Let~$Z$ be the left zigzag of~$j$, and $J$ be the set of zig nodes of~$Z$.
Consider the point~$\b{x} \eqdef \b{y} + \delta \sum_{j \in J} \b{e}_j$.
As~$i \notin J$ while~$j \in J$, we have~$x_i - x_j = y_i - y_j - \delta = \mu(\bush, i, j)$.
Finally, we let the reader check that~$\b{x}$ satisfies the equalities given by all holes of~$\bush$ and strictly satisfies all inequalities given by the other ascents and descents~of~$\bush$.
\end{proof}

\begin{corollary}
The dimension of~$\closedFiber{\bush}$ is the rank of~$\bush$.
\end{corollary}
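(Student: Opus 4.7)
The plan is to read the dimension directly off the irredundant description given by \cref{prop:fibers}. Since $\closedFiber{\bush}$ is a polyhedron in $\R^n$, its dimension equals $n$ minus the number of linearly independent equalities in any defining system. The equalities in \cref{prop:fibers} are indexed by the holes of $\bush$, so the whole argument reduces to (a) showing that these equalities are linearly independent, and (b) counting holes.

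For (a), I would observe that the holes of $\bush$ are in bijection with the indegree-$2$ nodes of $\bush$: by definition, a hole $(i,j)$ has a specified second coordinate $j$ of indegree $2$, and conversely every indegree-$2$ node $j$ determines a unique $i$, namely the greatest common ancestor of the two parents of $j$. Consequently, when the equalities $x_i - x_j = \mu(\bush, i, j)$ are listed with $j$ ranging over indegree-$2$ nodes in increasing order, each new equality introduces a fresh variable $x_j$ not appearing as the ``$j$-variable'' in any previous equality. The system is therefore echeloned with distinct pivots, hence its equations are linearly independent.

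For (b), every internal node of an $\s$-bush has indegree either $1$ or $2$ (this is immediate from the construction in \cref{def:bush}, which either attaches $j$ to one leaf or to two leaves). So the number of indegree-$2$ nodes equals $n - r(\bush)$, and by the bijection above there are exactly $n - r(\bush)$ holes.

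Combining (a) and (b), $\dim \closedFiber{\bush} = n - (n - r(\bush)) = r(\bush)$. The only subtlety is making sure the echelon argument is clean, but this is already implicit in the last paragraph of the proof of \cref{prop:fibers} (``The equalities are irredundant because they are echeloned''), so no real obstacle remains.
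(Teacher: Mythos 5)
Your proof is correct and follows essentially the same route as the paper: both read the codimension off the irredundant equality description of \cref{prop:fibers} and equate the number of holes with the number of indegree-$2$ nodes, which is $n - r(\bush)$ since every internal node has indegree $1$ or $2$. You simply spell out two steps the paper takes for granted --- the bijection between holes and indegree-$2$ nodes, and the echelon argument for linear independence of the equalities --- whereas the paper states the codimension count as an immediate consequence of \cref{prop:fibers}.
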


\begin{proof}
By \cref{prop:fibers}, the codimension of~$\closedFiber{\bush}$ is the number of indegree~$2$ nodes of~$\bush$.
Hence the dimension is the number of indegree~$1$ nodes of~$\bush$, thus the rank of~$\bush$.
\end{proof}

\begin{example}
\label{exm:grid}
Recall from \cref{rem:bush}\,\eqref{item:parametrization} that for~$\b{q} \in \sTrunks$, we denote by~$\trunk_\b{q}$ the $\s$-trunk obtained by attaching node~$i$ to leaves~$q_i$ and~$q_i+1$ (or to the only leaf if there is only one).
Its insertion fiber~$\fiber{\trunk_\b{q}}$ is the affine subspace of dimension~$n+T_n-S_n$, defined by the equations~$x_1 - x_j = \mu(\trunk_\b{q}, 1, j) = q_j-1$ for all~$j \in [n]$ such that~$T_{j-1} \ge 2$.
Hence, the insertion fibers of the $\s$-trunks form a grid in the basis~$x_1 - x_j$.
This is visible in \cref{fig:foams}.
\end{example}

%%%%%%%%%%%%%%%%%

\subsection{$\s$-foam}
\label{subsec:sFoam}

We now consider the collection of all $\s$-insertion fibers.

\begin{definition}
The \defn{$\s$-foam} is the collection~$\sFoam$ of closed fibers~$\closedFiber{\bush}$ of all~$\s$-bushes~$\bush$.
\end{definition}

Some $\s$-foams are illustrated in \cref{fig:foams}.
As~$\ins(\s, \b{x} + \lambda \sum_{i \in [n]} \b{e}_i) = \ins(\s, \b{x})$ for any~$\lambda \in \R$, all fibers contain the line~$\R \sum_{i \in [n]} \b{e}_i$ in their lineality space, hence we intersect the $\s$-foams with the hyperplane~$\bigset{\b{x} \in \R^n}{\sum_{i \in [n]} x_i = 0}$ in all illustrations.
We need the following property of the $\s$-foam.

\begin{figure}[t]
	\capstart
	\centerline{\includegraphics[scale=.6]{120-foam} \hspace{.5cm} \includegraphics[scale=.6]{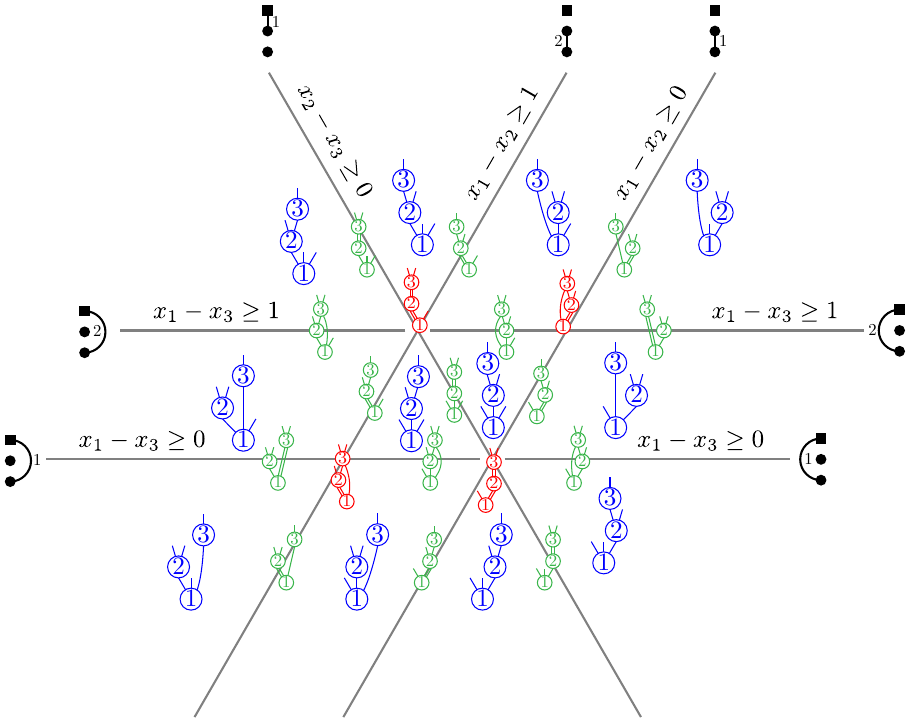}}
	\caption{The $\s$-foam for $\s = (1,2,0)$ (left) and~$\s = (2,1,0)$ (right).}
	\label{fig:foams}
\end{figure}

\begin{proposition}
\label{prop:fiberFaces}
Any face of a closed fiber~$\closedFiber{\bush}$ is a closed fiber~$\closedFiber{\bush'}$.
\end{proposition}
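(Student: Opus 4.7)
The plan is to extract the desired $\s$-bush $\bush'$ from the face $F$ via the insertion algorithm. Let $F$ be a nonempty face of $\closedFiber{\bush}$, pick a point $\b{x}$ in the relative interior of $F$, and set $\bush' \eqdef \ins(\s, \b{x})$; I claim $F = \closedFiber{\bush'}$. A useful preliminary observation is that, by the irredundant description in \cref{prop:fibers}, the open fiber $\fiber{\bush''}$ is exactly the relative interior of $\closedFiber{\bush''}$ for every $\s$-bush $\bush''$: the defining equalities hold throughout $\closedFiber{\bush''}$, while each defining inequality is tight only on a proper face. In particular $\b{x}$ lies in the relative interior of both $F$ and $\closedFiber{\bush'}$.

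The heart of the proof is a step-by-step comparison of the insertion algorithm applied to $\b{x}$ with the description of the constraints of $\closedFiber{\bush}$ that are tight at $\b{x}$. Since $\b{x}$ is relatively interior to $F$, these tight constraints are the hole equalities of $\bush$ together with exactly those ascent/descent inequalities of $\bush$ that become equalities on $F$. At insertion step $j$, each tight equality of the form $x_i - x_j = \mu(\bush, i, j)$, whether coming from a hole of $\bush$ or from a tightened ascent/descent of $\bush$, forces node $j$ to be attached to the two leaves around the gap labeled $(i, \mu(\bush, i, j))$ during the insertion on $\b{x}$, which makes $(i,j)$ a hole of $\bush'$; conversely, nodes with no tight constraint at step $j$ end up attached to a single leaf in $\bush'$, exactly as they are attached in $\bush$. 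This yields a bijection between the defining equalities of $F$ and the holes of $\bush'$, and hence an equality of affine hulls $\text{aff}(F) = \text{aff}(\closedFiber{\bush'})$. A parallel comparison of the ascents and descents of $\bush'$ with those of $\bush$ then shows that each inequality defining $\closedFiber{\bush'}$ is implied, modulo these hole equalities, by an inequality defining $\closedFiber{\bush}$ that is not tightened on $F$. Combined, these two facts give $\closedFiber{\bush'} \subseteq F$, and the reverse inclusion follows from the equality of dimensions together with $\b{x} \in \text{relint}(F) \cap \text{relint}(\closedFiber{\bush'})$.

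The main obstacle is the combinatorial bookkeeping indicated above: one must verify that the values of $\mu$ and $\nu$ computed for a given pair of ancestors $(i,j)$ in $\bush$ and in $\bush'$ actually agree whenever the corresponding constraint appears in both descriptions. Concretely, this amounts to checking that replacing certain single-leaf attachments of $\bush$ by two-leaf attachments in $\bush'$ rearranges the gap labels of the insertion algorithm at subsequent steps in exactly the way needed to preserve the ascents and descents of $\bush$ not tightened on $F$, together with their coefficients, as ascents and descents of $\bush'$. This analysis should be carried out by induction on the level $j$, exploiting that deleting all nodes greater than $j$ in both $\bush$ and $\bush'$ produces $\ins(\s_{\le j}, \b{x}_{\le j})$ by the inductive compatibility of the algorithm noted after \cref{def:insertion}.
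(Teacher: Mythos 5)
Your overall strategy matches the paper's: pick a relative interior point $\b{x}$ of the face $F$ and set $\bush' \eqdef \ins(\s, \b{x})$. However, there are two genuine gaps. First, the concluding logical step is incorrect. The claim that each inequality of $\closedFiber{\bush'}$ is \emph{implied by} an inequality of $\closedFiber{\bush}$ not tightened on $F$ gives (after using the equality of affine hulls) the containment $F \subseteq \closedFiber{\bush'}$, not $\closedFiber{\bush'} \subseteq F$ as you write. Then the asserted reverse inclusion ``from equality of dimensions together with $\b{x} \in \text{relint}(F) \cap \text{relint}(\closedFiber{\bush'})$'' is simply not valid: two polyhedra with the same affine span and a common relative interior point, one contained in the other, can still differ (think $[0,1] \subsetneq [0,2]$, both containing $1/2$). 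To get equality you would need either the stronger claim that the non-tightened inequalities of $\closedFiber{\bush}$ \emph{coincide} with the ascent/descent inequalities of $\bush'$ (modulo the equalities), or a completely different argument for the remaining inclusion, e.g.\ showing that for any $\b{y} \in \fiber{\bush'}$ a nearby point in the open halfspace must insert to $\bush$.

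Second, the combinatorial verification you flag as ``the main obstacle'' is precisely the content of the proof and it is not carried out. The assertion that a tight equality $x_i - x_j = \mu(\bush, i, j)$ forces $j$ to attach across the gap labeled $(i, \mu(\bush, i, j))$ in the insertion of $\b{x}$ is not obvious: the gap labels in $\bush'_{\le j-1}$ can differ from those in $\bush_{\le j-1}$ once earlier constraints have already been tightened, since the attachment of earlier nodes changes and the label-shifting rule in \cref{def:insertion} then applies differently. Handling this for an arbitrary face means tracking several gap labels disappearing and merging simultaneously. The paper avoids this by first proving the statement for \emph{facets} (so that at most one gap label disappears at each level, captured by its claims (a), (b), (c) and the ``reconstruction'' argument) and then extending to lower-dimensional faces by induction on codimension, using that every face of $\closedFiber{\bush}$ is a face of a facet. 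Your direct approach to arbitrary faces is not wrong in spirit, but without doing the bookkeeping it is a plan, not a proof; and the paper's induction-on-codimension reduction is what makes that bookkeeping tractable.
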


\begin{proof}
Consider a facet~$\polytope{G}$ of a closed fiber~$\closedFiber{\bush}$.
We will prove that~$\polytope{G} = \closedFiber{\bush'}$ where~$\bush' \eqdef \ins(\s, \b{g})$ for an arbitrary~$\b{g}$ in the relative interior of~$\polytope{G}$.
This shows the statement by induction on the dimension (since any face of~$\closedFiber{\bush}$ is a face of a facet of~$\closedFiber{\bush}$).

\medskip
Let~$\polytope{H}$ be the affine span of~$\closedFiber{\bush}$, let~$\polytope{H}'$ be the affine span of~$\polytope{G}$, and let~$\polytope{H}^+$ be the half-space of~$\polytope{H}$ defined by~$\polytope{H}'$ and containing~$\closedFiber{\bush}$ (this is well-defined since~$\polytope{G}$ is a facet of~$\closedFiber{\bush}$).
Pick a generic~$\b{x}' \in \polytope{H}'$ and then some~$\b{x} \in \polytope{H}^+$ close enough to~$\b{x}'$ so that $x'_i < x'_j - \sigma$ implies~$x_i < x_j - \sigma$ for any~$i,j \in [n]$ and~$0 \le \sigma \le S_n$ (this is possible as we have finitely many open conditions).

For~$j \in [n]$, consider the $\s_{\le j}$-bushes~$\bush_j \eqdef \ins(\s_{\le j}, \b{x}_{\le j})$ and~$\bush'_j \eqdef \ins(\s_{\le j}, \b{x}'_{\le j})$.
Let~$i$ be the minimal index such that~$\bush_i \ne \bush'_i$.
Hence, there is a gap label~$(u,\rho)$ in~$\bush_{i-1} = \bush'_{i-1}$ such that~${x_i \ne x_u - \rho}$ while~$x'_i = x'_u - \rho$.
Assume for instance~$x_i < x_u - \rho$ (the other case is similar).

We claim that we can reconstruct the sequence~$\bush_1, \dots, \bush_n$ from the sequence~$\bush'_1, \dots, \bush'_n$ and \viceversa.
To see it, we prove by induction that, for any~$j > i$,
\begin{enumerate}[(a)]
\item the gap labels in~$\bush_j$ are precisely the gap labels in~$\bush'_j$, except that one gap label of the form~$(u,\rho')$ in~$\bush_j$ may disappear in~$\bush'_j$,
\item if a gap label~$(u, \rho')$ of~$\bush_j$ does not appear in~$\bush'_j$, then the gap label~$(v, \sigma)$ immediately to the left of~$(u, \rho')$ in~$\bush_j$ is such that~$x'_v - \sigma = x'_u - \rho'$,
\item the position of~$j+1$ in~$\bush_{j+1}$ determines the position of~$j+1$ in~$\bush'_{j+1}$ and \viceversa.
\end{enumerate}
Note that~(a) and~(b) hold for~$j = i$.
Indeed, the gap~$(u,\rho)$ of~$\bush_i$ has disappeared in~$\bush'_i$, the gap immediately to its left is~$(i, 0)$, and we have~$x'_i - 0 = x'_u - \rho$.
Assume now that~(a) and~(b) hold for some~$j > i$.
For any gap label~$(w,\tau)$ common to~$\bush_j$ and~$\bush'_j$, we have
\begin{itemize}
\item $x_{j+1} = x_w-\tau \iff x'_{j+1} = x'_w-\tau$. Indeed, let~$\polytope{K}$ be the hyperplane of equation~${y_{j+1} = y_w-\tau}$. If~$x_{j+1} = x_w-\tau$, then~$\polytope{H} \subseteq \polytope{K}$, so that~$x'_{j+1} = x'_w-\tau$. Conversely, if~$x'_{j+1} = x'_w-\tau$, then~$\polytope{H}' \subseteq \polytope{K}$, so that~$\polytope{H} \subseteq \polytope{K}$ (otherwise, $\polytope{H}'$ would have codimension at least~$2$ in~$\polytope{H}$, since~$\b{x}'$ is generic in~$\polytope{H}'$),  hence~$x_{j+1} = x_w-\tau$.
\item $x_{j+1} < x_w-\tau \iff x'_{j+1} < x'_w-\tau$ by our assumption that~$\b{x}$ is close enough~to~$\b{x}'$.
\end{itemize}
Assume now that some gap label~$(u,\rho')$ in~$\bush_j$ disappeared in~$\bush'_j$, and let~$(v, \sigma)$ be the gap label immediately to its left in~$\bush_j$, so that~$x'_v - \sigma = x'_u - \rho'$ by~(b).
Assume that~$x_v - \sigma \le x_{j+1} \le x_u - \rho'$.
As~$\b{x}$ is close enough to~$\b{x}'$, we also have~$x'_v - \sigma \le x'_{j+1} \le x'_u - \rho'$.
As~$x'_v - \sigma = x'_u - \rho'$, we obtain that~$x'_{j+1} = x'_v - \sigma$.
Since~$(v, \sigma)$ is a common gap label in~$\bush_j$ and~$\bush'_j$, this implies that~$x_v - \sigma = x_u - \rho'$.
We thus obtain that~$j+1$ cannot be attached only to the leaf between the gap labels~$(v, \sigma)$ and~$(u, \rho')$, nor to the two leaves around the gap label~$(u,\rho')$.
We conclude that the position of~$j+1$ in~$\bush_{j+1}$ determines the position of~$j+1$ in~$\bush'_{j+1}$ and \viceversa, so that~(c) holds for~$j$.
This in turn implies that~(a) and~(b) hold for~$j+1$ since we shift a gap label common to~$\bush_j$ and~$\bush'_j$ by the same quantity to obtain the corresponding gap label common to~$\bush_{j+1}$ and~$\bush'_{j+1}$.

\medskip
We now consider the facet~$\polytope{G}$ of~$\closedFiber{\bush}$.
We pick an arbitrary~$\b{g}$ in the relative interior of~$\polytope{G}$, and choose~$\b{f}$ close enough to~$\b{g}$ such that~$\b{f} \in \fiber{\bush}$.
We prove that~$\polytope{G} = \closedFiber{\bush'}$ where~$\bush' \eqdef \ins(\s, \b{g})$.

Observe first that, for any~$\b{x}'$ in the interior of~$\polytope{G}$, we can choose~$\b{x}$ close enough to~$\b{x}'$ such that~$\b{x} \in \fiber{\bush}$.
As we proved that~$\ins(\s, \b{x}')$ is determined by~$\ins(\s, \b{x}) = \bush = \ins(\s, \b{f})$, we obtain that~$\ins(\s, \b{x}') = \ins(\s, \b{g}) = \bush'$.
Hence, the interior of~$\polytope{G}$ is contained in~$\fiber{\bush'}$.

Conversely, consider any~$\b{x}'$ in~$\fiber{\bush'}$, and let~$\b{x}$ in~$\polytope{H}^+$ be close enough to~$\b{x}'$.
Since~$\ins(\s, \b{x})$ is determined by~$\ins(\s, \b{x}') = \bush' = \ins(\s, \b{g})$, we obtain that~$\ins(\s, \b{x}) = \ins(\s, \b{f}) = \bush$.
Hence, $\b{x}'$ lies on the boundary of~$\closedFiber{\bush}$.
We conclude that the interior of~$\fiber{\bush'}$ is contained in~$\polytope{G}$.
\end{proof}

\begin{proposition}
\label{prop:sFoam}
The $\s$-foam~$\sFoam$ is a \defn{complete polyhedral complex} (\ie a set of polyhedra closed under faces, pairwise intersecting along faces, and whose union completely covers~$\R^n$).
\end{proposition}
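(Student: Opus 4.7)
The plan is to verify separately the three defining properties of a complete polyhedral complex: covering $\R^n$, closure under taking faces, and pairwise intersection along a common face. Covering comes for free from the insertion algorithm: every $\b{x} \in \R^n$ belongs to $\fiber{\ins(\s,\b{x})} \subseteq \closedFiber{\ins(\s,\b{x})}$. Closure under faces is precisely the content of \cref{prop:fiberFaces}, which is already in hand. So only the pairwise-intersection property requires a genuine argument.

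The workhorse lemma I would establish first is the following consequence of iterating \cref{prop:fiberFaces}: for every $\s$-bush~$\bush$ and every point $\b{x} \in \closedFiber{\bush}$, the unique face of $\closedFiber{\bush}$ whose relative interior contains $\b{x}$ is the closed fiber $\closedFiber{\ins(\s,\b{x})}$. Indeed, iterating \cref{prop:fiberFaces} shows that every face of a closed fiber is itself a closed fiber, with relative interior an open fiber $\fiber{\bush'}$; since the open fibers partition $\R^n$ (each $\b{x}$ sits in exactly one, namely $\fiber{\ins(\s,\b{x})}$), the face whose relative interior contains $\b{x}$ must be $\closedFiber{\ins(\s,\b{x})}$.

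With this in place, given $\s$-bushes $\bush_1, \bush_2$ with $\closedFiber{\bush_1} \cap \closedFiber{\bush_2} \ne \emptyset$, the plan is to pick $\b{x}$ in the relative interior of this intersection and set $\bush_0 \eqdef \ins(\s, \b{x})$. Applying the workhorse lemma to both $\closedFiber{\bush_1}$ and $\closedFiber{\bush_2}$ makes $\closedFiber{\bush_0}$ a face of each, so $\closedFiber{\bush_0} \subseteq \closedFiber{\bush_1} \cap \closedFiber{\bush_2}$. To close the argument, I would invoke the defining property of a face: any hypothetical $\b{y} \in (\closedFiber{\bush_1} \cap \closedFiber{\bush_2}) \ssm \closedFiber{\bush_0}$ could be linked through $\b{x}$ to a point $\b{z}$ still in the intersection, with $\b{x}$ strictly interior to the segment $[\b{y}, \b{z}]$ (using that $\b{x}$ sits in the relative interior of the convex intersection). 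But then $\b{x}$ in the relative interior of the face $\closedFiber{\bush_0}$ of $\closedFiber{\bush_1}$, lying on an open segment with endpoints in $\closedFiber{\bush_1}$, would force $\b{y}, \b{z} \in \closedFiber{\bush_0}$, a contradiction.

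The only substantial input is \cref{prop:fiberFaces}; everything else is routine convex geometry. Accordingly, I do not expect any genuine obstacle beyond packaging the face-closure property and the partition of $\R^n$ by open fibers cleanly together.
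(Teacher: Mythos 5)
Your proposal is correct and takes essentially the same route as the paper: covering comes for free, closure under faces is \cref{prop:fiberFaces}, and for the pairwise intersection both you and the paper observe that for any $\b{x}$ in the intersection, $\closedFiber{\ins(\s,\b{x})}$ is a common face of $\closedFiber{\bush_1}$ and $\closedFiber{\bush_2}$ lying inside the intersection, then close with a routine convexity argument. The only cosmetic difference is the packaging of that last step: the paper asserts that a convex union of faces is a face, whereas you pick $\b{x}$ in the relative interior of the intersection and invoke the defining property of a face directly — these are the same argument, and you also helpfully isolate as an explicit lemma the fact (used implicitly in the paper) that the face of $\closedFiber{\bush}$ whose relative interior contains $\b{x}$ is exactly $\closedFiber{\ins(\s,\b{x})}$.
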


\begin{proof}
We already observed that the closed fibers are polyhedra (\cref{lem:fibersPolyhedra}), and that their faces are closed fibers (\cref{prop:fiberFaces}).
The fibers (and thus their closures) cover~$\R^n$ as the insertion algorithm is defined on~$\R^n$.
We thus just need to prove that the intersection~$\polytope{X}$ of two closed fibers~$\closedFiber{\bush}$ and~$\closedFiber{\bush'}$ is a face of both.
For this, consider~$\b{x} \in \polytope{X}$, and let~$\bush'' \eqdef \ins(\s, \b{x})$.
Since~$\b{x} \in \closedFiber{\bush}$, \cref{prop:fiberFaces} implies that $\closedFiber{\bush''}$ is a face of~$\closedFiber{\bush}$ (possibly~$\closedFiber{\bush}$ itself).
We thus obtain that~$\closedFiber{\bush''}$ is contained in~$\polytope{X}$.
Hence, $\polytope{X}$ is a union of faces of~$\closedFiber{\bush}$.
As it is convex, it is actually a face of~$\closedFiber{\bush}$ (possibly~$\closedFiber{\bush}$ itself).
By symmetry, it is also a face of~$\closedFiber{\bush'}$ (possibly~$\closedFiber{\bush'}$ itself).
\end{proof}

%%%%%%%%%%%%%%%%%

\subsection{$\s$-rotations}
\label{subsec:sRotations}

We now describe the dual graph of the $\s$-foam~$\sFoam$.
For this, we describe two operations, which are obviously inverse to each other, as illustrated in \cref{fig:stitchingIncisionRotations}\,(left).

\begin{figure}[t]
	\capstart
	\centerline{\includegraphics[scale=1]{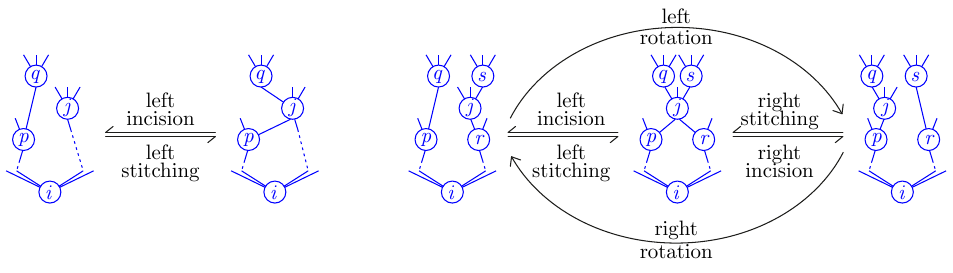}}
	\caption{The left stitching and incision of \cref{def:stitching,def:incision} (left) and the left and right rotations of \cref{def:rotation}.}
	\label{fig:stitchingIncisionRotations}
\end{figure}

\begin{definition}
\label{def:stitching}
Let~$(i,j)$ be an ascent (resp.~descent) in an $\s$-tree~$\tree$.
Let~$p \to q$ be the last edge whose source is smaller than~$j$ along the rightmost (resp.~leftmost) increasing path leaving~$i$ through the edge immediately to the left (resp.~right) of the path from~$i$ to~$j$ (note that~$q$ might be a leaf).
The \defn{stitching} of~$(i,j)$ in~$\tree$ is the $\s$-bush~$\bush$ obtained from~$\tree$ by replacing the edge~$p \to q$ by~$p \to j$ and the leftmost (resp.~rightmost) outgoing edge of~$j$ (which is a leaf) by~$j \to q$.
\end{definition}

\begin{definition}
\label{def:incision}
Consider an $\s$-bush~$\bush$ with a single hole~$(i,j)$.
Let~$p \to j$ and~$j \to q$ denote the leftmost (resp.~rightmost) incoming and outgoing edges at~$j$ (note that~$q$ can be a leaf).
The \defn{left} (resp.~\defn{right}) \defn{incision} of~$\bush$ is the $\s$-tree~$\tree$ obtained from~$\bush$ by replacing the edge~$p \to j$ by~$p \to q$ and the edge~$j \to q$ by a leaf at~$j$.
\end{definition}

\begin{proposition}
\label{prop:stitchingIncision}
If~$(i,j)$ is an ascent (resp.~descent) in an $\s$-tree~$\tree$, and~$\bush$ is the stitching of~$(i,j)$ in~$\tree$, then~$\closedFiber{\bush}$ is the facet of~$\closedFiber{\tree}$ corresponding to~$(i,j)$.
If an $\s$-bush~$\bush$ has a single hole, and~$\tree$ and~$\tree'$ are the left and right incisions of~$\bush$, then~$\closedFiber{\tree}$ and~$\closedFiber{\tree'}$ are the two facets of~$\sFoam$ containing~$\closedFiber{\bush}$.
\end{proposition}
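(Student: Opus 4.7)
The plan is to verify that stitching and incision are inverse operations on the set of pairs (ascent/descent of an $\s$-tree) $\leftrightarrow$ ($\s$-bush with exactly one hole), and that they correctly identify facets by matching the irredundant descriptions of closed fibers provided by \cref{prop:fibers}. I would organize the argument as two lemmas, plus an invocation of \cref{prop:sFoam}.

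First, I would address the first sentence, say for an ascent $(i,j)$ of an $\s$-tree $\tree$ (the descent case is symmetric). I would verify that the stitching $\bush$ is a well-formed $\s$-bush: the new edge $p\to j$ gives $j$ indegree $2$, while no other node changes indegree, and the total number of outgoing edges at $j$ is preserved since the leftmost outgoing leaf of $j$ is replaced by $j \to q$; this replacement is legitimate because the ascent condition ensures (by \cref{rem:ascentDescentTree}) that this leftmost outgoing edge was indeed a leaf, or that $s_j = 0$ in which case the $s_j+1$ leaves rule adapts as in \cref{def:bush}. I would then check that $\bush$ has exactly one hole, namely $(i,j)$: by construction the two incoming edges at $j$ are the original rightmost one and the new one from $p$, and the node $i$ is the greatest common ancestor of these two edges (this is what choosing the rightmost increasing path leaving $i$ through the edge immediately to the left of the $i$-to-$j$ path guarantees).

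Next, I would compare facet descriptions. Since $\tree$ is an $\s$-tree, \cref{prop:fibers} gives that $\closedFiber{\tree}$ is full-dimensional with facets indexed by its ascents and descents, and the facet associated to $(i,j)$ is obtained by saturating $x_i - x_j = \mu(\tree,i,j)$. On the other hand $\closedFiber{\bush}$ is defined by the single equation $x_i - x_j = \mu(\bush, i, j)$ together with ascent/descent inequalities for $\bush$. The crucial computation is to show that $\mu(\bush, i, j) = \mu(\tree, i, j)$, by checking that the leftmost increasing path from $i$ to $j$ arriving at the right incoming edge of $j$ in $\bush$ passes through the same nodes as the original path from $i$ to $j$ in $\tree$, and counts the same children of $i$. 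Then I would check that the ascents and descents of $\bush$ distinct from the hole $(i,j)$ are in bijection with those of $\tree$ other than $(i,j)$, with identical values of $\mu$ and $\nu$: the stitching only modifies the local structure around the edge $p \to q$ and the leftmost edge out of $j$, so the zigzag paths at all other indegree-$1$ nodes are unchanged in the relevant sense. Once this matching is established, the two irredundant systems coincide.

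I expect this bijection of ascents/descents with preserved $\mu, \nu$ values to be the main technical obstacle: one must carefully bookkeep the nodes that lie on the relevant zigzags and how the insertion of the edge $p \to j$ and the relocation of $j$'s leftmost leaf to $q$ affects the quantities $r$ and $\sum_k \max(0, s_k - 1)$ in \cref{def:rhs}. I would handle this by a careful case analysis on the position of a node relative to the modified edges, observing that each node's leftmost/rightmost increasing path to any common ancestor is unchanged outside of the local patch, and that within the patch the new edges compensate exactly.

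For the second sentence, I would verify that the left and right incisions $\tree, \tree'$ of a single-hole $\s$-bush $\bush$ are both $\s$-trees (the incision removes one of the two incoming edges of $j$ and redirects $j$'s leftmost, resp. rightmost, outgoing edge, giving $j$ indegree $1$ and preserving indegrees elsewhere), and that stitching $(i,j)$ in $\tree$ (respectively in $\tree'$) recovers $\bush$. Combined with the first part, this shows that $\closedFiber{\bush}$ is a facet of each of $\closedFiber{\tree}$ and $\closedFiber{\tree'}$. Finally, since $\sFoam$ is a complete polyhedral complex by \cref{prop:sFoam}, a codimension-$1$ cell is contained in at most two maximal cells; having exhibited two distinct such cells ($\tree \ne \tree'$ because the left and right incisions differ at $j$), the list is complete.
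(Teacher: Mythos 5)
The second half of your argument (about incisions and completeness of $\sFoam$) matches the paper exactly. The problem is in the first half, at the step you flagged as ``the main technical obstacle'': the claimed bijection between ascents/descents of $\bush$ (other than the hole) and ascents/descents of $\tree$ (other than $(i,j)$), with \emph{identical} values of $\mu,\nu$, is false.

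Concretely, take~$\s = (1,1,1)$ and $\tree$ the $\s$-tree where $2$ is the left child of $1$ and $3$ is the right child of $2$, so $\closedFiber{\tree} = \set{\b{x}}{x_2 \le x_3 \le x_1}$. Its only ascent is~$(2,3)$ and its only descent is~$(1,3)$, with $\mu(\tree,2,3) = \nu(\tree,1,3) = 0$. Stitch the ascent $(2,3)$: in~$\bush$, node~$3$ has indegree~$2$ (both edges from~$2$), and the hole is~$(2,3)$. Now the right zigzag of~$2$ in~$\bush$ passes through the indegree-$2$ node~$3$, so \cref{def:ascentDescentBush}(iii) is satisfied and $(1,2)$ becomes a descent of~$\bush$, with $\nu(\bush,1,2)=0$; meanwhile $(1,3)$ is no longer a descent since~$3$ has indegree~$2$. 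So the unique non-hole descent of~$\bush$ is~$(1,2)$, not~$(1,3)$. The two irredundant descriptions $\{x_2 - x_3 = 0,\ x_1 - x_3 \ge 0\}$ and $\{x_2 - x_3 = 0,\ x_1 - x_2 \ge 0\}$ define the same set, but they involve different pairs. In general the endpoints of an ascent/descent can slide along the zigzag through the newly stitched node, and the constants $\mu,\nu$ shift accordingly; they are only \emph{equivalent modulo} the hole equation~$x_i - x_j = \mu(\bush,i,j)$, not identical. So carrying out the ``careful bookkeeping'' as you describe it would not close the gap --- it would refute the claim.

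The paper sidesteps this entirely by comparing the \emph{raw} gap-label constraints from the insertion algorithm rather than the irredundant ascent/descent system of \cref{prop:fibers}. It shows by an easy induction (mirroring the induction in the proof of \cref{prop:fiberFaces}) that for every $k \ge j$, the gap labels of $\tree_{\le k}$ and $\bush_{\le k}$ coincide, except that one gap label $(i,\rho)$ of~$\tree$ disappears in~$\bush$. Since the gap labels directly encode the insertion inequalities and the missing gap's constraint degenerates to an equality with its neighbour on the facet hyperplane $x_i - x_j = \mu$, the closed fiber $\closedFiber{\bush}$ equals the facet of $\closedFiber{\tree}$ without any analysis of how ascents and descents migrate. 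If you want to salvage your approach, you would need to replace ``bijection with identical $\mu,\nu$'' by ``an equivalence of inequality systems modulo the hole equation,'' which amounts to re-deriving the gap-label argument in disguise.
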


\begin{proof}
If~$\bush$ is the stitching of an ascent (resp.~descent) $(i,j)$ in an $\s$-tree~$\tree$, then~$(i,j)$ is the only hole of~$\bush$, so that~$\closedFiber{\bush}$ satisfies the single equation~$x_i - x_j = \mu(\bush, i, j) = \nu(\bush, i, j)$, and we have~$\mu(\bush, i, j) = \mu(\tree, i, j)$ (resp.~$\nu(\bush, i, j) = \nu(\tree, i, j)$).
Moreover, as in the proof of \cref{prop:fiberFaces}, an immediate induction shows that, for any~$k \ge j$, the gap labels in~$\tree_{\le k}$ and~$\bush_{\le k}$ are identical, except that a gap label~$(i, \rho)$ of~$\tree$ disappears in~$\bush$.
Hence, we obtain that~$\closedFiber{\bush}$ satisfies the same inequalities as~$\closedFiber{\tree}$.
We conclude that~$\closedFiber{\bush}$ is the facet of~$\closedFiber{\tree}$ defined by~$x_i - x_j = \mu(\bush, i, j)$.

Conversely, if an $\s$-bush~$\bush$ has a single hole, then~$\closedFiber{\bush}$ is a codimension~$1$ polyhedron of the \linebreak $\s$-foam~$\sFoam$.
As~$\sFoam$ is a complete polyhedral complex, $\closedFiber{\bush}$ is contained in precisely two full-dimensional polyhedra of~$\sFoam$.
Since the stitching and incision operations are clearly inverse to each other, the first part of the statement implies that~$\closedFiber{\tree}$ and~$\closedFiber{\tree'}$ are the two facets of~$\sFoam$ containing~$\closedFiber{\bush}$.
\end{proof}

We now connect incisions and stitchings with the following operations, adapted from the description of~\cite{CeballosPons-sWeakOrderI}, and illustrated in \cref{fig:stitchingIncisionRotations}\,(right).

\begin{definition}[{\cite[Def.~1.30]{CeballosPons-sWeakOrderI}}]
\label{def:rotation}
Consider an ascent (resp.~descent)~$(i,j)$ in an $\s$-tree~$\tree$.
Let~$r$ be the parent of~$j$ (note that~$r$ might be~$i$).
Let~$j \to s$ be the rightmost (resp.~leftmost) outgoing edge of~$j$ (note that~$s$ might be a leaf).
Let~$p \to q$ be last edge whose source is smaller than~$j$ along the rightmost (resp.~leftmost) increasing path leaving~$i$ through the edge immediately to the left (resp.~right) of the path from~$i$ to~$j$ (note that~$q$ might be a leaf).
The \defn{left} (resp.~\defn{right}) \defn{rotation} of~$(i,j)$ transforms~$\tree$ by replacing the edges~$r \to j$, $j \to s$ and $p \to q$ by new edges~$r \to s$, $j \to q$ and~$p \to j$ respectively.
\end{definition}

\begin{lemma}
A left (resp.~right) rotation is the composition of an ascent (resp.~descent) stitching of \cref{def:stitching} with a right (resp.~left) incision of \cref{def:incision}.
\end{lemma}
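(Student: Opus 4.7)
The plan is to track the operations edge by edge and verify that the composition of a stitching and an incision produces exactly the edge swap prescribed by the rotation. Fix an ascent $(i,j)$ in an $\s$-tree $\tree$ and adopt the notation of \cref{def:rotation}: $r$ is the parent of $j$, $j \to s$ is the rightmost outgoing edge of $j$, and $p \to q$ is the distinguished edge along the rightmost increasing path leaving $i$ through the edge immediately to the left of the $i$-to-$j$ path. The left rotation replaces the three edges $\{r \to j, \, j \to s, \, p \to q\}$ by $\{r \to s, \, j \to q, \, p \to j\}$.

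First I would apply the ascent stitching of $(i,j)$ to $\tree$. By \cref{def:stitching} this replaces $p \to q$ by $p \to j$ and the leftmost outgoing leaf of $j$ by $j \to q$; note that this leftmost outgoing edge is indeed a leaf when $(i,j)$ is an ascent by \cref{rem:ascentDescentTree}. In the resulting $\s$-bush $\bush$, node $j$ now carries two incoming edges, namely its original parent edge $r \to j$ and the new edge $p \to j$. Hence $(i,j)$ is a hole of $\bush$, and a short direct inspection (no other node acquires a second parent) shows it is the only hole. The key planar remark is that $p$ sits on the branch leaving $i$ to the left of the $i$-to-$j$ path, so $p \to j$ lies to the left of $r \to j$ in $\bush$: thus $p \to j$ is the leftmost incoming edge at $j$ and $r \to j$ is the rightmost.

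Next I would apply the right incision to $\bush$. By the right version of \cref{def:incision}, this operates on the rightmost incoming edge $r \to j$ and the rightmost outgoing edge $j \to s$ of $j$; the latter is unchanged by the stitching, which only altered the leftmost outgoing leaf. The incision then replaces $r \to j$ by $r \to s$ and $j \to s$ by a leaf at $j$, producing an $\s$-tree. Assembling both steps, the net change applied to $\tree$ is exactly the three edge replacements defining the left rotation; the only additional bookkeeping is that the ``non-leaf'' outgoing edge of $j$ has migrated from the rightmost position to the leftmost one, which is precisely the planar picture of a rotation recorded in \cref{fig:stitchingIncisionRotations}.

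The descent case paired with the left incision is symmetric and follows by swapping ``left'' and ``right'' throughout. The one step deserving real attention, and the only potential obstacle, is the identification of which new incoming edge at $j$ in $\bush$ is leftmost versus rightmost: this is what forces the pairing of ascent stitching with right incision (and of descent stitching with left incision) asserted in the lemma, and it is entirely determined by the side on which the stitching grafts the new edge $p \to j$ relative to the existing edge $r \to j$.
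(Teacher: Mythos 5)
Your edge-tracking argument is correct, and it is essentially the proof: the paper states this lemma without a proof, regarding it as a direct consequence of \cref{def:stitching}, \cref{def:incision}, \cref{def:rotation} and \cref{fig:stitchingIncisionRotations}, so there is no alternative route in the paper to compare against. The step you rightly flag as the crux — identifying $p\to j$ as the leftmost incoming edge at $j$ in the stitched bush~$\bush$ and $r\to j$ as the rightmost, because $p$ lives in the subtree of $i$ immediately to the left of the $i$-to-$j$ branch — is exactly what pins down the pairing of ascent stitching with right incision (and descent stitching with left incision), and your justification of it is correct.

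One small misstatement in your closing paragraph: you describe the planar bookkeeping as the ``non-leaf'' outgoing edge of $j$ migrating from the rightmost to the leftmost slot. That is not quite what happens, since $s$ and $q$ are different vertices and no single edge moves. Rather, the leftmost outgoing slot of $j$ (a leaf in $\tree$ by the ascent condition of \cref{rem:ascentDescentTree}) is filled by $j\to q$ during the stitching, while the rightmost outgoing slot (holding $j\to s$) becomes a fresh leaf during the right incision, with $s$ reattached to $r$; the intermediate outgoing edges of $j$ are untouched. Said correctly, the non-leaf block of $j$'s children shifts one slot to the left, which is what makes $(i,j)$ a descent of the resulting $\s$-tree. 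Also note that your appeal to \cref{rem:ascentDescentTree} to say the leftmost edge of $j$ is a leaf only applies when $s_j\neq 0$; the case $s_j=0$ (where $j$ has a single child that need not be a leaf, and the stitched node $j$ grows an extra outgoing edge per \cref{def:bush}) is glossed over, but this imprecision is already present in the paper's \cref{def:stitching} itself, so it is not a gap attributable to your argument.
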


\begin{corollary}
\label{coro:dualGraph}
The dual graph of the $\s$-foam is the rotation graph on $\s$-trees.
Its incidence graph is the incision (or equivalently stitching) graph.
\end{corollary}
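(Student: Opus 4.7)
The plan is to read off both statements directly from the results already established in \cref{subsec:sFoam,subsec:sRotations}, assembling them into a single synthesis.

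First, I would identify the maximal cells and the codimension-$1$ cells of the $\s$-foam combinatorially. By \cref{prop:sFoam}, the $\s$-foam is a complete polyhedral complex. By the dimension corollary after \cref{prop:fibers}, the dimension of~$\closedFiber{\bush}$ equals the rank of~$\bush$. Hence the maximal (full-dimensional) cells of~$\sFoam$ are precisely the closed fibers~$\closedFiber{\tree}$ of the $\s$-trees~$\tree$, while the codimension-$1$ cells are precisely the closed fibers~$\closedFiber{\bush}$ of the $\s$-bushes~$\bush$ with exactly one indegree-$2$ node, \ie with exactly one hole. Thus vertices of the dual graph of~$\sFoam$ are in bijection with $\s$-trees, and vertices of the incidence graph on the codimension-$1$ skeleton split as $\s$-trees (full-dimensional cells) and single-hole $\s$-bushes (codim-$1$ cells).

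Next, I would describe the edges. For the incidence graph: given a single-hole $\s$-bush~$\bush$, the second part of \cref{prop:stitchingIncision} says that the two facets of the $\s$-foam containing~$\closedFiber{\bush}$ are~$\closedFiber{\tree}$ and~$\closedFiber{\tree'}$, where~$\tree$ and~$\tree'$ are the left and right incisions of~$\bush$; conversely, the first part of \cref{prop:stitchingIncision} says that every facet of~$\closedFiber{\tree}$ is obtained as the closed fiber of the stitching of an ascent or descent of~$\tree$. Since incision and stitching are inverse operations (as stated just before \cref{def:incision}), the edges of the incidence graph are exactly the incision/stitching edges, proving the second assertion.

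Finally, the dual-graph edges between two full-dimensional cells~$\closedFiber{\tree}$ and~$\closedFiber{\tree'}$ correspond to the shared codim-$1$ cells~$\closedFiber{\bush}$. By the lemma immediately preceding this corollary, a left (resp.~right) rotation decomposes as an ascent (resp.~descent) stitching followed by a right (resp.~left) incision, so that the two $\s$-trees associated via \cref{prop:stitchingIncision} to the two incisions of a single-hole $\s$-bush~$\bush$ are related by exactly one rotation, and conversely any rotation arises this way. This identifies the dual graph of~$\sFoam$ with the rotation graph on $\s$-trees, establishing the first assertion.

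None of these steps requires new computation; the only point that deserves care is checking that the two descriptions of the codim-$1$ cells agree (single-hole $\s$-bush versus stitching of an ascent/descent of some $\s$-tree), which is exactly the content of \cref{prop:stitchingIncision} combined with the fact that every codim-$1$ cell of a complete polyhedral complex is a facet of precisely two full-dimensional cells. Thus I do not expect a genuine obstacle, and the corollary follows as a clean repackaging of \cref{prop:sFoam}, the dimension computation, \cref{prop:stitchingIncision}, and the preceding lemma.
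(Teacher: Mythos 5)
Your proof is correct and takes essentially the same route as the paper: the corollary is read off from the dimension-equals-rank corollary (identifying maximal cells with $\s$-trees and codimension-$1$ cells with single-hole $\s$-bushes), \cref{prop:stitchingIncision} (identifying incidence with stitching/incision), and the lemma stating that rotations factor through stitching and incision. The paper leaves these steps implicit; you have simply spelled them out, including the mild bookkeeping point that every codimension-$1$ cell of a complete polyhedral complex lies in exactly two facets.
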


\begin{remark}
\label{rem:coverRelationsRefinementBushes}
More generally, given an $\s$-bush~$\bush$, the combinatorial description of the $\s$-bushes~$\bush'$ such that~$\fiber{\bush'}$ is a face of~$\fiber{\bush}$ is quite technical.
Namely, the $\s$-bushes~$\bush'$ such that~$\fiber{\bush'}$ is a facet of~$\fiber{\bush}$ are obtained by selecting an ascent or a descent~$(i,j)$ of~$\bush$ and performing a technical sewing process along~$(i,j)$.

In contrast, given an $\s$-bush~$\bush'$, the combinatorial description of which are the $\s$-bushes~$\bush$ such that~$\fiber{\bush'}$ is a face of~$\fiber{\bush}$ is easier.
Let~$j$ be an indegree~$2$ node of~$\bush'$, and let~$i \to j$ and~$j \to k$ denote the leftmost (resp.~rightmost) incoming and outgoing edges at~$j$ (note that~$k$ can be leaf).
A \defn{left} (resp.~\defn{right}) \defn{incision} of~$\bush'$ at~$j$ is an $\s$-bush~$\bush$ obtained as follows:
\begin{itemize}
\item replace the edge~$i \to j$ by~$i \to k$,
\item if the two leftmost (resp.~rightmost) outgoing edges of~$j$ have a lowest common descendant~$\ell$, then perform a (left or right) incision at~$\ell$ and replace the edge~$j \to k$ by~$j \to \ell$,
\item otherwise, replace the edge~$j \to k$ by a leaf.
\end{itemize}
Then $\fiber{\bush'}$ is a facet of~$\fiber{\bush}$ if and only if $\bush$ is obtained by some incision at an indegree~$2$ node of~$\bush'$.
Hence, $\fiber{\bush'}$ is a face of~$\fiber{\bush}$ if and only if~$\bush$ is obtained by a sequence of incisions in~$\bush'$.
We skip the proof of this description as we will not need it in the remaining of the paper.
\end{remark}

\begin{remark}
Given an~$\s$-bush~$\bush'$, we denote by~$\trees(\bush')$ the set of $\s$-trees~$\tree$ such that~$\fiber{\bush'}$ is a face of~$\fiber{\tree}$.
In other words, all $\s$-trees obtained by performing left or right incisions at all indegree~$2$ nodes of~$\bush'$.
We distinguish two particular $\s$-trees of~$\trees(\bush')$: the \defn{left tree}~$\leftTree(\bush')$ obtained by performing only left incisions, and the \defn{right tree}~$\rightTree(\bush')$ obtained by performing only right incisions.
In other words, if~$\b{x'} \in \fiber{\bush'}$, then~$\leftTree(\bush') = \ins(\s, \b{x'} + \varepsilon \b{\omega})$ and~$\leftTree(\bush') = \ins(\s, \b{x'} - \varepsilon \b{\omega})$ for a sufficiently small~$\epsilon > 0$ and~$\b{\omega} \eqdef (1, 2, \dots n) - (n, \dots, 2, 1) = \sum_{1 \le i < j \le n} \b{e}_j - \b{e}_i = \sum_{i \in [n]} (2i-n-1) \, \b{e}_i$.
Note that the holes, ascents and descents of~$\bush'$ can be derived from the ascents and descents of~$\leftTree(\bush')$ and~$\rightTree(\bush')$.
Namely, the holes of~$\bush'$ are the ascents of~$\leftTree(\bush')$ that are also descents of~$\rightTree(\bush')$, the ascents of~$\bush'$ are the ascents of~$\rightTree(\bush')$, and the descents of~$\bush'$ are the descents of~$\leftTree(\bush')$.
In fact, $\trees(\bush')$ is actually the interval~$[\leftTree(\bush'), \rightTree(\bush')]$ in the $\s$-weak order (defined in the next section).
These intervals are called pure intervals in~\cite{CeballosPons-sWeakOrderII}.
Such an interval is also determined by the $\s$-tree~$\leftTree(\bush')$ (resp.~$\rightTree(\bush')$) together with a subset of its ascents (resp.~descents).
To sum up, there are bijections between the $\s$-bushes, the faces of the $\s$-foam, the pure intervals of~\cite{CeballosPons-sWeakOrderII}, and the pairs~$(\tree, A)$ where~$A$ is a subset of ascents (resp.~descents) of an $\s$-tree~$\tree$.
\end{remark}

%%%%%%%%%%%%%%%%%%%%%%%%%%%%%%%%%%%%%%

\section{The $\s$-weak order and facial $\s$-weak order}
\label{sec:sWeakOrder}

In this section, we consider the $\s$-weak order~$W_\s$ on $\s$-trees (directly adapted from the original definition of~\cite{CeballosPons-sWeakOrderI}), and we extend it to the facial $\s$-weak order~$FW_\s$ on all $\s$-bushes.
We prove that these two posets are actually congruence uniform lattices by exhibiting a construction by interval doublings (recovering a result of~\cite{CeballosPons-sWeakOrderI} for the $\s$-weak order).

%%%%%%%%%%%%%%%%%

\subsection{Recollections~\ref{sec:sWeakOrder}: The weak order and facial weak order}
\label{subsec:recollectionsWeakOrder}

We first remind basic properties of the weak order on permutations of~$[n]$ and the facial weak order on ordered set partitions of~$[n]$.

\subsubsection{Lattices and interval doublings}

A \defn{lattice}~$(L, \le, \meet, \join)$ is a poset~$(L, \le)$ where any subset~$X$ admits a \defn{meet}~$\bigMeet X$ (greatest lower bound) and a \defn{join}~$\bigJoin X$ (least upper bound).
Particularly interesting lattices are semidistributive and congruence uniform  lattices, whose definitions are delayed to~\cref{subsec:recollectionsQuotients,subsec:recollectionsCanonicalComplex}, where we will have introduced more lattice theoretic background.
We just note here that congruence uniformity implies semidistributivity.

The \defn{doubling} of a subset~$X$ of a poset~$P$ is the poset~$P[X]$ on~$(P \ssm X) \sqcup (X \times \{0,1\})$ defined~by
\begin{itemize}
\item $a \le b$ in~$P[X]$ if~$a, b \notin X$ and~$a \le b$ in~$P$,
\item $(a,i) \le b$ in~$P[X]$ if~$a \in X$, $b \notin X$, $i \in \{0,1\}$, and $a \le b$ in~$P$,
\item $a \le (b,j)$ in~$P[X]$ if~$a \notin X$, $b \in X$, $j \in \{0,1\}$, and $a \le b$ in~$P$,
\item $(a,i) \le (b,j)$ in~$P[X]$ if~$a,b \in X$, $i,j \in \{0,1\}$, and $a \le b$ in~$P$ and~$i \le j$.
\end{itemize}
Recall that~$C \subseteq P$ is \defn{order convex} if $x \le y \le z$ and~$x,z \in C$ implies~$y \in C$, and that an \defn{interval} of~$P$ is a subset of the form~$[x,z] \eqdef \set{y \in P}{x \le y \le z}$.
A.~Day~\cite{Day} observed that if~$L$ is a lattice and $C \subseteq L$ is order convex, then~$L[C]$ is again a lattice.
In fact, a lattice is congruence normal (resp.~uniform) if and only if it can be obtained from a distributive lattice by a sequence of doublings of order convex sets (resp.~of intervals).

\subsubsection{The weak order}

We now remind basic properties of the weak order on permutations and of the facial weak order on ordered partitions, and show that they are constructible by interval doublings.
The \defn{inversion set} of a permutation~$\sigma$ of~$[n]$ is~$\inv(\sigma) \! \eqdef \! \set{(\sigma_i, \sigma_j)}{1 \le i < j \le n \text{ and } \sigma_i > \sigma_j}$.
The \defn{weak order}~$W_n$ is the partially ordered set of permutations of~$[n]$ ordered by inclusion of their inversion sets.
Its cover relations are given by transpositions of adjacent entries (meaning at two consecutive positions).
See \cref{fig:weakOrder}.
The weak order is known to be a congruence uniform lattice~\cite{GuilbaudRosenstiehl,DuquenneCherfouh,ContePolyBarbut}.

\begin{figure}[b]
	\capstart
	\centerline{\raisebox{1.5cm}{\includegraphics[scale=.8]{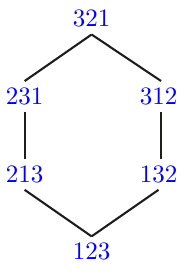}} \hspace{1.5cm} \includegraphics[scale=.8]{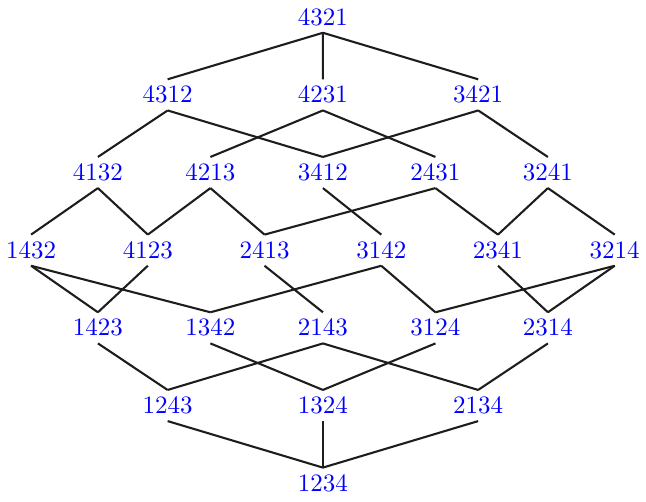}}
	\caption{The weak order~$W_n$ for~$n = 3$ (left) and~$n = 4$ (right).}
	\label{fig:weakOrder}
\end{figure}

As we will mimic it for the $\s$-weak order and facial $\s$-weak order, we now present a construction of the weak order by interval doublings, closely related to the insertion algorithm of \cref{sec:insertion}.
By induction, it suffices to exhibit a sequence of interval doublings from the weak order~$W_{n-1}$ to the weak order~$W_n$.
We denote by~$\bar\sigma$ the permutation of~$[n-1]$ obtained by deleting the entry~$n$ in a permutation~$\sigma$ of~$[n]$.
For any~$i \in [n]$, we denote by~$W_n^i$ the poset obtained from the weak order~$W_n$ by identifying two permutations~$\sigma$ and~$\sigma'$ if and only if~$\bar\sigma = \bar\sigma'$ and the relative position of~$k$ and~$n$ are the same in~$\sigma$ and~$\sigma$, for all~$n - i < k < n$.
Note that~$W_n^1$ is isomorphic to the weak order~$W_{n-1}$ and~$W_n^n$ is just the weak order~$W_n$.
For instance, \cref{fig:weakOrderIntervalDoublings} represents the posets~$W_4^i$ for~$i = 1, \dots, 4$.
In this picture, we represent an element of~$W_n^i$, on three lines (red, green, blue).
The red line contains~$[n-i-1]$, the green line contains~$[n-i,n[$, and the blue line just contains~$n$.
The red and green lines form the permutation~$\bar\sigma$, the green and blue lines record the relative position of~$k$ and~$n$ for all~$n - i < k \le n$, while the red and blue lines are incomparable.

We claim that~$W_n^{i+1}$ is obtained from~$W_n^{i}$ by interval doublings.
Consider~$U \eqdef \{u_1 < \dots < u_p\}$ and~$V \eqdef \{v_1 < \dots < v_q\}$ such that~$U \sqcup V = {]n-i, n[}$.
Let~$\tilde\sigma^{i}_{U,V}$ (resp.~$\tilde\tau^{i}_{U,V}$) denote the class in~$W_n^{i}$ of the permutations~$\sigma$ (resp.~$\tau$) of~$[n]$ such that~$\bar\sigma = [1, \dots, n-i-1, u_1, \dots, u_p, n-i, v_1, \dots, v_q]$ (resp.~$\bar\tau = [u_p, \dots, u_1, n-i, v_q, \dots, v_1, n-i-1, \dots, 1]$) and where~$n$ is to the right of any~$u \in U$ and to the left of any~$v \in V$.
We invite the reader to check that~$W_n^{i+1}$ is obtained from~$W_n^{i}$ by doubling, for all~$U \sqcup V = {]n-i, n[}$, the interval~$[\tilde\sigma^{i}_{U,V}, \tilde\tau^{i}_{U,V}]$ in~$W_n^{i}$ to the intervals~$[\tilde\sigma^{i+1}_{U \cup \{n-i\},V}, \tilde\tau^{i+1}_{U \cup \{n-i\},V}]$ and~$[\tilde\sigma^{i+1}_{U,V \cup \{n-i\}}, \tilde\tau^{i+1}_{U,V \cup \{n-i\}}]$ in~$W_n^{i+1}$.
These interval doublings are illustrated in \cref{fig:weakOrderIntervalDoublings}.

\begin{figure}[t]
	\capstart
	\centerline{\includegraphics[scale=.6]{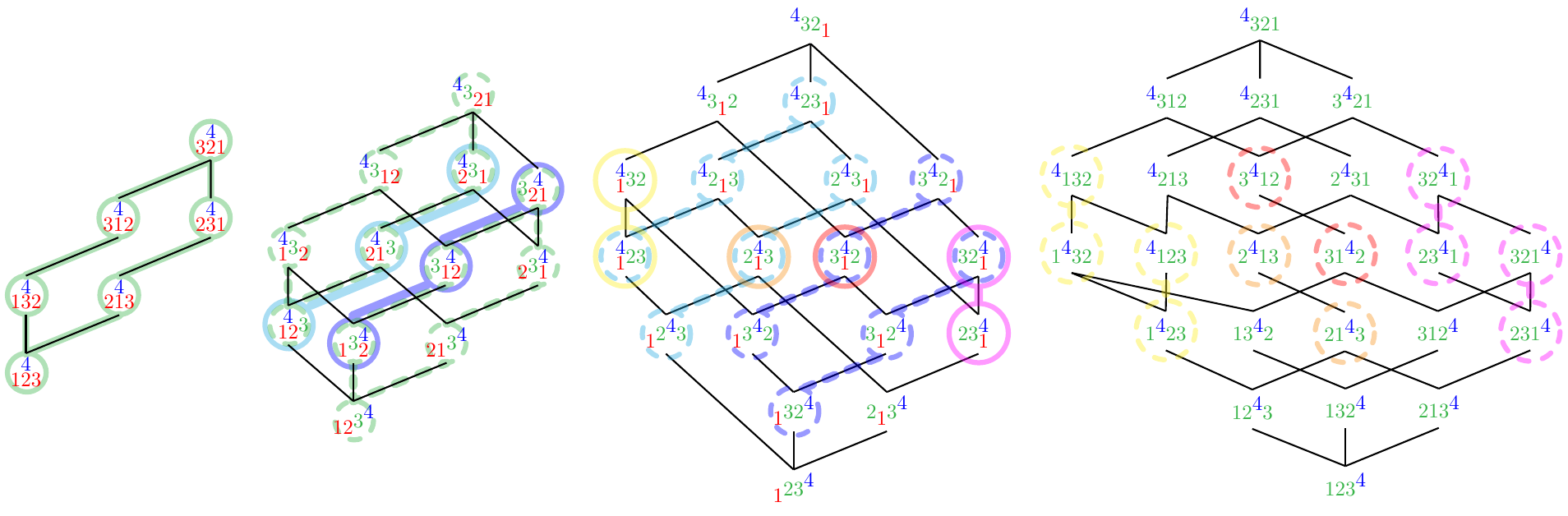}}
	\caption{Interval doublings from the weak order~$W_3$ (left) to the weak order~$W_4$ (right). The four lattices are~$W_3 \simeq W_4^1$, $W_4^2$, $W_4^3$, and~$W_4^4 = W_4$. We bold some intervals of~$W_4^{i-1}$ and dash the corresponding doubled intervals in~$W_4^i$.}
	\label{fig:weakOrderIntervalDoublings}
\end{figure}

\vspace{-.1cm}
\parpic(4.5cm,3cm)(10pt, 120pt)[r][b]{\includegraphics[scale=.8]{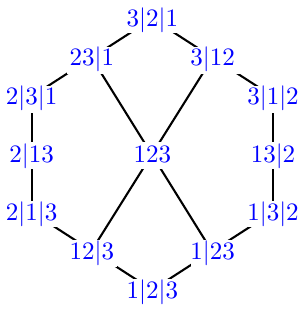}}{
\subsubsection{The facial weak order}
Finally, we briefly describe the facial weak order.
It is a lattice structure on all cones of the braid arrangement, or equivalently, on all faces of the permutahedron, which extends the weak order.
It is illustrated on the right for~${n = 3}$.
It was introduced for the braid arrangement in~\cite{KrobLatapyNovelliPhanSchwer,BoulierHivertKrobNovelli}, studied for arbitrary Coxeter arrangements in~\cite{PalaciosRonco, DermenjianHohlwegPilaud}, and extended even further in~\cite{DermenjianHohlwegMcConvillePilaud, Hanson}.
The \defn{facial weak order}~$FW_n$ is the partial order on ordered set partitions of~$[n]$ where~$\mu \le \nu$ if the following equivalent assertions holds: 
\begin{itemize}
\item $\min(\mu) \le \min(\nu)$ and~$\max(\mu) \le \max(\nu)$, where~$\min(\mu)$ and~$\max(\mu)$ respectively denote the weak order minimal and maximal permutations refining~$\mu$,
\item $\inv(\mu) \subseteq \inv(\nu)$ and~$\ninv(\mu) \supseteq \ninv(\nu)$, where~$\inv(\mu)$ (resp.~$\ninv(\mu)$) is the set of pairs~$i < j$ such that the part of~$\mu$ containing~$i$ is weakly after (resp.~before) the part of~$\mu$ containing~$j$,
\item the corresponding cones $\polytope{C}_{\mu}$ and~$\polytope{C}_{\nu}$ of the braid arrangement are connected by a path of pairs~$(\polytope{F}, \polytope{G})$, where either~$\polytope{F}$ is a facet of~$\polytope{G}$ with the same weak order maximum, or~$\polytope{G}$ is a facet of~$\polytope{F}$ with the same weak order minimum.
\end{itemize}
}

%%%%%%%%%%%%%%%%%

\subsection{The $\s$-weak order}
\label{subsec:sWeakOrder}

We now define the $\s$-weak order as introduced in~\cite{CeballosPons-sWeakOrderI} (modulo our minor convention changes, see \cref{rem:sTreeVSsDecreasingTrees}).

\begin{definition}[{\cite[Def.~1.3]{CeballosPons-sWeakOrderI}}]
\label{def:positions}
For an $\s$-tree~$\tree$ and~$1 \le i < j \le n$, the \defn{position}~$\pos(\tree, i, j) \in \llbracket s_i]$ is the minimum of~$s_i$ and the number of outgoing edges of~$i$ strictly to the right of the increasing path from the root of~$\tree$ to~$j$.
\end{definition}

\begin{lemma}
\label{lem:propertiesPositions}
For any~$\s$-tree~$\tree$ and~$1 \le i < j < k \le n$, 
\[
\pos(\tree, j, k) > 0 \implies \pos(\tree, i, j) \le \pos(\tree, i, k)
\quad\text{and}\quad
\pos(\tree, j, k) < s_j \implies \pos(\tree, i, j) \ge \pos(\tree, i, k).
\]
\end{lemma}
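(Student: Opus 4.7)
The plan is to unpack the geometric meaning of $\pos(\tree,i,j)$ and then argue by case analysis on how $i,j,k$ sit in the plane tree. As a preliminary, for $i<j$ the definition reduces to three transparent cases depending on where $i$ lies with respect to the increasing path $P_j$ from the root to $j$: if $i$ is an ancestor of $j$ and $P_j$ exits $i$ through the child at slot $r\in\llbracket s_i]$ counted from the left, then $\pos(\tree,i,j)=s_i-r$; if instead $i$ lies strictly to the left (resp.\ right) of $P_j$ in the planar embedding, then all outgoing edges of $i$ lie on one side of $P_j$, giving $\pos(\tree,i,j)=0$ (resp.\ $s_i$).

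Let $a$ denote the greatest common ancestor of $j$ and $k$. If $a=j$, then $j$ is an ancestor of $k$ and $P_k$ extends $P_j$. Since $i<j$ prevents $i$ from being a descendant of $j$, the side of $P_j$ on which $i$ lies coincides with the side of $P_k$, and if $i$ lies on $P_j$ both paths exit $i$ through the same slot. Thus $\pos(\tree,i,j)=\pos(\tree,i,k)$, and both implications of the lemma hold as equalities.

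Otherwise $a\ne j$, and $P_j$ and $P_k$ exit $a$ through distinct slots $p$ and $q$. A direct inspection yields $\pos(\tree,j,k)=s_j$ when $p>q$ and $\pos(\tree,j,k)=0$ when $p<q$, so exactly one of the two hypotheses of the lemma is satisfied and the other implication is vacuous. Treating the case $p>q$ (the case $p<q$ being symmetric), I verify $\pos(\tree,i,j)\le\pos(\tree,i,k)$ by distinguishing four possibilities for $i$. If $i$ is a strict ancestor of $a$, or is incomparable with $a$ (so $i$ branches off the lineage of $a$ at some strict ancestor of $a$), then $P_j$ and $P_k$ exit the relevant node through the same slot and the two positions agree. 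If $i=a$, then $\pos(\tree,i,j)=s_i-p<s_i-q=\pos(\tree,i,k)$. Finally, if $i$ is a strict descendant of $a$ through the child of $a$ at slot $r$, then $p>q$ forces $r>q$ or $r<p$: in the first case $i$ is strictly to the right of $P_k$, giving $\pos(\tree,i,k)=s_i\ge\pos(\tree,i,j)$; in the second case $i$ is strictly to the left of $P_j$, giving $\pos(\tree,i,j)=0\le\pos(\tree,i,k)$.

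The only genuine obstacle is organizing this case distinction cleanly; each individual case reduces to inspecting which child of a common ancestor a given path exits through, so no further computation is needed once the geometric picture is fixed.
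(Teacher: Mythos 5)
Your proof is correct and rests on the same geometric observation as the paper's: $\pos(\tree,j,k)>0$ forces $j$ (and hence the root-to-$j$ path $P_j$) to lie weakly to the right of the root-to-$k$ path $P_k$, so every outgoing edge of $i$ strictly right of $P_j$ is also strictly right of $P_k$. The paper records this in three lines, leaving the ``by definition'' step implicit, while you have unfolded that step into an explicit case analysis on the greatest common ancestor $a$ of $j$ and $k$ and on the position of $i$ relative to $a$.
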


\begin{proof}
If~$\pos(\tree, j, k) > 0$, then $j$ is weakly right of the path from the root of~$\tree$ to~$k$.
Hence, the path from the root of~$\tree$ to~$j$ is weakly on the right of the path from the root of~$\tree$ to~$k$.
By definition, this yields~$\pos(\tree, i, j) \le \pos(\tree, i, k)$.
The other inequality is similar.
\end{proof}

Although not strictly necessary in this paper, we recall from~\cite{CeballosPons-sWeakOrderI} that the inequalities of~\cref{lem:propertiesPositions} actually characterize the position vectors of $\s$-trees.

\begin{proposition}[{\cite[Prop.~1.6]{CeballosPons-sWeakOrderI}}]
\label{prop:characterizationPositions}
The following are equivalent for a $\binom{n}{2}$-tuple~$(P_{i,j})_{1 \le i < j \le n}$:
\begin{enumerate}
\item there exists an $\s$-tree~$\tree$ such that~$\pos(\tree, i, j) = P_{i,j}$ for all~$1 \le i < j \le n$,
\item $0 \le P_{i,j} \le s_i$ for all~$1 \le i < j \le n$, and $P_{j,k} > 0 \implies P_{i,j} \le P_{i,k}$ and $P_{j,k} < s_j \implies P_{i,j} \ge P_{i,k}$ for all~$1 \le i < j < k \le n$.
\end{enumerate}
\end{proposition}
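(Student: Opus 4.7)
The direction (1)$\Rightarrow$(2) is \cref{lem:propertiesPositions}, so the plan is to establish the converse by induction on $n$, with the base case $n=1$ being vacuous.

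For the inductive step, consider $(P_{i,j})_{1 \le i < j \le n}$ satisfying the inequalities of~(2). The restricted tuple $(P_{i,j})_{1 \le i < j \le n-1}$ still satisfies (2), so the inductive hypothesis produces an $\s_{\le n-1}$-tree~$\tree'$ with $\pos(\tree',i,j) = P_{i,j}$ for all $i<j\le n-1$. I would then build $\tree$ by attaching a new node~$n$ (carrying $s_n+1$ leaves) to the leaf of~$\tree'$ selected by the following top-down traversal: start at the root (node~$1$); while at an internal node~$i$, descend to its $(s_i+1-P_{i,n})$-th child from the left; stop upon reaching a leaf and attach~$n$ there. Since $0 \le P_{i,n} \le s_i$, each descent is well-defined, and the procedure terminates because the tree is finite.

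It remains to verify $\pos(\tree,i,n) = P_{i,n}$ for every $i<n$ (the positions $\pos(\tree,i,j) = P_{i,j}$ for $i<j<n$ are preserved by the attachment). For an ancestor~$i$ of~$n$ in~$\tree$, i.e.~a node visited by the traversal, the equality is immediate: by construction exactly $P_{i,n}$ outgoing edges of~$i$ sit strictly to the right of the root-to-$n$ path. The delicate part, and the main obstacle of the argument, is the non-ancestor case, where we must show that $P_{i,n} = 0$ if~$i$ lies to the left of that path and $P_{i,n} = s_i$ if it lies to the right.

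To handle it, I would propagate the two implications of~(2) along the chain $i = i_0 > i_1 > \dots > i_t = r$ of ancestors of~$i$ in~$\tree$, where~$r$ is the deepest common ancestor of~$i$ and~$n$. Since~$i$ is a non-ancestor of~$n$, the child $i_{t-1}$ of~$r$ lies strictly on one side of the root-to-$n$ path; by construction of the traversal, $P_{r,i_{t-1}}$ equals~$0$ or~$s_r$ accordingly, and this is the extreme case forbidding~$i_{t-1}$ from being on the path to~$n$. Applying the two implications of~(2) to each triple $(i_{k+1}, i_k, n)$ \textemdash{} noting that~$i_{k-1}$ is never on the path from~$i_k$ to~$n$, so $P_{i_k, i_{k-1}} \in \{0, s_{i_k}\}$ is forced at the extreme \textemdash{} inductively transports the equality $P_{i_{k+1},n} \in \{0, s_{i_{k+1}}\}$ down to $P_{i,n} \in \{0, s_i\}$, with the value matching the geometric side of~$i$. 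Uniqueness of the realizing $\s$-tree is then automatic, since the traversal above is deterministic and reconstructs the parent of each node from the position data.
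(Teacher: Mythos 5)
The paper does not prove this proposition; it is quoted from~\cite[Prop.~1.6]{CeballosPons-sWeakOrderI} without argument, so your proof must be judged on its own. Your construction is sound: the traversal descending to the $(s_i+1-P_{i,n})$-th child of each visited node $i$ is well-defined by the bounds $0\le P_{i,n}\le s_i$, it terminates at a leaf, and for every ancestor $i$ of $n$ in the resulting tree it manifestly gives $\pos(\tree,i,n)=P_{i,n}$. The gap is in the non-ancestor verification. Your intermediate claims that ``$P_{r,i_{t-1}}$ equals $0$ or $s_r$'' and that ``$P_{i_k,i_{k-1}}\in\{0,s_{i_k}\}$'' are simply false: $P_{i_{k+1},i_k}=\pos(\tree',i_{k+1},i_k)$ is $s_{i_{k+1}}+1-m'$ where $i_k$ lives in the $m'$-th subtree of $i_{k+1}$, and this takes every value in $\llbracket s_{i_{k+1}}]$. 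Worse, the chain propagation through triples $(i_{k+1},i_k,n)$ genuinely breaks: if $i_k$ is the rightmost child of $i_{k+1}$ then $P_{i_{k+1},i_k}=0$, and neither implication of condition~(2) applied to $(i_{k+1},i_k,n)$ gives any constraint on $P_{i_k,n}$ when $P_{i_{k+1},n}=0$. Concretely, with $\s=(2,1,1,1)$, $\tree'$ the $(2,1,1)$-tree where $2$ is the middle child of $1$ and $3$ the rightmost child of $2$, and $P_{1,4}=0$: the step from $P_{2,4}=0$ to $P_{3,4}=0$ cannot be made from the triple $(2,3,4)$ alone, since $P_{2,3}=P_{2,4}=0$ makes both implications vacuous.

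The repair is to skip the chain entirely and compare at the deepest common ancestor $r$ directly. If $i$ is a non-ancestor of $n$ in $\tree$, the paths to $i$ and to $n$ diverge at $r$, exiting through the $m$-th and $k$-th children respectively with $m\ne k$, where $k=s_r+1-P_{r,n}$. By the inductive hypothesis $P_{r,i}=\pos(\tree',r,i)=s_r+1-m$, so if $i$ lies strictly left of the root-to-$n$ path (that is, $m<k$) then $P_{r,i}>P_{r,n}$; the contrapositive of the first implication of~(2) for the triple $(r,i,n)$ then forces $P_{i,n}=0$, which matches $\pos(\tree,i,n)$. Symmetrically, if $m>k$ then $P_{r,i}<P_{r,n}$ and the second implication forces $P_{i,n}=s_i$. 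This single application of~(2), with $r$ as the pivot for every non-ancestor $i$, replaces the faulty propagation and completes the argument; the rest of your write-up (induction on $n$, preservation of $\pos(\tree,i,j)=P_{i,j}$ for $j<n$, uniqueness) is fine.
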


The main object of~\cite{CeballosPons-sWeakOrderI,CeballosPons-sWeakOrderII} is the following order on $\s$-trees, illustrated in~\cref{fig:weakOrders}.

\begin{figure}[t]
	\capstart
	\centerline{\raisebox{.5cm}{\includegraphics[scale=.6]{120-weakOrder}} \hspace{1.5cm} \includegraphics[scale=.6]{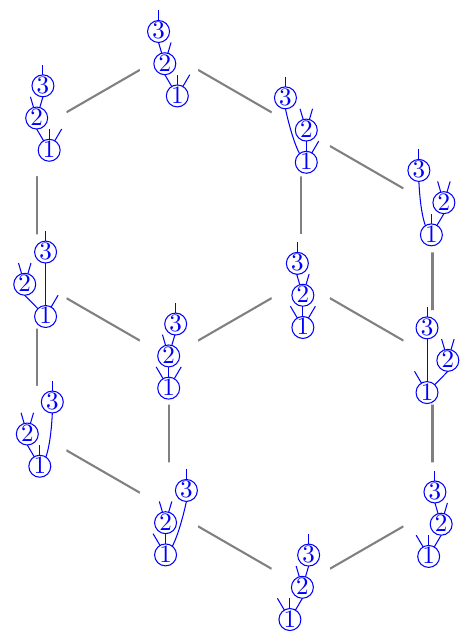}}
	\caption{The $\s$-weak order~$W_\s$ for $\s = (1,2,0)$ (left) and~$\s = (2,1,0)$ (right).}
	\label{fig:weakOrders}
\end{figure}

\begin{definition}[{\cite[Def.~1.9]{CeballosPons-sWeakOrderI}}]
\label{def:sWeakOrder}
The \defn{$\s$-weak order}~$W_\s$ is the partially ordered set of $\s$-trees given by~$\tree \le \tree'$ if and only if~$\pos(\tree, i, j) \le \pos(\tree', i, j)$ for all~$1 \le i < j \le n$.
\end{definition}

We now describe the cover relations in the $\s$-weak order in terms of ascents, descents, and rotations (see \cref{def:ascentDescentBush,rem:ascentDescentTree,def:rotation}).

\begin{proposition}[{\cite[Thm.~1.32]{CeballosPons-sWeakOrderI}}]
\label{prop:positionsRotation}
If two $\s$-trees~$\tree$ and~$\tree'$ are related by the rotation of an ascent~$(i,j)$ of~$\tree$ to a descent~$(i,j)$ of~$\tree'$, then~$\tree$ is the maximal (resp.~$\tree'$ is the minimal) $\s$-tree such that~$\tree \le \tree'$ and~$\pos(\tree, i, j) < \pos(\tree', i, j)$.
\end{proposition}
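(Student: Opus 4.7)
The plan is to prove the statement by directly tracking how the positions $\pos(\cdot, \alpha, \beta)$ change under the left rotation, and then to combine this information with \cref{lem:propertiesPositions} to obtain the extremality claims. Write $a$ for the index (counted from the left in $\{0, 1, \dots, s_i\}$) of the outgoing edge of $i$ through which the root-to-$j$ path leaves $i$ in $\tree$, so that $\pos(\tree, i, j) = s_i - a$; the ascent condition forces $a \ge 1$. By \cref{def:rotation}, $j$ becomes in $\tree'$ the new rightmost child of the node $p$ reached by following the rightmost increasing path from the $(a-1)$-th edge of $i$, so the root-to-$j$ path in $\tree'$ enters $i$ through its $(a-1)$-th edge, which yields $\pos(\tree', i, j) = s_i - (a-1) = \pos(\tree, i, j) + 1$. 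Write $D$ for the set of proper descendants of $j$ in $\tree$ that do not descend from the previous rightmost child $s$ of $j$. For $\beta \in D$, the root-to-$\beta$ path traverses $j$ through the same non-rightmost child of $j$ in both trees, so the same calculation gives $\pos(\tree', i, \beta) = \pos(\tree, i, \beta) + 1$. For every other pair $(\alpha, \beta)$, a case analysis on the location of $\alpha$ and $\beta$ relative to the nodes $\{i, r, j, p, q, s\}$ touched by the rotation shows that $\pos(\tree, \alpha, \beta) = \pos(\tree', \alpha, \beta)$. In particular $\tree \le \tree'$ and $\pos(\tree, i, j) < \pos(\tree', i, j)$.

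For the minimality of $\tree'$, let $\tilde\tree$ be an $\s$-tree with $\tree \le \tilde\tree$ and $\pos(\tilde\tree, i, j) > \pos(\tree, i, j)$. For every pair $(\alpha, \beta)$ outside $\{i\} \times (\{j\} \cup D)$, Step~1 and $\tree \le \tilde\tree$ give $\pos(\tree', \alpha, \beta) = \pos(\tree, \alpha, \beta) \le \pos(\tilde\tree, \alpha, \beta)$, and for $\beta = j$ the required inequality is the hypothesis. For $\beta \in D$, the node $\beta$ descends from $j$ in $\tree$ through a non-rightmost child of $j$, so $\pos(\tree, j, \beta) > 0$ and hence $\pos(\tilde\tree, j, \beta) \ge \pos(\tree, j, \beta) > 0$; \cref{lem:propertiesPositions} then yields $\pos(\tilde\tree, i, \beta) \ge \pos(\tilde\tree, i, j) \ge \pos(\tree, i, j) + 1 = \pos(\tree', i, \beta)$. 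Thus $\tree' \le \tilde\tree$. The maximality of $\tree$ is handled symmetrically: if $\tilde\tree \le \tree'$ with $\pos(\tilde\tree, i, j) < \pos(\tree', i, j)$, then $\beta \in D$ forces $s_j \ge 1$, so the ascent condition makes the leftmost edge of $j$ a leaf in $\tree$ and therefore also in $\tree'$ (the rotation only alters $j$'s rightmost outgoing edge). Since $\beta$ is reached from $j$ through a non-leftmost edge of $j$ in $\tree'$, this forces $\pos(\tree', j, \beta) < s_j$, hence $\pos(\tilde\tree, j, \beta) \le \pos(\tree', j, \beta) < s_j$, and \cref{lem:propertiesPositions} gives $\pos(\tilde\tree, i, \beta) \le \pos(\tilde\tree, i, j) \le \pos(\tree, i, j) = \pos(\tree, i, \beta)$.

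The main obstacle lies in the case analysis underlying Step~1: although the rotation touches only three edges, positions depend globally on root-to-node paths, and one must verify that the rerouting of $j$, $s$, and $q$ does not alter the outgoing edge of $\alpha$ used by the root-to-$\beta$ path for any pair $(\alpha, \beta)$ outside $\{i\} \times (\{j\} \cup D)$. The verification is routine but lengthy, splitting naturally on whether $r = i$ or $r \ne i$, whether $q$ is a leaf or a node, and on which of the subtrees (rooted at $s$, at $q$, or outside of them) the node $\beta$ sits in; the ascent condition (controlling $j$'s leftmost edge and the left zigzag leading to $p$) is what makes all potential discrepancies cancel out.
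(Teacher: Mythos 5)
The paper cites \cref{prop:positionsRotation} from \cite[Thm.~1.32]{CeballosPons-sWeakOrderI} and does not reprove it, so there is no in-paper argument to compare against. Your overall strategy --- compute exactly which position entries change under the rotation and then push the extremality through \cref{lem:propertiesPositions} --- is the right one, and the two extremality arguments in your second paragraph are sound \emph{given} the change-of-positions claim of your first paragraph.

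That claim is where the gap sits, and it is not merely ``routine but lengthy'' once one actually runs the case analysis: you are working with an incorrect model of the rotation. You say the rotation ``only alters $j$'s rightmost outgoing edge,'' i.e.\ you read ``replacing $j \to s$ by $j \to q$'' in \cref{def:rotation} as putting $j \to q$ in $j$'s rightmost slot. But the lemma following \cref{def:rotation} identifies the left rotation with an ascent stitching followed by a right incision, and tracing those two operations (\cref{def:stitching,def:incision}) shows that $q$ in fact becomes the \emph{leftmost} child of $j$ in $\tree'$, while $j$'s rightmost slot becomes a leaf. Under your reading the assertion ``$\pos(\tree,\alpha,\beta)=\pos(\tree',\alpha,\beta)$ for every other pair'' is false whenever $q$ is an internal node and $s_j>0$: one then gets $\pos(\tree,j,q)=s_j$ but $\pos(\tree',j,q)=0$, which would even falsify $\tree \le \tree'$. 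A concrete witness is $\s=(1,1,1)$ with $\tree$ having node $3$ as the left child and node $2$ as the right child of node $1$; rotating the ascent $(1,2)$ gives $p=1$, $q=3$, $s$ a leaf, and the position vector $(0,1,1)$ must move to $(1,1,1)$ (as it does when $q$ is leftmost), not to $(1,1,0)$. Once the rotation is read correctly, your change-of-positions claim is true, the parenthetical about the rightmost edge should be dropped (the leftmost edge of $j$ in $\tree'$ is $j\to q$, not a leaf), and the clean reason that $\pos(\tree',j,\beta)<s_j$ for $\beta\in D$ is that such $\beta$ descends through one of $j$'s middle edges, which the rotation leaves untouched. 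With those repairs your argument goes through.
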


\begin{proposition}[{\cite[Thm.~1.32]{CeballosPons-sWeakOrderI}}]
\label{prop:sWeakOrderCoverRelations}
The $\s$-trees which cover (resp.~are covered by) an $\s$-tree~$\tree$ in the $\s$-weak order~$W_\s$ are precisely those obtained by rotating an ascent (resp.~descent) of~$\tree$.
\end{proposition}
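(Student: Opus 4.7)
The statement splits into two implications. For the easy direction, suppose $\tree'$ is obtained from $\tree$ by rotating an ascent $(i,j)$, so that $\tree \le \tree'$ by Proposition~\ref{prop:positionsRotation}. To show $\tree \lessdot \tree'$ in $W_\s$, consider any $\s$-tree $\tree''$ with $\tree \le \tree'' \le \tree'$; this gives $\pos(\tree, i, j) \le \pos(\tree'', i, j) \le \pos(\tree', i, j)$. If $\pos(\tree'', i, j) < \pos(\tree', i, j)$, the maximality clause of Proposition~\ref{prop:positionsRotation} (with $\tree'$ as upper bound) forces $\tree'' \le \tree$, hence $\tree'' = \tree$. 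Otherwise $\pos(\tree'', i, j) = \pos(\tree', i, j) > \pos(\tree, i, j)$, and the minimality clause (with $\tree$ as lower bound) forces $\tree' \le \tree''$, hence $\tree'' = \tree'$. The analogous argument for the descent version of Proposition~\ref{prop:positionsRotation} yields the parenthetical statement.

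For the reverse direction, the plan is to show that if $\tree \lessdot \tree'$ in $W_\s$, then $\tree$ admits an ascent $(i,j)$ with $\pos(\tree, i, j) < \pos(\tree', i, j)$. Once such an $(i,j)$ is found, let $\tilde{\tree}$ be its rotation; then $\tree < \tilde{\tree}$ (the position at $(i,j)$ strictly increases), and the minimality clause of Proposition~\ref{prop:positionsRotation} applied with $\tree'$ as a witness gives $\tilde{\tree} \le \tree'$. The covering assumption then forces $\tilde{\tree} = \tree'$, as desired.

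The argument thus reduces to the following \emph{key lemma}: whenever $\tree < \tree'$ in $W_\s$, there exists an ascent $(i,j)$ of $\tree$ with $\pos(\tree, i, j) < \pos(\tree', i, j)$. To prove it, I would pick $(i,j)$ extremally among pairs where the position strictly increases from $\tree$ to $\tree'$---first minimizing $j$, then maximizing $i$ among ancestors of $j$ in $\tree$ satisfying the strict inequality---and verify that this $(i,j)$ meets the two defining conditions of an ascent of $\tree$: that the path from $i$ to $j$ takes the leftmost outgoing edge at every node after $i$, and that either $s_j = 0$ or the leftmost edge at $j$ is a leaf. This verification is the main obstacle; the propagation inequalities $P_{j,k} > 0 \Rightarrow P_{i,j} \le P_{i,k}$ and $P_{j,k} < s_j \Rightarrow P_{i,j} \ge P_{i,k}$ from Proposition~\ref{prop:characterizationPositions} are the natural tool, as they let one track the sign of the position gap $\pos(\tree', \cdot, \cdot) - \pos(\tree, \cdot, \cdot)$ along paths in $\tree$ and derive a contradiction with the extremality of $(i,j)$ whenever one of the ascent conditions fails.
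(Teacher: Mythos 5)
A preliminary note on framing: the paper does not prove this proposition --- it imports it verbatim as \cite[Thm.~1.32]{CeballosPons-sWeakOrderI}, and the adjacent Proposition~\ref{prop:positionsRotation} you lean on is cited from the same theorem. So there is no ``paper's own proof'' to compare against; what follows assesses your argument on its own terms.

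Your easy direction is correct and efficient: for any $\tree \le \tree'' \le \tree'$, the dichotomy on $\pos(\tree'', i, j)$ paired with the maximality and minimality clauses of Proposition~\ref{prop:positionsRotation} does force $\tree'' \in \{\tree, \tree'\}$, which is exactly the cover relation. The reduction of the hard direction to your key lemma is also sound: once you produce an ascent $(i,j)$ of $\tree$ with $\pos(\tree, i, j) < \pos(\tree', i, j)$, the rotated tree $\tilde\tree$ satisfies $\tree < \tilde\tree$, the minimality clause gives $\tilde\tree \le \tree'$, and covering forces $\tilde\tree = \tree'$.

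The genuine gap is that the key lemma is where essentially all of the combinatorial content of the theorem sits, and you do not prove it; you name a candidate extremal pair $(i,j)$, announce you ``would verify'' it satisfies the two conditions in Remark~\ref{rem:ascentDescentTree}, and concede that ``this verification is the main obstacle.'' That concession identifies the gap accurately. Moreover, even the extremal choice as written carries an unaddressed subtlety at the outset: after minimizing $j$, it is not immediate that there is any ancestor $i$ of $j$ in $\tree$ with $\pos(\tree, i, j) < \pos(\tree', i, j)$ --- when $i$ lies off the root-to-$j$ path in $\tree$, the value $\pos(\tree, i, j)$ is pinned to $0$ or $s_i$, and ruling out (or converting) such $i$ already requires Lemma~\ref{lem:propertiesPositions} together with the minimality of $j$. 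After that, checking that the path from $i$ to $j$ hugs the leftmost edges and that $s_j = 0$ or the leftmost edge at $j$ is a leaf is the bulk of the work. As it stands, the proposal is a credible roadmap with the correct architecture and the right named tool, but the load-bearing step is a sketch, not a proof.
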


We obtain the following geometric consequence of \cref{coro:dualGraph,prop:sWeakOrderCoverRelations}.

\begin{corollary}
\label{coro:sFoamsWeakOrder}
The Hasse diagram of the $\s$-weak order~$W_\s$ is isomorphic to the dual graph of the $\s$-foam oriented in the direction~$\b{\omega} \eqdef (1, 2, \dots n) - (n, \dots, 2, 1) = \!\!\sum\limits_{1 \le i < j \le n} \!\! \b{e}_j - \b{e}_i = \sum\limits_{i \in [n]} (2i-n-1) \, \b{e}_i$.
\end{corollary}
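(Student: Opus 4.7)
The plan is to combine \cref{coro:dualGraph}, which identifies the unoriented dual graph of $\sFoam$ with the rotation graph on $\s$-trees, and \cref{prop:sWeakOrderCoverRelations}, which identifies the cover relations of the $\s$-weak order as rotations of ascents. Since the full-dimensional cells of $\sFoam$ are precisely the closed fibers $\closedFiber{\tree}$ of $\s$-trees (those $\s$-bushes of maximal rank, by the dimension formula following \cref{prop:fibers}), the two graphs already share the same vertex set and the same underlying unoriented edge set. Only the compatibility of the orientation induced by the linear functional $\b{\omega}$ with the direction of ascent rotations remains to be checked.

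First I would fix a cover relation $\tree \lessdot \tree'$ in $W_\s$: by \cref{prop:sWeakOrderCoverRelations}, $\tree'$ is obtained from $\tree$ by the rotation of an ascent $(i,j)$ of $\tree$, which becomes a descent of $\tree'$. Letting $\bush$ denote the stitching of $(i,j)$ in $\tree$, \cref{prop:stitchingIncision} identifies $\closedFiber{\bush}$ as the codimension-one face shared by $\closedFiber{\tree}$ and $\closedFiber{\tree'}$. By \cref{prop:fibers}, this shared face lies in the hyperplane $\{x_i - x_j = \mu(\tree, i, j)\}$: specifically, $\closedFiber{\tree}$ lies in the half-space $x_i - x_j \le \mu(\tree, i, j)$ (the ascent inequality in $\tree$), while $\closedFiber{\tree'}$ lies in the opposite half-space $x_i - x_j \ge \nu(\tree', i, j) = \mu(\tree, i, j)$ (the descent inequality in $\tree'$). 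Consequently, the direction from $\closedFiber{\tree}$ into $\closedFiber{\tree'}$ across their shared facet is a positive multiple of $\b{e}_i - \b{e}_j$.

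Finally I would compute $\sprod{\b{\omega}}{\b{e}_i - \b{e}_j} = \b{\omega}_i - \b{\omega}_j = (2i - n - 1) - (2j - n - 1) = 2(i - j)$, which has a definite (nonzero) sign on every cover since $i < j$. Hence $\b{\omega}$ induces a consistent orientation of each edge of the dual graph along its ascent rotation, which by \cref{prop:sWeakOrderCoverRelations} is exactly the direction of the corresponding cover in $W_\s$. No real obstacle is expected: the argument is a direct composition of previous results, the only small bookkeeping being the identification $\mu(\tree, i, j) = \mu(\bush, i, j) = \nu(\bush, i, j) = \nu(\tree', i, j)$, which follows from \cref{def:rhs} because the leftmost and rightmost increasing paths from $i$ to $j$ in $\tree$, $\bush$, and $\tree'$ agree locally, away from the edges manipulated in \cref{def:stitching}.
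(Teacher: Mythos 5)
Your proof is correct and takes essentially the same route as the paper, which presents \cref{coro:sFoamsWeakOrder} simply as a consequence of \cref{coro:dualGraph} and \cref{prop:sWeakOrderCoverRelations} with no written proof. You spell out the implicit orientation check: identifying the full-dimensional cells of $\sFoam$ with the closed fibers of $\s$-trees, invoking \cref{prop:fibers} to locate the shared facet of $\closedFiber{\tree}$ and $\closedFiber{\tree'}$ on the hyperplane $\{x_i - x_j = \mu(\tree,i,j)\}$ with $\tree$ on the $\le$ side and $\tree'$ on the $\ge$ side, and computing $\sprod{\b{\omega}}{\b{e}_i - \b{e}_j} = 2(i-j)$, whose constant sign over all covers is the content of the statement. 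The identification $\mu(\tree,i,j) = \mu(\bush,i,j) = \nu(\bush,i,j) = \nu(\tree',i,j)$ is indeed needed and is exactly what the proof of \cref{prop:stitchingIncision} establishes (using that $(i,j)$ is the unique hole of the stitching $\bush$), so citing that proposition directly would be slightly cleaner than re-deriving it from \cref{def:rhs}, but this is cosmetic.
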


\begin{remark}
We note that C.~Ceballos and V.~Pons also briefly mention the construction of the ``$\s$-braid arrangement'' in \cite[end of Sect.~3.2~\&~Fig.~27]{CeballosPons-sWeakOrderII} that seems to coincide with our $\s$-foam~\cite{Pons-personnalCommunication}.
Note that we prefer the term ``foam'' to ``arrangement'' as it is not anymore a hyperplane arrangement (not even affine).
\end{remark}

Finally, we state to the key result of~\cite{CeballosPons-sWeakOrderI}.

\begin{theorem}[{\cite[Thms.~1.21 \& 1.40]{CeballosPons-sWeakOrderI}}]
\label{thm:sWeakOrderLattice}
The $\s$-weak order~$W_\s$ is a congruence uniform lattice.
\end{theorem}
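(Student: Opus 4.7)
The plan is to proceed by induction on~$n$, invoking Day's theorem~\cite{Day} that any lattice obtained from a singleton by a sequence of interval doublings is automatically a congruence uniform lattice. The base case ($n \le 1$) is trivial since $W_\s$ then consists of a single element. For the inductive step, assuming that $W_{\s_{\le n-1}}$ is congruence uniform, I would exhibit a sequence of interval doublings transforming $W_{\s_{\le n-1}}$ into $W_\s$, mirroring the construction sketched in \cref{subsec:recollectionsWeakOrder} for the classical weak order and decomposing the final step of the insertion algorithm, namely the attachment of node~$n$ to the $\s_{\le n-1}$-tree.

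By \cref{prop:characterizationPositions}, an $\s$-tree~$\tree$ is determined by the $\s_{\le n-1}$-tree~$\bar\tree$ (obtained by deleting~$n$) together with the position vector $(\pos(\tree, j, n))_{j \in [n-1]}$ subject to the monotonicity constraints of \cref{lem:propertiesPositions}. I would build intermediate posets $W_{\s_{\le n-1}} = L^0, L^1, \dots, L^N = W_\s$, where elements of $L^k$ are pairs $(\bar\tree, \pi)$ with $\pi$ a partial record of the positions $\pos(\tree, j, n)$, and the order is componentwise; two $\s$-trees are identified in $L^k$ whenever they agree on $\bar\tree$ and on the values already revealed in~$\pi$. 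The key claim is that each transition $L^k \to L^{k+1}$ reveals one comparison of the form ``is $\pos(\tree, j, n) \ge p$?'' for a suitable $j$ and $p$, and corresponds to doubling an interval of~$L^k$ whose extremal elements can be explicitly identified via \cref{prop:positionsRotation} and \cref{prop:sWeakOrderCoverRelations} as the minimal and maximal $\s$-trees consistent with the prescribed partial data.

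The main obstacle is selecting the correct processing order for these revelations in light of the monotonicity constraints of \cref{lem:propertiesPositions}: fixing $\pos(\tree, j, n)$ may indirectly constrain $\pos(\tree, i, n)$ for $i < j$, so the order (for instance $j$ decreasing, then $p$ increasing) must be chosen carefully so that each doubling is local---only introducing a relation not already forced by previous steps---and so that the set being doubled is a genuine interval, not merely an order-convex subset. Verifying that the claimed endpoints of each doubled interval are indeed the extremal elements with the prescribed partial position data is the main technical burden, but it reduces to a combinatorial check building on \cref{prop:positionsRotation}. Once this sequence of interval doublings is in place, Day's theorem immediately yields that $W_\s$ is a congruence uniform lattice.
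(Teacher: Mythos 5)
Your proposal coincides with the paper's own alternative argument for congruence uniformity, namely \cref{prop:sWeakOrderIntervalDoblings}: the intermediate posets $W_\s^{i,j}$, obtained by identifying $\s$-trees that agree on $\bar\tree$, on $\pos(\cdot,k,n)$ for $n-i<k<n$, and on whether $\pos(\cdot,n-i,n)$ exceeds the threshold~$j$, are exactly your partial-position-data posets, processed in the same order (the node $n-i$ decreasing from $n-1$ to $1$, the threshold increasing within each step). The one point you flag as the remaining technical burden---making the doubled intervals explicit and checking they are genuine intervals---is carried out in the paper by describing their endpoints $\tree[S]_{U,V}^{i,j}$ and $\tree[T]_{U,V}^{i,j}$ concretely in terms of right and left $\s$-combs, for each choice of $U\sqcup V={]n-i,n[}$.
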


\begin{remark}
The proof of the lattice property in~\cite{CeballosPons-sWeakOrderI} actually exploits the characterization of~\cref{prop:characterizationPositions} to explicitly describe the join in the $\s$-weak order~$W_\s$.
Namely, the join~$\tree \join \tree'$ of two $\s$-trees~$\tree$ and~$\tree'$ satisfies~$\pos(\tree \join \tree', i, j) = \smash{M_{i,j}^{tc}}$ where~$M_{i,j} \eqdef \max \big( \pos(\tree, i, j), \pos(\tree', i, j) \big)$ and $\smash{M_{i,k}^{tc}} \eqdef \max \set{M_{j_0,j_1}}{i = j_0 < \dots < j_q = k \text{ and } M_{j_p, j_{p+1}} > 0 \text{ for all } p \in [q]}$ for~${1 \le i < j \le n}$.
The congruence uniformity is proved in~\cite{CeballosPons-sWeakOrderI} by showing semidistributivity, and exhibiting a certain edge labeling of the $\s$-weak order that ensures congruence uniformity.
We now give an alternative argument for the congruence uniformity (and hence the semidistributivity) based on interval doublings, which naturally arise from our insertion algorithm, and will be extended to the facial $\s$-weak order in \cref{subsec:sFacialWeakOrder}.
We note that C.~Ceballos and V.~Pons also mention a proof by interval doublings in~\cite[Rem.~1.41]{CeballosPons-sWeakOrderI}.
Their sequence of doublings can be found in~\cite[function \texttt{lattice\_doublings} at line 1489]{Pons-sagedemo} and seems quite different from ours~\cite{Pons-personnalCommunication}.
\end{remark}

\begin{proposition}
\label{prop:sWeakOrderIntervalDoblings}
The $\s$-weak order~$W_\s$ is constructible by a sequence of interval doublings.
\end{proposition}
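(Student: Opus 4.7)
The plan is to argue by induction on~$n$, the base case $n \le 1$ being trivial since~$W_\s$ then has a single element. For the inductive step, we assume that~$W_{\s_{\le n-1}}$ is already constructible by interval doublings from a distributive lattice, and construct~$W_\s$ from~$W_{\s_{\le n-1}}$ by an additional sequence of interval doublings. The guiding analogy is the classical chain $W_{n-1} = W_n^1 \to W_n^2 \to \cdots \to W_n^n = W_n$ of~\cref{subsec:recollectionsWeakOrder}, where each refinement records how~$n$ interacts with one additional element.

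The key observation is that an $\s$-tree~$\tree$ is specified by~$\tree_{\le n-1}$ together with the leaf to which node~$n$ is attached, and by~\cref{prop:characterizationPositions} this attachment is encoded by the vector $(\pos(\tree, k, n))_{k \in [n-1]}$ subject to the compatibility conditions stated there. We would define intermediate posets~$W_\s^{(k,p)}$, indexed by~$k$ decreasing from~$n-1$ to~$1$ and, for each such~$k$, by~$p$ decreasing from~$s_k$ to~$1$, as the quotients of~$W_\s$ that remember~$\tree_{\le n-1}$ in full, the exact values of $\pos(\cdot, i, n)$ for all~$i > k$, and only the comparison ``$\pos(\cdot, k, n) < p$ versus $\pos(\cdot, k, n) \ge p$'' at level~$k$. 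Indexing so that the first poset in the sequence is~$W_{\s_{\le n-1}}$ (all information about~$n$ forgotten) and the last is~$W_\s$ (all positions of~$n$ fully determined), each successive refinement splits every equivalence class into two based on a single threshold comparison.

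The crucial claim is that each refinement step is an interval doubling: in the current intermediate poset, the subset of classes satisfying the newly added condition (say $\pos(\cdot, k, n) \ge p$) should form an order convex interval whose minimum and maximum can be identified explicitly. We expect this to be the principal obstacle. The extremal trees of each such interval should generalize the permutations $\tilde\sigma^{i}_{U,V}$ and $\tilde\tau^{i}_{U,V}$ of the classical construction, here corresponding to $\s$-trees that maximally nest or anti-nest the outgoing edges of~$k$ around the path to~$n$ at the threshold~$p$. To control the covers inside each interval and to verify that the doubling produces exactly the next intermediate poset, we would combine the position-vector characterization of~\cref{prop:characterizationPositions} with the ascent/descent rotation description of cover relations from~\cref{prop:positionsRotation,prop:sWeakOrderCoverRelations}, together with a direct check that the coarsening maps between consecutive intermediate posets preserve joins and meets (so that each quotient remains a lattice throughout).
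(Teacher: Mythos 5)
Your plan is structurally the same as the paper's: induct on $n$, strip off node~$n$, and reintroduce its position information one step at a time, with each step realized by interval doublings whose extremal elements generalize the classical~$\tilde\sigma^{i}_{U,V}$ and~$\tilde\tau^{i}_{U,V}$. But the definition of your intermediate posets, taken literally, does not produce a refinement chain, and this is the point where the argument would break. If~$W_\s^{(k,p)}$ remembers \emph{only} the binary comparison ``$\pos(\cdot,k,n)<p$ versus~$\ge p$'' at level~$k$, then passing from~$p$ to~$p-1$ is not a refinement: a tree with~$\pos(\cdot,k,n)=p-1$ and one with~$\pos(\cdot,k,n)=p$ are separated at threshold~$p$ but identified at threshold~$p-1$, so the partitions are not nested and there is no interval to double. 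The paper's criterion is cumulative rather than a single cut: $W_\s^{i,j}$ identifies~$\tree$ and~$\tree'$ when~$\bar\tree=\bar\tree'$, when~$\pos(\tree,k,n)=\pos(\tree',k,n)$ for all~$n-i<k<n$, and when~$\pos(\tree,n-i,n)$ and~$\pos(\tree',n-i,n)$ either coincide or are both~$\ge j$; so the exact value is retained below the threshold and everything above is a single lump, and raising~$j$ genuinely refines. A mirrored cumulative criterion (exact above~$p$, lumped below) would also work with your decreasing index, so the fix is small, but your phrasing as a one-bit comparison must be replaced. Two further imprecisions: not ``every equivalence class'' splits at each step — only those inside the doubled intervals do, which is precisely the content of interval doubling; and the set of classes that split is not a single order-convex interval but a disjoint family of intervals indexed by the partitions~$U\sqcup V={]n-i,n[}$ (as your own reference to the~$(U,V)$-indexed extremal elements already suggests). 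With those corrections your sketch coincides with the paper's.
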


\begin{proof}[Proof sketch]
We just give a brief description of the sequence of doublings, generalizing the description of \cref{subsec:recollectionsWeakOrder} for the interval doublings in the weak order.
Let~$\bar\s \eqdef \s_{\le n-1} = (s_1, \dots, s_{n-1})$ and denote by~$\bar\tree$ the $\bar\s$-tree obtained by deleting the node~$n$ in an $\s$-tree~$\tree$.
For any~$i \in [n-1]$ and~${j \in \llbracket s_{n-i}]}$, we denote by~$W_\s^{i,j}$ the poset obtained from the $\s$-weak order~$W_\s$ by identifying two $\s$-trees~$\tree$ and~$\tree'$ if and only if~$\bar\tree = \bar\tree'$, $\pos(\tree, k, n) = \pos(\tree', k, n)$ for all~${n - i < k < n}$, and~$\pos(\tree, n-i, n)$ and~$\pos(\tree', n-i, n)$ either coincide or are both at least~$j$.
Note that~$W_\s^{1,0}$ \linebreak is isomorphic to the $\bar\s$-weak order~$W_{\bar\s}$, that~$W_\s^{n-1,s_1}$ is just the $\s$-weak order~$W_\s$, and \linebreak that~${W_\s^{i,s_i} = W_\s^{i+1,0}}$.
See \cref{fig:sWeakOrderIntervalDoublings} for illustrations.

We now observe that we can construct~$W_\s^{i,j}$ from~$W_\s^{i,j-1}$ by interval doublings.
For~$J \subseteq [n]$, the \defn{left} (resp.~\defn{right}) \defn{$\s$-comb} with nodes~$J$ is the increasing tree on~$J$, where each node~$j \in J$ has~$s_j+1$ outgoing edges, which are all leaves except the leftmost (resp.~rightmost).
Consider~${U \eqdef \{u_1 < \dots < u_p\}}$ and~$V \eqdef \{v_1 < \dots < v_q\}$ such that~$U \sqcup V = {]n-i, n[}$. 
We denote by~$\tree[S]_{U,V}^{i,j}$ the class in~$W_\s^{i,j}$ of $\s$-trees~$\tree[S]$ such that~$\bar{\tree[S]}$ is obtained by attaching the right comb with nodes~$U$ to the $(j+1)$-st rightmost outgoing edge of~$n-i$ in the right comb with nodes~$[n-i-1]\cup V$ and where the node $n$ is attached either to the rightmost leaf of the right comb with nodes~$U$ or to leftmost leaf of the $j$-th rightmost subtree of~$n-i$.
We denote by~$\tree[T]_{U,V}^{i,j}$ the class in $W_\s^{i,j}$ of $\s$-trees~$\tree$ such that $\bar{\tree[T]}$ is obtained by attaching the left comb with nodes~$V$ to the $j$-th rightmost outgoing edgege of $n-i$ in the left comb with nodes $[n-i-1]\cup U$ and where the node $n$ is attached either to the leftmost leaf of the left comb with nodes~$V$ or to the rightmost leaf of the $(j+1)$-st rightmost subtree of~$n-i$. 
We let the reader check that~$W_\s^{i,j}$ is obtain from~$W_\s^{i,j-1}$ by doubling the intervals~$[\tree[S]_{U,V}^{i,j}, \tree[T]_{U,V}^{i,j}]$ for all~$U \sqcup V = {]n-i,n[}$.
\begin{figure}[t]
	\capstart
	\centerline{\includegraphics[scale=.6]{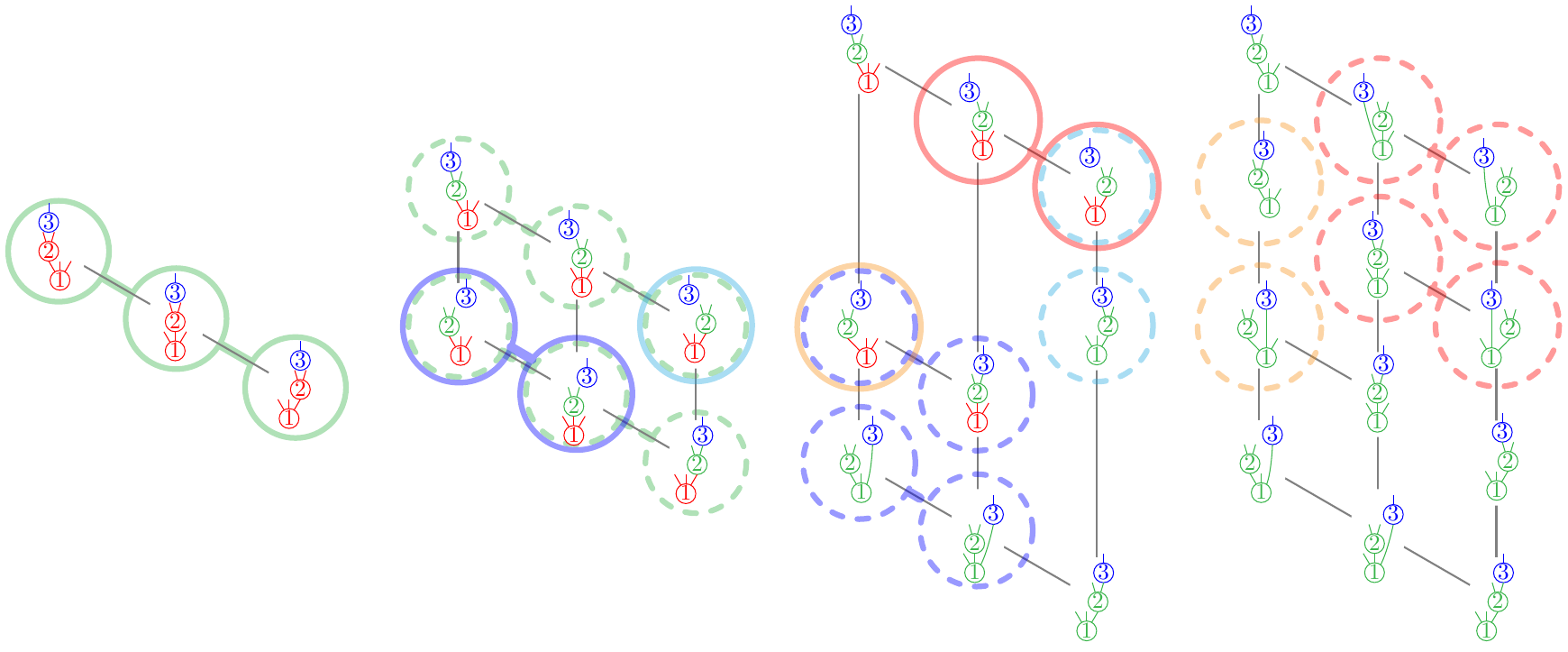}}
	\caption{Interval doublings from the $(2,1)$-weak order (left) to the $(2,1,0)$-weak order (right). The four lattices are~$W_{(2,1)} \simeq W_{(2,1,0)}^{1,0}$, $W_{(2,1,0)}^{1,1} = W_{(2,1,0)}^{2,0}$, $W_{(2,1,0)}^{2,1}$, and~$W_{(2,1,0)}^{2,2} = W_{(2,1,0)}$.}
	\label{fig:sWeakOrderIntervalDoublings}
\end{figure}
\end{proof}

%%%%%%%%%%%%%%%%%

\subsection{The facial $\s$-weak order}
\label{subsec:sFacialWeakOrder}

We now extend the $\s$-weak order to all $\s$-bushes (hence, to all polyhedra of the $\s$-foam~$\sFoam$).
Although it is not required to read the remaining of the paper, we have included this section as the ideas are very similar to that of \cref{subsec:sWeakOrder}.

\begin{definition}
\label{def:leftRightPositions}
For an $\s$-bush~$\bush$ and~$1 \le i < j \le n$, we define the \defn{left} (resp.~\defn{right}) \defn{position}~$\lpos(\bush, i, j)$ (resp.~$\rpos(\bush, i, j)$) as follows:
\begin{itemize}
\item If~$i$ is an ancestor of~$j$, consider the rightmost (resp.~leftmost) increasing path~$\pi$ from~$i$ to~$j$ such that any inner node of~$\pi$ with an incoming edge strictly left (resp.~right) of~$\pi$ has an outgoing edge strictly left (resp.~right) of~$\pi$, and define~$\lpos(\bush, i, j)$ (resp.~$\rpos(\bush, i, j)$) as the number of outgoing edges of~$i$ strictly left (resp.~right) of~$\pi$.
\item Otherwise, define~$\lpos(\bush, i, j) = s_i$ and~$\rpos(\bush, i, j) = 0$ if~$i$ is on the left of any increasing path from the root of~$\bush$ to~$j$, and $\lpos(\bush, i, j) = 0$ and~$\rpos(\bush, i, j) = s_i$ otherwise.
\end{itemize}
\end{definition}

\begin{remark}
A few observations on \cref{def:leftRightPositions}:
\begin{itemize}
\item \cref{def:leftRightPositions} contains \cref{def:positions}: for all~$1 \le i < j \le n$, there is at most one increasing path from~$i$ to~$j$ in an $\s$-tree~$\tree$, so that $\pos(\tree, i, j) = \rpos(\tree, i, j) = s_i - \lpos(\tree, i, j)$.
\item For any $\s$-bush~$\bush$ and any~$1 \le i < j \le n$, we have~$0 \le \lpos(\bush, i, j) , \rpos(\bush, i, j) \le s_i$, and~$\lpos(\bush, i, j) + \rpos(\bush, i, j) \le s_i+1$.
Moreover, both~$\lpos(\bush, i, j)$ and~$\rpos(\bush, i, j)$ satisfy the conditions of \cref{lem:propertiesPositions}.
We conjecture that these conditions characterize the left and right positions of~$\s$-bushes.
\item For any two $\s$-bushes~$\bush$ and~$\bush'$, one can check that the closed fiber~$\closedFiber{\bush}$ is a face of the closed fiber~$\closedFiber{\bush'}$ if and only if~$\lpos(\bush, i, j) \ge \lpos(\bush', i, j)$ and $\rpos(\bush, i, j) \ge \rpos(\bush', i, j)$ for all~$1 \le i < j \le n$.
\end{itemize}
\end{remark}

We now use~$\lpos(\bush, i, j)$ and~$\rpos(\bush, i, j)$ to define a different order on $\s$-bushes, which extends the $\s$-weak order of \cref{def:sWeakOrder} and generalizes the facial weak order of~\cite{KrobLatapyNovelliPhanSchwer}.
It is illustrated in \cref{fig:facialWeakOrders}.

\begin{figure}[t]
	\capstart
	\centerline{\raisebox{2cm}{\includegraphics[scale=.6]{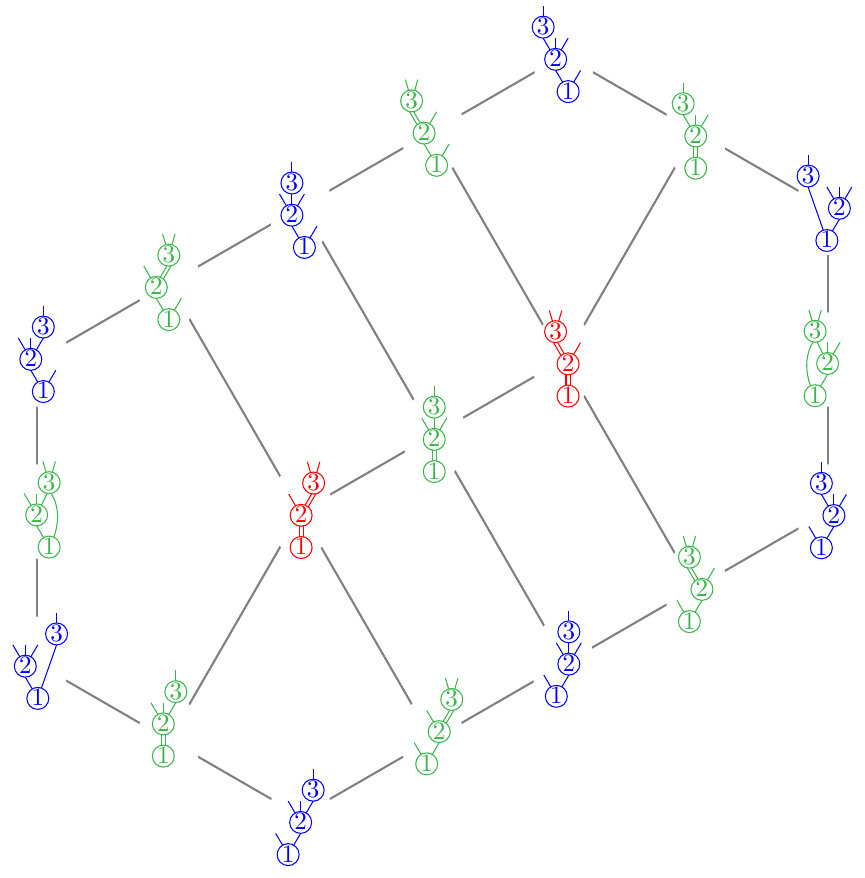}} \hspace{.5cm} \includegraphics[scale=.6]{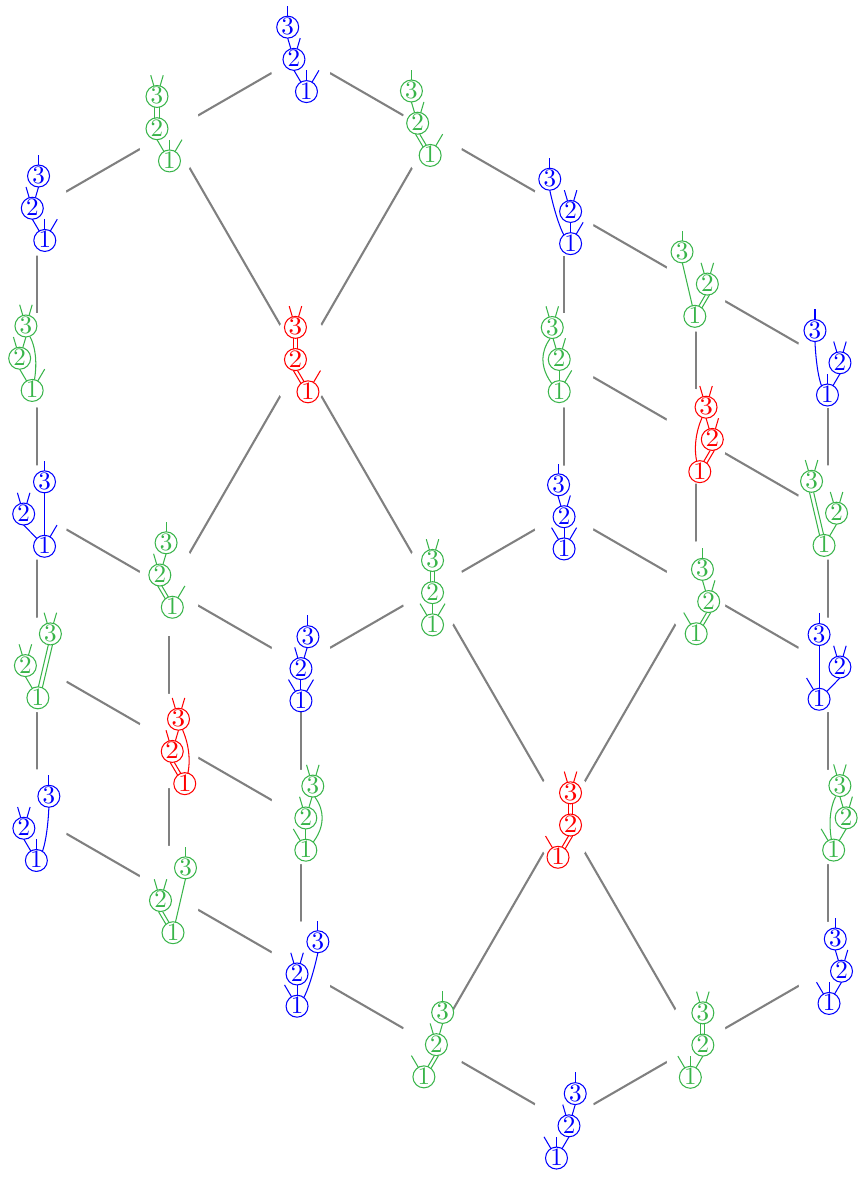}}
	\caption{The facial $\s$-weak order~$FW_\s$ for $\s = (1,2,0)$ (left) and~$\s = (2,1,0)$ (right).}
	\label{fig:facialWeakOrders}
\end{figure}

\begin{definition}
\label{def:sFacialWeakOrder}
The \defn{facial $\s$-weak order}~$FW_\s$ is the partially ordered set of $\s$-bushes given by~$\bush \le \bush'$ if and only if~$\lpos(\bush, i, j) \ge \lpos(\bush', i, j)$ and~$\rpos(\bush, i, j) \le \rpos(\bush', i, j)$ for all~$1 \le i < j \le n$.
\end{definition}

\begin{remark}
As for the facial weak order on ordered set partitions of~$[n]$, there are several equivalent perspectives on the facial $\s$-weak order.
Namely, 
\begin{itemize}
\item $\bush \le \bush'$ if and only if $\min(\bush) \le \min(\bush')$ and~$\max(\bush) \le \max(\bush')$, where~$\min(\bush)$ and~$\max(\bush)$ denote the $\s$-weak order minimal and maximal $\s$-trees refining the $\s$-bush~$\bush$,
\item if an $\s$-bush~$\bush$ has an indegree~$2$ node~$j$, then it covers (resp.~is covered by) the $\s$-bush obtained from~$\bush$ by detaching the left (resp.~right) incoming edge of~$j$, and these are all cover relations of the facial $\s$-weak order (note that this simple description contrasts with the involved description of the cover relations of the refinement order, see \cref{rem:coverRelationsRefinementBushes}).
\end{itemize}
\end{remark}

\begin{proposition}
\label{prop:sFacialWeakOrderIntervalDoblings}
The facial $\s$-weak order~$FW_\s$ is constructible by interval doublings, so that it is a congruence uniform lattice.
\end{proposition}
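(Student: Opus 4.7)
The plan is to follow the strategy of Proposition~\ref{prop:sWeakOrderIntervalDoblings} but now working with all $\s$-bushes rather than just $\s$-trees, proceeding by induction on~$n$. The base case is trivial, and for the inductive step it suffices to exhibit a finite sequence of interval doublings from the facial $\bar\s$-weak order~$FW_{\bar\s}$ (with $\bar\s \eqdef (s_1,\dots,s_{n-1})$) to~$FW_\s$. Throughout, $\bar\bush$ denotes the $\bar\s$-bush obtained from an $\s$-bush~$\bush$ by deleting the node~$n$.

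For each~$i \in [n-1]$ and~$j \in \llbracket s_{n-i}]$, we define the intermediate poset~$FW_\s^{i,j}$ as the quotient of~$FW_\s$ identifying two $\s$-bushes~$\bush$ and~$\bush'$ if and only if $\bar\bush = \bar\bush'$, the pairs $\big(\lpos(\bush, k, n), \rpos(\bush, k, n)\big)$ and $\big(\lpos(\bush', k, n), \rpos(\bush', k, n)\big)$ coincide for all $n-i < k < n$, and at $k = n-i$ these pairs either coincide or are both at least $j$ in the relevant $\lpos$ or $\rpos$ coordinate. Then $FW_\s^{1,0}$ is isomorphic to~$FW_{\bar\s}$, $FW_\s^{i,s_{n-i}} = FW_\s^{i+1,0}$, and~$FW_\s^{n-1,s_1} = FW_\s$. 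To pass from $FW_\s^{i,j-1}$ to $FW_\s^{i,j}$, we double, for each partition $U \sqcup V = {]n-i, n[}$, an interval whose extremal $\s$-bushes are the natural analogues of $\tree[S]^{i,j}_{U,V}$ and $\tree[T]^{i,j}_{U,V}$ from Proposition~\ref{prop:sWeakOrderIntervalDoblings}, but where node~$n$ is allowed to be attached at the $j$-th outgoing edge of~$n-i$ either via a single incoming edge (yielding two tree configurations) or via two incoming edges (yielding a degenerate $\s$-bush with a hole). Doubling each such interval separates the indegree-$1$ and indegree-$2$ configurations in~$FW_\s^{i,j}$.

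The main obstacle is the combinatorial verification that the families chosen for doubling really are pairwise disjoint intervals of~$FW_\s^{i,j-1}$, compatible with the cover relations on $\s$-bushes described in Remark~\ref{rem:coverRelationsRefinementBushes}. This is more delicate than the $\s$-tree case because of the extra elements corresponding to $\s$-bushes with indegree-$2$ nodes, whose left and right position statistics interact nontrivially with the attachment of node~$n$, so that one must carefully monitor how a cover relation in~$FW_\s^{i,j-1}$ either stays a single cover relation in~$FW_\s^{i,j}$ or is split into two parallel cover relations through the two new copies of the doubled interval. Once all doubling steps are verified, the conclusion that $FW_\s$ is a congruence uniform lattice follows from A.~Day's characterization~\cite{Day} of congruence uniform lattices as iterated interval doublings of distributive lattices, applied to the (distributive, by induction on~$n$ starting from the one-element lattice) base case~$FW_{\bar\s}$.
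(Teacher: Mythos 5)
Your overall strategy matches the paper's: induct on~$n$, pass from $FW_{\bar\s}$ to $FW_\s$ through intermediate quotients $FW_\s^{i,j}$ parametrized by the position data of node~$n$ relative to node~$n-i$, and split classes at each step. You also correctly observe that, unlike in the $\s$-tree case, the facial setting introduces three configurations at each level: node~$n$ attached with indegree~$1$ at one slot, attached with indegree~$1$ at the adjacent slot, and the degenerate $\s$-bush with a hole where node~$n$ is attached with indegree~$2$ straddling both slots.

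However, there is a genuine gap precisely at that point. Having correctly identified \emph{three} new states, you then write ``doubling each such interval separates the indegree-$1$ and indegree-$2$ configurations,'' i.e.\ you invoke a \emph{single} interval doubling per step. A single doubling $P \mapsto P[X]$ replaces each element of $X$ by exactly two copies; it cannot separate a class into three states. To account for the hole configuration sitting strictly between the two indegree-$1$ configurations in the refinement order, you need an interval \emph{tripling} — that is, two successive interval doublings, where the second doubles one of the two intervals produced by the first. This is exactly what the paper does: each step from $FW_\s^{i,j-1}$ to $FW_\s^{i,j}$ is realized as a sequence of interval triplings, explicitly noted to be ``obtained by doubling twice each interval.'' Without that second doubling the cover relation from the hole $\s$-bush to each of its two incisions cannot appear, so the resulting poset is not $FW_\s^{i,j}$ and the argument does not terminate at $FW_\s$.

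A smaller but related issue is that your identification condition at $k = n-i$ is stated too loosely (``both at least $j$ in the relevant $\lpos$ or $\rpos$ coordinate''). Because $\lpos$ and $\rpos$ move in opposite directions as node~$n$ shifts, the correct condition is asymmetric: the $\lpos$ values should either coincide or both be at most $s_{n-i}-j$, while the $\rpos$ values should either coincide or both be at least $j$. Stating it symmetrically obscures exactly the three-way split that forces the tripling.
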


\begin{proof}[Proof sketch]
The proof follows the same lines as that of \cref{prop:sWeakOrderIntervalDoblings}.
Let~$\bar\s \eqdef \s_{\le n-1} = (s_1, \dots, s_{n-1})$ and denote  by~$\bar\bush$ the $\bar\s$-bush obtained by deleting the node~$n$ in an $\s$-bush~$\bush$.
For any~$i \in [n-1]$ and~$j \in \llbracket s_{n-i}]$, we denote by~$FW_\s^{i,j}$ the poset obtained from the facial $\s$-weak order~$FW_\s$ by identifying two $\s$-bushes~$\bush$ and~$\bush'$ if and only if~$\bar\bush = \bar\bush'$, $\lpos(\bush, k, n) = \lpos(\bush', k, n)$ and $\rpos(\bush, k, n) = \rpos(\bush', k, n)$ for all~$n - i < k < n$, $\lpos(\bush, n-i, n)$ and~$\lpos(\bush', n-i, n)$ either coincide or are both at most~$s_i-j$, and $\rpos(\bush, n-i, n)$ and~$\rpos(\bush', n-i, n)$ either coincide or are both at least~$j$.
As before, observe that~$FW_\s^{1,0}$ is isomorphic to the facial $\bar\s$-weak order~$FW_{\bar\s}$, that~$FW_\s^{n-1,s_1}$ is just the facial $\s$-weak order~$FW_\s$, and that~$FW_\s^{i,s_i} = FW_\s^{i+1,1}$.
We then prove that we can construct~$FW_\s^{i,j}$ from~$FW_\s^{i,j-1}$ by a sequence of interval tripling (which are obtained by doubling twice each interval).
\end{proof}

%%%%%%%%%%%%%%%%%

%\subsection{Cubic coordinates}
%\label{subsec:cubicCoordinates}
%
%This section is not required to read the remaining of the paper.

%%%%%%%%%%%%%%%%%%%%%%%%%%%%%%%%%%%%%%

\section{Canonical representations in the $\s$-weak order}
\label{sec:canonicalComplex}

This section is devoted to the combinatorial description of the join irreducibles and canonical join representations in the $\s$-weak order~$W_\s$.

%%%%%%%%%%%%%%%%%

\subsection{Recollections~\ref{sec:canonicalComplex}: Canonical representations in the weak order}
\label{subsec:recollectionsCanonicalComplex}

We first recall the combinatorics of canonical representations of permutations in terms of non-crossing arc diagrams~\cite{Reading-arcDiagrams}, and of weak order intervals in terms of semi-crossing arc bidiagrams~\cite{AlbertinPilaud}.

\subsubsection{Canonical representations in semidistributive lattices}

We first recall the definition of canonical representations in a finite semi-distributive lattice~$(L, \le, \meet, \join)$.
A \defn{join representation} of~${x \in L}$ is a subset~$J \subseteq L$ such that~$x = \bigJoin J$.
Such a representation is \defn{irredundant} if~$x \ne \bigJoin J'$ for any strict subset~$J' \subsetneq J$.
The irredundant join representations of an element~$x \in L$ are ordered by containment of the down sets of their elements, \ie~$J \le J'$ if and only if for any~$y \in J$ there exists~$y' \in J'$ such that~$y \le y'$ in~$L$.
The \defn{canonical join representation} of~$x$ is the minimal irredundant join representation of~$x$ for this order when it exists.
Its elements are the \defn{canonical joinands} of~$x$.
Note that by definition, the canonical joinands form an antichain of join irreducible elements of~$L$ (the elements that are their only irredundant join representation, or equivalently that cover a single element).

The lattice~$L$ is \defn{join semidistributive} if the following equivalent assertions hold
\begin{itemize}
\item $x \join y = x \join z$ implies $x \join (y \meet z) = x \join y$ for any~$x, y, z \in L$,
\item all elements of~$L$ admit a canonical join representation.
\end{itemize}
The \defn{canonical join complex} of~$L$ is the simplicial complex whose vertices are the join irreducible elements of~$L$ and whose simplices are the canonical join representations of elements of~$L$, see~\cite{Reading-arcDiagrams, Barnard}.
The \defn{canonical meet representations}, the \defn{canonical meetands}, and the \defn{canonical meet complex} of a meet semidistributive lattice are defined dually.

The lattice~$L$ is \defn{semidistributive} if it is both join and meet semidistributive.
In this situation, its canonical join and meet complexes are isomorphic flag simplicial complexes~\cite{Barnard}.
The \defn{canonical representation} of an interval~$[x,y]$ of~$L$ is the disjoint union~$J \sqcup M$, where~$J$ is the canonical join representation of~$x$ and~$M$ the canonical meet representation of~$y$.
The \defn{canonical complex} of~$L$ is the flag simplicial complex whose vertex set is the disjoint union of the join irreducible elements of~$L$ and the meet irreducible elements of~$L$, and whose simplices are the canonical representations of the intervals of~$L$, see~\cite{AlbertinPilaud}.

\subsubsection{Canonical join representations of permutations and non-crossing arc diagrams}

We now recall the combinatorial model of~\cite{Reading-arcDiagrams} for the canonical join representations in the weak order.
An \defn{arc} on~$[n]$ is a quadruple~$(i, j, A, B)$ where~$1 \le i < j \le n$ and~$A \sqcup B$ forms a partition of the interval~${]i,j[} \eqdef \{i+1, \dots, j-1\}$.
We represent an arc by a curve wiggling around the vertical axis, starting at height~$i$ and ending at height~$j$, and passing to the right of the points of~$A$ and to the left of the points of~$B$.
Two arcs~$\alpha \eqdef (i, j, A, B)$ and~$\alpha' \eqdef (i', j', A', B')$ are \defn{crossing} if they cross in their interior or have the same bottom endpoints or the same top endpoints.
In other words, if there exist~$u \ne v$ such that ${u \in (A' \cup \{i', j'\}) \cap (B \cup \{i, j\})}$ and ${v \in (A \cup \{i, j\}) \cap (B' \cup \{i', j'\})}$.
A \defn{non-crossing arc diagram} on~$[n]$ is a collection of pairwise non-crossing arcs on~$[n]$.

\vspace{-.1cm}
\parpic(5cm,1.3cm)(20pt, 100pt)[r][b]{\includegraphics[scale=.75]{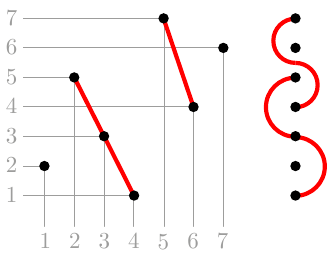}}{
In~\cite{Reading-arcDiagrams}, N.~Reading defined an elegant bijection between permutations of~$[n]$ and non-crossing arc diagrams on~$[n]$.
It sends a permutation~$\sigma$ to the non-crossing arc diagram~$\delta_\join(\sigma)$ with an arc~$(\sigma_{j+1}, \sigma_j, A_j, B_j)$ for each descent~$j \in [n-1]$ of~$\sigma$ (\ie with~$\sigma_j > \sigma_{j+1}$), where

\begin{minipage}{8cm}
\begin{align*}
A_j & \eqdef \set{\sigma_i}{1 \le i < j \text{ and } \sigma_{j+1} < \sigma_i < \sigma_j} \\
\text{and} \qquad
B_j & \eqdef \set{\sigma_k}{j+1 < k \le n \text{ and } \sigma_{j+1} < \sigma_k < \sigma_j}.
\end{align*}
\end{minipage}
}

\vspace{.4cm}
\noindent
As illustrated on the right with~$\sigma = 2531746$, the non-crossing arc diagram~$\delta_\join(\sigma)$ can also been visualized by representing the table~$(j, \sigma_j)$ of the permutation~$\sigma$, drawing the segments corresponding to the descents of~$\sigma$, and collapsing the picture vertically, allowing the segments to bend but not to cross each other nor to pass through a point.

\begin{figure}[t]
	\capstart
	\centerline{\raisebox{2.7cm}{\includegraphics[scale=.8]{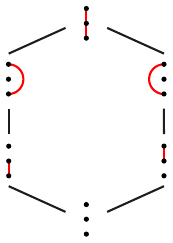}} \hspace{1.5cm} \includegraphics[scale=.8]{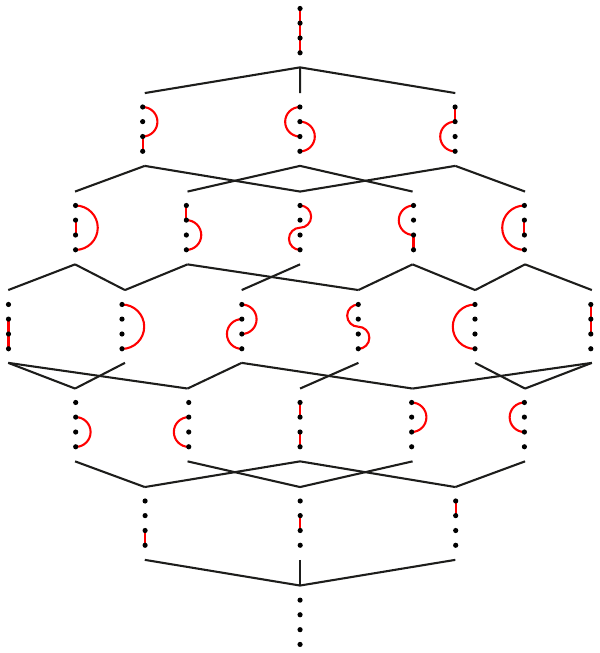}}
	\caption{The weak order for~$n = 3$ (left) and~$n = 4$ (right), where each permutation~$\sigma$ is replaced by its non-crossing arc diagram~$\delta_\join(\sigma)$.}
	\label{fig:weakOrderArcDiagrams}
\end{figure}

The single arcs correspond to permutations with a single descent, that is, to join irreducible permutations of the weak order.
More generally, the non-crossing arc diagram~$\delta_\join(\sigma)$ of a permutation~$\sigma$ encodes the canonical join representation of~$\sigma$ in the weak order.

\begin{theorem}[\cite{Reading-arcDiagrams}]
\label{thm:canonicalJoinComplexWeakOrder}
The map~$\delta_\join$ is a bijection from the permutations of~$[n]$ to the non-crossing arc diagrams on~$[n]$.
The canonical join representation~$\sigma$ is precisely the set of join irreducible permutations corresponding to the arcs of the non-crossing arc diagram~$\delta_\join(\sigma)$.
In other words, the canonical join complex of the weak order~$W_n$ is isomorphic to the non-crossing arc complex on~$[n]$.
\end{theorem}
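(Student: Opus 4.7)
The plan is to establish the two claims in turn: first that $\delta_\join$ is a bijection, and then that the arcs of $\delta_\join(\sigma)$ encode the canonical join representation of $\sigma$.

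For well-definedness, I would show that the arcs $\alpha_j \eqdef (\sigma_{j+1}, \sigma_j, A_j, B_j)$ produced by $\delta_\join$ are pairwise non-crossing. Suppose for contradiction two arcs $\alpha_j$ and $\alpha_k$ with $j < k$ cross, so there exist $u \ne v$ with $u \in (A_k \cup \{\sigma_{k+1}, \sigma_k\}) \cap (B_j \cup \{\sigma_{j+1}, \sigma_j\})$ and $v \in (A_j \cup \{\sigma_{j+1}, \sigma_j\}) \cap (B_k \cup \{\sigma_{k+1}, \sigma_k\})$. By definition, an element $\sigma_i$ lies in $A_m$ only if $i < m$ and in $B_m$ only if $i > m+1$. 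A short case analysis on the position of the preimage of $u$ (resp.\ $v$) relative to $j, j+1, k, k+1$, together with the descent conditions $\sigma_j > \sigma_{j+1}$ and $\sigma_k > \sigma_{k+1}$, forces a contradiction; either the value ranges $(\sigma_{j+1}, \sigma_j)$ and $(\sigma_{k+1}, \sigma_k)$ do not overlap (so no crossing is possible), or they do and the positions of $u, v$ are pinned in a way that contradicts the defining inequalities.

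Next, to prove bijectivity I would exhibit an explicit inverse $\varepsilon$, constructed inductively on $n$. Given a non-crossing arc diagram $D$ on $[n]$, let $\alpha = (i, n, A, B)$ be the (unique, if any) arc of $D$ with top endpoint $n$; remove $\alpha$ and apply $\varepsilon$ recursively to obtain a permutation $\tau$ of $[n-1]$; then insert $n$ into $\tau$ at the position immediately after the entry $i$ (or at the unique position compatible with $A, B$ when $\alpha$ is absent, which amounts to appending $n$ at the end). Using the non-crossing condition, one checks that this insertion is well-defined and produces a permutation $\varepsilon(D)$ whose descent at $n$'s left-neighbour (if any) matches $\alpha$ and whose remaining descents match the arcs of $D \setminus \{\alpha\}$. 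A parallel induction shows $\delta_\join \circ \varepsilon = \mathrm{id}$ and $\varepsilon \circ \delta_\join = \mathrm{id}$.

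For the canonical join interpretation, I would first identify the join irreducibles of $W_n$ as the permutations with exactly one descent; via $\delta_\join$ these are in bijection with arcs. For each descent $j$ of $\sigma$, define $\sigma^{(j)}$ as the unique minimum of $\set{\tau \in W_n}{\tau \le \sigma \text{ and } (\sigma_{j+1}, \sigma_j) \in \inv(\tau)}$; a direct check shows $\sigma^{(j)}$ has the single descent producing precisely the arc $\alpha_j$. Then verify $\sigma = \bigJoin_j \sigma^{(j)}$ by comparing inversion sets, and that the representation is irredundant because $\sigma^{(j)}$ is the only joinand containing the inversion $(\sigma_{j+1}, \sigma_j)$. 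Minimality among irredundant join representations follows from join semidistributivity of $W_n$: any other irredundant join representation $J'$ of $\sigma$ must, for each descent $j$, have some element $\tau \in J'$ with $(\sigma_{j+1}, \sigma_j) \in \inv(\tau)$, hence $\sigma^{(j)} \le \tau$ by minimality of $\sigma^{(j)}$, showing $\{\sigma^{(j)}\}_j \le J'$ in the order on irredundant join representations.

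The main obstacle will be the case analysis in the non-crossing verification, and the precise description and matching of the canonical joinand $\sigma^{(j)}$ with the arc $\alpha_j$; the rest is bookkeeping once the inverse $\varepsilon$ is set up and the general semidistributive machinery from \cref{subsec:recollectionsCanonicalComplex} is invoked.
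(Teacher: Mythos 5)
Your plan for the bijectivity part has a genuine gap: the inductive inverse $\varepsilon$ you describe does not invert $\delta_\join$. The recursion $\varepsilon(D) = $ ``insert $n$ into $\varepsilon(D \smallsetminus \{\alpha\})$'' implicitly assumes that deleting $n$ from $\sigma$ and deleting the arc $\alpha$ with top endpoint $n$ from $\delta_\join(\sigma)$ commute, i.e.\ that $\delta_\join(\bar\sigma) = \delta_\join(\sigma) \smallsetminus \{\alpha\}$, where $\bar\sigma$ is the restriction of $\sigma$ to $[n-1]$. This is false in general, because removing $n$ from a position $j$ with $\sigma_{j-1} > \sigma_{j+1}$ \emph{creates} a descent (at the new position $j-1$) that $\sigma$ did not have, so $\delta_\join(\bar\sigma)$ acquires an extra arc. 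Concretely, take $\sigma = 3412$: then $\delta_\join(\sigma) = \{(1,4,\{3\},\{2\})\}$, so your recursion computes $\tau = \varepsilon(\varnothing) = 123$, and there is no position where inserting $4$ into $123$ recovers a permutation whose arc diagram is $\{(1,4,\{3\},\{2\})\}$ --- inserting before $1$ gives $4123 \mapsto \{(1,4,\varnothing,\{2,3\})\}$, after $1$ gives $1423 \mapsto \{(2,4,\varnothing,\{3\})\}$, and the ``unique position compatible with $A,B$'' does not exist in $123$ since $A = \{3\}$ would require $3$ to sit left of $4$ while $n$ must be adjacent to $1$. The recursion must be based on $\tau = 312$, not $123$, but $D\smallsetminus\{\alpha\}$ gives no way to recover that.

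For a correct construction, note that the paper's own $\s$-generalization (\cref{prop:bij_sTree2sArcDiagram}) builds the inverse by a forward insertion procedure that places the elements $1, 2, \ldots, n$ one at a time, reading off the attachment point of the $k$-th element from the arcs of $D$ with endpoint or interior point $k$; the non-crossing condition is exactly what makes this attachment well-defined. Reading's original proof for permutations works the same way. Alternatively, you can sidestep the inverse altogether: first show $\delta_\join$ is injective (if $\sigma \ne \sigma'$ agree up to the first position $j$ where they differ, then one of them has a descent at $j$ not reproduced by the other), then verify the canonical-join statement --- which then automatically gives the image of $\delta_\join$ as the full non-crossing arc complex, since semidistributivity of $W_n$ guarantees every permutation has a canonical join representation and hence every non-crossing arc diagram is hit.

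The remainder of your plan --- identifying join irreducibles as single-descent permutations, defining $\sigma^{(j)}$ as the minimum of $\set{\tau \le \sigma}{(\sigma_{j+1},\sigma_j) \in \inv(\tau)}$, and matching this with $\alpha_j$ --- is essentially the right approach and parallels \cref{lem:prepCanonicalJoinRepresentation} and \cref{prop:canonicalJoinRepresentation} in the paper. You would still need to justify that this minimum is unique (it is, but the proof uses a lattice-theoretic or inversion-set argument) and check the non-crossing condition carefully, but those are genuine details rather than gaps. The inverse construction is where the argument breaks.
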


See~\cref{fig:weakOrderArcDiagrams} for the weak order on non-crossing arc diagrams and~\cite{Reading-arcDiagrams} for details.
The canonical meet representation of a permutation is obtained similarly, using ascents instead of descents to define the non-crossing arc diagram~$\delta_\meet(\sigma)$.

\subsubsection{Canonical representations of weak order intervals and semi-crossing arc bidiagrams}

Two arcs~$\alpha_\join \eqdef (i_\join, j_\join, A_\join, B_\join)$ and~$\alpha_\meet \eqdef (i_\meet, j_\meet, A_\meet, B_\meet)$ are \defn{semi-crossing} if they do not cross in their interiors with~$\alpha_\join$ going in the diagonal direction and $\alpha_\meet$ in the anti-diagonal direction at the crossing, and do not start at the same point with~$\alpha_\join$ leaving right of~$\alpha_\meet$ at this point, and do not end at the same point with~$\alpha_\join$ arriving left of~$\alpha_\meet$ at this point.
In other words, if there exists no~$u < v$ with ${u \in (A_\meet \cup \{i_\meet\}) \cap (B_\join \cup \{i_\join\})}$ and ${v \in (A_\join \cup \{j_\join\}) \cap (B_\meet \cup \{j_\meet\})}$.
A \defn{semi-crossing arc bidiagram} is a disjoint union~$\delta_\join \sqcup \delta_\meet$ of two non-crossing arc diagrams~$\delta_\join$ and~$\delta_\meet$ such that any~$\alpha_\join \in \delta_\join$ and~$\alpha_\meet \in \delta_\meet$ are semi-crossing.
The following statement is illustrated in \cref{fig:weakOrderCanonicalComplex} and details can be found in~\cite{AlbertinPilaud}.

\begin{figure}[t]
	\capstart
	\centerline{\raisebox{1.5cm}{\includegraphics[scale=1]{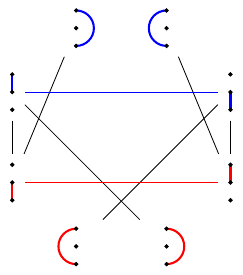}} \hspace{1cm} \includegraphics[scale=.45]{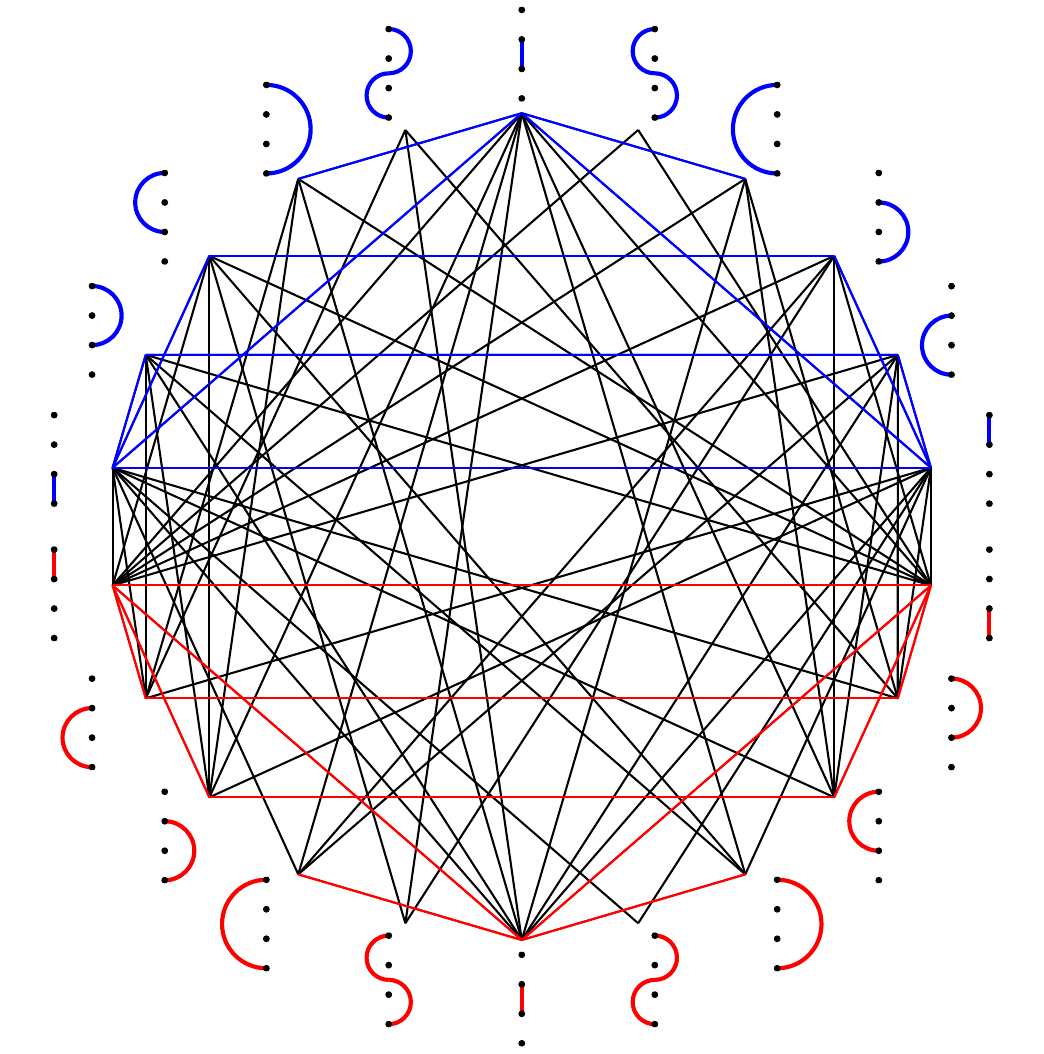}}
	\caption{The canonical complex of the weak order for~$n = 3$ (left) and~$n = 4$ (right), interpreted as the semi-crossing arc complex. The canonical join and meet complexes of the weak order, interpreted as non-crossing arc complexes, appear in red and blue respectively. Adapted from~\cite[Fig.~8 \& 9]{AlbertinPilaud}.}
	\label{fig:weakOrderCanonicalComplex}
\end{figure}

\begin{theorem}[\cite{AlbertinPilaud}]
\label{thm:canonicalComplexWeakOrder}
The map~$[x,y] \mapsto \delta_\join(\sigma) \sqcup \delta_\meet(\sigma)$ is a bijection from the weak order intervals to the semi-crossing arc bidiagrams, which encodes the canonical representations of the weak order.
In other words, the canonical complex of the weak order~$W_n$ is isomorphic to the semi-crossing arc complex on~$[n]$.
\end{theorem}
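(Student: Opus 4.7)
The plan is to upgrade \cref{thm:canonicalJoinComplexWeakOrder} (and its dual for canonical meet representations) from elements to intervals of the weak order, and to reduce the global interval condition to a pairwise comparability condition between join and meet irreducibles. By \cref{thm:canonicalJoinComplexWeakOrder}, the map~$\delta_\join$ is already a bijection between permutations of~$[n]$ and non-crossing arc diagrams, encoding the canonical join representations. A dual argument (using ascents in place of descents) gives that~$\delta_\meet$ is a bijection onto non-crossing arc diagrams encoding the canonical meet representations. Consequently, the assignment~$(x,y) \mapsto \delta_\join(x) \sqcup \delta_\meet(y)$ is automatically a bijection from ordered pairs of permutations onto ordered pairs of non-crossing arc diagrams, and what remains is to match the interval condition~$x \le y$ on the left with the semi-crossing condition on the right.

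Second, I would invoke the following standard fact about finite semidistributive lattices (which applies to~$W_n$): one has~$x \le y$ if and only if~$j \le m$ for every canonical joinand~$j$ of~$x$ and every canonical meetand~$m$ of~$y$. One direction is transparent from the chain~$j \le x \le y \le m$. For the converse, writing~$x = \bigJoin J$ and~$y = \bigMeet M$ for the canonical representations, the hypothesis forces~$j \le \bigMeet M = y$ for each~$j \in J$, whence~$x = \bigJoin J \le y$. This reduces the interval description to a pairwise condition between the join irreducibles indexed by arcs in~$\delta_\join(x)$ and the meet irreducibles indexed by arcs in~$\delta_\meet(y)$.

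The main obstacle, and the heart of the argument, is then the following key lemma: for a join irreducible permutation~$j$ with arc~${\alpha_\join = (i_\join, j_\join, A_\join, B_\join)}$ and a meet irreducible permutation~$m$ with arc~${\alpha_\meet = (i_\meet, j_\meet, A_\meet, B_\meet)}$, we have~$j \le m$ in~$W_n$ if and only if~$\alpha_\join$ and~$\alpha_\meet$ are semi-crossing. To establish this, I would read off the inversion set of~$j$ and the complementary non-inversion set of~$m$ directly from their arc data, and then verify that the inclusion~$\inv(j) \subseteq \inv(m)$ translates precisely into the nonexistence of a witnessing pair~$u < v$ with~${u \in (A_\meet \cup \{i_\meet\}) \cap (B_\join \cup \{i_\join\})}$ and~${v \in (A_\join \cup \{j_\join\}) \cap (B_\meet \cup \{j_\meet\})}$, which is exactly the semi-crossing condition. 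Combining the three steps yields the bijection and the encoding of canonical representations; the flag property of the canonical complex then comes for free, because semi-crossingness is defined pairwise between the join and the meet sides, so a candidate bidiagram is valid as soon as each of its edges is a semi-crossing pair.
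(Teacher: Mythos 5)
Your proposal is correct and follows essentially the same route as the cited reference and as this paper's own proof of the $\s$-generalization (\cref{prop:canonicalRepresentation}, which cites \cref{prop:canonicalJoinRepresentation}, \cref{coro:comparisonTjoinTmeet}\,(iii), and ``the ideas of \cite{AlbertinPilaud}''): use the bijections between (co)descent diagrams and canonical join/meet representations, reduce the interval condition $x \le y$ to the pairwise condition ``every canonical joinand of $x$ is $\le$ every canonical meetand of $y$'', and identify that pairwise condition with semi-crossingness of the corresponding arcs via an inversion-set computation. The only cosmetic quibble is in the last sentence: the flag property rests on \emph{both} the pairwise non-crossing condition inside each of $\delta_\join$ and $\delta_\meet$ (already contained in \cref{thm:canonicalJoinComplexWeakOrder}) and the pairwise semi-crossing condition across them, not just the latter.
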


%%%%%%%%%%%%%%%%%

\subsection{Join irreducibles of the $\s$-weak order, $\s$-arcs, $\s$-shards}
\label{subsec:sArcs}

We first define the main characters of this paper, generalizing the arcs of~\cite{Reading-arcDiagrams}.

\begin{definition}
An \defn{$\s$-arc} is a quintuple~$(i, j, A, B, r)$ where~$1 \le i < j \le n$, the sets~$A$ and~$B$ form a partition of~$\set{k \in {]i,j[}}{s_k \ne 0}$, and~$r \in [s_i]$.
\end{definition}

Note that the first entry~$i$ of an $\s$-arc must satisfy~$s_i \ne 0$ (otherwise, we have no choice for~$r$).
We represent $\s$-arcs graphically as follows.
We place~$n$ points on the vertical axis, where the point at level~$i$ is round if~$s_i \ne 0$ and square if~$s_i = 0$.
The $\s$-arc~$(i, j, A, B, r)$ is represented by a curve wiggling around the vertical axis, starting from the point at level~$i$ and ending at the point at level~$j$, passing on the right of the points in~$A$ and on the left of the points in~$B$ (we do not care if it passes on the left or right of the square points), and with an additional label~$r$ written close~to~it.

\begin{remark}
The number of~$\s$-arcs is $\sum_{1 \le i < j \le n} s_i \, 2^{\# \set{k \in {]i,j[}}{s_k \ne 0}}$.
\end{remark}

We now associate two specific $\s$-trees to each $\s$-arc.

\pagebreak
\enlargethispage{.8cm}

\parpic(4cm,3cm)(2pt, 115pt)[r][b]{\includegraphics[scale=.8]{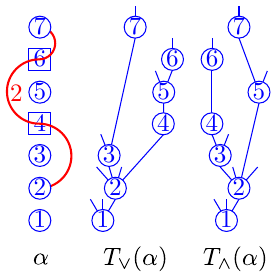}}{
\begin{definition}
\label{def:sArc2sTree}
For~$J \subseteq [n]$, the \defn{left} (resp.~\defn{right}) \defn{$\s$-comb} with nodes~$J$ is the increasing tree on~$J$, where each node~$j \in J$ has~$s_j+1$ outgoing edges, which are all leaves except the leftmost (resp.~rightmost).
For an $\s$-arc~$\alpha \eqdef (i, j, A, B, r)$, we define~$\tree_{\!\join}(\alpha)$ (resp.~$\tree_{\!\meet}(\alpha)$) as the $\s$-tree obtained by attaching the right (resp.~left) $\s$-comb with nodes~$A \cup \{j\}$ (resp.~$B \cup \{j\}$) to the $(r+1)$-st (resp.~$r$-th) rightmost leaf of~$i$ in the right (resp.~left) $\s$-comb with nodes~$[n] \ssm (A \cup \{j\})$ (resp.~$[n] \ssm (B \cup \{j\}$).
See examples of~$\tree_{\!\join}(\alpha)$ and~$\tree_{\!\meet}(\alpha)$ on the right for~$\s = (2, 3, 1, 0, 1, 0, 0)$ and~$\alpha = (2, 7, \{3\}, \{5\}, 2)$.
\end{definition}
}

\begin{lemma}
\label{lem:positionsArc}
For any~$\s$-arc~$\alpha \eqdef (i, j, A, B, r)$, and any~$1 \le k < \ell \le n$, we have
\begin{align*}
\pos(\tree_{\!\join}(\alpha), k, \ell) & = 
\begin{cases}
r & \text{if $k = i$ and $\ell \in A \cup \{j\}$} \\
s_k & \text{if $k \in B$ and $\ell \in A \cup \{j\}$} \\
0 & \text{otherwise}
\end{cases},
\\
\text{and}\qquad
\pos(\tree_{\!\meet}(\alpha), k, \ell) & = 
\begin{cases}
r-1 & \text{if $k = i$ and $\ell \in B \cup \{j\}$} \\
0 & \text{if $k \in A$ and $\ell \in B \cup \{j\}$} \\
s_k & \text{otherwise}
\end{cases}.
\end{align*}
\end{lemma}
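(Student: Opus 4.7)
My plan is to prove both formulas by a direct case analysis based on the explicit structure of the two trees. Since $\tree_{\!\join}(\alpha)$ and $\tree_{\!\meet}(\alpha)$ are built entirely by grafting combs onto combs, the increasing path from the root $1$ to any node $\ell$ is essentially forced, and the quantity $\pos(\tree, k, \ell)$ (the minimum of $s_k$ and the number of outgoing edges of $k$ strictly right of this path) can be read off from where this path branches at $k$, or, if $k$ is not on the path, by determining on which side of the path $k$ lies.

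First, I would record the relevant combinatorial anatomy. In the right $\s$-comb with node set $J = \{j_1 < \dots < j_m\}$, the unique increasing path from $j_1$ to any $j_p$ proceeds $j_1 \to j_2 \to \dots \to j_p$ and leaves each node via its rightmost outgoing edge (symmetrically for left combs via the leftmost edge). Applied to $\tree_{\!\join}(\alpha)$, this implies that the path from $1$ to $\ell$ travels down the outer right comb through the nodes of $[n] \smallsetminus (A \cup \{j\})$ that are $\le i$, each time via the rightmost edge; at $i$ it takes the rightmost edge if $\ell \notin A \cup \{j\}$, and the $(r+1)$-st rightmost edge (entering the inner right comb) if $\ell \in A \cup \{j\}$; from there it continues rightmost through the inner comb to $\ell$. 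The description for $\tree_{\!\meet}(\alpha)$ is the mirror statement with ``rightmost'' replaced by ``leftmost'' and $(r+1)$-st rightmost replaced by $r$-th rightmost.

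Armed with this, I would dispatch the six cases of the lemma. For the nontrivial values: if $k = i$ and $\ell \in A \cup \{j\}$, the path leaves $i$ via the $(r+1)$-st rightmost edge, so exactly $r$ outgoing edges of $i$ sit strictly to its right, giving $\pos(\tree_{\!\join}(\alpha), i, \ell) = r$; dually, if $k=i$ and $\ell \in B \cup \{j\}$, the path leaves $i$ via the $r$-th rightmost edge and the formula gives $r-1$. If $k \in B$ and $\ell \in A \cup \{j\}$ in $\tree_{\!\join}(\alpha)$, then $k$ lies in the outer right comb strictly below $i$, while the path branched off at $i$ through an edge strictly left of the rightmost edge continuing the outer comb; thus $k$ lies entirely to the right of the path, all $s_k+1$ of its outgoing edges count, and the minimum with $s_k$ yields $s_k$. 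The dual argument handles $k \in A$, $\ell \in B \cup \{j\}$ in $\tree_{\!\meet}(\alpha)$, where $k$ lies entirely to the left of the path and the count is $0$.

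For the remaining ``otherwise'' case, I would verify the zero (resp.~$s_k$) value by checking that either $s_k = 0$ (trivial) or the path from $1$ to $\ell$ passes through $k$ via its rightmost (resp.~leftmost) outgoing edge. This happens in each remaining subcase (for instance, $k < i$ or $k \in A, \ell \in A \cup \{j\}$ in $\tree_{\!\join}(\alpha)$, since both $k$ and $\ell$ then lie in a common right comb with $k < \ell$; and symmetrically for $\tree_{\!\meet}(\alpha)$). I do not expect any serious obstacle: the only point requiring care is the bookkeeping of which edge of $i$ the two combs are attached to (namely that ``$(r+1)$-st rightmost'' for $\tree_{\!\join}$ gives $r$ edges to the right, while ``$r$-th rightmost'' for $\tree_{\!\meet}$ gives $r-1$), and that in the ``off-path'' cases one correctly identifies the side of the path on which $k$ sits by comparing the index of the branching edge at $i$ with the index of the edge leading to $k$.
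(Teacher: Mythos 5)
Your proof is correct and takes exactly the approach the paper intends — the paper's own proof is the single line ``Immediate from \cref{def:positions,def:sArc2sTree},'' and your argument is precisely that unpacking: trace the forced path from the root to $\ell$ through the two grafted combs and count edges of $k$ strictly to its right. One small wording slip: in the ``otherwise'' cases the path does \emph{not} always pass through $k$ (e.g.\ $k\in A$ and $\ell\notin A\cup\{j\}$ in $\tree_{\!\join}(\alpha)$, where $k$ sits entirely on the left of the path), but you already flag and handle this off-path situation in your closing sentence, so the argument stands.
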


\begin{proof}
Immediate from \cref{def:positions,def:sArc2sTree}.
\end{proof}

\begin{corollary}
\label{coro:comparisonTjoinTmeet}
For any two $\s$-arcs~$\alpha \eqdef (i, j, A, B, r)$ and~$\alpha' \eqdef (i', j', A', B', r')$, we have
\begin{enumerate}[(i)]
\item $\tree_{\!\join}(\alpha) \le \tree_{\!\join}(\alpha')$ if and only if~$A \cup \{j\} \subseteq A' \cup \{j'\}$ and~$B \cup \{i\} \subseteq B' \cup \{i'\}$ and~$i = i' \implies r \le r'$,
\item $\tree_{\!\meet}(\alpha) \le \tree_{\!\meet}(\alpha')$ if and only if~$A \cup \{i\} \supseteq A' \cup \{i'\}$ and~$B \cup \{j\} \supseteq B' \cup \{j'\}$ and~$i = i' \implies r < r'$,
\item $\tree_{\!\join}(\alpha) \le \tree_{\!\meet}(\alpha')$ if and only if there is no~$1 \le k < \ell \le n$ such that~$k \in (A' \cup \{i'\}) \cap (B \cup \{i\})$ and~$\ell \in (A \cup \{j\}) \cap (B' \cup \{j'\})$ except if~$k = i'$ and~$r < r'$.
\end{enumerate}
\end{corollary}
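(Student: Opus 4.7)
My plan is to derive each of the three equivalences directly from \cref{def:sWeakOrder}, which characterises $\tree \le \tree'$ in the $\s$-weak order by the pointwise inequalities $\pos(\tree, k, \ell) \le \pos(\tree', k, \ell)$ for all $1 \le k < \ell \le n$. Since \cref{lem:positionsArc} supplies three-case closed-form expressions for the position vectors of the four trees $\tree_{\!\join}(\alpha)$, $\tree_{\!\join}(\alpha')$, $\tree_{\!\meet}(\alpha)$, $\tree_{\!\meet}(\alpha')$, each equivalence reduces to a bounded case analysis on where $k$ and $\ell$ lie relative to the data $(i, j, A, B, r)$ and $(i', j', A', B', r')$.

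For part~(i), I use that $\pos(\tree_{\!\join}(\alpha), k, \ell)$ is nonzero precisely on the rectangle $(B \cup \{i\}) \times (A \cup \{j\})$, with value $r$ when $k = i$ and value $s_k$ when $k \in B$. Forward: the containments $A \cup \{j\} \subseteq A' \cup \{j'\}$ and $B \cup \{i\} \subseteq B' \cup \{i'\}$ guarantee that every nonzero entry of $\tree_{\!\join}(\alpha)$ lands in a nonzero entry of $\tree_{\!\join}(\alpha')$, and the hypothesis $r \le r'$ at $i = i'$ handles the only overlap in which both trees place their $r$-value on the same index pair. Backward: probing the pointwise inequality at $(i, \ell)$ with $\ell \in A \cup \{j\}$, then at $(k, \ell)$ with $k \in B$ and $\ell \in A \cup \{j\}$, and finally at $(i, j)$ when $i = i'$, recovers the three combinatorial conditions. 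Part~(ii) is the mirror argument based on the ``otherwise~$= s_k$'' branch of the formula for $\tree_{\!\meet}$, which is the one used to extract the reverse containments and the corresponding relation between $r$ and $r'$.

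For part~(iii) the case analysis is finer. Juxtaposing the two formulas, $\pos(\tree_{\!\join}(\alpha), k, \ell) > 0$ forces $(k, \ell) \in (B \cup \{i\}) \times (A \cup \{j\})$, while $\pos(\tree_{\!\meet}(\alpha'), k, \ell) < s_k$ forces $(k, \ell) \in (A' \cup \{i'\}) \times (B' \cup \{j'\})$, so any violation $\pos(\tree_{\!\join}(\alpha), k, \ell) > \pos(\tree_{\!\meet}(\alpha'), k, \ell)$ must live on the intersection rectangle $\big((A' \cup \{i'\}) \cap (B \cup \{i\})\big) \times \big((A \cup \{j\}) \cap (B' \cup \{j'\})\big)$ appearing in the statement. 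I then examine each cell of this rectangle: in most cells one of the two positions equals $s_k$ while the other is strictly smaller, producing an unconditional violation, whereas at the overlap $k = i = i'$ the comparison reduces to $r \le r' - 1$, which is precisely what the stated exception $r < r'$ permits. Conversely, absence of any violating pair directly yields the pointwise inequality on the full index range. The subtle part is identifying the overlap cell where the exception is triggered and checking that the stated combinatorial condition tightly encodes all four sub-cases of the rectangle together with the single reprieve.
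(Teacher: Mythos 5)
Your method --- pointwise comparison of position vectors via \cref{lem:positionsArc} and \cref{def:sWeakOrder} --- is exactly what the paper's one-line ``straightforward'' proof intends, and your case analysis is the natural expansion of it.

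I would however raise a substantive caution. In part~(iii) you correctly isolate the single conditional cell as $k = i = i'$, where the comparison is $r$ versus $r'-1$. But the corollary's exception is phrased ``$k = i'$'', which additionally covers the cell $k = i' \in B$ (so $i < i'$). In that cell $\pos(\tree_{\!\join}(\alpha), k, \ell) = s_k$ while $\pos(\tree_{\!\meet}(\alpha'), k, \ell) = r'-1 \le s_{i'}-1 = s_k-1$, an unconditional violation; it must not be exempted, and the exception as literally written is too permissive. Concretely, for $\s = (1,2,1)$, $\alpha = (1,3,\varnothing,\{2\},1)$, $\alpha' = (2,3,\varnothing,\varnothing,2)$, the only candidate pair is $(k,\ell) = (2,3)$ with $k = i'$ and $r = 1 < 2 = r'$, yet $\pos(\tree_{\!\join}(\alpha),2,3) = s_2 = 2 > 1 = r'-1 = \pos(\tree_{\!\meet}(\alpha'),2,3)$, so $\tree_{\!\join}(\alpha) \not\le \tree_{\!\meet}(\alpha')$. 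You remark that the ``stated combinatorial condition tightly encodes'' your case analysis ``together with the single reprieve''; this needs to be checked rather than asserted, and the reprieve you actually derive ($k = i = i'$) is strictly narrower than the one stated. A similar issue affects part~(ii): a careful mirror of your part-(i) argument yields $r \le r'$ at $i = i'$, not the strict $r < r'$; for instance $\s = (1,1,1)$, $\alpha = (1,3,\varnothing,\{2\},1)$, $\alpha' = (1,2,\varnothing,\varnothing,1)$ give $r = r' = 1$ yet $\tree_{\!\meet}(\alpha) < \tree_{\!\meet}(\alpha')$. So the approach is correct and your computations are the sharp ones, but reconciling your derived conditions with the corollary's exact phrasing is where the real work lies, and your sketch currently glosses over it.
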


\begin{proof}
Straightforward from \cref{def:sWeakOrder,lem:positionsArc}.
\end{proof}

Recall that an element of a lattice~$(L, \le, \meet, \join)$ is \defn{join} (resp.~\defn{meet}) \defn{irreducible} if it covers (resp.~is covered by) a single element~of~$L$.

\pagebreak
\begin{proposition}
\label{prop:sArc2sTree}
The map~$\tree_{\!\join}$ (resp.~$\tree_{\!\meet}$) is a bijection from the $\s$-arcs to the join (resp.~meet) irreducible $\s$-trees in the $\s$-weak order.
\end{proposition}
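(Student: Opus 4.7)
The plan is to combine \cref{prop:sWeakOrderCoverRelations} with a direct structural analysis of the trees $\tree_{\!\join}(\alpha)$. The former tells us that an $\s$-tree is join irreducible exactly when it has a unique descent, so I would show that (a) every tree $\tree_{\!\join}(\alpha)$ has $(i,j)$ as its unique descent, and (b) conversely, every join irreducible $\s$-tree arises this way for a uniquely determined $\s$-arc.

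For (a), given $\alpha = (i, j, A, B, r)$, I would scan the candidate descent endpoints $j'$ of $\tree \eqdef \tree_{\!\join}(\alpha)$, namely those with $s_{j'} = 0$ or rightmost outgoing edge a leaf. On the main right $\s$-comb on~$C \eqdef [n] \ssm (A \cup \{j\})$, the path from the root to any node takes only rightmost edges, so no descent ends on the main spine. In the side right $\s$-comb on $A \cup \{j\}$, each interior node $a_l \in A$ has $s_{a_l} \ne 0$ by the definition of~$A$, and its rightmost edge leads to~$a_{l+1}$, so it fails the candidate condition; only $j$ remains. Since the step $i \to a_1$ sits at slot $r+1 \ge 2$ of $i$ (hence is non-rightmost) while every subsequent step $a_l \to a_{l+1}$ is rightmost, the deepest non-rightmost ancestor of $j$ on the root-to-$j$ path is exactly~$i$, and $(i,j)$ is the unique descent of $\tree$. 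The data $(i,j,A,B,r)$ can then be read off from $\tree$, yielding injectivity.

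For (b), let $\tree$ be a join irreducible $\s$-tree with unique descent $(i,j)$, write the path from $i$ to $j$ in $\tree$ as $i \to a_1 \to \dots \to a_p = j$, and set $A \eqdef \{a_1, \dots, a_{p-1}\}$, $B \eqdef \set{k \in {]i,j[}}{s_k \ne 0} \ssm A$, and $r \eqdef (\text{slot of } a_1 \text{ at } i) - 1$. The main obstacle is the structural claim that $\tree$ is exactly the right $\s$-comb on $C$ with the right $\s$-comb on $A \cup \{j\}$ grafted at slot $r+1$ of $i$. The key observation is that any non-leaf subtree attached at a non-rightmost slot of a node $v$ of $\tree$ produces a descent $(v, y)$, where $y$ is the rightmost descendant tip of the subtree: the path from $v$ to $y$ takes rightmost edges after its first, non-rightmost step, so $v$ is the deepest non-rightmost ancestor of~$y$, and $y$ satisfies the candidate condition (its rightmost edge is a leaf by construction). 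Uniqueness of the descent therefore forces $v = i$ and $y = j$ for every such attachment: this rules out side attachments anywhere off $i$, it forces $i$ to carry exactly one such non-leaf side child (two would yield distinct tips hence distinct descents), and it forces every non-rightmost slot elsewhere on the main spine and on the side branch to be a leaf. Finally, if some intermediate $a_l$ with $l < p$ had $s_{a_l} = 0$, the pair $(i, a_l)$ would already be a second descent, so $A \subseteq \set{k}{s_k \ne 0}$, confirming that $\alpha \eqdef (i, j, A, B, r)$ is a valid $\s$-arc with $\tree_{\!\join}(\alpha) = \tree$.

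This establishes the bijection for $\tree_{\!\join}$. The corresponding bijection $\tree_{\!\meet}$ onto meet irreducible $\s$-trees follows by an entirely symmetric argument, swapping descents for ascents, rightmost for leftmost, the roles of $A$ and $B$, and replacing $r$ by $r-1$ throughout.
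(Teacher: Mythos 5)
Your proof is correct and takes essentially the same route as the paper: both invoke \cref{prop:sWeakOrderCoverRelations} to identify join irreducibles with unique-descent $\s$-trees, and then argue that such a tree must be a right $\s$-comb with another right $\s$-comb grafted at a non-rightmost slot of $i$. You supply more detail for the structural claim that the paper's proof compresses into ``This implies that $\tree$ is a right $\s$-comb, to which we have attached\dots,'' in particular your ``key observation'' that every non-leaf child at a non-rightmost slot spawns a descent, which is exactly the fact needed to justify that sentence.
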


\begin{proof}
Consider an $\s$-tree~$\tree$ that is a join irreducible in the $\s$-weak order.
By~\cref{prop:sWeakOrderCoverRelations}, this means that~$\tree$ has a unique descent~$(i,j)$.
This implies that~$\tree$ is a right $\s$-comb, to which we have attach on some leaf of node~$i$ a right $\s$-comb with top vertex~$j$.
Let~$r \eqdef \pos(\tree, i, j) \in [s_i]$, let~$A$ denote the set of nodes along the path from~$i$ to~$j$ (but distinct from~$i$ and~$j$), and let~$B$ be the complement of~$A$ in~$\set{k \in {]i,j[}}{s_k \ne 0}$.
Define the $\s$-arc~$\alpha_\join(\tree) \eqdef (i, j, A, B, r)$.
Then~$\tree_{\!\join}$ and~$\alpha_\join$ are obviously inverse to each other.
\end{proof}

We now provide the geometric counterpart of the $\s$-arcs, generalizing the shards of~\cite{Reading-posetRegions} (see also~\cite[Sect.~9.7]{Reading-posetRegionsChapter}).

\begin{definition}
\label{def:sShard}
The \defn{$\s$-shard} of an $\s$-arc~$\alpha \eqdef (i, j, A, B, r)$ is the polyhedron~$\shard$ of~$\R^n$ defined by
\begin{itemize}
\item the equality~$x_i - x_j = r - 1 + \sum_{k \in B} \max(0, s_k-1)$,
\item the inequalities~$x_i - x_a \ge r - 1 + \sum_{k \in B \cap {]i,a[}} \max(0, s_k-1)$ for all~$a \in A$, and
\item the inequalities~$x_i - x_b \le r - 1 + \sum_{k \in B \cap {]i,b[}} \max(0, s_k-1)$ for all~$b \in B$.
\end{itemize}
\end{definition}

\begin{lemma}
\label{lem:shard}
For any $\s$-arc~$\alpha \eqdef (i, j, A, B, r)$, the $\s$-shard~$\shard$ is the union of the closed fibers~$\closedFiber{\bush}$ over all $\s$-bushes~$\bush$ in which~$(i,j)$ is a hole and the rightmost path from~$i$ to the left incoming edge at~$j$ has the nodes of~$A$ weakly on its left, and the nodes of~$\bush$ and $r$ children of~$i$ strictly~on~its~right.
\end{lemma}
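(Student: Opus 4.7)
The plan is to prove the claimed equality by a double inclusion, exploiting \cref{prop:fibers} and the insertion algorithm of \cref{def:insertion}.

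For the inclusion $\bigcup_{\bush} \closedFiber{\bush} \subseteq \shard$, I would fix an $\s$-bush $\bush$ satisfying the stated hypotheses and show that the (in)equalities defining $\closedFiber{\bush}$ in \cref{prop:fibers} imply those of $\shard$. Let $\pi$ denote the rightmost path from $i$ to the left incoming edge of $j$. Since $(i,j)$ is a hole of $\bush$, \cref{prop:fibers} gives $x_i - x_j = \mu(\bush, i, j) = \nu(\bush, i, j)$ on $\closedFiber{\bush}$; then applying \cref{def:rhs} for $\nu$ along $\pi$, the assumption that exactly $r$ children of $i$ and the nodes of $B$ lie strictly right of $\pi$ (together with all $s_k = 0$ intermediate nodes, which contribute $0$ to the sum) yields $\nu(\bush, i, j) = r - 1 + \sum_{k \in B} \max(0, s_k-1)$, matching the defining equality of $\shard$. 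For each $a \in A$, the inequality $x_i - x_a \ge r - 1 + \sum_{k \in B \cap ]i,a[} \max(0, s_k-1)$ of $\shard$ should follow by a telescoping argument along the prefix of $\pi$ ending at the branching point toward $a$, combining the ascent/descent inequalities of $\closedFiber{\bush}$ from \cref{prop:fibers}; the argument for each $b \in B$ is symmetric, using the leftmost continuation from the branching point toward $b$.

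For the reverse inclusion $\shard \subseteq \bigcup_{\bush} \closedFiber{\bush}$, I would take $\b{x} \in \shard$ and trace the insertion $\bush \eqdef \ins(\s, \b{x})$ of \cref{def:insertion}. The key invariant, maintained inductively as $k$ runs through $]i,j[$, is that there is a gap label immediately right of $i$'s current position whose value equals $r - 1 + \sum_{\ell \in B,\, \ell \le k} \max(0, s_\ell - 1)$; the defining inequality of $\shard$ at step $k$ then forces $k$ to be attached weakly left of this gap when $k \in A$ and strictly right of it when $k \in B$, preserving the invariant. When step $j$ is reached, the defining equality of $\shard$ matches $x_i - x_j$ with this gap value, so $j$ is attached to the two leaves around it, creating the hole $(i,j)$ in $\bush$ with precisely the structural data prescribed by the $\s$-arc~$\alpha$.

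The main obstacle is the inductive bookkeeping of gap labels under the insertion, particularly tracking how indegree-$2$ nodes lying inside $]i,j[$ (additional holes of $\bush$) interact with the cumulative shift of the gap label tied to ancestor $i$. A secondary subtlety is that $A$ and $B$ only partition the nodes of $]i,j[$ with $s_k \neq 0$; however, since nodes with $s_k = 0$ contribute $0$ to every sum appearing in $\shard$'s (in)equalities, their placement is unconstrained by $\shard$ and automatically consistent with the structural description of $\bush$ in the lemma.
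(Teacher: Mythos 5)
Your proposal follows the same overall structure as the paper's proof: a double inclusion, verified on each side by tracing the insertion algorithm of \cref{def:insertion} and tracking the gap label tied to node~$i$. The reverse inclusion in particular is essentially identical to the paper's: take~$\b{x} \in \shard$, run the insertion, and show by induction that the gap label right of the~$(r+1)$-st leaf of~$i$ has value~$x_i - \bigl(r-1 + \sum_{k \in B,\,k < \ell} \max(0, s_k-1)\bigr)$ at each step~$\ell$, forcing every~$a \in A$ to land weakly left of it, every~$b \in B$ strictly right, and finally~$j$ to land exactly on it. That is precisely the argument the paper runs.

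The forward inclusion is where your plan drifts slightly from the paper's. You propose to derive the shard inequalities~$x_i - x_a \ge \cdots$ and~$x_i - x_b \le \cdots$ by telescoping the facet inequalities listed in the \emph{statement} of \cref{prop:fibers} (ascent/descent/hole inequalities). But for a generic~$a \in A$ the pair~$(i,a)$ need not be an ascent, descent, or hole of~$\bush$, and the intermediate pairs~$(p_\ell, p_{\ell+1})$ along~$\pi$ need not be either, so there is no obvious chain of facet inequalities to telescope. The paper instead appeals to the \emph{proof} of \cref{prop:fibers}, specifically the gap-label invariant~(b) established there, which directly gives the required bound on~$x_i - x_a$ via the running inequality~$x_u - \rho < x_k < x_v - \sigma$ at the step when~$a$ (resp.~$b$) is inserted. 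That invariant is strictly more information than the irredundant facet description, and it sidesteps the need for any explicit convex combination. Your sketch would go through cleanly if you replaced the ``telescope the facet inequalities'' step with ``re-run the gap-label invariant from the proof of \cref{prop:fibers}'' — which, conveniently, is exactly the invariant you already invoke for the reverse inclusion.

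One small point of care for the reverse direction: you should note that the lemma's ``nodes of~$\bush$'' is evidently a typo for ``nodes of~$B$'' (the paper's own proof uses~$B$), and that the placement of nodes with~$s_k = 0$ inside~$]i,j[$ is genuinely unconstrained by the shard, as you correctly observe — this is consistent because such nodes neither enter~$A \sqcup B$ nor shift any gap label.
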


\begin{proof}
We prove both inclusions.

Consider an $\s$-bush~$\bush$ where~$(i,j)$ is a hole and the rightmost path from~$i$ to the left incoming edge at~$j$ has the nodes of~$A$ weakly on its left, and the nodes of~$\bush$ and $r$ children of~$i$ strictly on its right.
Consider~$\b{x}$ in the fiber~$\fiber{\bush}$.
As in the proof of \cref{prop:fibers}, we check that~$\b{x}$ must satisfy the equality $x_i - x_j = r - 1 + \sum_{k \in B} \max(0, s_k-1)$ since~$(i,j)$ is a hole of~$\bush$ and the inequalities ${x_i - x_a \ge r - 1 + \sum_{k \in B \cap {]i,a[}} \max(0, s_k-1)}$ for~$a \in A$ and~${x_i - x_b \le r - 1 + \sum_{k \in B \cap {]i,b[}} \max(0, s_k-1)}$ for~$b \in B$.

Conversely, consider~$\b{x} \in \shard$ and let~$\bush \eqdef \ins(\s, \b{x})$.
Follow the steps of the insertion algorithm described in \cref{def:insertion}, and construct inductively the rightmost path~$\pi$ leaving through the $(r+1)$-st leaf of node~$i$.
Since~$\b{x} \in \shard$, we obtain by induction that all nodes of~$A$ are weakly on the left of~$\pi$ while all nodes of~$B$ are strictly on its right, and finally that~$(i,j)$ is a hole of~$\bush$.
It follows from the description of the insertion algorithm in \cref{def:insertion} that~$\bush$ is an $\s$-bush where~$(i,j)$ is a hole and the rightmost path from~$i$ to the left incoming edge at~$j$ has the nodes of~$A$ weakly on its left, and the nodes of~$B$ and $r$ children of~$i$ strictly on its right.
\end{proof}

\begin{corollary}
The union of all codimension~$1$ closed fibers in~$\sFoam$ is precisely the union of the shards~$\shard$ for all $\s$-arcs~$\alpha$.
\end{corollary}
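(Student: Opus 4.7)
The plan is to identify codimension~$1$ closed fibers with $\s$-bushes that have a single hole, and then to match these bushes with $\s$-shards via \cref{lem:shard}. By the corollary following \cref{prop:fibers}, a closed fiber~$\closedFiber{\bush}$ has codimension exactly~$1$ if and only if~$\bush$ has exactly one indegree~$2$ node, that is, exactly one hole~$(i,j)$.

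For the forward inclusion, I would start from such a codimension~$1$ fiber~$\closedFiber{\bush}$ with unique hole~$(i,j)$, take~$\pi$ to be the rightmost increasing path from~$i$ to the left incoming edge at~$j$, and define
\[
A \eqdef \set{k \in {]i,j[}}{s_k \ne 0 \text{ and } k \text{ is weakly left of } \pi},
\quad
B \eqdef \set{k \in {]i,j[}}{s_k \ne 0 \text{ and } k \text{ is strictly right of } \pi},
\]
together with~$r$ equal to the number of children of~$i$ that are strictly right of~$\pi$. A short check gives~$1 \le r \le s_i$: the upper bound because the edge of~$\pi$ leaving~$i$ is not counted in~$r$, and the lower bound because the right incoming edge at~$j$ lies strictly right of~$\pi$. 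Hence~$\alpha \eqdef (i,j,A,B,r)$ is a genuine $\s$-arc, and by construction~$\bush$ is one of the $\s$-bushes appearing in the union of \cref{lem:shard}, so that~$\closedFiber{\bush} \subseteq \shard$.

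For the reverse inclusion, I would read \cref{lem:shard} the other way: for any $\s$-arc~$\alpha$, the shard~$\shard$ is a union of closed fibers~$\closedFiber{\bush}$ in which~$(i,j)$ is a hole of~$\bush$, so each such fiber has codimension at least~$1$. Since~$\sFoam$ is a complete polyhedral complex (\cref{prop:sFoam}), every face of codimension~$\ge 1$ is a face of some codimension~$1$ face, so every such~$\closedFiber{\bush}$, and thus all of~$\shard$, is contained in the union of codimension~$1$ closed fibers.

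The only real obstacle is the bookkeeping in the forward direction: one must verify that the triple~$(A,B,r)$ extracted from~$\bush$ fits the rigid definition of an $\s$-arc (in particular~$r \in [s_i]$ and~$A \sqcup B$ being the correct partition of the nodes of~${]i,j[}$ with $s_k \ne 0$). This is precisely where the single-hole hypothesis is used, and once it is checked the statement follows immediately from \cref{lem:shard} and the polyhedral complex property.
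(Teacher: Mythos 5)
Your proof is correct and takes essentially the same route the paper intends: the corollary is stated immediately after \cref{lem:shard} without proof, being viewed as a direct consequence of that lemma together with \cref{prop:sFoam} and the dimension count from the corollary after \cref{prop:fibers}. You have spelled out the implicit argument carefully, including the check that $r \in [s_i]$ (so the extracted quintuple is a genuine $\s$-arc) and the use of the polyhedral-complex property to absorb the lower-codimension fibers appearing in \cref{lem:shard} into codimension-$1$ cells; this matches the paper's implicit reasoning exactly.
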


%%%%%%%%%%%%%%%%%

\subsection{Canonical join representations in the $\s$-weak order and non-crossing $\s$-arc diagrams}
\label{subsec:sArcDiagrams}

We now generalize \cref{thm:canonicalJoinComplexWeakOrder} to the $\s$-weak order, exploiting its semidistributivity established in \cref{thm:sWeakOrderLattice} and \cite[Thms.~1.21 \& 1.40]{CeballosPons-sWeakOrderI}.

\begin{definition}
\label{def:non-crossing}
Consider two $\s$-arcs~$\alpha \eqdef (i, j, A, B, r)$ and~$\alpha' \eqdef (i', j', A', B', r')$.
Assume without loss of generality that~$j \le j'$ (otherwise, exchange~$\alpha$ and~$\alpha'$).
Then~$\alpha$ and~$\alpha'$ are \defn{non-crossing} if~$j < j'$, and one of the following conditions hold
\begin{enumerate}
\item\label{non-crossing_item1} $j \le i'$,
\item\label{non-crossing_item2} $i < i' < j$ and~$i' \in A$ and~$j \notin A'$ and $A' \cap {]i,j[} \subseteq A \cap {]i',j'[}$,
\item\label{non-crossing_item3} $i < i' < j$ and~$i' \in B$ and~$j \notin B'$ and $A' \cap {]i,j[} \supseteq A \cap {]i',j'[}$,
\item\label{non-crossing_item4} $i = i'$ and~$r < r'$ and~$j \notin A'$ and $A' \cap {]i,j[} \subseteq A \cap {]i',j'[}$,
\item\label{non-crossing_item5} $i = i'$ and~$r = r'$ and~$s_j = 0$ and $A' \cap {]i,j[} = A \cap {]i',j'[}$,
\item\label{non-crossing_item6} $i = i'$ and~$r > r'$ and~$j \notin B'$ and $A' \cap {]i,j[} \supseteq A \cap {]i',j'[}$,
\item\label{non-crossing_item7} $i' < i$ and~$i \in A'$ and~$j \notin B'$ and $A' \cap {]i,j[} \supseteq A \cap {]i',j'[}$,
\item\label{non-crossing_item8} $i' < i$ and~$i \in B'$ and~$j \notin A'$ and $A' \cap {]i,j[} \subseteq A \cap {]i',j'[}$.
\end{enumerate}
A \defn{non-crossing $\s$-arc diagram} is a set~$\delta$ of pairwise non-crossing $\s$-arcs.
The \defn{non-crossing $\s$-arc complex} is the (flag) simplicial complex of non-crossing $\s$-arc diagrams.
\end{definition}

\pagebreak
\begin{lemma}
\label{lem:NonCrossingAreNotComparable}
If $\alpha$ and~$\alpha'$ are non-crossing, then $\tree_{\!\join}(\alpha)$ and $\tree_{\!\join}(\alpha')$ are incomparable in $\s$-weak~order.
\end{lemma}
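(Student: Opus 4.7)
The plan is to apply Corollary \ref{coro:comparisonTjoinTmeet}(i), which translates the comparability of $\tree_{\!\join}(\alpha)$ and $\tree_{\!\join}(\alpha')$ into purely combinatorial conditions on the quintuples defining $\alpha$ and $\alpha'$. Since incomparability requires ruling out both $\tree_{\!\join}(\alpha) \le \tree_{\!\join}(\alpha')$ and $\tree_{\!\join}(\alpha') \le \tree_{\!\join}(\alpha)$, the proof naturally splits into two parts.

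The easy half handles $\tree_{\!\join}(\alpha') \not\le \tree_{\!\join}(\alpha)$: by the assumption $j < j'$ built into Definition \ref{def:non-crossing}, the top endpoint $j'$ lies in $A' \cup \{j'\}$ but cannot lie in $A \cup \{j\}$ (since $A \subseteq {]i,j[}$ forces every element of $A$ to be strictly less than $j < j'$, and $j' \ne j$). This already violates the first containment required by Corollary \ref{coro:comparisonTjoinTmeet}(i), so $\tree_{\!\join}(\alpha') \not\le \tree_{\!\join}(\alpha)$ regardless of which of the eight non-crossing conditions holds.

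The main work is the reverse direction, $\tree_{\!\join}(\alpha) \not\le \tree_{\!\join}(\alpha')$, which I would verify by going through each of the eight cases of Definition \ref{def:non-crossing} and exhibiting a violation of one of the three conditions $A \cup \{j\} \subseteq A' \cup \{j'\}$, $B \cup \{i\} \subseteq B' \cup \{i'\}$, $i = i' \implies r \le r'$. The pattern to exploit is:
\begin{itemize}
\item In cases \ref{non-crossing_item1}, \ref{non-crossing_item2}, \ref{non-crossing_item4}, \ref{non-crossing_item5}, \ref{non-crossing_item8}, the bottom endpoint $j$ of $\alpha$ fails to lie in $A' \cup \{j'\}$: in case \ref{non-crossing_item1} because $j \le i' < j'$ and $A' \subseteq {]i',j'[}$, in cases \ref{non-crossing_item2}, \ref{non-crossing_item4}, \ref{non-crossing_item8} directly from the hypothesis $j \notin A'$, and in case \ref{non-crossing_item5} because $s_j = 0$ forces $j \notin A'$ (since every element of $A'$ must have non-zero $s$-coordinate).
\item In cases \ref{non-crossing_item3} and \ref{non-crossing_item7}, the starting point $i$ of $\alpha$ fails to lie in $B' \cup \{i'\}$: in case \ref{non-crossing_item3} because $i < i'$ and $B' \subseteq {]i',j'[}$, and in case \ref{non-crossing_item7} because $i \in A'$ forces $i \notin B'$ (disjointness of $A'$ and $B'$) and $i \ne i'$.
\item In case \ref{non-crossing_item6}, the condition $i = i'$ with $r > r'$ directly violates the third condition.
\end{itemize}

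I do not anticipate any real obstacle here; the work is essentially bookkeeping once Corollary \ref{coro:comparisonTjoinTmeet}(i) is available, and the eight cases of Definition \ref{def:non-crossing} have been designed to mirror the three ways comparability can fail. The only mildly subtle point is case \ref{non-crossing_item5}, where one must remember that elements of $A'$ are required to have nonzero $s$-coordinate, so the hypothesis $s_j = 0$ is what prevents $j \in A'$.
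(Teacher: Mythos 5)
Your proof is correct, but it takes a genuinely different route from the paper. The paper runs the argument contrapositively and with almost no case analysis: it assumes $\tree_{\!\join}(\alpha) \le \tree_{\!\join}(\alpha')$, applies \cref{coro:comparisonTjoinTmeet}\,(i) to get the three combinatorial conditions, observes that the two inclusions force $i' \le i < j \le j'$ and $j \in A'$ (when $j < j'$), and then in two quick branches ($i' = i$ and $i' < i$) shows that these facts are incompatible with every non-crossing condition, so $\alpha$ and $\alpha'$ must be crossing. The other comparability direction is handled implicitly by relabeling $\alpha \leftrightarrow \alpha'$, since crossing is symmetric. You instead take the direct route: for each of the eight clauses of \cref{def:non-crossing} you exhibit a specific violation of one of the three conditions in \cref{coro:comparisonTjoinTmeet}\,(i), and separately dispatch the $\tree_{\!\join}(\alpha') \not\le \tree_{\!\join}(\alpha)$ direction by the one-line observation that $j' \notin A \cup \{j\}$. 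Your case analysis is a bit longer, but it has the merit of recording explicitly, clause by clause, which comparability condition is obstructed — information that is buried in the paper's terser contradiction. Both proofs lean on the same key tool, \cref{coro:comparisonTjoinTmeet}\,(i); the choice is a matter of whether one prefers a compact contrapositive deduction or a transparent exhaustive check.
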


\begin{proof}
Write $\alpha \eqdef (i, j, A, B, r)$ and~$\alpha' \eqdef (i', j', A', B', r')$, and assume that~$\tree_{\!\join}(\alpha) \leq \tree_{\!\join}(\alpha')$.
By \cref{coro:comparisonTjoinTmeet}\,(i), we have~$A \cup \{j\} \subseteq A' \cup \{j'\}$ and~$B \cup \{i\} \subseteq B' \cup \{i'\}$ and~$i = i' \implies r \le r'$.
The two inclusions imply in particular that~$i' \le i < j \le j'$.
If~$j = j'$, then~$\alpha$ and~$\alpha'$ are crossing.
Otherwise, $j < j'$ and the first inclusion implies~$j \in A'$.
If~$i' = i$, then~$r \le r'$, and since~$j \in A'$, we obtain by Condition~\eqref{non-crossing_item5} and~\eqref{non-crossing_item6} of \cref{def:non-crossing} that~$\alpha$ and~$\alpha'$ are crossing.
Otherwise, $i' < i$, and the second inclusion implies~$i \in B'$, which together with~$j \in A'$ implies by Condition~\eqref{non-crossing_item8} of \cref{def:non-crossing} that~$\alpha$ and~$\alpha'$ are crossing.
\end{proof}

\begin{definition}
\label{def:sTree2sArcDiagram}
For a descent~$(i,j)$ of an $\s$-tree~$\tree$, let~$r \eqdef \pos(\tree, i, j)$ and~$A$ (resp.~$B$) be the set of nodes~$i < k < j$ with~${s_k \ne 0}$ and weakly on the left (resp.~strictly on the right) of the path from~$i$ to~$j$.
Define~\defn{$\alpha_\join(\tree, i, j)$}$\eqdef (i, j, A, B, r)$ and~\defn{$\delta_\join(\tree)$}$\eqdef \set{\alpha_\join(\tree, i, j)}{(i,j) \text{ descent of } \tree}$.
Define similarly~\defn{$\alpha_\meet(\tree, i, j)$} for an ascent~$(i,j)$ of~$\tree$, and~\defn{$\delta_\meet(\tree)$}$\eqdef \set{\alpha_\meet(\tree, i, j)}{(i,j) \text{ ascent of } \tree}$.
\begin{figure}[t]
	\capstart
	\centerline{\raisebox{.5cm}{\includegraphics[scale=.6]{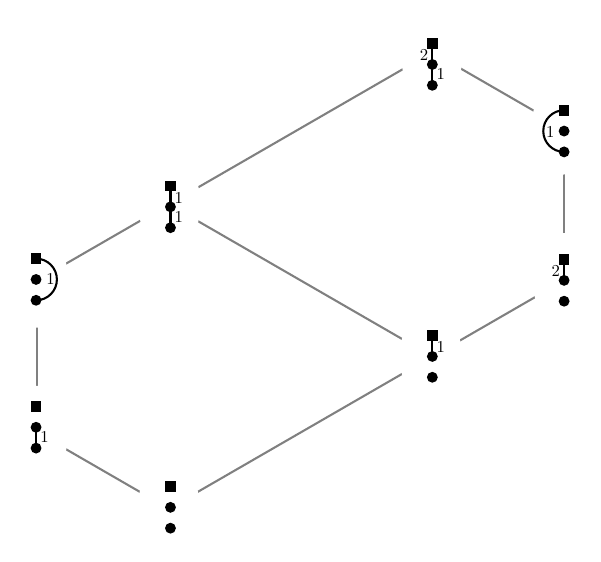}} \hspace{1.5cm} \includegraphics[scale=.6]{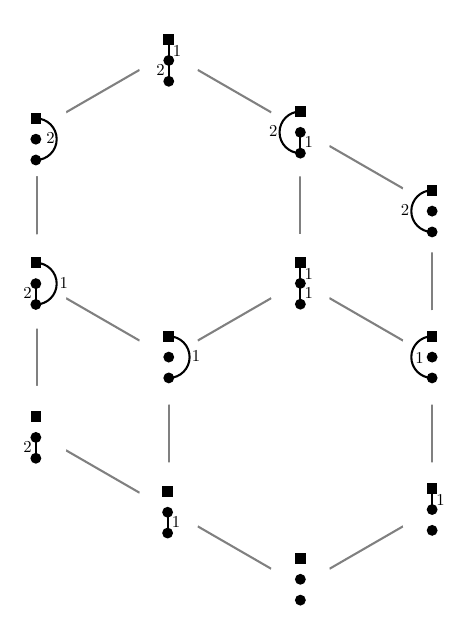}}
	\caption{The $\s$-weak order~$W_\s$ for $\s = (1,2,0)$ (left) and~$\s = (2,1,0)$ (right), where each $\s$-tree~$\tree$ is replaced by its non-crossing $\s$-arc diagram~$\delta_\join(\tree)$.}
	\label{fig:sWeakOrderArcDiagrams}
\end{figure}
\end{definition}

\begin{remark}
\label{rem:noncrossingsArcDiagram}
The maps~$\delta_\join$ and~$\delta_\meet$ have both a graphical and a geometric interpretation.
Namely, the non-crossing $\s$-arc diagram~$\delta_\join(\tree)$ (resp.~$\delta_\meet(\tree)$) of an $\s$-tree~$\tree$ is obtained \\[.2cm]
\noindent
\begin{minipage}[b]{11cm}
\begin{itemize}
\item  by drawing the path joining~$i$ to~$j$ in~$\tree$ for each descent (resp.~ascent)~$(i,j)$ of~$\tree$, perturbing all these paths so that they pass slightly to the right (resp.~left) of their intermediate nodes, and flattening the picture horizontally, allowing the arcs to bend but not to cross nor to pass a node, see on the right,
\item as the set of all $\s$-arcs~$\alpha$ whose corresponding $\s$-shard~$\shard$ supports a lower facet of the maximal cell~$\closedFiber{\tree}$ of the $\s$-foam corresponding to~$\tree$.
\end{itemize}
\end{minipage}
\quad
\includegraphics[scale=.75]{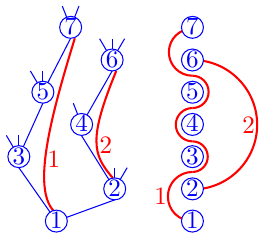}
\end{remark}

The following statement is illustrated by \cref{fig:sWeakOrderArcDiagrams}.

\begin{proposition}
\label{prop:bij_sTree2sArcDiagram}
The maps~$\delta_\join$ and~$\delta_\meet$ are bijections from $\s$-trees to non-crossing $\s$-arc diagrams.
\end{proposition}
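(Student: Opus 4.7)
The plan is to establish the bijection for $\delta_\join$; the argument for $\delta_\meet$ is symmetric by a left/right flip. I split the proof into well-definedness, an inductive construction of an inverse, and verification.

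To show $\delta_\join(\tree)$ is a non-crossing $\s$-arc diagram, take two distinct descents $(i,j) \ne (i',j')$ of $\tree$. Since $i$ is uniquely determined by $j$ (Remark~\ref{rem:ascentDescentTree}), we have $j \ne j'$; assume $j < j'$. Let $P$ (resp.~$P'$) denote the path from $i$ to $j$ (resp.~$i'$ to $j'$) in $\tree$; by definition of descent, both paths take the rightmost outgoing edge at every strictly intermediate node. A case analysis on the location of $i'$ handles all eight conditions of Definition~\ref{def:non-crossing}: if $j \le i'$ we get~\eqref{non-crossing_item1}; if $i < i' < j$ with $i'$ weakly left or strictly right of $P$, we get~\eqref{non-crossing_item2} or~\eqref{non-crossing_item3}; if $i = i'$ we compare $r = \pos(\tree,i,j)$ and $r' = \pos(\tree,i,j')$ to obtain one of~\eqref{non-crossing_item4}--\eqref{non-crossing_item6}; if $i' < i$ then $i$ is strictly intermediate on $P'$, giving~\eqref{non-crossing_item7} or~\eqref{non-crossing_item8}. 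In each case, the required set inclusions on $A \cap {]i',j'[}$ and $A' \cap {]i,j[}$ follow from the planarity of $\tree$: once $P$ and $P'$ diverge, both proceed by rightmost edges and hence cannot cross in the plane embedding, which forces the prescribed inclusion pattern.

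For the inverse, I proceed by induction on $n$, the base case $n = 1$ being trivial. Since descents are determined by their top endpoint, at most one arc of $\delta_\join(\tree)$ has top endpoint $n$, so it suffices to invert on non-crossing $\s$-arc diagrams $\delta$ containing at most one such arc, denoted~$\alpha^\star$. Let $\delta' \eqdef \delta \setminus \{\alpha^\star\}$; any two arcs of~$\delta'$ remain non-crossing when viewed as $\s_{\le n-1}$-arcs, so by induction there is a unique $\s_{\le n-1}$-tree $\tree'$ with $\delta_\join(\tree') = \delta'$. I then attach node $n$ to $\tree'$: if $\alpha^\star = (i, n, A, B, r)$, at the leaf reached from the $(r+1)$-st rightmost child of $i$ in $\tree'$ by following rightmost descendants; if $\alpha^\star$ is absent, at the unique position along the rightmost chain of~$\tree'$ that creates no descent at~$n$. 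The non-crossing conditions between~$\alpha^\star$ and the arcs of~$\delta'$ guarantee that the traced path in~$\tree'$ correctly passes weakly left of the nodes of~$A$ and strictly right of those of~$B$, so that the resulting $\s$-tree $\tree(\delta)$ is well-defined.

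The two compositions are verified to be identities: deleting $n$ from $\tree$ yields an $\s_{\le n-1}$-tree whose $\delta_\join$ equals $\delta_\join(\tree) \setminus \{\alpha^\star\}$, which by induction recovers $\tree'$, and reattaching $n$ according to $\alpha^\star$ rebuilds $\tree$; conversely, the descents of $\tree(\delta)$ split into the descents of $\tree'$ (matching $\delta'$ by induction) together with the single descent at $n$ encoded by $\alpha^\star$, with no spurious descents. The main obstacle is verifying that the attachment step preserves and creates exactly the right descents: equivalently, that the intricate set-inclusion conditions of Definition~\ref{def:non-crossing} between $\alpha^\star$ and arcs of~$\delta'$ (notably cases~\eqref{non-crossing_item2}, \eqref{non-crossing_item3}, \eqref{non-crossing_item7}, \eqref{non-crossing_item8}) precisely express compatibility between the attachment point of $n$ and the existing descent structure of~$\tree'$; this is where Lemma~\ref{lem:NonCrossingAreNotComparable} and a careful bookkeeping along the rightmost-edge chain at each affected node become essential.
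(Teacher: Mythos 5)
Your forward direction (that $\delta_\join(\tree)$ is non-crossing) is in line with the paper, which treats it via \cref{rem:noncrossingsArcDiagram}. The problem is in the inverse: the claim that deleting node $n$ from $\tree$ yields an $\s_{\le n-1}$-tree whose $\delta_\join$ equals $\delta_\join(\tree) \ssm \{\alpha^\star\}$ is false. If $n$ sits at the rightmost leaf of a node $j$ with $s_j \ne 0$, then the candidate pair $(i,j)$ is \emph{not} a descent of $\tree$ (its rightmost outgoing edge leads to $n$ rather than to a leaf) but becomes one in $\tree_{\le n-1}$, so $\delta_\join(\tree_{\le n-1})$ picks up an extra arc. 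Concretely, take $\s=(1,1,1)$ and $\delta=\{(1,3,\{2\},\varnothing,1)\}$. The unique $\s$-tree $\tree$ with $\delta_\join(\tree)=\delta$ has $2$ as the left child of $1$ and $3$ as the right child of $2$, whereas $\delta_\join(\tree_{\le 2})=\{(1,2,\varnothing,\varnothing,1)\}\ne\varnothing=\delta\ssm\{\alpha^\star\}$. Your recursion, applied to $\delta'=\varnothing$, returns $\tree'$ equal to the right comb on $\{1,2\}$ (so $2$ is the \emph{right} child of $1$), and attaching $3$ at the second rightmost leaf of $1$ then places $3$ as the left child of $1$, producing a tree whose arc at $3$ is $(1,3,\varnothing,\{2\},1)\ne\alpha^\star$. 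So the step you yourself flagged as the main obstacle is exactly where the construction breaks, and the non-crossing conditions between $\alpha^\star$ and $\delta'$ cannot rescue it, since in this example $\delta'$ is empty and there is nothing to cross.

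The root cause is that $\alpha^\star=(i,n,A,B,r)$ also constrains the positions of the intermediate nodes $k\in A\cup B$, and passing only $\delta'=\delta\ssm\{\alpha^\star\}$ into the recursive call discards precisely this information, leaving those $k$'s free to be placed incompatibly with $\alpha^\star$. The paper's inverse $\tree_{\!\join}(\delta)$ avoids this by building the tree bottom-up, inserting node $k$ at step $k$ while consulting the \emph{whole} of $\delta$: when no arc of $\delta$ ends at $k$, it looks for arcs $(i',j',A',B',r')\in\delta$ with $k\in A'$ and uses the maximum such $i'$ and then the maximum such $r'$ to decide where to attach $k$. Your top-down recursion could in principle be repaired by passing into the recursive call not just $\delta'$ but also the residual constraint that $\alpha^\star$ imposes on $[n-1]$, but making that precise essentially reconstructs the paper's bottom-up attachment rule.
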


\begin{proof}
By symmetry, we only prove it for~$\delta_\join$.
First, for a descent~$(i,j)$ of an $\s$-tree~$\tree$, $\alpha_\join(\tree, i, j)$ is indeed an $\s$-arc, and it is not difficult to see that the $\s$-arcs of~$\delta_\join(\tree)$ are non-crossing by \cref{rem:noncrossingsArcDiagram}.
We now define the inverse map of~$\delta_\join$.
Consider a non-crossing $\s$-arc diagram~$\delta$. 
We construct an $\s$-tree~$\tree_{\!\join}(\delta)$ inductively as follows:
\begin{itemize}
\item start with the rooted tree with just a root~$1$ and~$s_1+1$ leaves,
\item at step $k\geq 2$, attach a node $k$ with $s_k+1$ leaves to the rightmost leaf of the $(r+1)$-st subtree of node $i$ where $i$ and $r$ are defined by:
\begin{itemize}
\item if there exists an $\s$-arc $(i',k,A,B,r')$ in $\delta$, then $i \eqdef i'$ and $r \eqdef r'$ (such an arc is necessarily unique),
\item otherwise, if there exists an $\s$-arc $(i',j,A,B,r')$ in $\delta$ with $k\in A$, then
\begin{align*}
i & \eqdef \max\set{i'}{\exists \; (i',j,A,B,r')\in \delta\text{ with }k\in A} \\
\text{and} \qquad r & \eqdef \max\set{r'}{\exists \; (i,j,A,B,r')\in \delta\text{ with }k\in A},
\end{align*}
\item otherwise $i \eqdef 1$ and $r \eqdef 0$.
\end{itemize}
\end{itemize}
Let us check that $\delta_\join(\tree_{\!\join}(\delta))=\delta$. 
It is clear that any arc $(i,j,A,B,r)$ of $\delta$ will correspond to a descent $(i,j)$ in $\tree_{\!\join}$, created at step $j$.
Reciprocally, let $(i,j)$ be a descent of $\tree_{\!\join}(\delta)$. 
The construction of $\tree_{\!\join}(\delta)$ implies that there is an arc $(i,j,A,B,r)$ in $\delta$ with $\pos(\tree_{\!\join}(\delta),i,j)=r$. 
We show by induction on $k\in {]i,j[}$ and $i$ that $A$ (resp. $B$) corresponds to the nodes of $\tree$ weakly on the left (resp. strictly on the right) of the path from $i$ to $j$ in $\tree_{\!\join}(\delta)$.

Suppose that $k\in A$. 
During the construction of $\tree_{\!\join}(\delta)$, the node $k$ was attached to the rightmost leaf  of the $(r'+1)$-st subtree of a node $i'$ with $i'>i$ or $i'=i$ and $r'\geq r$. 
This implies that $k$ is weakly on the left of the path from $i$ to $j$. 
Indeed, this is clear if $i'=i$ and $r'\geq r$. 
Suppose that~$i'>i$. 
Then we are in the case~(\ref{non-crossing_item7}) of \cref{def:non-crossing} since $i<i'<k<j$ and $k\in A$. 
Thus~$i'\in A$ and we conclude by induction.

Now, suppose that $k\in B$. 
During the construction of $\tree_{\!\join}(\delta)$, the node $k$ was attached either to the rightmost available leaf at step $k$, in which case it is clear that $k$ is strictly on the right of the path from $i$ to $j$ in $\tree_{\!\join}(\delta)$, or to the rightmost leaf  of the $(r'+1)$-st subtree of a node $i'$ such that there is an $\s$-arc $(i',j',A',B',r')$ in $\delta$, with $k\in A'\cup\{j'\}$. 
In this case, assume that $j<j'$. 
Then the only possible cases of \cref{def:non-crossing} are (\ref{non-crossing_item3}), which implies $i'\in B$, (\ref{non-crossing_item6}), which implies $i=i'$ and $r'<r$, or (\ref{non-crossing_item7}), which implies $i\in A'$. 
In all these cases we can conclude either directly or by induction that $k$ is strictly on the right of the path from $i$ to $j$ in $\tree_{\!\join}(\delta)$.
Similarly, if $j'<j$ the only possible cases of \cref{def:non-crossing} are (\ref{non-crossing_item2}), (\ref{non-crossing_item4}), or (\ref{non-crossing_item8}) and we arrive to the same conclusion.

We have proven that $\delta_\join\circ \tree_{\!\join}$ is the identity function on the set of non-crossing $\s$-arc diagrams.

To finish the proof of the bijection, we show that $\delta_\join$ is injective.
Let $\tree$ and $\tree'$ be two $\s$-trees and denote by $j$ the first step where their inductive construction differs.
Up to exchanging $\tree$ and~$\tree'$ we can assume that the leaf of $\tree_{\leq j-1}$ where $j$ is attached to build $\tree$ is on the left of the leaf where $j$ is attached to build $\tree'$. 
In particular $j$ is not on the rightmost path from the root in~$\tree$.
If $s_j=0$, then $j$ forms an ascent $(i,j)$ in $\tree$, which gives an $\s$-arc $\alpha_\join(\tree,i,j)$ that is in $\delta_\join(\tree)$ but not in $\delta_\join(\tree')$. 
Assume that $s_j\neq 0$. 
Then there is an $\s$-arc $(i,k,A,B,r)$ with $j\in A$ that is in $\delta_\join(\tree)$ but not in $\delta_\join(\tree')$ (we take $i$ and $k$ to be the smallest and greatest nodes such that $j$ is on the rightmost path from $i$ to $k$ in $\tree$).
This proves that $\delta_\join$ is injective and we have the desired bijection.
\end{proof}

\begin{lemma}
\label{lem:prepCanonicalJoinRepresentation}
For a descent~$(i,j)$ of an $\s$-tree~$\tree$, the tree~$\tree_{\!\join}(\alpha_\join(\tree, i, j))$ is the unique minimal element of the set~$\set{\tree' \le \tree}{\pos(\tree', i, j) = \pos(\tree, i, j)}$.
\end{lemma}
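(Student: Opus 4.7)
The plan is to set $\alpha \eqdef \alpha_\join(\tree, i, j) = (i, j, A, B, r)$ with $r = \pos(\tree, i, j)$, write $S \eqdef \set{\tree' \le \tree}{\pos(\tree', i, j) = r}$, and establish in turn that $\tree_{\!\join}(\alpha)$ belongs to $S$ and that it sits below every other element of $S$. By \cref{lem:positionsArc}, the only nonzero positions of $\tree_{\!\join}(\alpha)$ occur at pairs $(i, \ell)$ with $\ell \in A \cup \{j\}$ (value $r$) and at pairs $(k, \ell)$ with $k \in B$, $\ell \in A \cup \{j\}$ and $k < \ell$ (value $s_k$), so both claims will reduce to checking inequalities only at these two families of pairs.

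For $\tree_{\!\join}(\alpha) \in S$, the equality $\pos(\tree_{\!\join}(\alpha), i, j) = r = \pos(\tree, i, j)$ is immediate from \cref{lem:positionsArc}, and $\tree_{\!\join}(\alpha) \le \tree$ will follow from a case analysis on the planar location of $\ell$ relative to the path from $i$ to $j$ in $\tree$: whether $\ell$ sits on this path (giving $\pos(\tree, i, \ell) = r$), is strictly to its left while remaining a descendant of $i$ (so the outgoing edge of $i$ towards $\ell$ is strictly left of the one towards $j$ and $\pos(\tree, i, \ell) \ge r$), or lies outside $i$'s subtree altogether (so $\pos(\tree, i, \ell) = s_i \ge r$). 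For the second family, $k \in B$ and $\ell \in A \cup \{j\}$ lie on opposite sides of the path, so $k$ is not an ancestor of $\ell$ and all $s_k + 1$ outgoing edges of $k$ lie strictly right of the path from the root to $\ell$, whence $\pos(\tree, k, \ell) = s_k$.

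For the minimality, the crucial leverage is the descent hypothesis on $(i, j)$, which forces the path from $i$ to $j$ in $\tree$ to exit every strictly intermediate node via its rightmost outgoing edge. This will yield two key consequences. First, $\pos(\tree, \ell, j) = 0$ for every $\ell \in A$: when $\ell$ sits on the path this follows from the descent property, and when $\ell$ is strictly to its left, planarity places the entire subtree of $\ell$ left of the path from the root to $j$. For any $\tree' \in S$, monotonicity then gives $\pos(\tree', \ell, j) = 0 < s_\ell$, and the second implication of \cref{lem:propertiesPositions} applied to $i < \ell < j$ delivers $\pos(\tree', i, \ell) \ge \pos(\tree', i, j) = r$. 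Second, the descent property forbids any rightward branching off the path at a strictly intermediate node, which forces $\pos(\tree, i, k) < r$ for every $k \in B$: either the branch to $k$ occurs at $i$ itself via an outgoing edge strictly right of the one towards $j$, or $k$ sits outside $i$'s subtree and $\pos(\tree, i, k) = 0$. Combining $\pos(\tree', i, k) \le \pos(\tree, i, k) < r \le \pos(\tree', i, \ell)$ with the contrapositive of the second implication of \cref{lem:propertiesPositions} applied to $i < k < \ell$ then yields $\pos(\tree', k, \ell) \ge s_k$, hence $=s_k$.

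I expect the main obstacle to be isolating the descent hypothesis as the precise ingredient responsible for the vanishings $\pos(\tree, \ell, j) = 0$ for $\ell \in A$ and the strict inequality $\pos(\tree, i, k) < r$ for $k \in B$; the planar case analysis in the upper bound part is routine, and once the two descent-driven facts are established, monotonicity and \cref{lem:propertiesPositions} carry the minimality argument mechanically.
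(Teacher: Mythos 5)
Your proposal is correct and the minimality half is essentially identical to the paper's argument: you isolate the same two facts $\pos(\tree,\ell,j)=0$ for $\ell\in A$ and $\pos(\tree,i,k)<r$ for $k\in B$ (and you are right that the descent hypothesis is exactly what makes them true), and you then run the same two applications of \cref{lem:propertiesPositions} to get $\pos(\tree',i,\ell)\ge r$ and $\pos(\tree',k,\ell)=s_k$, after which \cref{lem:positionsArc} closes the argument. The one place where the paper is more economical is the membership claim $\tree_{\!\join}(\alpha)\le\tree$: instead of the planar case analysis you perform (which in particular requires checking $\pos(\tree,k,\ell)=s_k$ for $k\in B$ and $\ell\in A$, a verification the paper never needs), the paper simply feeds $\tree'=\tree$ into the already-proved minimality inequality, since $\tree$ trivially satisfies $\tree\le\tree$ and $\pos(\tree,i,j)=r$; you may want to adopt that shortcut to avoid the extra planarity bookkeeping.
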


\begin{proof}
Let~$(i, j, A, B, r) \eqdef \alpha_\join(\tree, i, j)$.
Note that~$\pos(\tree, i, a) \ge r$ and~$\pos(\tree, a, j) = 0$ for all~$a \in A$, while~$\pos(\tree, i, b) < r$ and $\pos(\tree, b, j) = s_b$ for all~$b \in B$.
For any~$\s$-tree~$\tree' \le \tree$ with~${\pos(\tree', i, j) = r}$,
\begin{itemize}
\item for~$a \in A$, we have~$\pos(\tree', a, j) \le \pos(\tree, a, j) = 0 < s_a$ (because~$s_a \ne 0$), so that we get~$\pos(\tree', i, a) \ge \pos(\tree', i, j) = r$ (by \cref{lem:propertiesPositions}),
\item for~$a \in A \cup \{j\}$ and~$b \in B$ with~$b < a$, we have~$\pos(\tree', i, b) \le \pos(\tree, i, b) < r \le \pos(\tree', i, a)$, so that~$\pos(\tree', b, a) = s_b$ (by \cref{lem:propertiesPositions}).
\end{itemize}
We conclude that~$\pos(\tree', k, \ell) \ge \pos(\tree_{\!\join}(\alpha_\join(\tree, i, j)), k, \ell)$ for all~$1 \le k < \ell \le n$ by~\cref{lem:positionsArc}, so that~$\tree' \ge \tree_{\!\join}(\alpha_\join(\tree, i, j))$.
This concludes the proof since~${\pos(\tree_{\!\join}(\alpha_\join(\tree, i, j)), i, j) = r = \pos(\tree, i, j)}$ by \cref{def:sArc2sTree,def:sTree2sArcDiagram}, and~$\tree \ge \tree_{\!\join}(\alpha_\join(\tree, i, j))$ by setting~$\tree' = \tree$.
\end{proof}

\begin{proposition}
\label{prop:canonicalJoinRepresentation}
The canonical join (resp.~meet) representation of an~$\s$-tree~$\tree$ is~$\tree = \bigJoin_{\alpha \in \delta_\join(\tree)} \tree_{\!\join}(\alpha)$ (resp.~$\tree = \bigMeet_{\alpha \in \delta_\meet(\tree)} \tree_{\!\meet}(\alpha)$).
In other words, the map~$\alpha_\join$ (resp.~$\alpha_\meet$) induces an isomorphism from the canonical join (resp.~meet) complex of the $\s$-weak order~$W_\s$ to the non-crossing $\s$-arc complex.
\end{proposition}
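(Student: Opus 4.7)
The plan is to identify the canonical joinands of an $\s$-tree $\tree$ via the standard lattice-theoretic characterization: in any finite join-semidistributive lattice (which $W_\s$ is, by \cref{thm:sWeakOrderLattice}), the canonical joinands of an element $x$ are precisely the join irreducibles $\kappa_\downarrow(x^\flat)$, one for each cover relation $x^\flat \iscovered x$, where $\kappa_\downarrow(x^\flat)$ denotes the unique minimum of the set $\{y : y \le x \text{ and } y \not\le x^\flat\}$. This bypasses any need to compute joins explicitly in~$W_\s$.

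Using \cref{prop:sWeakOrderCoverRelations}, I would parametrize the $\s$-trees covered by~$\tree$ by the descents of~$\tree$: to each descent~$(i,j)$ corresponds the $\s$-tree $\tree^\flat_{i,j}$ obtained by rotating~$(i,j)$. For a fixed descent~$(i,j)$, the next step would be to establish the set equality
\[
\{\tree' \le \tree : \tree' \not\le \tree^\flat_{i,j}\} = \{\tree' \le \tree : \pos(\tree', i, j) = \pos(\tree, i, j)\}.
\]
The inclusion $\supseteq$ is immediate from \cref{prop:positionsRotation}, which yields $\pos(\tree^\flat_{i,j}, i, j) < \pos(\tree, i, j)$, so any $\tree'$ with $\pos(\tree', i, j) = \pos(\tree, i, j)$ cannot lie below~$\tree^\flat_{i,j}$. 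The inclusion $\subseteq$ uses the maximality clause of the same proposition: $\tree^\flat_{i,j}$ is the largest $\s$-tree below~$\tree$ with strictly smaller position at~$(i,j)$, so any $\tree' \le \tree$ with $\pos(\tree', i, j) < \pos(\tree, i, j)$ satisfies $\tree' \le \tree^\flat_{i,j}$. \cref{lem:prepCanonicalJoinRepresentation} then identifies the minimum of this set as $\tree_{\!\join}(\alpha_\join(\tree, i, j))$, which is a join irreducible by \cref{prop:sArc2sTree}.

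Letting $(i,j)$ range over the descents of~$\tree$, I would conclude that the canonical join representation of~$\tree$ is exactly $\{\tree_{\!\join}(\alpha) : \alpha \in \delta_\join(\tree)\}$ (the antichain property is automatic from the canonical join formalism, and is anyway reproved combinatorially by \cref{lem:NonCrossingAreNotComparable}). The isomorphism of simplicial complexes then follows immediately from \cref{prop:bij_sTree2sArcDiagram}: the vertex set bijection $\alpha \mapsto \tree_{\!\join}(\alpha)$ between $\s$-arcs and join irreducibles of~$W_\s$ pulls the canonical join complex back to the non-crossing $\s$-arc complex, since the simplices on both sides are indexed by $\s$-trees and the indexings agree on vertices. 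The canonical meet case is obtained by the symmetric argument, exchanging descents with ascents, the roles of left and right, and $\tree_{\!\join}$ with $\tree_{\!\meet}$. The main delicate point is the two-way matching in the set equality above, which rests entirely on the maximality clause of \cref{prop:positionsRotation}; beyond that, the proof is a clean bookkeeping of previously established results.
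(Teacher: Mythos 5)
Your proof is correct and takes a genuinely different route from the paper's. You invoke the standard fact about finite join-semidistributive lattices that the canonical joinands of~$x$ are exactly the unique minima of the sets $\set{y}{y \le x \text{ and } y \not\le x^\flat}$ as $x^\flat$ ranges over the elements covered by~$x$ (a result going back to Freese--Je\v{z}ek--Nation, cf.~\cite{Barnard}); you then identify these minima via \cref{lem:prepCanonicalJoinRepresentation} after establishing the set equality $\set{\tree' \le \tree}{\tree' \not\le \tree^\flat_{i,j}} = \set{\tree' \le \tree}{\pos(\tree', i, j) = \pos(\tree, i, j)}$, whose nontrivial inclusion rests on the maximality clause of \cref{prop:positionsRotation}. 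The paper does not invoke that abstract characterization: it verifies $\tree = \bigJoin_{\alpha \in \delta_\join(\tree)} \tree_{\!\join}(\alpha)$ directly (lower bound from \cref{lem:prepCanonicalJoinRepresentation}, and no cover of~$\tree$ lies above the join), establishes irredundancy via \cref{lem:NonCrossingAreNotComparable} and \cref{prop:bij_sTree2sArcDiagram}, and then proves minimality by contradiction against an arbitrary join representation --- using the same two ingredients (\cref{lem:prepCanonicalJoinRepresentation} and \cref{prop:positionsRotation}) that your set equality repackages. Your route is shorter and conceptually crisper, explaining that the joinands are precisely the $\kappa$-values, at the cost of citing an external lattice-theoretic fact; the paper's is self-contained, trading that abstraction for a direct verification against arbitrary join representations. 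Both hinge on the same two paper-specific lemmas, so the substantive combinatorial work is identical either way.
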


\begin{proof}
We only prove the result for the canonical join representation.

We first show that~$\tree = \bigJoin_{\alpha \in \delta_\join(\tree)} \tree_{\!\join}(\alpha)$.
Indeed, \cref{lem:prepCanonicalJoinRepresentation} implies that~$\tree \ge \tree_{\!\join}(\alpha_\join(\tree, i, j))$ for any descent~$(i,j)$ of~$\tree$, so that~$\tree \ge \bigJoin_{\alpha \in \delta_\join(\tree)} \tree_{\!\join}(\alpha)$.
Moreover, as any~$\tree'$ covered by~$\tree$ is obtained by rotating a descent~$(i,j)$ of~$\tree$, we have~${\pos(\tree', i, j) = \pos(\tree, i, j) - 1 < \pos(\tree_{\!\join}(\alpha_\join(\tree, i, j)),i,j)}$, so that~$\tree' \not\ge \bigJoin_{\alpha \in \delta_\join(\tree)} \tree_{\!\join}(\alpha)$. 

It follows from~\cref{lem:NonCrossingAreNotComparable} and~\cref{prop:bij_sTree2sArcDiagram} that the $\s$-trees~$\tree_{\!\join}(\alpha)$ for $\alpha\in\delta_\join(\tree)$ form an antichain in the $\s$-weak order, thus the join representation $\tree = \bigJoin_{\alpha \in \delta_\join(\tree)} \tree_{\!\join}(\alpha)$ is irredundant.

Consider now any join representation~$\tree = \bigJoin \c{J}$, and assume by contradiction that there is a descent~$(i,j)$ of~$\tree$ such that there is no~$\tree[J] \in \c{J}$ with~$\tree_{\!\join}(\alpha_\join(\tree, i, j)) \le \tree[J]$.
Let~$\tree'$ be the $\s$-tree obtained by rotating the descent~$(i,j)$ of~$\tree$.
For all~$\tree[J] \in \c{J}$, we thus obtain that~$\pos(\tree[J], i, j) < \pos(\tree, i, j)$ by \cref{lem:prepCanonicalJoinRepresentation}, so that~$\tree[J] \le \tree'$ by \cref{prop:positionsRotation}.
Hence, we conclude that~${\bigJoin \c{J'} \le \tree' < \tree}$, a contradiction.
\end{proof}

For instance, \cref{fig:sWeakOrderCanonicalComplex} contains the canonical join (red) and meet (blue) complexes of the $\s$-weak order for~$\s = (1,2,0)$ and~$\s = (2,1,0)$.

%%%%%%%%%%%%%%%%%

\subsection{Canonical complex of the $\s$-weak order and semi-crossing $\s$-arc bidiagrams}
\label{subsec:canonicalComplex}

We finally generalize \cref{thm:canonicalComplexWeakOrder} to the $\s$-weak order.
This section is not required in the sequel.

\begin{definition}
A \defn{semi-crossing $\s$-arc bidiagram} is a disjoint union~$\delta_\join \sqcup \delta_\meet$ of non-crossing $\s$-arc diagrams such that for any~$\alpha_\join \eqdef (i_\join, j_\join, A_\join, B_\join, r_\join) \in \delta_\join$ and~$\alpha_\meet \eqdef (i_\meet, j_\meet, A_\meet, B_\meet, r_\meet)$,  there is no~$1 \le k < \ell \le n$ such that~$k \in (A_\meet \cup \{i_\meet\}) \cap (B_\join \cup \{i_\join\})$ and~$\ell \in (A_\join \cup \{j_\join\}) \cap (B_\meet \cup \{j_\meet\})$ except if~$k = i_\meet$ and~$r_\join < r_\meet$.
The \defn{semi-crossing $\s$-arc complex} is the (flag) simplicial complex whose ground set contains two copies~$\alpha_\join$ and~$\alpha_\meet$ of each $\s$-arc~$\alpha$ and whose simplices are all semi-crossing $\s$-arc bidiagrams.
\end{definition}

\begin{proposition}
\label{prop:canonicalRepresentation}
The map~$[\tree,\tree'] \to \delta_\join(\tree) \sqcup \delta_\meet(\tree')$ is a bijection from intervals of the~$\s$-weak order to semi-crossing $\s$-arc diagrams.
The canonical representation of an interval~$[\tree,\tree']$ in the~$\s$-weak order is given by $\bigJoin_{\alpha \in \delta_\join(\tree)} \tree_{\!\join}(\alpha) \sqcup \bigMeet_{\alpha \in \delta_\meet(\tree')} \tree_{\!\meet}(\alpha)$.
In other words, the canonical complex of the $\s$-weak order~$W_\s$ is isomorphic to the semi-crossing $\s$-arc complex.
\end{proposition}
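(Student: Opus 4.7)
The plan is to reduce the statement to \cref{prop:canonicalJoinRepresentation}, \cref{prop:bij_sTree2sArcDiagram}, and \cref{coro:comparisonTjoinTmeet}\,(iii), which together already provide (i)~bijections between $\s$-trees and non-crossing $\s$-arc diagrams on each side, and (ii)~a purely combinatorial criterion on pairs of $\s$-arcs equivalent to the comparability~$\tree_{\!\join}(\alpha) \le \tree_{\!\meet}(\alpha')$ in the $\s$-weak order. The semi-crossing condition in the definition of a semi-crossing $\s$-arc bidiagram is literally the criterion of \cref{coro:comparisonTjoinTmeet}\,(iii), so the only real work is to show that the comparability~$\tree \le \tree'$ of two $\s$-trees can be reduced to comparing their canonical joinands with their canonical meetands arc-by-arc.

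First, I would establish the following key reduction: for any two $\s$-trees~$\tree$ and~$\tree'$, we have~$\tree \le \tree'$ in~$W_\s$ if and only if $\tree_{\!\join}(\alpha) \le \tree_{\!\meet}(\alpha')$ for every~$\alpha \in \delta_\join(\tree)$ and every~$\alpha' \in \delta_\meet(\tree')$. The forward direction is immediate since $\tree_{\!\join}(\alpha) \le \tree$ and $\tree' \le \tree_{\!\meet}(\alpha')$ follow from \cref{prop:canonicalJoinRepresentation}. For the converse, I would use \cref{prop:canonicalJoinRepresentation} to write~$\tree = \bigJoin_{\alpha \in \delta_\join(\tree)} \tree_{\!\join}(\alpha)$ and~$\tree' = \bigMeet_{\alpha' \in \delta_\meet(\tree')} \tree_{\!\meet}(\alpha')$, and exploit the universal properties of joins and meets: if every~$\tree_{\!\join}(\alpha)$ lies below every~$\tree_{\!\meet}(\alpha')$, then $\tree_{\!\join}(\alpha) \le \bigMeet_{\alpha'} \tree_{\!\meet}(\alpha') = \tree'$ for all~$\alpha$, and hence $\tree = \bigJoin_\alpha \tree_{\!\join}(\alpha) \le \tree'$.

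With this reduction in hand, the bijection part is immediate: by \cref{prop:bij_sTree2sArcDiagram}, the map~$\tree \mapsto \delta_\join(\tree)$ (resp.~$\tree' \mapsto \delta_\meet(\tree')$) is a bijection between $\s$-trees and non-crossing $\s$-arc diagrams, so $[\tree, \tree'] \mapsto \delta_\join(\tree) \sqcup \delta_\meet(\tree')$ sends an interval of~$W_\s$ to a disjoint union of two non-crossing $\s$-arc diagrams. The reduction above combined with \cref{coro:comparisonTjoinTmeet}\,(iii) shows that $\tree \le \tree'$ if and only if, for every~$\alpha \in \delta_\join(\tree)$ and~$\alpha' \in \delta_\meet(\tree')$, there is no~$1 \le k < \ell \le n$ with~$k \in (A_\meet \cup \{i_\meet\}) \cap (B_\join \cup \{i_\join\})$ and~$\ell \in (A_\join \cup \{j_\join\}) \cap (B_\meet \cup \{j_\meet\})$, except when~$k = i_\meet$ and~$r_\join < r_\meet$; that is, precisely when~$\delta_\join(\tree) \sqcup \delta_\meet(\tree')$ is a semi-crossing $\s$-arc bidiagram. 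Hence the map is a bijection onto the semi-crossing $\s$-arc bidiagrams.

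Finally, the canonical representation statement is just a rephrasing: by \cref{prop:canonicalJoinRepresentation} the canonical join representation of~$\tree$ is~$\bigJoin_{\alpha \in \delta_\join(\tree)} \tree_{\!\join}(\alpha)$ and the canonical meet representation of~$\tree'$ is~$\bigMeet_{\alpha' \in \delta_\meet(\tree')} \tree_{\!\meet}(\alpha')$, so the canonical representation of the interval~$[\tree, \tree']$ is their disjoint union, and the simplices of the canonical complex of~$W_\s$ are in bijection with the semi-crossing $\s$-arc bidiagrams; flagness is inherited from the flagness of both the non-crossing $\s$-arc complex (\cref{def:non-crossing}) and the semi-crossing condition, which is a pairwise condition between a $\join$-arc and a $\meet$-arc. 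I expect the one subtle point to be verifying that the exception ``$k = i_\meet$ and $r_\join < r_\meet$'' in \cref{coro:comparisonTjoinTmeet}\,(iii) matches exactly the exception appearing in the definition of semi-crossing $\s$-arc bidiagrams, which is a direct bookkeeping check on the labels~$r$ of the $\s$-arcs.
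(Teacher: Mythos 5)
Your proof is correct and fills in exactly the argument that the paper leaves implicit (the paper's proof is a one-line pointer to the ideas of~\cite{AlbertinPilaud}, citing \cref{prop:canonicalJoinRepresentation} and \cref{coro:comparisonTjoinTmeet}\,(iii) as the key ingredients, which are precisely the ones you use). The lattice-theoretic reduction that $\tree \le \tree'$ holds if and only if every canonical joinand of~$\tree$ lies below every canonical meetand of~$\tree'$, followed by the translation of this pairwise comparability into the semi-crossing condition via \cref{coro:comparisonTjoinTmeet}\,(iii) and the identification of the notation (join-arc for $\alpha$, meet-arc for $\alpha'$), is exactly the intended argument, and your final reduction of the flag/complex isomorphism to \cref{prop:canonicalJoinRepresentation} and the general theory of canonical complexes of semidistributive lattices is also sound.
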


\begin{proof}
According to \cref{prop:canonicalJoinRepresentation,coro:comparisonTjoinTmeet}\,(iii), this is a direct application of the ideas of~\cite{AlbertinPilaud}.
\end{proof}

See \cref{fig:sWeakOrderCanonicalComplex} for illustrations.

\begin{figure}[t]
	\capstart
	\centerline{\raisebox{.2cm}{\includegraphics[scale=.8]{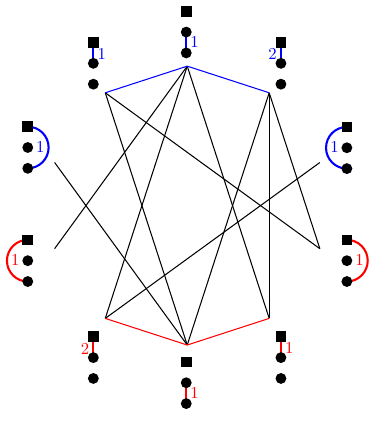}} \hspace{1cm} \includegraphics[scale=.8]{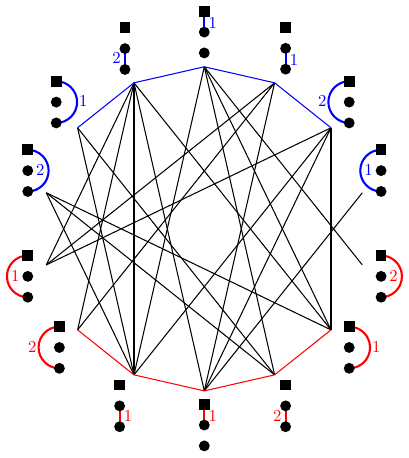}}
	\caption{The canonical complex of the $\s$-weak order~$W_\s$ for $\s = (1,2,0)$ (left) and~$\s = (2,1,0)$ (right), interpreted as the semi-crossing arc complex. The canonical join and meet complexes of the $\s$-weak order, interpreted as non-crossing arc complexes, appear in red and blue respectively.}
	\label{fig:sWeakOrderCanonicalComplex}
\end{figure}

%%%%%%%%%%%%%%%%%%%%%%%%%%%%%%%%%%%%%%

\section{Lattice congruences of the $\s$-weak order}
\label{sec:quotients}

In this section, we describe the combinatorics of the congruence lattice of the $\s$-weak order, and we explore a few relevant families of these congruences.

%%%%%%%%%%%%%%%%%

\subsection{Recollections~\ref{sec:quotients}: Lattice congruences of the weak order}
\label{subsec:recollectionsQuotients}

We first recall the combinatorics of lattice congruences of the weak order.
We refer to~\cite{Reading-posetRegionsChapter} for an enlightening survey on the topic.

\subsubsection{Lattice congruences and quotients}

Consider a finite lattice~$(L, \le, \meet, \join)$.
A \defn{congruence}~$\equiv$ on~$L$ is an equivalence relation on~$L$ that respects the lattice operations, \ie such that $x \equiv x'$ and $y \equiv y'$ implies $x \join y \equiv x' \join y'$ and $x \meet y \equiv x' \meet y'$.
Equivalently, the equivalence classes are intervals, and the maps~$\projDown[\equiv]$ and~$\projUp[\equiv]$ sending an element to the minimum and maximum elements in its congruence class are order preserving.
The \defn{lattice quotient}~$L/{\equiv}$ is the lattice structure on the congruence classes of~$\equiv$, where for any two congruence classes~$X$ and~$Y$,  the order is given by~$X \le Y$ if and only if $x \le y$ for some representatives~$x \in X$ and~$y \in Y$, and the join~$X \join Y$ (resp.~meet~$X \meet Y$) is the congruence class of~$x \join y$ (resp.~$x \meet y$) for any representatives~$x \in X$ and~$y \in Y$.
Intuitively, the quotient~$L/{\equiv}$ is obtained by contracting the equivalence classes of~$\equiv$ in the lattice~$L$.
More precisely, we say that an element~$x$ is \defn{contracted} by~$\equiv$ if it is not minimal in its equivalence class of~$\equiv$, \ie if~$x \ne \projDown(x)$.
As each class of~$\equiv$ is an interval of~$L$, it contains a unique uncontracted element, and the quotient~$L/{\equiv}$ is isomorphic to the subposet of~$L$ induced by its uncontracted elements~$\projDown(L)$.

\smallskip
The prototypical example of congruence of the weak order~$W_n$ is the \defn{sylvester congruence}~$\equiv_\textrm{sylv}$ \cite{Tonks, LodayRonco, HivertNovelliThibon-algebraBinarySearchTrees}.
Its congruence classes are the fibers of the binary tree insertion algorithm, or equivalently the sets of linear extensions of binary trees (labeled in inorder and considered as posets oriented from bottom to top).
It can also be seen as the transitive closure of the rewriting rule~$U u w V v W \equiv_\textrm{sylv} U w u V v W$ where~$u < v < w$ are letters and~$U,V,W$ are words on~$[n]$.
The uncontracted permutations for~$\equiv_\textrm{sylv}$ are those avoiding the pattern~$312$.
The quotient~$W_n / {\equiv_\textrm{sylv}}$ is (isomorphic to) the classical \defn{Tamari lattice}~\cite{Tamari}, whose elements~are the binary trees on~$n$ nodes and whose cover relations are right rotations in binary trees.
See~\cref{fig:sylvesterCongruence}.
\begin{figure}[b]
	\capstart
	\centerline{\includegraphics[scale=.6]{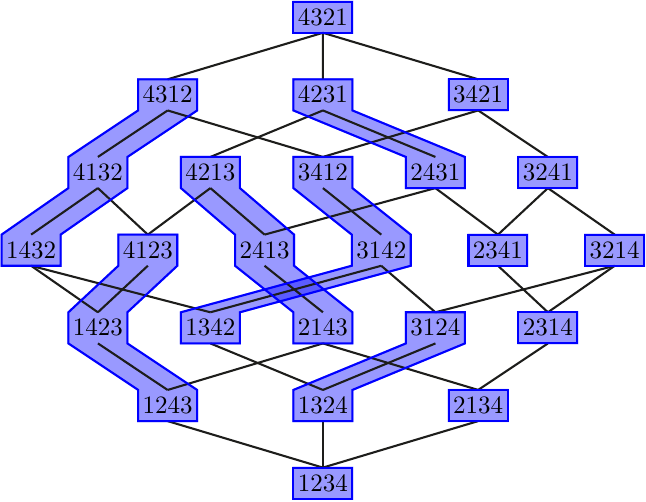} \; \includegraphics[scale=.48]{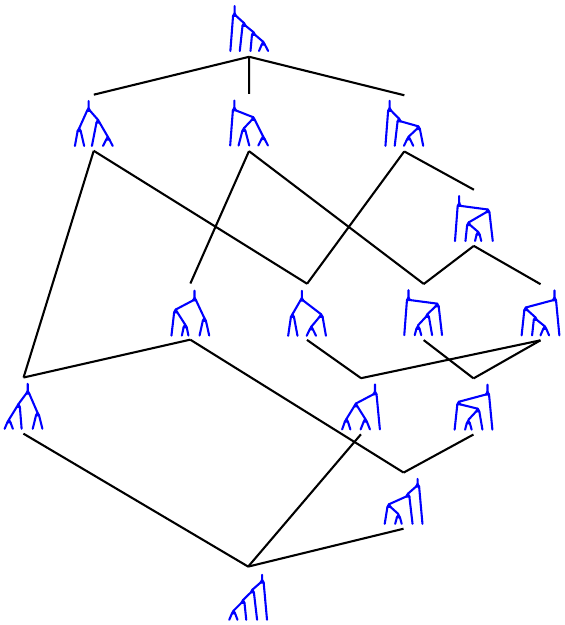}}
	\caption{The sylvester congruence~$\equiv_\textrm{sylv}$~(left), and the Tamari lattice (right). \cite[Fig.~2]{PilaudSantos-quotientopes}}
	\label{fig:sylvesterCongruence}
\end{figure}

\subsubsection{Canonical representations in lattice quotients}

\enlargethispage{.4cm}
If a lattice~$L$ is semidistributive, then any lattice quotient~$L/{\equiv}$ is also semidistributive.
Moreover, via the identification between congruence classes of~$\equiv$ and their minimal elements, the canonical join representations in the quotient~$L/{\equiv} \simeq \projDown(L)$ are precisely the canonical join representations of~$L$ that only involve join-irreducibles of~$L$ uncontracted by~$\equiv$.
In other words, the canonical join complex of the quotient~$L/{\equiv}$ is isomorphic to the subcomplex of the canonical complex of~$L$ induced by the join-irreducibles of~$L$ uncontracted~by~$\equiv$.

\smallskip
Recall from~\cref{subsec:recollectionsCanonicalComplex} that the weak order~$W_n$ is semidistributive, the join-irreducible permutations correspond to arcs on~$[n]$, and the canonical join representations of permutations correspond to non-crossing arc diagrams.
This yields the following statement.

\begin{theorem}[{\cite[Thm.~4.1]{Reading-arcDiagrams}}]
\label{thm:joinMeetRepresentationsQuotient}
For any lattice congruence~$\equiv$ of the weak order~$W_n$, the set of join-irreducibles of~$W_n$ uncontracted by~$\equiv$ corresponds to a set of arcs~$\c{A}_\equiv$, and the canonical join representations in the lattice quotient~$W_n/{\equiv}$ correspond to non-crossing arc diagrams using only arcs of~$\c{A}_\equiv$.
\end{theorem}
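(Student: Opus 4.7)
The plan is to combine the general principles from the theory of canonical join representations in quotients of semidistributive lattices (recalled at the beginning of \cref{subsec:recollectionsQuotients}) with the explicit combinatorial model for canonical join representations in the weak order (\cref{thm:canonicalJoinComplexWeakOrder}).

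First, I would invoke the semidistributivity of the weak order~$W_n$, so that any lattice quotient~$W_n/{\equiv}$ is itself semidistributive and admits canonical join representations. Via the identification~$W_n/{\equiv} \simeq \projDown[\equiv](W_n)$, the canonical joinands of an uncontracted permutation~$\sigma \in \projDown[\equiv](W_n)$ in the quotient are obtained from the canonical joinands of~$\sigma$ in~$W_n$ by projecting them through~$\projDown[\equiv]$. The key lemma here (standard in the semidistributive setting, see e.g.\ the discussion around the canonical join complex in \cite{Reading-arcDiagrams, Barnard}) is that a canonical joinand of~$\sigma$ in~$W_n$ is uncontracted by~$\equiv$ as soon as~$\sigma$ itself is uncontracted, and that the resulting set is precisely the canonical join representation of~$\sigma$ in the quotient. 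Thus the canonical join complex of~$W_n/{\equiv}$ is the subcomplex of the canonical join complex of~$W_n$ induced by the join-irreducibles of~$W_n$ that are not contracted by~$\equiv$.

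Next I would translate this along the bijections of \cref{thm:canonicalJoinComplexWeakOrder}. Via Reading's bijection between join-irreducible permutations and arcs on~$[n]$, the set of join-irreducibles of~$W_n$ uncontracted by~$\equiv$ is encoded by a set of arcs, which we call~$\c{A}_\equiv$. Under the same bijection extended to non-crossing arc diagrams (which realizes the canonical join complex of~$W_n$ as the non-crossing arc complex), the induced subcomplex on~$\c{A}_\equiv$ corresponds exactly to non-crossing arc diagrams all of whose arcs belong to~$\c{A}_\equiv$. Combining these two translations gives the claimed description of canonical join representations in~$W_n/{\equiv}$.

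The only genuinely delicate step, and the one I expect to be the main obstacle, is verifying the semidistributive-lattice principle that the canonical join complex of a quotient is the induced subcomplex on uncontracted join-irreducibles; everything else is a direct transport of structure along the already established bijections of \cref{thm:canonicalJoinComplexWeakOrder}. This principle itself reduces to checking that (i)~if~$J$ is the canonical join representation of~$\sigma$ in~$W_n$ and~$\sigma$ is uncontracted, then each~$j \in J$ is uncontracted (otherwise replacing~$j$ by~$\projDown[\equiv](j) < j$ would produce a strictly smaller irredundant join representation of~$\sigma$, contradicting minimality), and (ii)~any antichain of uncontracted join-irreducibles which forms a canonical join representation in~$W_n$ descends to a canonical join representation in the quotient since joins are computed classwise. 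Once this is in place, the statement of the theorem follows immediately.
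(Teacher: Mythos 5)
Your proposal is correct and follows essentially the same route as the paper: recall that semidistributivity passes to lattice quotients and that the canonical join complex of $W_n/{\equiv}$ is the subcomplex of the canonical join complex of $W_n$ induced by the join-irreducibles uncontracted by~$\equiv$, then translate through the arc and non-crossing arc diagram bijections of \cref{thm:canonicalJoinComplexWeakOrder}. The paper states this general principle without proof (deferring to~\cite{Reading-arcDiagrams}) and immediately deduces the theorem, so your added sketch of steps (i)--(ii) for the principle is a harmless supplement rather than a divergence.
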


For instance, the uncontracted join-irreducibles of the sylvester congruence~$\equiv_\textrm{sylv}$ are given by the set~$\c{A}_\textrm{sylv} = \set{(i, j, {]i, j[}, \varnothing)}{1 \le i < j \le n}$ of right arcs, \ie those which pass on the right of all dots in between their endpoints.
Therefore, the sylvester congruence classes are in bijection with non-crossing arc diagrams with arcs in~$\c{A}_\textrm{sylv}$, also known as non-crossing partitions.

\subsubsection{The congruence lattice and the forcing relation}

The \defn{congruence lattice}~$\con(L)$ is the set of all congruences of~$L$ ordered by refinement.
It is a lattice where the meet is the intersection of relations and the join is the transitive closure of union of relations.
Consider a join irreducible element~$j$ of~$L$, and let~$j_\star$ be the unique element covered by~$j$.
We say that~$\equiv$ \defn{contracts}~$j$ if~$j \equiv j_\star$ and we denote by~$\con(j)$ the unique minimal congruence of~$L$ that contracts~$j$.
It turns out that~$\con(j)$ is join irreducible in~$\con(L)$ and that all join irreducible congruences in~$\con(L)$ are of this form.
Hence, any congruence of~$L$ is completely determined by the subset~$J_\equiv$ of join irreducible elements of~$L$ that it contracts.
For~$j$ and~$j'$ join irreducible in~$L$, we say that~$j$ \defn{forces}~$j'$, and write~$j \succcurlyeq j'$, if $\con(j) \ge \con(j')$, that is, if any congruence contracting~$j$ also contracts~$j'$.
The relation~$\succcurlyeq$ is a preorder (\ie a transitive and reflexive, but not necessarily antisymmetric, relation) on the join irreducible elements of~$L$.
Moreover, the down sets of~$\succcurlyeq$ (\ie the subsets~$J$ such that~$j \succcurlyeq j'$ and~$j \in J$ implies~$j' \in J$) are precisely the subsets~$J_\equiv$ of join irreducible elements of~$L$ that are contracted by some congruence~$\equiv$ on~$L$.
We sum up with the following statement.

\begin{theorem}[{\cite[Prop.~9.5.16]{Reading-posetRegionsChapter}}]
\label{thm:congruenceLattice}
The congruence lattice~$\con(L)$ is isomorphic to the lattice of down sets of the forcing relation~$\succcurlyeq$ (hence, it is a distributive lattice).
\end{theorem}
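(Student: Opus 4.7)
The plan is to produce an explicit bijection $\Phi \colon \con(L) \to \c{D}(\succcurlyeq)$, where $\c{D}(\succcurlyeq)$ denotes the poset of down sets of the forcing preorder, and then to verify that $\Phi$ is a lattice isomorphism. The map is given by $\Phi(\equiv) \eqdef J_\equiv$, the set of join irreducible elements of $L$ contracted by $\equiv$. Almost all the work has been packaged into the statements recalled just before the theorem: we are told (i) every congruence of $L$ is determined by the set of join irreducibles it contracts, and (ii) the sets so arising are exactly the down sets of $\succcurlyeq$. Taken together these assertions already say that $\Phi$ is a well-defined bijection between $\con(L)$ and $\c{D}(\succcurlyeq)$, so the remaining task is order-theoretic.

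Next I would check that $\Phi$ and $\Phi^{-1}$ are both order preserving. One direction is immediate: if $\equiv \, \subseteq \, \equiv'$ as relations, then $j \equiv j_\star$ implies $j \equiv' j_\star$, hence $J_\equiv \subseteq J_{\equiv'}$. For the converse I would use the representation $\equiv = \bigJoin_{j \in J_\equiv} \con(j)$, which itself follows from the preamble: each $\con(j)$ for $j \in J_\equiv$ is by definition at most $\equiv$, so their join is at most $\equiv$; conversely, this join contracts every element of $J_\equiv$, so by the bijective correspondence it must equal $\equiv$. Therefore $J_\equiv \subseteq J_{\equiv'}$ gives
\[
\equiv \; = \; \bigJoin_{j \in J_\equiv} \con(j) \; \le \; \bigJoin_{j \in J_{\equiv'}} \con(j) \; = \; \equiv',
\]
and $\Phi$ is a poset isomorphism, hence a lattice isomorphism since both sides are finite lattices.

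The distributivity statement is then a formality: for any preorder, the lattice of down sets, with meets and joins given by intersection and union, satisfies the distributive law, because intersection distributes over union set-theoretically and the down-set property is preserved by both operations. Transporting this across $\Phi^{-1}$ shows that $\con(L)$ is distributive.

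The only delicate point in the whole argument is the surjectivity of $\Phi$ onto $\c{D}(\succcurlyeq)$, which I am using as a black box from the preamble. Concretely, the nontrivial content is that $\bigJoin_{j \in J} \con(j)$ contracts \emph{no} join irreducible outside of $J$ whenever $J$ is $\succcurlyeq$-closed; this is where one really uses that each $\con(j)$ is join irreducible in $\con(L)$ and that the Funayama--Nakayama-type argument forces $\con(j') \le \bigJoin_{j \in J} \con(j)$ to imply $\con(j') \le \con(j)$ for some $j \in J$. Since this is exactly what is encoded in the recalled fact that the down sets of $\succcurlyeq$ coincide with the sets $J_\equiv$, I would not redo it here and would simply cite it, keeping the proof of the theorem itself a short verification.
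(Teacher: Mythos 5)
Your proof is correct. Note, however, that the paper does not itself prove this statement: it is stated purely as a recollection, cited to \cite[Prop.~9.5.16]{Reading-posetRegionsChapter}, so there is no in-paper proof to compare against. Your derivation from the facts assembled in the preamble --- injectivity and surjectivity of $\equiv \mapsto J_\equiv$, the representation $\equiv = \bigJoin_{j \in J_\equiv} \con(j)$ to get order preservation of $\Phi^{-1}$, and distributivity from the general structure of down-set lattices under union and intersection --- is exactly the standard way these pieces fit together, and you correctly flag that the surjectivity onto $\succcurlyeq$-closed sets is where the substantive work (which you appropriately leave as a citation) actually lives.
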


When~$\succcurlyeq$ is a poset (meaning antisymmetric), then there is a bijection between its join irreducible elements and its join irreducible congruences.
The lattice~$L$ is \defn{congruence uniform} if and only if it satisfies this property and the dual property (\ie the meet irreducible elements are in bijection with the meet irreducible congruences).
As already mentioned in \cref{subsec:recollectionsWeakOrder}, this is equivalent to constructibility by interval doublings, and it implies semidistributivity.

\smallskip
The weak order~$W_n$ is a congruence uniform lattice, and the forcing order on join-irreducible permutations can be described visually on arcs as follows.
We say that an arc~$\alpha \eqdef (i, j, A, B)$ is a subarc of an arc~$\alpha' \eqdef (i', j', A', B')$, if~$i' \le i < j \le j'$ and~${A \subseteq A'}$ and~${B \subseteq B'}$.
Visually, $\alpha$ is a subarc of~$\alpha'$ if the endpoints of~$\alpha$ are located in between those of~$\alpha'$ and~$\alpha$ agrees with~$\alpha'$ in between its endpoints (meaning, $\alpha$ and~$\alpha'$ pass on the left and on the right of the same points in between the endpoints of~$\alpha$).
See \cref{fig:subarcOrder}.
Then $\alpha$ is a subarc of~$\alpha'$ if and only if the join irreducible permutation corresponding to~$\alpha$ forces the join irreducible permutation corresponding to~$\alpha'$.
We thus obtain the following description of the lattice congruences of the weak order~$W_n$.

\begin{figure}
	\capstart
	\centerline{\includegraphics[scale=.5,valign=c]{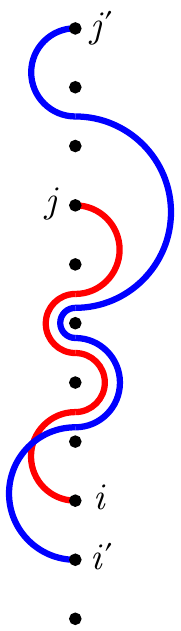} \hspace{2cm} \includegraphics[scale=.6,valign=c]{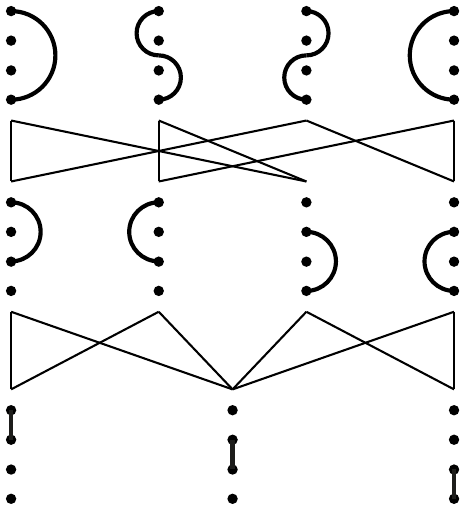}}
	\caption{The subarc relation (left) and the subarc poset for~$n = 4$ (right). The red arc~$(i,j,A,B)$ is a subarc of the blue arc~$(i',j',A',B')$. \mbox{\cite[Fig.~5]{PilaudSantos-quotientopes}}}
	\label{fig:subarcOrder}
\end{figure}

\begin{theorem}[{\cite[Thm.~4.4 \& Coro.~4.5]{Reading-arcDiagrams}}]
\label{thm:arcIdeals}
The map~${\equiv} \mapsto \c{A}_\equiv$ is a bijection between the lattice congruences of the weak order and the down sets of the subarc poset.
\end{theorem}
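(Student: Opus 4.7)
The plan is to combine \cref{thm:congruenceLattice} with the arc-to-join-irreducible bijection of \cref{thm:canonicalJoinComplexWeakOrder}. Since $W_n$ is congruence uniform, the forcing preorder on its join irreducibles is a genuine partial order, and \cref{thm:congruenceLattice} gives a bijection between congruences of~$W_n$ and down sets of this forcing poset, sending~$\equiv$ to its contracted join irreducibles. The heart of the proof is therefore to establish that, under the arc bijection, the forcing poset on join irreducibles is anti-isomorphic to the subarc poset on arcs, so that down sets of the forcing poset correspond to up sets of the subarc poset; complementing, $\c{A}_\equiv$ (the uncontracted arcs) becomes the corresponding down set of the subarc poset.

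First I would prove the ``subarc implies forcing'' direction: if $\alpha$ is a subarc of $\alpha'$, then every congruence contracting~$j(\alpha)$ also contracts~$j(\alpha')$. Writing the subarc relation as a sequence of elementary moves (extending an endpoint by one step, or transferring a single element of~$]i,j[$ from outside the arc into $A$ or into $B$), I would exhibit, for each elementary move, a polygon (a square or pentagon) in~$W_n$ whose two diagonals are cover relations labeled by the arcs immediately before and after that move. Composing these polygons propagates contraction from~$j(\alpha)$ to~$j(\alpha')$.

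For the reverse direction, I would show that if $\alpha$ is not a subarc of~$\alpha'$, then the principal congruence $\con(j(\alpha))$ does not contract~$j(\alpha')$, by verifying that $\con(j(\alpha))$ collapses exactly those join irreducibles $j(\beta)$ for which $\alpha$ is a subarc of~$\beta$. This can be done directly by tracking the polygon propagations of the forward direction, or more conceptually by invoking the shard theory for the braid arrangement, whose shards correspond to arcs and whose incidence pattern reproduces the subarc order. Once this anti-isomorphism is established, the map $\equiv \mapsto \c{A}_\equiv$ follows by complementation. The principal obstacle is the exhaustive enumeration of polygon identities covering every elementary subarc move; the reverse direction is then comparatively routine.
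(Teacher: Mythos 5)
Your strategy is correct and is exactly Reading's original argument; note that the paper does not actually supply a proof of this statement (it is quoted from~\cite{Reading-arcDiagrams}), but it does implement the same polygon-propagation method to prove the $\s$-analog in \cref{thm:forcingOrder} and \cref{coro:sWeakOrderCongruenceLattice}. Two small factual slips worth fixing before you run the argument: the polygonal intervals of the classical weak order~$W_n$ are squares and \emph{hexagons} (from the rank-two parabolics of types $A_1 \times A_1$ and~$A_2$), not pentagons --- pentagons only appear in the $\s$-weak order, cf.~\cref{prop:polygons}; and since $A \sqcup B = {]i,j[}$ for a classical arc, the only elementary subarc moves are the four single-step endpoint extensions $(i,j,A,B)\to(i,j+1,A\cup\{j\},B)$, $(i,j,A,B)\to(i,j+1,A,B\cup\{j\})$, and their analogs at~$i-1$, so the ``transfer an element of~${]i,j[}$ into~$A$ or~$B$'' move you describe does not occur. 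With those corrections the rest of your outline (combine \cref{thm:congruenceLattice} with the arc/join-irreducible bijection, establish the anti-isomorphism between the forcing preorder and the subarc order by polygon forcing in both directions, then pass to complements of down sets) is precisely the intended proof.
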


For instance, we have represented in \cref{fig:weakOrderCongruenceLattice} the upper set of the congruence lattice~$\con(W_4)$ generated by the recoil congruence (these are precisely the congruences whose quotient fan is essential, see \cite{PilaudSantos-quotientopes} and~\cref{fig:weakOrderQuotientopeLattice}).

\subsubsection{Forcing in polygonal lattices}

A \defn{polygon} in~$L$ is an interval~$[x,y]$ that is the union of two maximal chains joining~$x$ to~$y$, which are disjoint except at~$x$ and~$y$.
The \defn{edges} are the order relations appearing in the polygon.
The two edges incident to $x$ are called the \defn{bottom edges} of the polygon, the two edges incident to $y$ are called the \defn{top edges} of the polygon, and the other edges are called the \defn{side edges} of the polygon.
The lattice~$L$ is \defn{polygonal} if
\begin{itemize}
\item for any~$y, y'$ covering the same element~$x$, the interval~$[x, y \join y']$ is a polygon, and
\item for any~$x, x'$ covered by the same element~$y$, the interval~$[x \meet x', y]$ is a polygon.
\end{itemize}
Polygonal lattices behave particularly nicely with respect to congruence relations.
To be precise, let us define two notions of forcing on all cover relations of~$L$ (not only on the relations $j_\star \iscovered j$ as before).
We say that a congruence~$\equiv$ of~$L$ \defn{contracts} a cover relation~$x \iscovered y$ in~$L$ if~$x \equiv y$.
We say that~$x \iscovered y$ \defn{forces} $x' \iscovered y'$ if any congruence contracting~$x \iscovered y$ also contracts~$x' \iscovered y'$.
We say that $x \iscovered y$ \defn{forces} $x' \iscovered y'$ \defn{in a polygon} if there is some polygon in~$L$ containing $x \iscovered y$ and $x' \iscovered y'$ such that one of the following holds:
\begin{enumerate}
\item $x \iscovered y$ is a bottom edge of the polygon and $x' \iscovered y'$ is the opposite top edge,
\item $x \iscovered y$ is a bottom edge of the polygon and $x' \iscovered y'$ is a side edge,
\item $x \iscovered y$ is a top edge of the polygon and $x' \iscovered y'$ is the opposite bottom edge,
\item $x \iscovered y$ is a top edge of the polygon and $x' \iscovered y'$ is a side edge.
\end{enumerate}
See \cref{fig:forcingPolygon}.
Note that these are the only forcing relations in a polygon, and the following holds.

\begin{figure}
	\capstart
	\centerline{\includegraphics[scale=1]{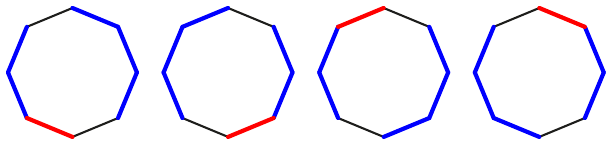}}
	\caption{Forcing in a polygon. In each picture, the red edge forces all the blue edges.}
	\label{fig:forcingPolygon}
\end{figure}

\begin{theorem}[{\cite[Thm.~9-6.5]{Reading-posetRegionsChapter}}]
\label{thm:edge_forcing_polygon}
If $L$ is a finite polygonal lattice, then the forcing relation (on cover relations of~$L$) is the transitive closure of the forcing in polygons relation. % (on cover relations of~$L$).
\end{theorem}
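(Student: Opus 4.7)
The plan is to establish two inclusions between relations on the cover relations of~$L$. On the one hand, each polygonal forcing is a genuine forcing; since forcing is transitive, its transitive closure contains every polygonal forcing. On the other hand, conversely, every forcing relation is witnessed by a chain of polygonal forcings. The first inclusion is a direct check; the substance of the proof lies in the second.

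For the easy direction, I would verify the four cases of \cref{fig:forcingPolygon}. Consider a polygon $[x, y']$ with bottom edges $x \iscovered y$ and $x \iscovered z_1$, and denote its two chains by $x \iscovered y \iscovered \dots \iscovered y'$ and $x \iscovered z_1 \iscovered \dots \iscovered z_\ell = y'$. If a congruence~$\equiv$ contracts $x \iscovered y$, then joining $x \equiv y$ with each~$z_j$ yields $z_j \equiv y \join z_j = y'$, the second equality following since within the interval~$[x, y']$, which by hypothesis coincides with the polygon, the only common upper bound of~$y$ and~$z_j$ is~$y'$. Hence the entire opposite chain collapses, contracting the opposite top edge and all its side edges; this handles cases~(1) and~(2), while (3) and (4) are dual via meets.

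For the converse, I would fix a cover relation~$x_0 \iscovered y_0$, let~$\approx$ denote the equivalence relation on cover relations generated by polygonal forcing, and define a relation~$\equiv^\star$ on~$L$ by declaring~$a \equiv^\star b$ (for~$a \le b$) whenever some saturated chain from~$a$ to~$b$ has all its edges $\approx$-equivalent to~$x_0 \iscovered y_0$, then closing symmetrically and transitively. The target is to prove that $\equiv^\star$ is a lattice congruence. Once this is done, since $\equiv^\star$ contracts~$x_0 \iscovered y_0$, minimality of~$\con(x_0 \iscovered y_0)$ gives $\con(x_0 \iscovered y_0) \subseteq {\equiv^\star}$, so any cover relation forced by~$x_0 \iscovered y_0$ must be contracted by~$\equiv^\star$, hence $\approx$-equivalent to~$x_0 \iscovered y_0$.

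The main obstacle is proving that $\equiv^\star$ is indeed a congruence, and this is precisely where the polygonal hypothesis is decisive. Given covers $x \iscovered y$ and $x \iscovered z$ with $x \equiv^\star y$, the interval $[x, y \join z]$ is a polygon by assumption; cases~(1) and~(2) of polygonal forcing then ensure that every edge along the chain from~$z$ to~$y \join z$ is $\approx$-equivalent to the bottom edge $x \iscovered y$, so the entire opposite chain collapses under~$\equiv^\star$, giving $z \equiv^\star y \join z$. The dual argument handles meets. A routine induction on interval length propagates this local compatibility to arbitrary joins and meets, completing the proof.
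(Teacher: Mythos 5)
There is a genuine gap in the converse direction, stemming from your use of the \emph{equivalence relation} $\approx$ generated by polygonal forcing. Polygonal forcing is not symmetric: a bottom or top edge forces a side edge, but a side edge forces nothing in the same polygon, so the reverse arrow is absent. Hence the $\approx$-class of $x_0 \iscovered y_0$ is in general \emph{strictly larger} than the set $S$ of covers reachable from $x_0 \iscovered y_0$ by forward chains of polygonal forcings, which is what the theorem concerns. Take for $L$ the six-element hexagon with chains $w \iscovered a \iscovered c \iscovered z$ and $w \iscovered b \iscovered d \iscovered z$, and $e_0 \eqdef (w \iscovered a)$. Then $S$ consists of $e_0$, its opposite top edge $d \iscovered z$, and the two side edges $a \iscovered c$ and $b \iscovered d$; correspondingly $\con(e_0)$ has classes $\{w,a,c\}$ and $\{b,d,z\}$ and contracts exactly these four covers. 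But the other bottom edge $w \iscovered b$ also polygon-forces the side edge $a \iscovered c$, so $(w \iscovered b) \approx (a \iscovered c) \approx e_0$, and similarly $(c \iscovered z) \approx e_0$: the $\approx$-class of $e_0$ is all six edges, your $\equiv^\star$ is the universal congruence, and your final conclusion that any cover forced by $e_0$ is $\approx$-equivalent to $e_0$ is vacuously true and strictly weaker than the theorem. You should work with $S$ throughout; $S$ is still closed upward under polygonal forcing, which is all your compatibility argument actually uses.

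Two further points. Even after this fix, you need to argue that the constructed congruence contracts \emph{exactly} the covers of $S$ and no others: a priori a cover $a \iscovered b$ could have both endpoints in the same $S$-connected component without the edge itself lying in $S$, and excluding this possibility is one of the delicate points of Reading's argument rather than a routine induction. And in the easy direction, your join computation only collapses the opposite chain, establishing case (1) and the opposite-chain side edges of case (2); the side edges on the same chain as $x \iscovered y$ require either the dual meet computation (meet $z_1 \equiv y \join z_1$ down the $y$-chain) or an explicit transitivity step through the opposite top edge.
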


It is known that the weak order~$W_n$ is polygonal~\cite{CaspardContePolyBarbutMorvan}.
More generally, the polygonality of posets of regions of hyperplane arrangements was studied in details in~\cite[Sect.~9-6]{Reading-posetRegionsChapter}.

\begin{figure}[p]
	\capstart
	\centerline{\includegraphics[scale=.6]{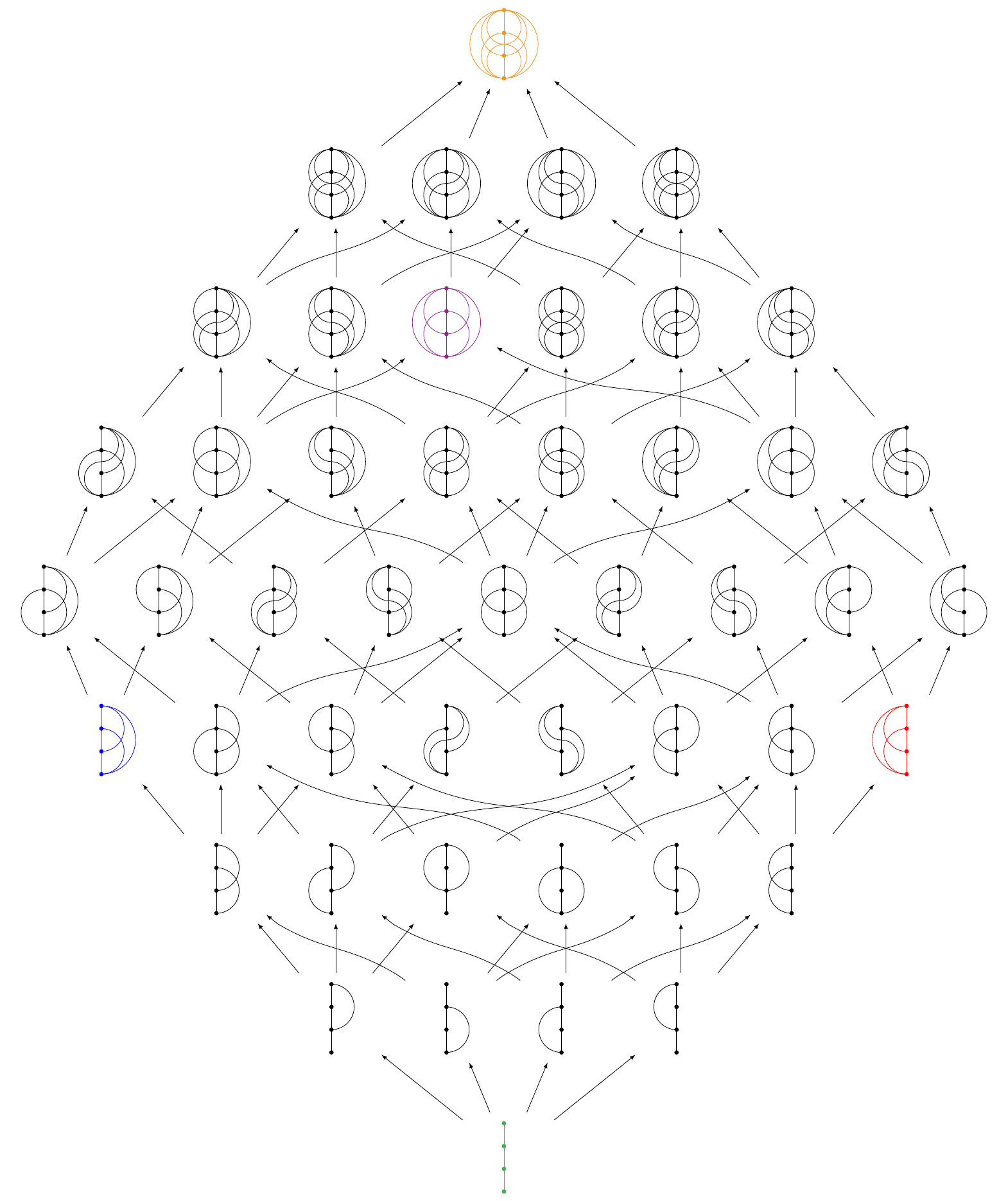}}
	\caption{The congruence lattice of the weak order~$W_4$, where each congruence~$\equiv$ is replaced by its down set~$\c{A}_\equiv$. We have colored in green / blue / red / purple / orange the recoil / sylvester / anti-sylvester / Baxter / trivial (also generic rectangulation) congruence. See also~\cref{fig:weakOrderQuotientopeLattice}. Adapted from~\cite[Fig.~6]{PilaudSantos-quotientopes}}
	\label{fig:weakOrderCongruenceLattice}
\end{figure}

\subsubsection{Special congruences of the weak order}
\label{subsubsec:specialCongruences}

We conclude this recollection with some particularly relevant congruences of the weak order~$W_n$:
\begin{enumerate}
\item the \defn{recoil congruence}~$\equiv_\textrm{rec}$ is defined by the down set~$\c{A}_\textrm{rec} = \set{(i, i+1, \varnothing, \varnothing)}{i \in [n-1]}$ of basic arcs. It has a congruence class for each subset~$I \subseteq [n-1]$ given by the permutations whose recoils (descents of the inverse) are at positions in~$I$. It can also be seen as the transitive closure of the rewriting rule~$U u v V \equiv_{\textrm{rec}} U v u V$ for~$|u - v| > 1$. The quotient~$W_n/{\equiv_\textrm{rec}}$ is the boolean lattice.
\label{item:recoilCongruence}

\item for an arc~$\alpha \eqdef (i, j, A, B)$, the \defn{$\alpha$-Cambrian congruence}~$\equiv_\alpha$ is defined by the down set of subarcs of~$\alpha$.
It was introduced by N.~Reading~\cite{Reading-CambrianLattices} as a generalization of the sylvester congruence, obtained for~$\alpha = (1, n, {]1,n[}, \varnothing)$.
The $\alpha$-Cambrian congruence classes are fibers of the $\alpha$-Cambrian tree insertion, or equivalently linear extensions of $\alpha$-Cambrian trees, see~\cite{LangePilaud, ChatelPilaud, PilaudPons-permutrees}.
An \defn{$\alpha$-Cambrian tree} is a tree on~$[i,j]$ such that the node~$b \in \{i\} \cup A$ (resp.~$b \in B \cup \{j\}$) has one ancestor (resp.~descendant) subtree and two descendant (resp.~ancestor) subtrees, and~$a < b < c$ for any nodes $a$ in the left descendant (resp.~ancestor) subtree of~$b$ and~$c$ in the right descendant (resp.~ancestor) subtree of~$b$.
The $\alpha$-Cambrian congruence can also be seen as the transitive closure of the three rewriting rules~${U u v V \equiv_\alpha U v u V}$ for~$u < i$ or~$v > j$, $U u w V v W \equiv_\alpha U w u V v W$ for~$u < v < w$ with~$v \in A$, and~$U v V u w W \equiv_\alpha U v V w u W$ for~$u < v < w$ with~$v \in B$.
\label{item:CambrianCongruence}

\item for~$\decoration \in \Decorations^n$, the \defn{$\decoration$-permutree congruence}~$\equiv_\decoration$ is defined by the down set~$\c{A}_\decoration$ of arcs that do not pass on the right the points~$j$ with~$\decoration_j \in \{\upCirc, \upDownCirc\}$ nor on the left of the points~$j$ with~$\decoration_j \in \{\downCirc, \upDownCirc\}$.
Its congruence classes correspond to $\decoration$-permutrees~\cite{PilaudPons-permutrees}.
It can also be seen as the transitive closure of the rewriting rules~$U u w V v W \equiv_\decoration U w u V v W$ for~$u < v < w$ with~$\decoration_v \in \{\downCirc, \upDownCirc\}$ and~$U v V u w W \equiv_\decoration U v V w u W$ for~$u < v < w$ with~${\decoration_v \in \{\upCirc, \upDownCirc\}}$.
\label{item:permutreeCongruence}

\item the \defn{Baxter congruence}~$\equiv_\textrm{Bax}$ is defined by the down set of arcs that do not cross the vertical axis, \ie~$\c{A}_\textrm{Bax} = \set{(i, j, A, B)}{A = \varnothing \text{ or } B = \varnothing}$.
Its congruence classes correspond to diagonal rectangulations~\cite{LawReading} or equivalently pairs of twin binary trees~\cite{Giraudo}, which are counted by the Baxter numbers.
It can also be seen as the transitive closure of the rewriting rule~$U v V u x W w X \equiv_{\textrm{Bax}} U v V x u W w X$ for~$u < v, w < x$.
See also~\cite{CardinalPilaud}.
\label{item:BaxterCongruence}

\item the \defn{generic rectangulation congruence}~$\equiv_\textrm{Rec}$ is defined by the down set of arcs that do not cross twice the vertical axis, \ie~$\c{A}_\textrm{Rec} \! = \! \set{(i, j, A, B)}{A \text{ and } B \text{ are empty or intervals}}$
Its congruence classes correspond to generic rectangulations~\cite{Reading-rectangulations} up to wall slides.
See also~\cite{AsinowskiCardinalFelsnerFusy,CardinalPilaud}.
\label{item:rectangulationCongruence}

\item for~$p \ge 1$, the \defn{$p$-twist congruence}~$\equiv_{p\textrm{-twist}}$ is defined by the down set of arcs passing on the left of at most~$p$ points, \ie~$\c{A}_{p\textrm{-twist}} = \set{(i, j, A, B)}{|B| \le p}$.
Its congruence classes correspond to certain acyclic pipe dreams~\cite{Pilaud-brickAlgebra}.
It can also be seen as the transitive closure of the rewriting rule~$U u w V_1 v_1 \dots V_p v_p W \equiv_{p\textrm{-twist}} U w u V_1 v_1 \dots V_p v_p W$ for~${u \! < \! v_1, \dots, v_p \! < \! w}$.
\label{item:pTwistCongruence}

\item a congruence~$\equiv$ of the weak order~$W_n$ is \defn{regular} if the cover graph of the quotient~$W_n / {\equiv}$ is regular (or equivalently if the quotient fan~$\equiv_\equiv$ is simplicial and the quotientope~$\quotientope$ is simple, see \cref{subsec:recollectionsGeometry}).
It was proved in~\cite{HoangMutze, DemonetIyamaReadingReitenThomas, BarnardNovelliPilaud} that $\equiv$ is regular if and only if any minimal (in subarc order) arc in the complement of the down set~$\c{A}_\equiv$ is either a left arc~$(i, j, \varnothing, {]i,j[})$ or a right arc~$(i, j, {]i,j[}, \varnothing)$.
For instance, the sylvester, Cambrian, and permutree congruences are regular, while the Baxter, generic rectangulation, and $p$-twist (for~$p > 1$) congruences are not regular.
\label{item:simpleCongruences}

\end{enumerate}

%%%%%%%%%%%%%%%%%

\subsection{Forcing order and subarc order}
\label{subsec:forcingOrder}

The goal of this section is to describe the forcing order on the join irreducible elements of the $\s$-weak order.
Our approach is similar to the one developed for the weak order in~\cite[Thm.~4.4]{Reading-arcDiagrams} and relies on the property that the $\s$-weak order is a {polygonal} lattice with well-understood polygons, described in~\cite[Lems.~3.16~\&~3.18 and Thm.~3.19]{Lacina} and~\cite[Prop.~1.35]{CeballosPons-sWeakOrderI}.

\begin{proposition}[{\cite[Prop.~1.35]{CeballosPons-sWeakOrderI}, from~\cite[Lems.~3.16~\&~3.18 and Thm.~3.19]{Lacina}}]
\label{prop:polygons}
The $\s$-weak order is a polygonal lattice. 
More precisely, let $\tree[T]$ be an $\s$-tree such that $\tree[T]$ is covered by~$\tree[R]$ and $\tree[S]$ by left rotations of the ascents~$(a,b)$ and~$(c,d)$ respectively, with~$a < c$.
Then the interval $[\tree[T], \tree[R] \join \tree[S]]$ is either a square, a pentagon, or an hexagon depending on the following cases:
\begin{enumerate}
\item If $(a,b)$ is an ascent of~$\tree[S]$ and~$(c,d)$ is an ascent of~$\tree[R]$, then $[\tree[T], \tree[R] \join \tree[S]]$ is a square depicted in the first case of Figure~\ref{fig:polygons}.
\label{cond:polygons-square}
\item If $(a,b)$ is an ascent of $\tree[S]$ but $(c,d)$ is not an ascent of $\tree[R]$, then $b = c$ and $[\tree[T], \tree[R] \join \tree[S]]$ is a pentagon depicted in the second case of Figure~\ref{fig:polygons}.
\label{cond:polygons-pentagon-left}
\item If $(a,b)$ is not an ascent of $\tree[S]$ and $(c,d)$ is an ascent of $\tree[R]$, then $b = c$ and $[\tree[T], \tree[R] \join \tree[S]]$ is a pentagon depicted in the third case of Figure~\ref{fig:polygons}.
\label{cond:polygons-pentagon-right}
\item If $(a,b)$ is not an ascent of $\tree[S]$ and $(c,d)$ is not an ascent of $\tree[R]$, then $b = c$, $s_b = 1$, and $[\tree[T], \tree[R] \join \tree[S]]$ is a hexagon depicted in the fourth case of Figure~\ref{fig:polygons}.
\label{cond:polygons-hexagon}
\end{enumerate}
\end{proposition}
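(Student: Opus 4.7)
The plan is to exploit the geometric interpretation of the $\s$-weak order as the oriented dual graph of the $\s$-foam (\cref{coro:sFoamsWeakOrder}) together with the combinatorial correspondence between facets and stitchings (\cref{prop:stitchingIncision}). Under this correspondence, any polygon in the Hasse diagram of $W_\s$ is the boundary of a collection of maximal cells of $\sFoam$ sharing a common codimension-$2$ face. So my goal reduces to identifying, for each pair of distinct ascents $(a,b), (c,d)$ of $\tree[T]$ with $a < c$, the $\s$-bush~$\bush$ (or bushes) of codimension $2$ whose closed fiber~$\closedFiber{\bush}$ is a face of~$\closedFiber{\tree[T]}$ realizing both rotations, and then read off the remaining maximal cells incident to $\closedFiber{\bush}$.

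First, I would analyze when the two ascents can be stitched \emph{simultaneously} into a single $\s$-bush with exactly two holes. A direct inspection of \cref{def:stitching,def:ascentDescentBush} shows that this is possible precisely when either $b \ne c$ (so the stitching paths are disjoint, since one leaves~$a$ to the right of~$b$ and the other leaves~$c$ to the right of~$d$), or when $b = c$ and $s_b \geq 1$ with the two stitchings using different outgoing edges of~$b$. In the former case, the two rotations commute, and the resulting codimension-$2$ cell is adjacent to exactly four maximal cells (obtained by independently making each incision left or right), which yields the square of~\eqref{cond:polygons-square}. This is also exactly the situation where $(a,b)$ persists as an ascent of $\tree[S]$ and $(c,d)$ as an ascent of $\tree[R]$.

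Next I would handle the case where one ascent-preservation fails but not both. Using \cref{def:ascentDescentBush} to track how a rotation modifies the left/right zigzag at each node, one checks that the rotation of $(a,b)$ destroys the ascent $(c,d)$ in $\tree[R]$ only when $b = c$, and symmetrically for the other direction. In these situations the two rotations still admit a common codimension-$2$ bush $\bush$, but one side of the polygon requires an intermediate rotation along a newly created ascent; a careful application of \cref{prop:positionsRotation} (to identify each cover as the unique rotation changing a prescribed position) produces the extra vertex needed, yielding the pentagons of \eqref{cond:polygons-pentagon-left} and \eqref{cond:polygons-pentagon-right}. The join $\tree[R] \join \tree[S]$ is then computed by the explicit join formula following \cref{thm:sWeakOrderLattice}, confirming the top of the polygon.

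The main obstacle is the hexagon case~\eqref{cond:polygons-hexagon}: when both ascent-preservation conditions fail, one must first prove that necessarily $b = c$ and $s_b = 1$ (so that~$b$ has one leaf child and two non-leaf children, allowing both ``incoming from~$a$ on the left'' and ``outgoing to~$d$ on the right'' to interfere). The hexagon then arises because neither side can reach $\tree[R] \join \tree[S]$ in a single rotation after the first one: one must perform two intermediate rotations on each side, corresponding to incising the two holes of $\bush$ in opposite orders. The delicate point will be verifying that the four intermediate trees on the two sides of the polygon are pairwise distinct and that the join formula produces exactly the predicted top element; this requires a careful bookkeeping of the subtrees of $b$ and the position~$r = \pos(\tree[T], a, b)$ to ensure all six vertices are correctly identified, and it is here that the hypothesis $s_b = 1$ is indispensable (for $s_b \ge 2$ the $\s$-bush~$\bush$ would not have codimension~$2$). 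Finally, the dual statement (for intervals $[x \meet x', y]$ with $y$ covering both $x$ and $x'$) follows by a symmetric argument exchanging ascents with descents and left incisions with right incisions, completing the verification that $W_\s$ is polygonal.
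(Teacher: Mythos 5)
The paper does not prove this proposition: it is imported verbatim from~\cite[Prop.~1.35]{CeballosPons-sWeakOrderI} and~\cite{Lacina}, where it is established by a direct combinatorial analysis of the join formula and the rotation operations (i.e.\ a lattice-theoretic argument working with position vectors, not geometry). Your proposal instead goes through the $\s$-foam and tries to read off the polygon from a codimension-$2$ cell. That is a genuinely different strategy, in the spirit of the Bj\"orner--Edelman--Ziegler / Reading theory for posets of regions of hyperplane arrangements, and if it could be carried out it would give a pleasant geometric explanation of the four cases. However, there are several real gaps.

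First, the opening reduction --- ``any polygon in the Hasse diagram of $W_\s$ is the boundary of a collection of maximal cells of $\sFoam$ sharing a common codimension-$2$ face'' --- is not something you can assume; it is essentially the content of the proposition. For hyperplane arrangements this is a nontrivial theorem (see~\cite[Sect.~9-6]{Reading-posetRegionsChapter}), and its proof uses structural properties (the rank-$2$ restriction of the arrangement, the BEZ lemma, tightness of the base region) that have not been established for the $\s$-foam, which is a polyhedral complex but \emph{not} a hyperplane arrangement. Concretely, you need to show (i) that the two facets of~$\closedFiber{\tree[T]}$ given by the ascents $(a,b)$ and $(c,d)$ meet in a face of codimension exactly~$2$ --- two facets of a polyhedron can meet in a higher-codimension face if the polyhedron is not simple, and~\cref{prop:fibers} gives irredundancy but not simplicity --- and (ii) that the lattice interval $[\tree[T],\tree[R]\join\tree[S]]$ is \emph{exactly} the set of top cells incident to that codimension-$2$ face, not a larger set. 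Neither follows from~\cref{coro:sFoamsWeakOrder} and~\cref{prop:stitchingIncision} alone.

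Second, some of the local bookkeeping is off. When~$s_b=1$ the node~$b$ has~$s_b+1=2$ children, not ``one leaf child and two non-leaf children''; the interference creating the hexagon is subtler than that. And the remark ``for $s_b \ge 2$ the $\s$-bush~$\bush$ would not have codimension~$2$'' conflates the number of holes (which is the codimension by~\cref{prop:fibers}) with the number of children of~$b$: the obstruction when $s_b\ge 2$ is that the two stitchings can be performed simultaneously and commute, so the polygon degenerates to a square rather than the codimension going up.

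In short: the geometric route is attractive and genuinely different from the cited combinatorial proofs, but as written it presupposes a BEZ-type ``rank-$2$'' theorem for the $\s$-foam, and that theorem is precisely the missing ingredient. You would either need to prove it directly (by analyzing the local face structure of $\closedFiber{\tree[T]}$ at each pair of ascents, using~\cref{prop:fibers} and the explicit $\mu/\nu$ description of the facets) or fall back to the combinatorial argument using~\cref{prop:positionsRotation} and the join formula following~\cref{thm:sWeakOrderLattice}.
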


\begin{figure}[ht]
	\capstart
	\centerline{\input{figures/polygons}}
	\caption{The four possible intervals between $\tree[T]$ and $\tree[R] \join \tree[S]$ where $\tree[T] \iscovered \tree[R]$ and $\tree[T] \iscovered \tree[S]$ in the $\s$-weak order. Figure adapted from~\cite{CeballosPons-sWeakOrderI}.}
	\label{fig:polygons}
\end{figure}

\begin{lemma}
\label{lem:forcing_join-irred_NCAD}
Let $\alpha \eqdef (i, j, A, B, r)$ be an $\s$-arc and $\delta$ be a non-crossing $\s$-arc diagram such that~${\alpha\in \delta}$. 
Denote by $\tree_{\!\join}(\alpha)_\star$ the only $\s$-tree covered by $\tree_{\!\join}(\alpha)$, and by~$\tree_{\!\join}(\delta)_\star$ the $\s$-tree obtained by rotating the descent $(i,j)$ of $\tree_{\!\join}(\delta)$. 
Then, for any congruence $\equiv$ of the $\s$-weak order, $\tree_{\!\join}(\alpha)_\star \equiv \tree_{\!\join}(\alpha)$ if and only if ${\tree_{\!\join}(\delta)_\star} \equiv {\tree_{\!\join}(\delta)} $. 
\end{lemma}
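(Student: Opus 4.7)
The strategy is to show that both cover relations share the same canonical join label $\tree_{\!\join}(\alpha)$, and then use the polygonal structure of the $\s$-weak order (\cref{prop:polygons,thm:edge_forcing_polygon}) to deduce they force each other.

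Identification of labels: for $\tree_{\!\join}(\alpha)_\star \iscovered \tree_{\!\join}(\alpha)$, the canonical join label is $\tree_{\!\join}(\alpha)$ itself since $\tree_{\!\join}(\alpha)$ is join irreducible with unique lower cover $\tree_{\!\join}(\alpha)_\star$ (\cref{prop:sArc2sTree}). For $\tree_{\!\join}(\delta)_\star \iscovered \tree_{\!\join}(\delta)$, \cref{prop:canonicalJoinRepresentation} gives $\tree_{\!\join}(\delta) = \bigJoin_{\beta \in \delta} \tree_{\!\join}(\beta)$, so $\tree_{\!\join}(\alpha) \le \tree_{\!\join}(\delta)$. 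Comparing position vectors via \cref{lem:positionsArc} and \cref{prop:positionsRotation} shows~$\pos(\tree_{\!\join}(\alpha), i, j) = r$ while $\pos(\tree_{\!\join}(\delta)_\star, i, j) = \pos(\tree_{\!\join}(\delta), i, j) - 1 = r-1$, hence $\tree_{\!\join}(\alpha) \not\le \tree_{\!\join}(\delta)_\star$. Thus $\tree_{\!\join}(\alpha)$ is the join label of both cover relations.

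Propagation by induction on $|\delta|$: the base case $\delta = \{\alpha\}$ is tautological. For the inductive step, fix $\beta \in \delta \ssm \{\alpha\}$ and set $\delta' \eqdef \delta \ssm \{\beta\}$. By induction, $\tree_{\!\join}(\alpha)_\star \iscovered \tree_{\!\join}(\alpha)$ and $\tree_{\!\join}(\delta')_\star \iscovered \tree_{\!\join}(\delta')$ force each other. It then suffices to prove mutual forcing between $\tree_{\!\join}(\delta')_\star \iscovered \tree_{\!\join}(\delta')$ and $\tree_{\!\join}(\delta)_\star \iscovered \tree_{\!\join}(\delta)$. Since $\tree_{\!\join}(\delta) = \tree_{\!\join}(\delta') \join \tree_{\!\join}(\beta)$, the element $\tree_{\!\join}(\delta')_\star$ is covered both by $\tree_{\!\join}(\delta')$ (rotating the $(i,j)$ descent) and by an $\s$-tree obtained by rotating the ascent of $\tree_{\!\join}(\delta')_\star$ pointing toward $\tree_{\!\join}(\beta)$. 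These two covers sit at the bottom of a polygon in $[\tree_{\!\join}(\delta')_\star, \tree_{\!\join}(\delta)]$. The polygon falls into one of the four cases of \cref{prop:polygons}, and in each case the cover $\tree_{\!\join}(\delta)_\star \iscovered \tree_{\!\join}(\delta)$ appears as one of the top edges; applying the forcing rules of \cref{thm:edge_forcing_polygon} (bottom-edge/opposite-top-edge in the square and both pentagons, and a side-edge analysis in the hexagon) yields mutual forcing between the $(i,j)$-rotating edges at $\tree_{\!\join}(\delta')_\star$ and at $\tree_{\!\join}(\delta)_\star$.

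The main obstacle is the polygon case analysis. The square case (\cref{prop:polygons}\,\eqref{cond:polygons-square}) is easy: the two $(i,j)$-labelled edges appear as parallel bottom and top edges and directly force each other. The pentagon cases (\cref{prop:polygons}\,\eqref{cond:polygons-pentagon-left} and \eqref{cond:polygons-pentagon-right}) require checking that the $(i,j)$ edge remains undistorted even when one ascent ceases to be an ascent after the other is rotated; here the opposite-bottom/top forcing of \cref{thm:edge_forcing_polygon} suffices. The hexagon case (\cref{prop:polygons}\,\eqref{cond:polygons-hexagon}) is the most delicate, since the matching $(i,j)$ edges then appear as opposite bottom and top edges, but a side edge labelled by a distinct $\s$-arc intervenes; one must verify that the mutual forcing still holds, possibly by composing two polygon steps. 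Careful tracking of the rotation labels, together with the observation that the position $\pos(\cdot, i, j)$ remains $r-1$ or $r$ in each vertex of the polygon, closes the argument.
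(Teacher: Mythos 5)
Your proposal diverges substantially from the paper's argument, and the inductive step as written has a genuine gap.

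The paper's proof is a short, direct lattice-theoretic argument that uses no polygonality. For the forward direction, one applies the congruence to the canonical join representation: $\tree_{\!\join}(\alpha)_\star \equiv \tree_{\!\join}(\alpha)$ gives $\tree_{\!\join}(\delta) = \bigJoin_{\beta \in \delta} \tree_{\!\join}(\beta) \equiv \tree' \eqdef \bigJoin_{\beta \in \delta \ssm \{\alpha\}} \tree_{\!\join}(\beta) \join \tree_{\!\join}(\alpha)_\star$; one then checks $\tree' \le \tree_{\!\join}(\delta)_\star$ via \cref{lem:prepCanonicalJoinRepresentation} and \cref{prop:positionsRotation}, and uses that congruence classes are intervals. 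The converse follows by applying the congruence to the meet $\tree_{\!\join}(\alpha) \meet \tree_{\!\join}(\delta)$ and observing $\tree_{\!\join}(\alpha) \meet \tree_{\!\join}(\delta)_\star \le \tree_{\!\join}(\alpha)_\star$.

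Your proof instead routes through the polygonal structure of \cref{prop:polygons}, inducting on $|\delta|$. The first paragraph (identification of labels) is correct but is essentially a restatement of \cref{lem:prepCanonicalJoinRepresentation}. The inductive step, however, fails. After removing $\beta$ to form $\delta'$, you need the $(i,j)$-rotation edges at $\tree_{\!\join}(\delta')$ and at $\tree_{\!\join}(\delta)$ to force each other. You claim both edges appear in a single polygon rooted at $\tree_{\!\join}(\delta')_\star$, with $\tree_{\!\join}(\delta)_\star \iscovered \tree_{\!\join}(\delta)$ as one of its top edges. This is false in general: the polygon generated by the two covers $\tree_{\!\join}(\delta')$ and $\tree[S]$ of $\tree_{\!\join}(\delta')_\star$ has apex $\tree_{\!\join}(\delta') \join \tree[S]$, which is usually strictly below $\tree_{\!\join}(\delta) = \tree_{\!\join}(\delta') \join \tree_{\!\join}(\beta)$, since $\tree[S]$ is a single cover of $\tree_{\!\join}(\delta')_\star$ while $\tree_{\!\join}(\beta)$ may lie arbitrarily far above. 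Thus the edge $\tree_{\!\join}(\delta)_\star \iscovered \tree_{\!\join}(\delta)$ is not in your polygon, and \cref{thm:edge_forcing_polygon} transports the forcing only to an intermediate $(i,j)$-labelled edge, not to the target. To close this gap you would need to chain an unbounded number of polygons along a maximal chain from $\tree_{\!\join}(\delta')$ to $\tree_{\!\join}(\delta)$, while verifying at each step that $(i,j)$ remains a descent with canonical joinand $\tree_{\!\join}(\alpha)$ — an induction your write-up does not carry out and which is not controlled by $|\delta|$. (This is the content of the CU-labeling property, which the paper never needs to invoke here.) The hexagon side-edge issue you flag is real, but it is secondary to this more basic problem. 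The paper's meet/join argument sidesteps the entire polygon analysis.
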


\begin{proof}
Assume first that $\tree_{\!\join}(\alpha)_\star \equiv \tree_{\!\join}(\alpha)$. 
Then ${\tree_{\!\join}(\delta)=\bigJoin_{\beta \in \delta} \tree_{\!\join}(\beta) \equiv \bigJoin_{\beta \in \delta\ssm\{\alpha\}} \tree_{\!\join}(\beta) \join \tree_{\!\join}(\alpha)_\star}$. 
Let us denote $\tree' \eqdef \bigJoin_{\beta \in \delta\ssm\{\alpha\}} \tree_{\!\join}(\beta) \join \tree_{\!\join}(\alpha)_\star$. 
It is clear that $\tree_{\!\join}(\alpha) \nleq \tree'$ in the $\s$-weak order. 
Thus it follows from \cref{lem:prepCanonicalJoinRepresentation} (where we take $\tree \eqdef \tree_{\!\join}(\delta)$) that $\pos(\tree',i,j)<\pos(\tree_{\!\join}(\alpha),i,j)$, and \cref{prop:positionsRotation} implies that $\tree'\leq \tree_{\!\join}(\delta)_\star$.
Hence $\tree_{\!\join}(\delta)_\star$ is contained in the interval $[\tree', \tree_{\!\join}(\delta)]$, and $\tree_{\!\join}(\delta)_\star \equiv \tree_{\!\join}(\delta)$ since congruence classes are intervals.

Assume now that $\tree_{\!\join}(\delta)_\star\equiv \tree_{\!\join}(\delta)$. 
Then we have $\tree_{\!\join}(\alpha)=\tree_{\!\join}(\alpha)\meet \tree_{\!\join}(\delta) \equiv \tree_{\!\join}(\alpha)\meet \tree_{\!\join}(\delta)_\star$.
But since $\tree_{\!\join}(\alpha) \nleq \tree_{\!\join}(\delta)_\star$ in the $\s$-weak order, we have that $\tree_{\!\join}(\alpha)\meet \tree_{\!\join}(\delta)_\star \leq \tree_{\!\join}(\alpha)_\star$. 
We conclude that $\tree_{\!\join}(\alpha) \equiv \tree_{\!\join}(\alpha)_\star$.
\end{proof}

\begin{definition}
\label{def:subarc}
Consider two $\s$-arcs~$\alpha \eqdef (i, j, A, B, r)$ and~$\alpha' \eqdef (i', j', A', B', r')$. 
We say that $\alpha$ is a \defn{subarc} of $\alpha'$ if all the following conditions hold:
\begin{itemize}
\item $i'\leq i < j \leq j'$,
\item $A\subseteq A'$ and $B\subseteq B'$,
\item if $s_j=0$ then $j=j'$,
\item if $i'=i$ then $r=r'$,
\item if $i'<i$ then either $i\in A'$ and $r=1$, or $i\in B'$ and $r=s_i$.
\end{itemize}
\end{definition}

\begin{definition}
\label{def:subarc_cover}
We say that an $\s$-arc $\alpha \eqdef (i, j, A, B, r)$ \defn{is extended} by an $\s$-arc $\alpha' \eqdef (i', j', A', B', r')$ if one of the following conditions holds:
\begin{enumerate}
\item $s_j\neq 0$ and $\alpha'=(i,j+1,A\cup\{j\},B,r)$,
\item $s_j\neq 0$ and $\alpha'=(i,j+1,A,B\cup\{j\},r)$,
\item $r=s_i$ and $\alpha'=(i-1,j,A,B\cup\{i\},r')$,
\item $r=1$ and $\alpha'=(i-1,j,A\cup\{i\},B,r')$.
\end{enumerate}
\end{definition}

It is clear that the subarc relation is the transitive closure of the extension relation.
The subarc order is illustrated in \cref{fig:sSubarcOrder} (note that when~$n = 3$, the subarc and the extension orders coincide).

\begin{figure}[t]
	\capstart
	\centerline{\includegraphics[scale=.8]{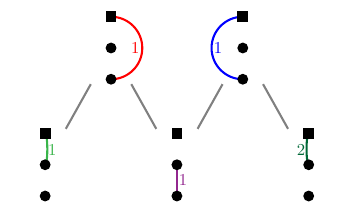} \hspace{1.5cm} \includegraphics[scale=.8]{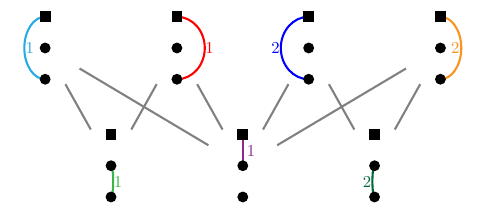}}
	\caption{The subarc order on $\s$-arcs for $\s = (1,2,0)$ (left) and~$\s = (2,1,0)$ (right).}
	\label{fig:sSubarcOrder}
\end{figure}

\begin{theorem}
\label{thm:forcingOrder}
The forcing order on join irreducible $\s$-trees coincide with the subarc order on $\s$-arcs:
if~$\alpha$ and~$\alpha'$ are two~$\s$-arcs, then $\tree_{\!\join}(\alpha)$ forces $\tree_{\!\join}(\alpha')$ if and only if $\alpha$ is a subarc of $\alpha'$.
\end{theorem}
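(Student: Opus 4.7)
The plan is to follow N.~Reading's approach for the weak order (\cite[Thm.~4.4]{Reading-arcDiagrams}): combine the polygonality of the $\s$-weak order (\cref{prop:polygons}) with \cref{thm:edge_forcing_polygon} to reduce forcing to polygon-forcing, and then match each polygon-forcing with an extension from \cref{def:subarc_cover}. The key bridge is \cref{lem:forcing_join-irred_NCAD}: every cover relation $\tree \iscovered \tree'$ obtained by rotating an ascent $(i,j)$ of $\tree$ has the same forcing behavior as $\tree_{\!\join}(\alpha)_\star \iscovered \tree_{\!\join}(\alpha)$, where $\alpha \eqdef \alpha_\join(\tree', i, j)$. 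Hence the forcing order on join irreducible $\s$-trees coincides with the restriction, to these distinguished cover relations, of the polygon-generated forcing on cover relations of the $\s$-weak order.

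For the direction ``subarc implies forcing'', since the subarc order is the transitive closure of the extension relation (noted after \cref{def:subarc_cover}), it suffices to verify the statement when $\alpha'$ extends $\alpha$. For each of the four extension types of \cref{def:subarc_cover}, I would exhibit an explicit polygon of \cref{prop:polygons} in which the cover relation associated to $\alpha$ appears as a bottom (or top) edge while the cover relation associated to $\alpha'$ appears as an opposite or side edge, and then invoke polygon-forcing. The extensions that grow $\alpha$ at its top endpoint $j$ into $j+1$ (types~(1) and~(2) of \cref{def:subarc_cover}) are witnessed by the two pentagon cases of \cref{prop:polygons}, depending on whether $j$ lies weakly left or strictly right of the new path in the corresponding $\s$-tree; the symmetric extensions at the bottom endpoint $i$ into $i-1$ (types~(3) and~(4), whose constraints $r = s_i$ or $r = 1$ reflect the fact that such an extension changes the position $r$) are witnessed by the mirror pentagons, and the hexagon case arises precisely when $s_b = 1$.

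For the converse ``forcing implies subarc'', we apply \cref{thm:edge_forcing_polygon} and check that, in each of the four polygons of \cref{prop:polygons}, every polygon-forcing translates into either equality or an extension between the associated $\s$-arcs. In the square, opposite edges carry the same associated $\s$-arc and no side edges are present, so the forcing is reflexive. In the two pentagons and in the hexagon, we compute via \cref{def:sTree2sArcDiagram} the $\s$-arc associated to each edge of the polygon (by reading off its $A$, $B$ and $r$ from the local structure of the rotated tree) and recognize each polygon-forcing (bottom to opposite top, bottom to side, top to opposite bottom, top to side) as one of the four extension types of \cref{def:subarc_cover}. Taking transitive closures then yields the identification of forcing with the subarc order. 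The main obstacle will be the bookkeeping in this step: for each polygon we must carefully read off the associated $\s$-arc of every edge and match it against the correct extension type, with particular care at the boundary values ($s_j = 0$, $r \in \{1, s_i\}$, $s_b = 1$) which delimit exactly which extensions are allowed in \cref{def:subarc_cover}.
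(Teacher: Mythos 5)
Your proposal follows essentially the same route as the paper's proof: reduce forcing of join irreducibles to polygon-forcing on edges via \cref{thm:edge_forcing_polygon} and \cref{lem:forcing_join-irred_NCAD}, then verify in one direction that every polygon-forcing of edges translates into an extension of $\s$-arcs (hence a subarc relation), and in the other direction that every extension type of \cref{def:subarc_cover} is realized by a bottom-to-side or bottom-to-opposite-top forcing inside an explicit polygon of \cref{prop:polygons}. The paper packages this reduction through the auxiliary relation $\alpha \to \alpha'$ of \cref{def:arrow_join-irred}, and the real content — which you rightly flag as the main obstacle — is the case-by-case reading of the $\s$-arc attached to each edge of each polygon, with the boundary conditions $s_j = 0$, $r \in \{1, s_i\}$, $s_b = 1$ delimiting when extensions are permitted; your outline is a faithful high-level account of exactly that bookkeeping.
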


\begin{definition}
\label{def:arrow_join-irred}
Consider two $\s$-arcs~$\alpha \eqdef (i, j, A, B, r)$ and~$\alpha' \eqdef (i', j', A', B', r')$. 
We write \defn{$\alpha \rightarrow \alpha'$} if there are non-crossing $\s$-arc diagrams $\delta$ and $\delta'$ such that $\alpha\in \delta$, $\alpha'\in \delta'$ and the edge $\tree_{\!\join}(\delta)_\star \iscovered \tree_{\!\join}(\delta)$ forces the edge  $\tree_{\!\join}(\delta')_\star \iscovered \tree_{\!\join}(\delta')$ in a polygon of the $\s$-weak order, where we denote by $\tree_{\!\join}(\delta)_\star$ the \mbox{$\s$-tree} obtained by rotating the descent $(i,j)$ of $\tree_{\!\join}(\delta)$ and by $\tree_{\!\join}(\delta')_\star$ the $\s$-tree obtained by rotating the descent $(i',j')$ of $\tree_{\!\join}(\delta')$.
\end{definition}

\begin{proof}[Proof of \cref{thm:forcingOrder}]
It follows from \cref{thm:edge_forcing_polygon}, \cref{lem:forcing_join-irred_NCAD} and \cref{def:arrow_join-irred} that the forcing order on the $\s$-arcs is the transitive closure of the relation $\rightarrow$. 

We can assume that $\alpha$ and $\alpha'$ are distinct.

First, we suppose that $\alpha \rightarrow \alpha'$ with $\delta, \delta', \tree_{\!\join}(\delta)_\star, \tree_{\!\join}(\delta')_\join$ as in \cref{def:arrow_join-irred} and we show that $\alpha$ is a subarc of $\alpha'$.
By considering the polygon in which the edge $\tree_{\!\join}(\delta)_\star \iscovered \tree_{\!\join}(\delta)$ forces the edge  $\tree_{\!\join}(\delta')_\star \iscovered \tree_{\!\join}(\delta')$, \cref{prop:polygons} shows that we are in one of the following cases:
\begin{enumerate}
\item $\tree_{\!\join}(\delta) = \tree[R]$ and $\tree_{\!\join}(\delta') = \tree[R]'$ (cases (\ref{cond:polygons-pentagon-left}) or (\ref{cond:polygons-hexagon}) of \cref{prop:polygons}), then $i=i'$, $s_j\neq 0$.
The $\s$-tree $\tree[R] = \tree_{\!\join}(\delta)=\tree_{\!\join}(\delta')_\star$ has a descent $(i,j)$ and an ascent $(i,j')$ around a same gap of the node $i$ (so that $(j,j')$ is an ascent of $\tree[R]'=\tree_{\!\join}(\delta')$). This implies that $r=r'$. Moreover, all the nodes $i<k<j$ that are weakly on the left, resp.\ strictly on the right, of the path from $i$ to $j$ in $\tree[R]$ are weakly on the left, resp.\ strictly on the right, of the path from $i$ to $j'$ in $\tree[R]'$, thus $A\subseteq A'$ and $B\subseteq B'$. 
(In this case we moreover have $j\in A'$).
\item $\tree_{\!\join}(\delta)=\tree[R]$ and $\tree_{\!\join}(\delta')=\tree[S]'$ (cases (\ref{cond:polygons-pentagon-right}) or (\ref{cond:polygons-hexagon}) of \cref{prop:polygons}), then $i=i'$ and $s_j\neq 0$.
The $\s$-tree $\tree$ has an ascent $(i,j)$ and an ascent $(j,j')$ around the leftmost gap of node $j$, so that $(i,j')$ is an ascent of the $\s$-tree~$\tree[S]$. 
 This implies that $r=r'$. 
 Moreover, all the nodes $i<k<j$ that are weakly on the left, resp.\ strictly on the right, of the path from $i$ to $j$ in $\tree[R]$ are weakly on the left, resp.\ strictly on the right, of the path from $i$ to $j'$ in $\tree[S]'$, thus $A\subseteq A'$ and $B\subseteq B'$. 
(In this case we moreover have $j\in B'$).
\item $\tree_{\!\join}(\delta) = \tree[S]$ and $\tree_{\!\join}(\delta')=\tree[S]'$ (cases (\ref{cond:polygons-pentagon-right}) or (\ref{cond:polygons-hexagon}) of \cref{prop:polygons}), then $j=j'$.
The tree $\tree[S]$ has an ascent $(i',j)$ and a descent $(i,j)$. This implies that $r=s_i$. 
Moreover, it follows from the fact that $(i,j)$ is an ascent of $\tree[S]'$ that there are no nodes $i<k<j$ on the path from $i$ to $j$ in $\tree[S]$.
Hence, all the nodes $i<k<j$ that are weakly on the left, resp.\ strictly on the right, of the path from $i$ to $j$ in $\tree[S]$ are weakly on the left, resp.\ strictly on the right, of the path from $i'$ to $j$ in $\tree[S]'$, thus $A\subseteq A'$ and $B\subseteq B'$. 
The node $i$ is on the right of the path from $i'$ to $j$ in $\tree[S]'$, thus $i\in B'$.
\item $\tree_{\!\join}(\delta)=\tree[S]$ and $\tree_{\!\join}(\delta')=\tree[R]'$ (cases (\ref{cond:polygons-pentagon-left}) or (\ref{cond:polygons-hexagon}) of \cref{prop:polygons}), then $j=j'$. 
The $\s$-tree $\tree$ has an ascent $(i',i)$ and an ascent $(i,j)$. The fact that $(i',j)$ is an ascent of $\tree[R]$ implies that $r=1$ (so that $j$ is not moved during the rotation of $(i',i)$ from $\tree$ to $\tree[R]$).
The nodes $i<k<j$ that are weakly on the left, resp.\ strictly on the right, of the path from $i$ to $j$ in $\tree[S]$ are weakly on the left, resp.\ strictly on the right, of the path from $i'$ to $j$ in $\tree[R]'$, thus $A\subseteq A'$ and $B\subseteq B'$. 
The node $i$ is on the path from $i'$ to $j$ on $\tree[R]'$, thus $i\in A'$.
\end{enumerate}

Then, we suppose that $\alpha$ is extended by $\alpha'$ and we show that $\alpha \rightarrow \alpha'$. 
For each of the four cases of \cref{def:subarc_cover} we indicate which $\delta$ and $\delta'$ we can use to recover the four previously studied cases.

\begin{enumerate}
\item $s_j\neq 0$ and $\alpha'=(i,j+1,A\cup\{j\},B,r)$. We take $\delta=\{\alpha\}$ if $r=1$ or ${\delta=\{\alpha, (i,j+1,A\cup\{j\}},$ $B,r-1)\}$ if $r>1$, and $\delta'=\{\alpha'\}$. Then we have $\tree_{\!\join}(\delta)=\tree[R]$ and $\tree_{\!\join}(\delta')=\tree[R]'$ in case (\ref{cond:polygons-pentagon-left}) of \cref{prop:polygons} if $s_j>1$ or case (\ref{cond:polygons-hexagon}) if $s_j=1$.
\item $s_j\neq 0$ and $\alpha'=(i,j+1,A,B\cup\{j\},r)$. 
If $s_j>1$, we take $\delta=\{\alpha, (j,j+1,\emptyset,\emptyset,s_j)\}$, and~$\delta'=\{\alpha'\}$ if $r=1$ or $\delta'=\{\alpha', (i,j,A,B,r-1) \}$ if $r>1$. Then we have $\tree_{\!\join}(\delta)=\tree[R]$ and $\tree_{\!\join}(\delta')=\tree[S]'$ in case (\ref{cond:polygons-pentagon-right}) of \cref{prop:polygons}.
If $s_j=1$, we take $\delta=\{\alpha\}$ and $\delta'=\{\alpha'\}$ if~$r=1$ or $\delta=\{\alpha, (i,j+1,A\cup\{j\},B,r-1)\}$ and  $\delta'=\{\alpha', (i,j,A,B,r-1) \}$ if $r>1$. Then we have $\tree_{\!\join}(\delta)=\tree[R]$ and $\tree_{\!\join}(\delta')=\tree[S]'$ in case (\ref{cond:polygons-hexagon}) of \cref{prop:polygons}.
\item $r=s_i$ and $\alpha'=(i-1,j,A,B\cup\{i\},r')$. 
If $A=\emptyset$, we take $\delta=\{\alpha\}$ and $\delta'=\{\alpha'\}$ if~$r'=1$, or $\delta=\{\alpha, (i-1, i, \emptyset, \emptyset,r'-1)\}$ and $\delta'=\{\alpha', (i-1,i,\emptyset,\emptyset,r'-1)\}$ if $r'>1$. 
If $A\neq \emptyset$, we denote by $k\in {]i,j[}$ the maximum element of $A$ 
and we add the $\s$-arc $(i-1, k, A\ssm\{k\}, (B\cap {]i,k[}) \cup \{i\}, r')$ to the diagrams $\delta$ previously defined.
Then we have $\tree_{\!\join}(\delta)=\tree[S]$ and $\tree_{\!\join}(\delta')=\tree[S]'$ in case (\ref{cond:polygons-pentagon-right}) of \cref{prop:polygons} if $s_i>1$ or case (\ref{cond:polygons-hexagon}) if $s_i=1$.
\item $r=1$ and $\alpha'=(i-1,j,A\cup\{i\},B,r')$. 
If $A=\emptyset$, we take $\delta=\{\alpha\}$ if $r'=1$, or $\delta=\{\alpha, (i-1, i, \emptyset, \emptyset,r'-1)\}$ if $r'>1$,  and $\delta'=\{\alpha'\}$. 
If $A\neq \emptyset$, we denote by $k\in {]i,j[}$ the maximum element of $A$ 
and we add the $\s$-arc $(i-1, k, A\ssm\{k\}, (B\cap {]i,k[}) \cup \{i\}, r')$ to the diagram $\delta$ and we add $(i,k,A\ssm\{k\},B\cap {]i,k[},s_i)$ to $\delta'$.
Then we have $\tree_{\!\join}(\delta)=\tree[S]$ and~$\tree_{\!\join}(\delta')=\tree[R]'$ in case (\ref{cond:polygons-pentagon-left}) of \cref{prop:polygons} if $s_i>1$ or case (\ref{cond:polygons-hexagon}) if $s_i=1$.
\qedhere
\end{enumerate}
\end{proof}

From~\cref{thm:congruenceLattice,thm:forcingOrder}, we immediately deduce the following statement, illustrated in \cref{fig:sWeakOrderCongruenceLattice}.

\begin{figure}[t]
	\capstart
	\centerline{\raisebox{1.2cm}{\includegraphics[scale=.8]{120-congruenceLattice}} \hspace{.5cm} \includegraphics[scale=.8]{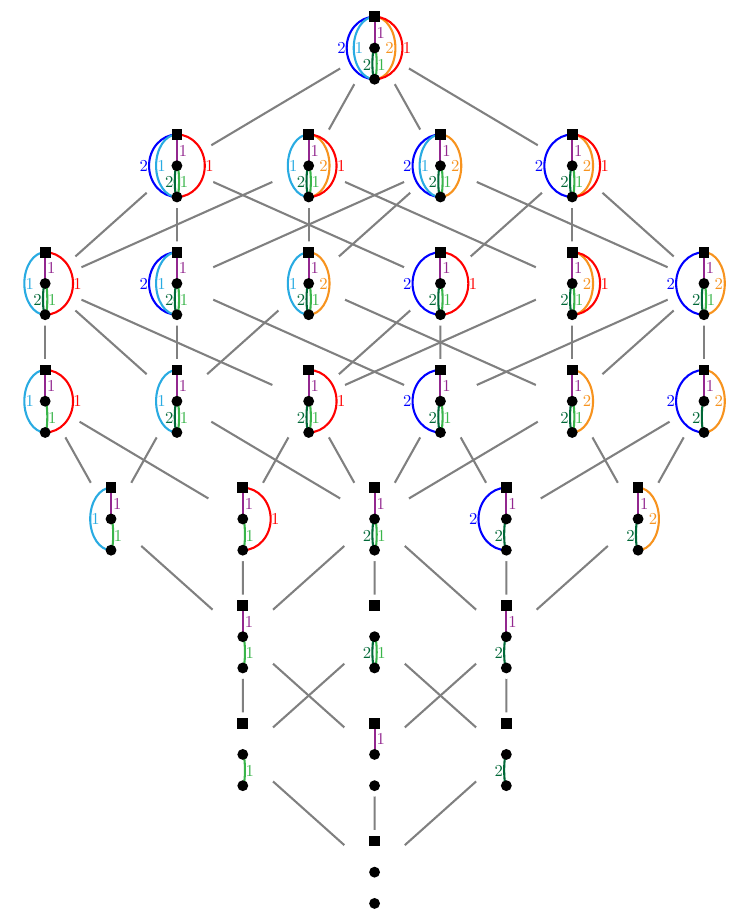}}
	\caption{The congruence lattice of the $\s$-weak order~$W_\s$ for $\s = (1,2,0)$ (left) and~$\s = (2,1,0)$ (right), where each congruence~$\equiv$ is replaced by its $\s$-arc down set~$\c{A}_\equiv$. See also \cref{fig:sQuotientFoamLattice,fig:sQuotientopeLattice}.}
	\label{fig:sWeakOrderCongruenceLattice}
\end{figure}

\begin{corollary}
\label{coro:sWeakOrderCongruenceLattice}
The congruence lattice of the $\s$-weak order~$W_\s$ is isomorphic to the lattice of down sets of the subarc order on $\s$-arcs.

For a congruence~$\equiv$ of~$W_\s$, we denote by~$\c{A}_\equiv$ the corresponding down set of the subarc order on $\s$-arcs.
\end{corollary}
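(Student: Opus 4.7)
The plan is to deduce the corollary directly by combining the general lattice-theoretic principle recalled in \cref{thm:congruenceLattice} with the concrete identifications already established for the $\s$-weak order. In particular, since \cref{thm:sWeakOrderLattice} gives that $W_\s$ is a congruence uniform lattice (and in particular finite semidistributive), the hypotheses of \cref{thm:congruenceLattice} apply, so the congruence lattice $\con(W_\s)$ is isomorphic to the lattice of down sets of the forcing preorder on the join irreducible $\s$-trees. The goal is then to transport this statement from join irreducibles to $\s$-arcs.

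First, I would invoke \cref{prop:sArc2sTree}, which establishes that the map $\tree_{\!\join}$ is a bijection from $\s$-arcs to the join irreducible elements of the $\s$-weak order. This identifies the ground set of the forcing preorder with the set of $\s$-arcs. Next, I would invoke \cref{thm:forcingOrder}, which asserts that under this bijection the forcing order on join irreducible $\s$-trees coincides with the subarc order on $\s$-arcs; in particular, the forcing preorder is actually a partial order, so down sets of the forcing order correspond bijectively to down sets of the subarc order.

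Combining these three inputs, the isomorphism $\con(W_\s) \simeq \mathrm{DownSets}(\succcurlyeq) \simeq \mathrm{DownSets}(\text{subarc order})$ follows immediately, which is exactly the claim. The definition of $\c{A}_\equiv$ as the down set of $\s$-arcs corresponding to the join irreducibles contracted by $\equiv$ makes the isomorphism explicit. Since each of the three inputs is already proven in the excerpt, there is no genuine obstacle here: the whole proof is a one-line chain of bijections, and the main care is simply to verify that \cref{thm:congruenceLattice} indeed applies (which it does thanks to the lattice property in \cref{thm:sWeakOrderLattice}) and that the preorder of \cref{thm:congruenceLattice} is identified as an \emph{order} via \cref{thm:forcingOrder}, so that "down sets of a preorder" becomes the cleaner "down sets of the subarc poset".
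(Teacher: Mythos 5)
Your proof is correct and takes essentially the same route as the paper, which immediately deduces the corollary from \cref{thm:congruenceLattice} and \cref{thm:forcingOrder}; you simply spell out the intermediate identifications (\cref{prop:sArc2sTree}, \cref{thm:sWeakOrderLattice}) a bit more explicitly.
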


%%%%%%%%%%%%%%%%%

\subsection{Some relevant congruences of the $\s$-weak order}
\label{subsec:relevantQuotients}

In this section, we exploit \cref{coro:sWeakOrderCongruenceLattice} to define particularly relevant congruences, mimicking some congruences of \cref{subsubsec:specialCongruences}.
We present a few conjectures that echo the situation of the similar congruences in the classical weak order.

\subsubsection{The $\s$-sylvester congruence and the $\s$-Tamari lattice}
\label{subsubsec:sylvester}

A \defn{right $\s$-arc} is an $\s$-arc of the form~$(i, j, \set{k \in {]i,j[}}{s_k \ne 0}, \varnothing, r)$ for some~$1 \le i < j \le n$ and~$r\in[s_i]$.
The set of right $\s$-arcs clearly forms a down set~$\c{A}_\mathrm{sylv}$ of the subarc order.
The corresponding congruence of the $\s$-weak order is the \defn{$\s$-sylvester congruence}~$\equiv_\mathrm{sylv}$, and the quotient~$W_\s / {\equiv_\mathrm{sylv}}$  of the $\s$-weak order by the $\s$-sylvester congruence is the \defn{$\s$-Tamari lattice}.
An $\s$-tree~$\tree$ is minimal in its $\s$-sylvester congruence class if and only if~$\pos(\tree, a, b) \le \pos(\tree, a, c)$ for all~$1 \le a < b < c \le n$ with~$s_b \ne 0$.

When~$\s$ contains no~$0$, and up to our convention modifications (see \cref{rem:sTreeVSsDecreasingTrees}), these $\s$-trees are called \defn{$\s$-Tamari trees} in~\cite{CeballosPons-sWeakOrderI,CeballosPons-sWeakOrderII}.
We thus directly obtain that the subposet of the \mbox{$\s$-weak} order induced by the $\s$-Tamari trees is (isomorphic to) the $\s$-Tamari lattice~$W_\s / {\equiv_\mathrm{sylv}}$.
This immediately recovers \cite[Thm.~2.20]{CeballosPons-sWeakOrderI}.
We refer to~\cite[Sect.~2.4]{CeballosPons-sWeakOrderI} for the connection to the $\nu$-Tamari lattice of~\cite{PrevilleRatelleViennot, CeballosPadrolSarmiento-geometryNuTamari}.

When~$\s$ contains some~$0$, the $\s$-trees that are minimal in their $\s$-sylvester congruence classes actually differ from the $\s$-Tamari trees of~\cite{CeballosPons-sWeakOrderI}, because they drop the condition~$s_b \ne 0$.
It was proved in~\cite[Thm.~2.2]{CeballosPons-sWeakOrderI} that the subposet of the $\s$-weak order induced by the $\s$-Tamari trees is still a sublattice of the $\s$-weak order, but is not anymore a lattice quotient of the $\s$-weak order.
(We note that even when~$\s$ contains no~$0$, it follows from the theory of lattice congruences that this subposet is a join sublattice, but it could a priori fail to be a meet sublattice.)
They still call this sublattice the $\s$-Tamari lattice.
We insist that, in the situation that~$\s$ contains some~$0$, our $\s$-Tamari lattice differs from that of~\cite[Sect.~2]{CeballosPons-sWeakOrderI}.

\subsubsection{The $\s$-Cambrian congruences}

For any $\s$-arc~$\alpha$, the \defn{$\alpha$-Cambrian congruence}~$\equiv_\alpha$ is defined by the down set of subarcs of~$\alpha$, and the quotient~$W_\s / {\equiv_\alpha}$ is the \defn{$\alpha$-Cambrian lattice}.
Note that the $\s$-sylvester congruence is not anymore an $\s$-Cambrian congruence (as some right arcs are not subarcs of the arc~$(1, n, \set{k \in {]1,n[}}{s_k \ne 0}, \varnothing, 1)$).
Our main conjecture on $\s$-Cambrian congruences is the following.

\begin{conjecture}
\label{conj:Cambrian}
For any fixed~$\s$, the following only depend on the endpoints of~$\alpha$:
\begin{itemize}
\item the cardinality of the $\alpha$-Cambrian lattice,
\item the $f$-vector of the canonical join complex of the $\alpha$-Cambrian lattice,
\item the (isomorphism class of the) undirected cover graph of the $\alpha$-Cambrian lattice,
\item the (isomorphism class of the) face lattice of the $\alpha$-Cambrian foam~$\quotientFoam[\equiv_\alpha]$ or equivalently of the $\alpha$-Cambrian quotientoplex~$\quotientoplex[\equiv_\alpha]$ (see \cref{sec:sQuotientopes} for the definitions of~$\quotientFoam$ and~$\quotientoplex$).
\end{itemize}
\end{conjecture}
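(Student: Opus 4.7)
The plan is to extend the classical phenomenon (for Coxeter Cambrian lattices~\cite{Reading-CambrianLattices}) that different orientations of the Coxeter diagram yield isomorphic underlying associahedra. In our setting, varying $A$, $B$, and $r$ while fixing the endpoints $(i,j)$ of $\alpha$ plays the role of varying the Coxeter orientation, so the conjecture asserts the same rigidity phenomenon at the $\s$-level.

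First I would reduce to \emph{elementary moves} on $\alpha$: either swapping a node $k \in {]i,j[}$ with $s_k \ne 0$ between $A$ and $B$, or shifting $r$ by one unit in $[s_i]$. Any two $\s$-arcs sharing the same endpoints $(i,j)$ are connected by a sequence of such moves, so it suffices to produce, for each elementary move $\alpha \leadsto \alpha'$, a bijection $\Phi$ from non-crossing $\s$-arc diagrams using only subarcs of $\alpha$ to those using only subarcs of $\alpha'$. The recipe is local: each $\s$-arc of the diagram that interacts with the modified data (the node $k$ or the label $r$) is rerouted according to a fixed combinatorial rule, while the others are left unchanged. Via~\cref{prop:bij_sTree2sArcDiagram} and~\cref{coro:sWeakOrderCongruenceLattice}, $\Phi$ translates to a bijection between the uncontracted $\s$-trees of $\equiv_\alpha$ and $\equiv_{\alpha'}$, yielding the equality of cardinalities and, restricting to nonempty diagrams, the equality of $f$-vectors of the canonical join complexes. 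To check that $\Phi$ preserves undirected covers, I would argue polygon by polygon using~\cref{prop:polygons}: each square, pentagon, or hexagon above a rotation in the $\alpha$-Cambrian lattice should be carried to a polygon of matching type in the $\alpha'$-Cambrian lattice. Equality of face lattices of the Cambrian foams~$\quotientFoam[\equiv_\alpha]$ and~$\quotientFoam[\equiv_{\alpha'}]$ would then follow from the description of faces via $\s$-shards (\cref{lem:shard}) by verifying that $\Phi$ preserves the incidences of subarc $\s$-shards bounding each maximal cell.

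The main obstacle is the interaction between the rerouting recipe and the integer parameter $r$, which has no classical Cambrian analog. Shifting $r$ changes which leaf of the root node $i$ hosts the principal subtree of each uncontracted $\s$-tree, and correspondingly reshuffles the parameters of the subarcs appearing in canonical join representations. Checking that such a shift can be performed coherently across all non-crossing diagrams, compatibly with the subarc forcing of~\cref{thm:forcingOrder} and with the eight non-crossing conditions of~\cref{def:non-crossing}, is the delicate point and will likely require a detailed case analysis. A secondary difficulty is the behavior near nodes $k$ with $s_k = 0$, which do not appear in the data $(A,B)$ but still affect the geometry of the $\s$-shards; one will need to check that the local rerouting preserves the configuration of codimension-one fibers along such nodes, so that $\Phi$ really induces an isomorphism of the face lattices of the Cambrian foams and not just of their maximal cells.
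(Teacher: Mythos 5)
This statement is left as an open conjecture in the paper --- the authors offer no proof, only computational evidence (see the example after \cref{conj:permutrees}), so there is no argument of theirs to compare against. Your proposal is therefore a research sketch rather than a proof, and you say so yourself.

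As a sketch, the strategy is plausible and is the natural one: reduce to elementary moves on $\alpha$ (exchanging a node between $A$ and $B$, or shifting $r$ by one), build a bijection $\Phi$ on non-crossing $\s$-arc diagrams restricted to subarcs of $\alpha$, and then upgrade $\Phi$ to respect covers and shards. But the two difficulties you flag are exactly where the gap lies, and they are not minor. The parameter $r$ has no classical analogue, and shifting it changes the \emph{affine offsets} in the definition of the $\s$-shard $\shard$ (\cref{def:sShard}): the right-hand sides $r - 1 + \sum_{k \in B \cap {]i,\cdot[}} \max(0, s_k-1)$ move, so the shards for $\alpha$ and $\alpha'$ live in genuinely different affine position. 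A set-theoretic bijection on arc diagrams, even one that preserves subarc forcing, does not by itself control how these affine walls intersect, which is what the face lattice of the quotient foam records. To get the fourth bullet you would need either an affine symmetry of $\R^n$ carrying one shard arrangement to the other, or a direct combinatorial description of the face poset that is manifestly independent of $r$; neither is supplied. A smaller remark: the cover-graph item already requires $\Phi$ to be a graph isomorphism, not just a bijection on vertices, and the polygon-by-polygon argument via \cref{prop:polygons} would need to show that the \emph{type} of each cellular interval (square, pentagon, hexagon) is preserved, which is a stronger invariance than the $f$-vector claim. Finally, note that the equivalence between $\quotientFoam[\equiv_\alpha]$ and $\quotientoplex[\equiv_\alpha]$ inside the fourth bullet is already established by \cref{prop:quotientoplex1}, so you only need to handle one of the two; but the substance of the conjecture remains open, and your outline, while a reasonable starting point, does not close it.
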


\subsubsection{The $\s$-permutree congruences}

For a map~$\decoration : \set{i \in [n]}{s_i \ne 0} \to \Decorations$, the \defn{$\decoration$-per\-mutree congruence}~$\equiv_\decoration$ is defined by the down set of arcs that do not pass on the right of a point~$j$ with~$\decoration(j) \in \{\upCirc, \upDownCirc\}$ nor on the left of the points~$j$ with~$\decoration(j) \in \{\downCirc, \upDownCirc\}$.
The \defn{$\decoration$-permutree lattice} is the quotient~$W_\s / {\equiv_\decoration}$.
For these congruences, we observed the following behavior.

\begin{conjecture}
\label{conj:permutrees}
For any fixed~$\s$, changing any~$\upCirc$ to~$\downCirc$ in~$\decoration$ does not affect 
\begin{itemize}
\item the cardinality of the $\decoration$-permutree lattice,
\item the $f$-vector of the canonical join complex of the $\decoration$-permutree lattice.
\end{itemize}
\end{conjecture}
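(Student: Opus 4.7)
The plan is to establish the conjecture by exhibiting a bijection between non-crossing $\s$-arc diagrams whose arcs lie in $\c{A}_\decoration$ and those whose arcs lie in $\c{A}_{\decoration'}$, which would simultaneously give both assertions. This reduction is justified by the general principle (stated for the weak order in \cite{Reading-arcDiagrams} and inherited for quotients via the discussion in \cref{subsec:recollectionsQuotients}) that the minimal representatives of the congruence classes of $\equiv$ are precisely those $\s$-trees whose canonical join representation $\delta_\join(\tree)$ is entirely supported in $\c{A}_\equiv$. Consequently the cardinality of $W_\s / \equiv_\decoration$ equals the number of non-crossing $\s$-arc diagrams contained in $\c{A}_\decoration$, and the $f$-vector of the canonical join complex of $W_\s / \equiv_\decoration$ records these diagrams enumerated by their cardinality.

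The first attempt is the naive involution $\phi$ that fixes every $\s$-arc not passing strictly through level $j$, and for each arc $(i, k, A, B, r)$ with $i < j < k$ swaps $j$ between $A$ and $B$. This is a bijection between $\c{A}_\decoration$ and $\c{A}_{\decoration'}$, but small examples already show that it can destroy the non-crossing property when combined with arcs of the form $(j, k', \cdot, \cdot, \cdot)$ or $(i', j, \cdot, \cdot, \cdot)$ having $j$ as an endpoint: flipping a single arc's passage through level $j$ can introduce a crossing with an arc whose endpoint is at~$j$. A corrective bijection must therefore act globally on diagrams, not merely on single arcs, and has to reroute the arcs touching level $j$ (perhaps by adjusting the index $r$ on arcs emanating from $j$, or by splitting and recombining arcs that straddle level $j$) in a compatible way.

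The most promising route is to realize the bijection geometrically through the shardoplex and quotientoplex machinery of \cref{sec:sQuotientopes}: one can hope for a piecewise-linear self-homeomorphism of $\R^n$ which locally reflects at level $j$ and sends the $\decoration$-permutree foam $\quotientFoam[\equiv_\decoration]$ to the $\decoration'$-permutree foam $\quotientFoam[\equiv_{\decoration'}]$. Such a homeomorphism would induce the desired bijection on maximal cells (hence on cardinality) and on all faces (hence on the canonical join $f$-vector, encoded by the face structure of these foams via \cref{thm:main3}). The principal obstacle is that the $\s$-shards of arcs contracted when turning $\upCirc$ into $\downCirc$ are not related by any obvious symmetry of $\R^n$, since the ambient $\s$-foam $\sFoam$ itself is not symmetric under reflection; the reflection is an exact symmetry only when $\s$ is palindromic, and general $\s$ forces one to build the PL map cell by cell. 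Alternatively, a direct combinatorial bijection would require a careful case analysis of all eight clauses of \cref{def:non-crossing} together with a local rewriting procedure to resolve the crossings introduced at level $j$, and I expect this case analysis to be the hardest technical point.
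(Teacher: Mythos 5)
The statement in question is \cref{conj:permutrees}; the paper states it as an open conjecture and offers no proof, only numerical evidence and a cautionary remark. Your first-paragraph reduction is correct and is exactly the right reformulation: by semidistributivity (\cref{thm:sWeakOrderLattice}), the identification of congruence classes with their minimal representatives (cf.\ Section~\ref{subsec:recollectionsQuotients}), and \cref{prop:canonicalJoinRepresentation}, both enumerative claims of the conjecture are equivalent to the existence of a cardinality-preserving bijection between non-crossing $\s$-arc diagrams supported in $\c{A}_\decoration$ and those supported in $\c{A}_{\decoration'}$. Your dismissal of the naive single-arc flip is also warranted.

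Your third paragraph, however, is ruled out by the paper's own remark immediately following the conjecture. A piecewise-linear self-homeomorphism of $\R^n$ carrying $\quotientFoam[\equiv_\decoration]$ cell-by-cell onto $\quotientFoam[\equiv_{\decoration'}]$ --- inducing, as you require, a bijection on \emph{all} faces --- would yield an isomorphism of face posets and therefore of dual graphs; by \cref{prop:quotientFoam3} the dual graph of the quotient foam is the undirected cover graph of the corresponding permutree lattice. The paper reports a concrete counterexample: for $\s = (1,1,2,2,1,1)$ with $\decoration = (\upCirc,\upCirc,\upCirc,\upCirc,\upCirc,\upCirc)$ and $\decoration' = (\upCirc,\upCirc,\upCirc,\downCirc,\upCirc,\upCirc)$, the two lattices share the cardinality~$331$ and the canonical-complex $f$-vector~$(1,20,93,139,69,9)$, yet their undirected cover graphs are \emph{not} isomorphic. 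Hence no cellular homeomorphism between the two foams exists, and the ``local reflection at level $j$'' picture cannot be implemented even cell by cell, regardless of whether $\s$ is palindromic. Any proof of the conjecture must be genuinely non-geometric (a bijection on non-crossing $\s$-arc diagrams that is cardinality-preserving but deliberately not face-poset-preserving, or a bijection-free enumeration); the combinatorial rewriting you flag at the end is the live route, and it remains open.
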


Note that in contrast to \cref{conj:Cambrian}, the undirected cover graph of the $\decoration$-permutree lattice and the face lattice of the $\decoration$-permutree foam~$\quotientFoam[\equiv_\decoration]$ and of the $\decoration$-permutree quotientoplex~$\quotientoplex[\equiv_\decoration]$ all depend on the positions of~$\upCirc$ and~$\downCirc$.
For instance, for~$\s \eqdef (1, 1, 2, 2, 1, 1)$, and~$\decoration \eqdef (\upCirc, \upCirc, \upCirc, \upCirc, \upCirc, \upCirc)$ and~$\decoration' \eqdef (\upCirc, \upCirc, \upCirc, \downCirc, \upCirc, \upCirc)$, the $\decoration$- and $\decoration'$-permutree lattices both have cardinality~$331$, the \mbox{$f$-vector} of their canonical complex is~$(1, 20, 93, 139, 69, 9)$, but their undirected cover graphs are not isomorphic.

\subsubsection{Regular congruences}

An interval of a lattice~$L$ is \defn{cellular} if it is of the form~$[x, \bigJoin Y]$ for~$x \in L$ and a non-empty subset~$Y$ of elements of~$L$ covering~$x$, or dually~$[\bigMeet X, y]$ for any~$y \in L$ and a non-empty subset~$X$ of elements of~$L$ covered by~$y$.
We say that~$L$ is \defn{cellularly regular} if the cover graph of any cellular interval of~$L$ is regular.
Note that it obviously does not imply that the cover graph of the lattice~$L$ itself is regular.
The result of~\cite{HoangMutze, DemonetIyamaReadingReitenThomas, BarnardNovelliPilaud} on regular congruences of the weak order motivates the following conjecture.

\begin{conjecture}
\label{conj:simpleCongruences}
The following conditions are equivalent for a congruence~$\equiv$ of the $\s$-weak order~$W_\s$:
\begin{itemize}
\item the quotient~$W_\s / {\equiv}$ is cellularly regular,
\item any minimal (in subarc order) arc of the complement of the down set~$\c{A}_\equiv$ is either a left arc (\ie with~${A = \varnothing}$) or a right arc (\ie with~$B = \varnothing$).
\end{itemize}
\end{conjecture}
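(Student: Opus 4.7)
The plan is to mirror the strategy used for the weak order in \cite{BarnardNovelliPilaud, HoangMutze, DemonetIyamaReadingReitenThomas}, adapted to the richer polygonal structure of $W_\s$ described in \cref{prop:polygons} and to the combinatorial forcing description of \cref{thm:forcingOrder}. I would treat the two implications separately, and as a preliminary step, translate cellular regularity into a statement about the local structure of uncontracted cover relations: identifying $W_\s/{\equiv}$ with $\projDown(W_\s)$, the covers in $W_\s/{\equiv}$ above (resp.~below) a class correspond, via the non-crossing $\s$-arc diagrams of \cref{prop:canonicalJoinRepresentation}, to the descents (resp.~ascents) carrying $\s$-arcs in $\c{A}_\equiv$.

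For the implication that cellular regularity forces minimal complement arcs to be left or right, I would argue by contrapositive. Assume $\alpha \eqdef (i,j,A,B,r)$ lies in $\c{A}_\equiv^c$ and is minimal there, with both $A\neq\varnothing$ and $B\neq\varnothing$. By minimality and \cref{thm:forcingOrder}, every proper subarc of $\alpha$ is contracted by $\equiv$. I would choose $a \in A$ and $b \in B$, and pick an $\s$-tree $\tree$ whose canonical join representation contains $\alpha$ together with a descent built from $a$ (or $b$). Rotating the descent $(i,j)$ of $\tree$ and the descents corresponding to the neighbors in a well-chosen cellular interval $[x,\bigJoin Y]$ above the class of $\tree$ would then produce, through the hexagon/pentagon dichotomy in \cref{prop:polygons}, top vertices of different degrees: the hexagons collapse on the side where the intermediate arc is contracted, but the asymmetry $A,B \neq \varnothing$ prevents a uniform collapse, breaking regularity.

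For the converse, assume every minimal arc of $\c{A}_\equiv^c$ is a left or right arc. Given a cellular interval $[x,\bigJoin Y]$ of $W_\s/{\equiv}$, lift $x$ and each element of $Y$ canonically to $\s$-trees via $\projDown$. Each edge in the cover graph of the cellular interval corresponds to an $\s$-arc in $\c{A}_\equiv$, and the degree of a vertex equals the number of such uncontracted arcs in its canonical join or meet representation. The regularity reduces to showing that ``diagonally opposite'' edges in each polygon of \cref{prop:polygons} are simultaneously contracted, or simultaneously uncontracted. Using \cref{thm:edge_forcing_polygon}, this becomes a purely combinatorial statement about how the extension rules of \cref{def:subarc_cover} propagate across pentagons and hexagons; the left/right hypothesis on minimal complement arcs guarantees that no ``asymmetric'' contraction can occur inside such a polygon, ensuring the two parallel edges are in identical contraction status.

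The hard part will be two-fold. First, the pentagons in \cref{prop:polygons} have no analogue in the weak order and introduce essentially asymmetric polygons: the edge-forcing structure of \cref{thm:edge_forcing_polygon} does not respect the left--right symmetry, so the analysis must carefully distinguish the two pentagon types according to which of $(a,b)$ or $(c,d)$ fails to be an ascent of the opposite cover. Second, \emph{cellular} regularity demands uniformity of degree within every cellular interval, not merely the global regularity of the cover graph; this requires a local argument at each uncontracted $\s$-tree, rather than the global simplicial-fan argument available in the weak order case, and in particular forces one to control joins of arbitrarily many covers simultaneously, where repeated applications of the pentagon and hexagon collapses must be shown to interact consistently.
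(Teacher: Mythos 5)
This statement is a \emph{conjecture} in the paper (\cref{conj:simpleCongruences}), so there is no proof in the source to compare your attempt against; the authors only motivate it by analogy with the known characterization of regular congruences of the classical weak order~\cite{HoangMutze,DemonetIyamaReadingReitenThomas,BarnardNovelliPilaud}. Your text is accordingly best read not as a proof but as a strategy outline, and as such it is sensible: it correctly localizes cellular regularity as a condition on cellular intervals rather than on the global cover graph, correctly translates vertex degrees in $W_\s/{\equiv}$ into counts of uncontracted $\s$-arcs in canonical join/meet representations, and correctly identifies \cref{prop:polygons} and \cref{thm:edge_forcing_polygon} as the tools through which forcing propagates.

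That said, the proposal leaves the two decisive steps open, and you acknowledge this yourself. In the contrapositive direction you claim that the asymmetry $A,B\neq\varnothing$ ``prevents a uniform collapse, breaking regularity,'' but you never exhibit a specific cellular interval $[x,\bigJoin Y]$ whose cover graph is provably non-regular under that hypothesis; choosing $\tree$ so that $\alpha$ appears in $\delta_\join(\tree)$ together with a neighbor involving $a\in A$ is not by itself enough, because after contractions the polygon may degenerate to a square and the degree imbalance you want may disappear. In the converse direction, the reduction to ``diagonally opposite edges of each polygon are simultaneously contracted'' is not obviously equivalent to cellular regularity: the cellular interval $[x,\bigJoin Y]$ is a union of many overlapping polygons from \cref{prop:polygons}, and showing that the pentagon and hexagon collapses interact consistently across all of them is precisely the combinatorial content that is missing. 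A genuine proof would need, at minimum, a careful case analysis of the four polygon types against the extension rules in \cref{def:subarc_cover}, together with an inductive argument controlling higher-dimensional cellular intervals (joins of more than two covers), neither of which appears here. Until those are filled in, this remains a plan rather than a proof, consistent with the statement's status as an open conjecture in the paper.
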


%%%%%%%%%%%%%%%%%%%%%%%%%%%%%%%%%%%%%%

\section{Quotient foams and quotientoplexes}
\label{sec:sQuotientopes}

In this section, we construct polyhedral complexes realizing all lattice quotients of the $\s$-weak order~$W_\s$.
Most proofs of the results of this section actually require some basic tropical geometry, and are therefore delayed to \cref{sec:tropicalGeometry}.

%%%%%%%%%%%%%%%%%

\subsection{Recollections \ref{sec:sQuotientopes}: Quotient fans and quotientopes}
\label{subsec:recollectionsGeometry}

As usual now, we start with a recollection of the geometric realizations of the lattice quotients of the classical weak order~\cite{Reading-HopfAlgebras, PilaudSantos-quotientopes, PadrolPilaudRitter}.

\subsubsection{Quotient fan}

We start with polyhedral fan realizations.

\begin{definition}[{\cite[Thm.~1.1]{Reading-HopfAlgebras}}]
\label{def:quotientFan}
The \defn{quotient fan} of a congruence~$\equiv$ of the weak order is the polyhedral fan~$\quotientFan$~where
\begin{enumerate}[(i)]
\item the maximal cones are obtained by gluing together the chambers of the braid arrangement corresponding to permutations in the same congruence class of~$\equiv$,
\item the union of the codimension~$1$ cones is the union of the shards of the arcs of~$\c{A}_\equiv$.
\end{enumerate}
\end{definition}

By construction, the braid fan refines the quotient fan~$\quotientFan$, and the dual graph of the quotient fan~$\quotientFan$, oriented in the direction~$\b{\omega}$, is isomorphic to the Hasse graph of the quotient~$W_n/{\equiv}$.
For instance, \cref{fig:sylvesterFanAssociahedron}\,(left) represents the \defn{sylvester fan}, the quotient fan of the sylvester congruence of \cref{fig:sylvesterCongruence}\,(left), whose dual graph is the cover graph of the Tamari lattice of \cref{fig:sylvesterCongruence}\,(right).

\begin{figure}[b]
	\capstart
	\centerline{\includegraphics[scale=.6]{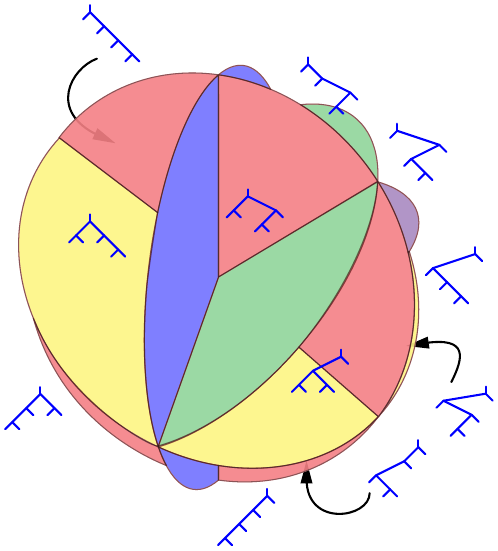} \qquad \raisebox{.1cm}{\includegraphics[scale=.6]{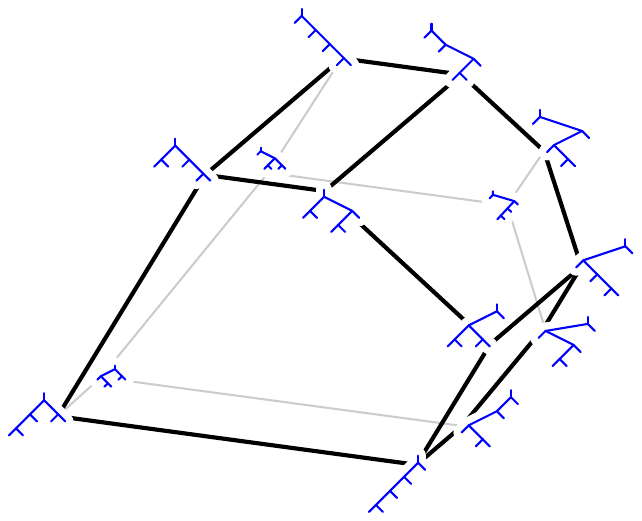}}}
	\caption{The sylvester fan (left) and the associahedron~$\Asso[4]$ (right). \cite[Fig.~5]{PadrolPilaudRitter}}
	\label{fig:sylvesterFanAssociahedron}
\end{figure}

\subsubsection{Shard polytopes and quotientopes}

We now recall the definition and main property of the shard polytopes of~\cite{PadrolPilaudRitter}, which are illustrated in \cref{fig:shardPolytiopes}.

\begin{definition}[{\cite[Defs.~39~\&~40]{PadrolPilaudRitter}}]
\label{def:shardPolytopes}
Fix a classical arc~$\alpha \eqdef (i, j, A, B)$.
An \defn{$\alpha$-alternating matching} $\altmatch$ is a sequence $i \le i_1 < j_1 < i_2 < j_2 < \dots < i_q < j_q \le j$ such that~$i_p \in \{i\} \cup A$ and~$j_p \in \{j\} \cup B$ for all~$p \in [q]$.
Its \defn{characteristic vector} is~$\charvect_{\altmatch} \eqdef \sum_{p \in [q]} \b{e}_{i_p} - \b{e}_{j_p}$. 
We denote by $\altmatchset$ the set of all $\alpha$-alternating matchings. 
The \defn{shard polytope} of~$\alpha$ is the convex hull~$\shardPolytope$ of the characteristic vectors of all $\alpha$-alternating matchings~$\altmatch$ in $\altmatchset$.
\begin{figure}
	\capstart
	\centerline{\includegraphics[scale=.9]{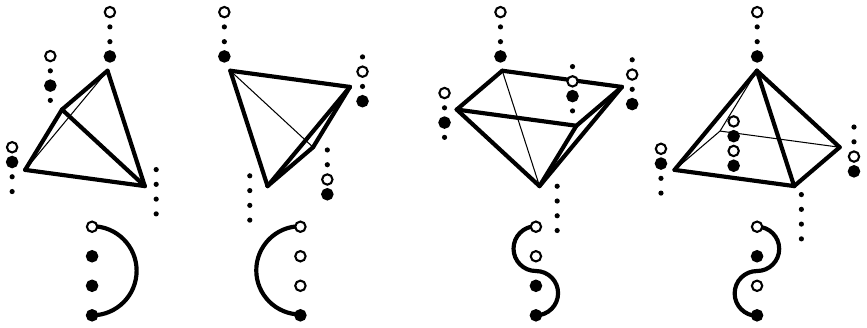}}
	\caption{The shard polytopes of the four arcs of the form~$(1, 4, A, B)$. The vertices of the shard polytope of~$\alpha \eqdef (i, j, A, B)$ are labeled by the corresponding $\alpha$-alternating matchings, where we use solid dots~$\bullet$ for elements in~$\{i\} \cup A$ and hollow dots~$\circ$ for elements in~$B \cup \{j\}$. The corresponding vertex coordinates are directly read replacing~$\bullet$ by~$1$ and~$\circ$ by~$-1$. For instance, the vertex labeled~\raisebox{-.1cm}{\rotatebox{90}{\scalebox{.6}{${\bullet \cdot \cdot \,\circ}$}}} has coordinates~$(1,0,0,-1)$. Adapted from~\cite[Fig.~10]{PadrolPilaudRitter}.}
	\label{fig:shardPolytiopes}
\end{figure}
\end{definition}

For instance, the shard polytope~$\shardPolytope$ corresponding to a right arc~$\alpha \eqdef (i, j, {]i,j[}, \varnothing)$ is just the simplex~$\triangle_{[i,j]} \eqdef \conv\set{\b{e}_k}{i \le k \le j}$ translated by the vector~$-\b{e}_j$.

\begin{proposition}[{\cite[Prop.~48]{PadrolPilaudRitter}}]
\label{prop:shardPolytopes}
For any arc~$\alpha$, the union of the walls of the normal fan of the shard polytope~$\shardPolytope$
\begin{itemize}
\item contains the shard~$\shard[\alpha]$ corresponding to~$\alpha$,
\item is contained in the union of the shards~$\shard[\beta]$ over all subarcs~$\beta$ of~$\alpha$.
\end{itemize}
\end{proposition}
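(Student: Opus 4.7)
The plan is to analyze the support function of $\shardPolytope$ and to characterize which linear functionals $c \in \R^n$ attain their maximum on a face of positive dimension, then identify those faces combinatorially with subarcs of $\alpha$.

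For the first inclusion, fix $c \in \shard[\alpha]$, so that $c_i = c_j$, $c_a \leq c_i$ for all $a \in A$, and $c_b \geq c_j$ for all $b \in B$. For any $\alpha$-alternating matching $\altmatch = (i_1 < j_1 < \dots < i_q < j_q)$ with $i_p \in \{i\} \cup A$ and $j_p \in \{j\} \cup B$, the pairing gives
\[
\langle c, \charvect_{\altmatch} \rangle \;=\; \sum_{p \in [q]} (c_{i_p} - c_{j_p}) \;\leq\; 0,
\]
since each summand is nonpositive. Both the empty matching (with $\charvect_\altmatch = 0$) and the matching $(i,j)$ (with $\charvect_\altmatch = \b{e}_i - \b{e}_j$) attain the value $0$ because $c_i = c_j$. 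Hence the face of $\shardPolytope$ maximizing $\varphi_c \eqdef \langle c, \cdot \rangle$ has at least two vertices, so $c$ lies in a cone of the normal fan of dimension at most $n-1$, that is, on a wall.

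For the second inclusion, I would start by giving a combinatorial description of the edges of~$\shardPolytope$. A natural claim is that two $\alpha$-alternating matchings~$\altmatch$ and $\altmatch'$ are joined by an edge precisely when $\charvect_{\altmatch'}-\charvect_{\altmatch}$ is of the form $\b{e}_k - \b{e}_\ell$ (possibly zero on one coordinate), or more generally a signed interval corresponding to inserting, deleting, or sliding one pair of entries. For each such edge, the hyperplane $\{c : \langle c, \charvect_{\altmatch'} - \charvect_{\altmatch}\rangle = 0\}$ takes the form $c_k = c_\ell$ for some $i \le k < \ell \le j$ with $k \in \{i\}\cup A$ and $\ell \in B \cup \{j\}$. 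This is precisely the equality defining the shard of a candidate subarc $\beta = (k,\ell,A',B')$ with $A' \subseteq A \cap {]k,\ell[}$ and $B' \subseteq B \cap {]k,\ell[}$.

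The substantive step is then to argue that the remaining inequalities defining the normal cone of the edge $[\charvect_\altmatch, \charvect_{\altmatch'}]$, obtained from comparison with all other $\alpha$-alternating matchings, enforce $c_k \geq c_a$ for $a \in A'$ and $c_k \leq c_b$ for $b \in B'$, so that the normal cone lies in $\shard[\beta]$. Concretely, one would test against matchings obtained by swapping a single element of $A'$ or $B'$ into or out of the optimal matching, and check that the resulting inequality is exactly a defining inequality of $\shard[\beta]$. Taking the union over all edges then gives the containment in the union of shards of subarcs of $\alpha$.

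The main obstacle is the combinatorial classification of edges of $\shardPolytope$: the vertices are indexed by alternating matchings of arbitrary length, and one must verify that no unexpected edges appear (e.g.\ joining two matchings that differ in several pairs simultaneously). I would handle this by noting that $\shardPolytope$ is a lattice polytope inside $[-1,1]^n$ with vertices of $\{-1,0,1\}$-type, and that for each edge direction $\charvect_{\altmatch'}-\charvect_\altmatch$, one can explicitly construct an intermediate matching realizing a vertex on the segment between them unless the difference has a very constrained shape. This reduces the classification of edges to a finite local check, after which the identification with subarc shards is routine.
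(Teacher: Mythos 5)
The overall plan — compute the support function of $\shardPolytope$, identify the walls of the normal fan with edges of the polytope, classify those edges combinatorially, and match each edge's normal cone with the shard of a subarc — is essentially the route taken in the cited source, and the paper's own tropical generalization (\cref{prop:shard_trophyp}) explicitly invokes that edge classification ([PadrolPilaudRitter, Lem.~49]) as well. Your proof of the first inclusion is correct and complete: for $c \in \shard$ one has $c_{i_p} \le c_i = c_j \le c_{j_p}$ for every pair in an $\alpha$-alternating matching, so every matching gives a nonpositive value and both $\varnothing$ and $\{i<j\}$ attain the maximum $0$.

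For the second inclusion, however, there are two real gaps. First, the claimed form of the edge hyperplanes is incomplete. You assert that each edge direction $\charvect_{\altmatch'}-\charvect_{\altmatch}$ gives a hyperplane $c_k = c_\ell$ with $k \in \{i\}\cup A$ and $\ell \in B\cup\{j\}$. This describes only one of the four edge types in [PadrolPilaudRitter, Lem.~49]. The other three — moving the terminal endpoint of a pair ($\b{e}_{j_2}-\b{e}_{j_1}$ with both $j_1, j_2 \in B\cup\{j\}$), moving the initial endpoint of a pair (both in $\{i\}\cup A$), and merging two consecutive pairs ($\b{e}_{i_2}-\b{e}_{j_1}$ with $j_1\in B$, $i_2\in A$, $j_1<i_2$) — give hyperplanes of other types. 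These still correspond to shards of subarcs of $\alpha$, but those subarcs have endpoints drawn from $B\cup\{j\}$ or $A\cup\{i\}$ in ways your description excludes; you need all four cases to cover the wall locus. Second, and more fundamentally, the edge classification itself is not a minor verification to be "handled by noting the polytope has $\{-1,0,1\}$ vertices." It is the substantive combinatorial lemma underlying the whole statement — the content of [PadrolPilaudRitter, Lem.~49] — and your sketch does not supply an argument for it. As written, the second bullet point of the proposition remains unproved.
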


The shard polytopes are the building blocks to construct polytopal realizations of lattice quotients of the weak order.
We note that alternative realizations were constructed by V.~Pilaud and F.~Santos in~\cite{PilaudSantos-quotientopes} using direct but slightly obscure right hand sides to define their inequalities.

\begin{theorem}[{\cite[Coro.~50]{PadrolPilaudRitter}}]
\label{thm:quotientopes}
For any congruence~$\equiv$ of the weak order, the quotient fan~$\quotientFan$ is the normal fan of the \defn{quotientope}~$\quotientope$, obtained as the Minkowski sum of (any positive scaling of) the shard polytopes~$\shardPolytope$, over all arcs~$\alpha$ in~$\c{A}_\equiv$.
\end{theorem}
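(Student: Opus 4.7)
The plan is to identify the normal fan of the Minkowski sum $\polytope{Q} \eqdef \sum_{\alpha \in \c{A}_\equiv} \shardPolytope$ with the quotient fan $\quotientFan$ by matching their codimension-one skeleta and then using the characterization of $\quotientFan$ recalled in Definition \ref{def:quotientFan}. The underlying principle is the standard fact that the normal fan of a Minkowski sum of polytopes is the common refinement of the normal fans of the summands; in particular, the union of its codimension-one cones is the union of the codimension-one cones of the summands' normal fans.

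First I would apply Proposition \ref{prop:shardPolytopes} summand by summand: for each $\alpha \in \c{A}_\equiv$, the union of walls of the normal fan of $\shardPolytope$ contains $\shard[\alpha]$ and is contained in $\bigcup_{\beta \preceq \alpha} \shard[\beta]$, where $\preceq$ denotes the subarc relation. Taking the union over all $\alpha \in \c{A}_\equiv$ and using the crucial fact that $\c{A}_\equiv$ is a down set of the subarc order (Theorem \ref{thm:arcIdeals}), both the lower and the upper bounds collapse to $\bigcup_{\alpha \in \c{A}_\equiv} \shard[\alpha]$. This matches exactly the codimension-one skeleton prescribed by Definition \ref{def:quotientFan}(ii) for $\quotientFan$.

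To upgrade this match of skeleta to an equality of fans, I would observe that each $\shardPolytope$ has vertices that are $0/{\pm}1$ characteristic vectors of $\alpha$-alternating matchings (Definition \ref{def:shardPolytopes}), so its edges lie along directions $\b{e}_i - \b{e}_j$, and the walls of its normal fan therefore lie on braid hyperplanes. Consequently, the braid fan refines the normal fan of each summand, hence refines the normal fan of $\polytope{Q}$. This forces the maximal cones of the normal fan of $\polytope{Q}$ to be unions of braid chambers, and two adjacent chambers get merged precisely when the separating shard does not belong to $\bigcup_{\alpha \in \c{A}_\equiv} \shard[\alpha]$. By Definition \ref{def:quotientFan}(i), this is exactly the gluing rule for $\quotientFan$, so the two fans coincide and $\polytope{Q}$ realizes $\quotientope$. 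Finally, positive rescalings of individual summands do not alter the normal fan of a Minkowski sum, which yields the parenthetical flexibility in the statement.

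The main obstacle is not in the present argument but upstream, in Proposition \ref{prop:shardPolytopes}, whose proof requires a fairly delicate analysis of the normal fan of $\shardPolytope$ through the combinatorics of $\alpha$-alternating matchings in order to pinpoint which shards of subarcs can actually appear. Once that sandwich is available, the theorem itself is a clean bookkeeping argument combining the Minkowski-sum/common-refinement principle, the downward-closedness of $\c{A}_\equiv$ under subarcs, and the fact that shard polytopes have root-direction edges so that their normal fans are refined by the braid fan.
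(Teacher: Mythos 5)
Your argument is essentially correct and follows the same route as \cite[Coro.~50]{PadrolPilaudRitter} (the paper in front of you only cites that result and does not reprove it): combine the Minkowski-sum/common-refinement principle with the ``sandwich'' of \cref{prop:shardPolytopes}, use that $\c{A}_\equiv$ is a down set for subarcs by \cref{thm:arcIdeals}, and conclude that the codimension-one skeleton of the normal fan of~$\sum_{\alpha\in\c{A}_\equiv}\shardPolytope$ is exactly~$\bigcup_{\alpha\in\c{A}_\equiv}\shard[\alpha]$, which together with the braid refinement determines the fan.

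The one step to tighten is the claim that vertices with coordinates in $\{0,\pm1\}$ force edges along root directions~$\b{e}_i-\b{e}_j$; that inference is false in general (the unit cube has $\{0,1\}$-vertices and axis-parallel edges). You do not actually need it: the second containment in \cref{prop:shardPolytopes} already places every wall of the normal fan of~$\shardPolytope$ inside $\bigcup_{\beta}\shard[\beta]$ over subarcs~$\beta$ of~$\alpha$, hence inside braid hyperplanes, which is precisely the assertion that the braid fan refines the normal fan of each~$\shardPolytope$ and therefore that of the Minkowski sum. With that substitution the rest of your argument --- that the maximal cones are unions of braid chambers glued exactly across the walls $\bigcup_{\alpha\in\c{A}_\equiv}\shard[\alpha]$, and that a complete fan is determined by its codimension-one skeleton --- closes correctly, as does the observation that positive rescalings of the summands leave the common refinement unchanged.
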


By construction, the quotientope~$\quotientope$ is a deformed permutahedron (or generalized permutahedron~\cite{Postnikov, PostnikovReinerWilliams}, or polymatroid~\cite{Edmonds}), whose skeleton, oriented in the direction~$\b{\omega}$, is isomorphic to the Hasse diagram of the quotient~$W_n/{\equiv}$.
For instance, \cref{fig:sylvesterFanAssociahedron}\,(right) represents the associahedron~$\Asso[4]$, a quotientope for the sylvester congruence of \cref{fig:sylvesterCongruence}\,(left), whose normal fan is the sylvester fan of \cref{fig:sylvesterFanAssociahedron}\,(left), and whose skeleton is the cover graph of the Tamari lattice of \cref{fig:sylvesterCongruence}\,(right).
It is obtained as the Minkowski sum of the shard polytopes of all right arcs, that is, up to translation, of the faces~$\triangle_{[i,j]}$ of the standard simplex corresponding to all intervals~$1 \le i < j \le n$.

If we want to make explicit the scaling coefficients, we denote by~$\quotientope({\b{\lambda}})$ the quotientope obtained as the Minkowski sum~$\sum_{\alpha \in \c{A}_{\equiv}} \lambda_{\alpha} \, \shardPolytope$ for~${\b{\lambda}} \eqdef (\lambda_{\alpha})_{\alpha \in \c{A}_{\equiv}}$ with~$\lambda_{\alpha} > 0$ for all~$\alpha \in \c{A}_\equiv$.

\begin{figure}[p]
	\capstart
	\centerline{\includegraphics[scale=.6]{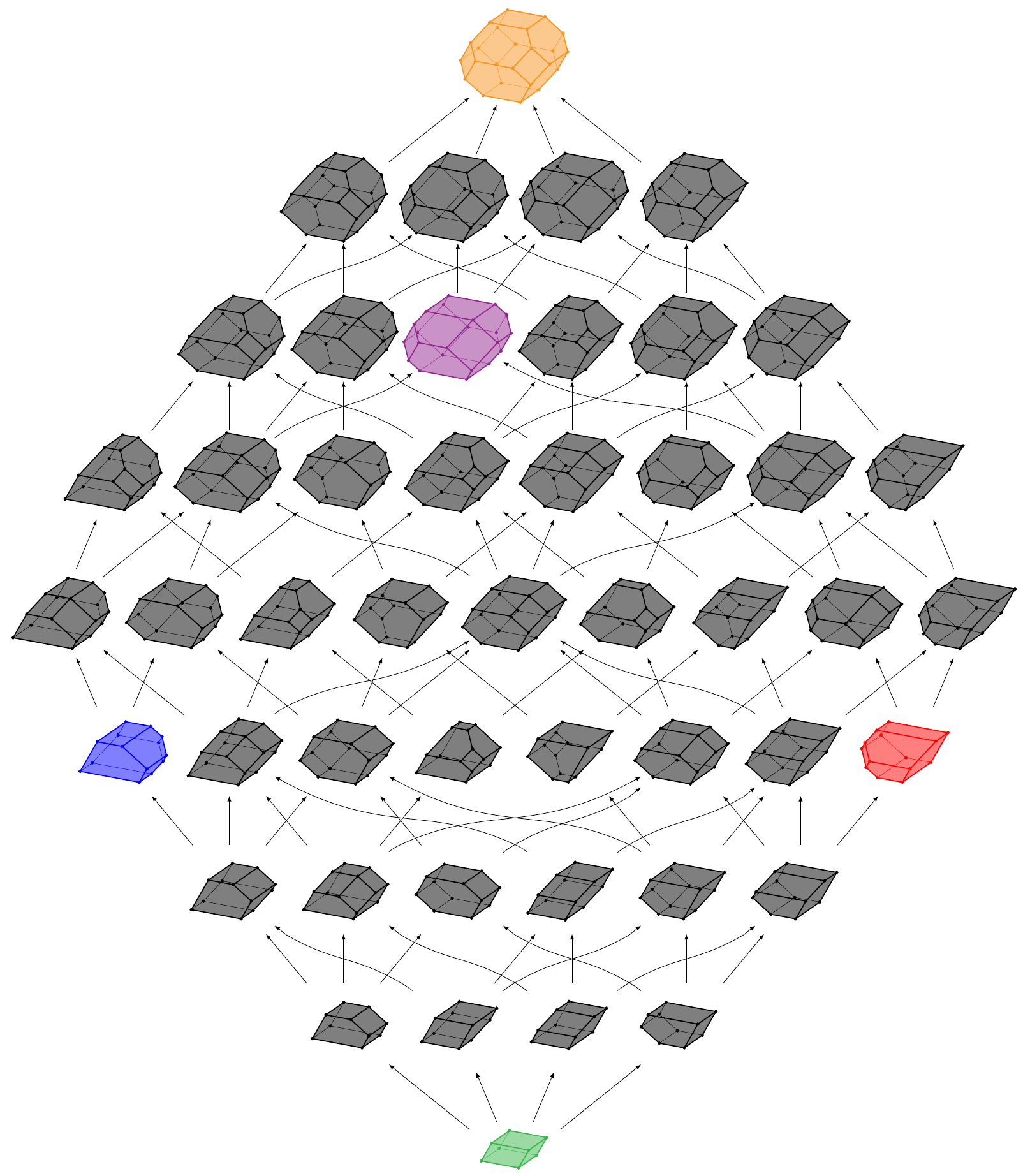}}
	\caption{The congruence lattice of the weak order~$W_4$, where each congruence~$\equiv$ is replaced by its quotientope~$\quotientope$. We have colored in green / blue / red / purple / orange the cube / associahedron / anti-associahedron / Baxter polytope / permutahedron. See also~\cref{fig:weakOrderCongruenceLattice}. Adapted from~\cite[Fig.~9]{PilaudSantos-quotientopes}}
	\label{fig:weakOrderQuotientopeLattice}
\end{figure}

\begin{remark}
In general, the quotient fan~$\quotientFan$ is not simplicial, and the quotientope~$\quotientope$ is not simple.
It is the case for regular congruences, see \cref{subsubsec:specialCongruences}\,\eqref{item:rectangulationCongruence} and~\cite{HoangMutze, DemonetIyamaReadingReitenThomas, BarnardNovelliPilaud}.
\end{remark}

%%%%%%%%%%%%%%%%%

\subsection{Quotient foams}
\label{subsec:sQuotientFoam}

We now generalize \cref{def:quotientFan} to the $\s$-weak order.
For a congruence of the $\s$-weak order~$W_\s$, we construct a polyhedral complex~$\quotientFoam$ coarsening the $\s$-foam~$\sFoam$, and whose oriented dual graph is isomorphic to the Hasse diagram of the quotient~$W_\s / {\equiv}$.
See \cref{fig:sQuotientFoamLattice,fig:sylvesterQuotientFoams} for some illustrations.
Recall from \cref{def:sShard} that we have associated to each $\s$-arc~$\alpha$ a codimension~$1$ polyhedral cone called the $\s$-shard~$\shard$ of~$\alpha$.

\begin{figure}[t]
	\capstart
	\centerline{\includegraphics[scale=.6]{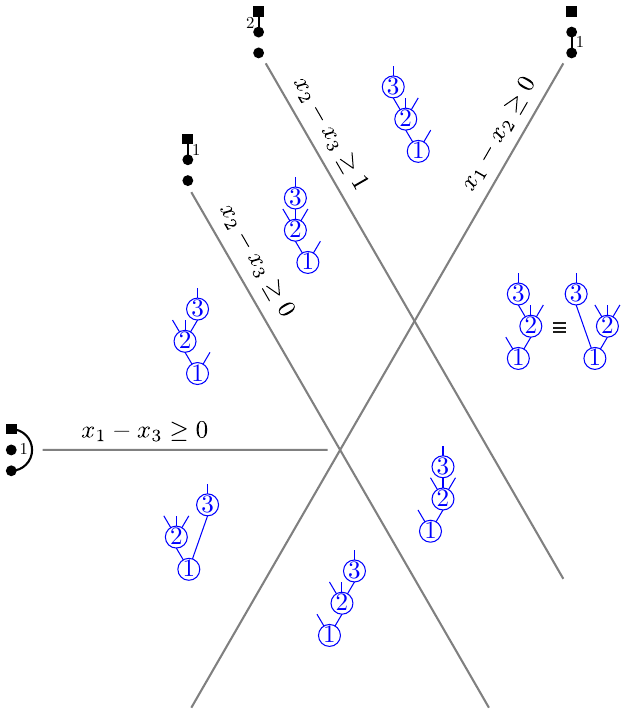} \hspace{.5cm} \includegraphics[scale=.6]{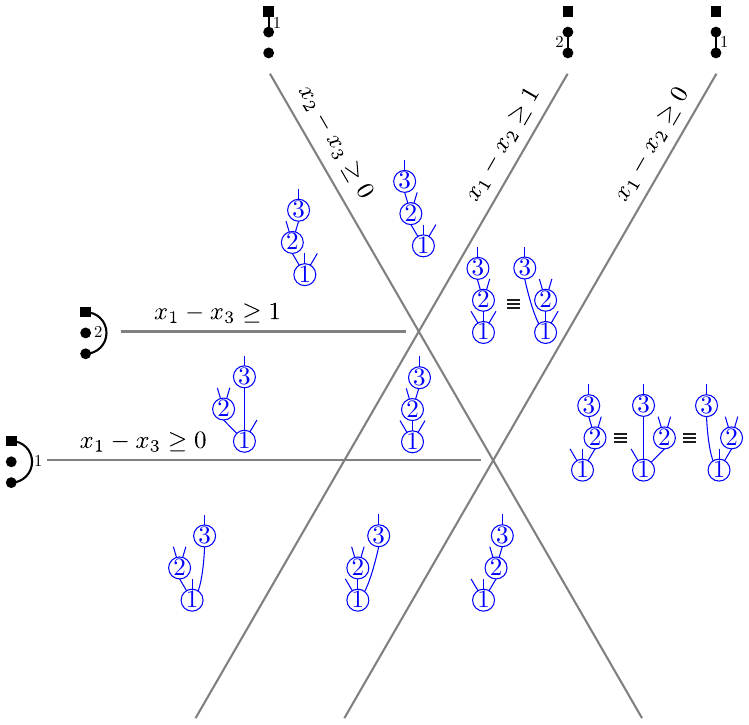}}
	\caption{The quotient foam~$\quotientFoam[\equiv_\mathrm{sylv}]$ for the $\s$-sylvester congruence of the $\s$-weak order of \cref{subsubsec:sylvester}, for $\s = (1,2,0)$ (left) and~$\s = (2,1,0)$ (right).}
	\label{fig:sylvesterQuotientFoams}
\end{figure}

\begin{figure}[t]
	\capstart
	\centerline{\raisebox{1.2cm}{\includegraphics[scale=.8]{120-quotientFoamLattice}} \hspace{.5cm} \includegraphics[scale=.8]{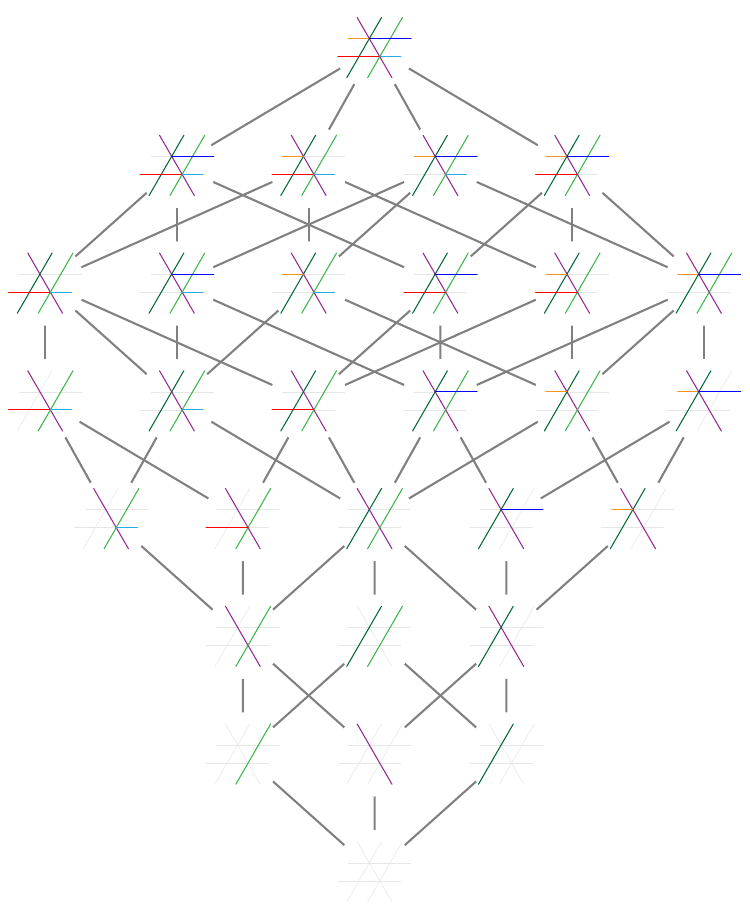}}
	\caption{The congruence lattice of the $\s$-weak order~$W_\s$ for $\s = (1,2,0)$ (left) and~$\s = (2,1,0)$ (right), where each congruence~$\equiv$ is replaced by its quotient foam~$\quotientFoam$. See also \cref{fig:sWeakOrderCongruenceLattice,fig:sQuotientopeLattice}.}
	\label{fig:sQuotientFoamLattice}
\end{figure}

\begin{definition}
\label{def:quotientFoam}
For any congruence~$\equiv$ of the $\s$-weak order~$W_\s$, the \defn{quotient foam}~$\quotientFoam$ is the complete polyhedral complex defined by the following equivalent descriptions:
\begin{enumerate}[(i)]
\item its maximal cells are obtained by gluing together the maximal cells of the $\s$-foam corresponding to $\s$-trees in the same congruence class of~$\equiv$,
\item the union of its codimension~$1$ cells is the union of the $\s$-shards~$\shard$ of the $\s$-arcs~$\alpha$~in~$\c{A}_\equiv$.
\end{enumerate}
\end{definition}

\cref{def:quotientFoam} requires some justifications, that we give in the following statements.

\begin{proposition}
\label{prop:quotientFoam1}
For any congruence~$\equiv$ of the $\s$-weak order~$W_\s$, the two descriptions of \cref{def:quotientFoam}~coincide.
\end{proposition}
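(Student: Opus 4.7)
The plan is to match up codimension-$1$ cells in both descriptions via the forcing theory of \cref{sec:quotients}. To start, I will recall from \cref{prop:fibers,prop:stitchingIncision} that the codimension-$1$ closed fibers of $\sFoam$ are exactly the $\closedFiber{\bush}$ for $\s$-bushes $\bush$ with a unique hole $(i,j)$, and that each such wall lies between the two maximal cells $\closedFiber{\leftTree(\bush)}$ and $\closedFiber{\rightTree(\bush)}$ whose $\s$-trees differ by the rotation of $(i,j)$. Under the gluing in description~(i), such a wall becomes interior to a merged maximal cell precisely when $\leftTree(\bush) \equiv \rightTree(\bush)$, and persists as a codimension-$1$ cell otherwise.

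Next, I will attach to each such $\bush$ the $\s$-arc $\alpha \eqdef \alpha_\join(\rightTree(\bush), i, j)$. Comparing \cref{def:sShard} with the explicit equations and inequalities for $\closedFiber{\bush}$ given by \cref{prop:fibers}, and using \cref{def:incision} and \cref{def:sTree2sArcDiagram} to see that the rightmost path from $i$ to the left incoming edge of $j$ in $\bush$ becomes the path from $i$ to $j$ in $\rightTree(\bush)$, I will verify that $\closedFiber{\bush} \subseteq \shard$. Conversely, \cref{lem:shard} shows that every point of $\shard$ lies in some $\closedFiber{\bush'}$ for an $\s$-bush $\bush'$ with hole $(i,j)$ and $\alpha_\join(\rightTree(\bush'), i, j) = \alpha$, so $\shard$ is covered pointwise by such walls.

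Finally, I will apply \cref{lem:forcing_join-irred_NCAD} with $\delta \eqdef \delta_\join(\rightTree(\bush))$, which contains $\alpha$ by construction, to conclude that $\leftTree(\bush) \equiv \rightTree(\bush)$ if and only if the join-irreducible $\tree_{\!\join}(\alpha)$ is contracted by $\equiv$, if and only if $\alpha \notin \c{A}_\equiv$ by \cref{coro:sWeakOrderCongruenceLattice}. Combining the three steps, the codimension-$1$ cells surviving the gluing in~(i) are exactly the $\closedFiber{\bush}$ whose associated $\s$-arc lies in $\c{A}_\equiv$, and their union is precisely $\bigcup_{\alpha \in \c{A}_\equiv} \shard$. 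The main delicate point will be the identification in the second step---verifying that the right incision $\rightTree(\bush)$ is indeed the $\s$-tree through which the $\s$-shard $\shard$ is read off from $\bush$; once this dictionary between walls of $\sFoam$ and $\s$-arcs is pinned down, the equivalence of descriptions~(i) and~(ii) follows immediately from the forcing theory of \cref{sec:quotients}.
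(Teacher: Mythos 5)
Your proposal follows the same route as the paper's proof: identify the codimension-$1$ walls of the $\s$-foam with $\s$-bushes having a single hole, use \cref{prop:stitchingIncision} to match a wall with the cover relation $\leftTree(\bush) \iscovered \rightTree(\bush)$, invoke \cref{lem:shard} to identify the wall with the $\s$-shard of $\alpha_\join(\rightTree(\bush),i,j)$, and then apply \cref{lem:forcing_join-irred_NCAD} together with \cref{coro:sWeakOrderCongruenceLattice} to convert the gluing condition into $\alpha \notin \c{A}_\equiv$. You spell out the wall--arc dictionary and the forcing step a bit more explicitly than the paper (which compresses the last step into the assertion ``$\tree \equiv \tree'$ if and only if $\alpha \notin \c{A}_\equiv$''), but the ingredients and logic are the same.
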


\begin{proof}
Consider a cover relation~$\tree \iscovered \tree'$ in the $\s$-weak order, let~$(i,j)$ denote the ascent of~$\tree$ and descent of~$\tree'$ corresponding to this flip, and let~$\alpha \eqdef (i, j, A, B, r)$ denote the corresponding $\s$-arc in~$\delta_\join(\tree')$.
Consider now the $\s$-bush~$\bush$ obtained by stitching~$(i,j)$ in~$\tree$ (or equivalently in~$\tree'$), that is, such that~$\closedFiber{\tree} \cap \closedFiber{\tree'} = \fiber{\bush}$ by \cref{prop:stitchingIncision}.
Then~$\closedFiber{\bush}$ is contained in the shard~$\shard$ by \cref{lem:shard}.
As~$\tree \equiv \tree'$ if and only if~$\alpha \notin \c{A}_\equiv$, we thus obtain that $\tree \iscovered \tree'$ is contracted in the description of \cref{def:quotientFoam}\,(i) if and only if it is contracted in the description of \cref{def:quotientFoam}\,(ii). 
\end{proof}

\begin{proposition}
\label{prop:quotientFoam2}
For any congruence~$\equiv$ of the $\s$-weak order~$W_\s$, the quotient foam~$\quotientFoam$ of \cref{def:quotientFoam} is indeed a polyhedral complex.
\end{proposition}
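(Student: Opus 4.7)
The strategy is to verify the three defining properties of a complete polyhedral complex. Faces and pairwise face-intersections can be inherited from $\sFoam$ (\cref{prop:sFoam}) once the merged maximal cells are known to be convex polyhedra, and the covering property is automatic since $\sFoam$ already covers $\R^n$. Hence the heart of the proof is to show that each maximal cell of $\quotientFoam$ is convex.

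I would first reformulate the maximal cells using \cref{prop:quotientFoam1}: the union of the codimension-$1$ walls of $\quotientFoam$ equals $\c{W}_\equiv \eqdef \bigcup_{\alpha \in \c{A}_\equiv} \shard$, so each candidate maximal cell $U_C$ is the closure of a connected component of $\R^n \ssm \c{W}_\equiv$, and coincides with the union of the closed fibers $\closedFiber{\tree}$ over the $\s$-trees in a single $\equiv$-class $C$. To prove that $U_C$ is convex, I would proceed by induction on the number of $\s$-arcs missing from $\c{A}_\equiv$. In the base case $\equiv$ is the identity relation, $\c{A}_\equiv$ is the full set of $\s$-arcs, and $\quotientFoam = \sFoam$ is a polyhedral complex by \cref{prop:sFoam}. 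For the inductive step, I choose a finer congruence $\equiv'$ with $\c{A}_{\equiv'} = \c{A}_\equiv \sqcup \{\alpha_0\}$ where $\alpha_0$ is subarc-maximal in $\c{A}_{\equiv'}$ (so that $\c{A}_\equiv$ remains a down set), and verify that merging the two adjacent cells of $\quotientFoam[\equiv']$ separated by $\shard[\alpha_0]$ produces a convex polyhedron. Using the explicit inequality description of $\shard$ in \cref{def:sShard}, this amounts to showing that the hyperplane $H_{\alpha_0}$ carrying $\shard[\alpha_0]$ cuts the merged cell into exactly those two pieces, with no further protrusion.

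The main obstacle is this \emph{clean-cut} property: ensuring that no shard $\shard$ with $\alpha \in \c{A}_\equiv$ protrudes into the interior of a merged cell $U_C$. If such a shard did enter the interior of $U_C$, its relative boundary inside the hyperplane $H_\alpha$ would meet this interior, and along that boundary $\shard$ must abut lower-dimensional shards $\shard[\beta]$. By the forcing correspondence of \cref{thm:forcingOrder} together with \cref{coro:sWeakOrderCongruenceLattice}, any such $\beta$ is a strict subarc of $\alpha$; since $\c{A}_\equiv$ is a down set in the subarc order, $\beta \in \c{A}_\equiv$, and then $\shard[\beta]$ itself would separate $U_C$, contradicting its construction as a connected component of $\R^n \ssm \c{W}_\equiv$. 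Once this clean-cut property is secured, the inductive step closes and gathering the pieces yields the polyhedral complex structure of $\quotientFoam$, with face-intersections and covering inherited from $\sFoam$.
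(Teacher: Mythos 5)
Your route is genuinely different from the paper's. The paper's own proof of \cref{prop:quotientFoam2} is a one-liner: it delegates entirely to \cref{thm:quotientFoamTropical}, where the quotient foam is identified with the polyhedral complex induced by the arrangement of tropical hypersurfaces $\set{\trophyp[\troppol_\alpha]}{\alpha \in \c{A}_\equiv}$. The ingredients are \cref{prop:shard_trophyp} (which gives $\shard \subseteq \trophyp[\troppol_\alpha] \subseteq \bigcup_{\beta}\shard[\beta]$ over subarcs~$\beta$ of~$\alpha$, so that for a down set~$\c{A}_\equiv$ the union of these tropical hypersurfaces equals the union of shards), \cref{lem:product_troppol}, and \cref{thm:arrangement_tropical_hypersurfaces}. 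The whole point of going tropical is to avoid the direct convexity argument you are attempting: each $\trophyp[\troppol_\alpha]$ is the corner locus of a piecewise-affine convex function, so the regions of any finite union of such loci are automatically convex polyhedra. Your plan --- induction on~$\c{A}_\equiv$, removing one subarc-maximal $\s$-arc at a time, plus a ``clean-cut'' lemma --- is instead a hands-on geometric argument in the spirit of Reading's original proof of the quotient-fan theorem for the braid arrangement.

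The gap is that the clean-cut property, as you state it, is necessary but not sufficient. The property you derive from the down-set condition --- the relative boundary of $\shard$ is covered by shards $\shard[\beta]$ of subarcs~$\beta$ of~$\alpha$, all of which again lie in $\c{A}_\equiv$ --- is true and is essentially what \cref{prop:shard_trophyp} encodes, but it does \emph{not} on its own force the components of $\R^n\ssm\bigcup_{\alpha\in\c{A}_\equiv}\shard$ to be convex. Take three closed half-lines in $\R^2$ issuing from the origin at angles $0^\circ$, $45^\circ$, $90^\circ$: each has relative boundary $\{\b{0}\}$, covered by the other two, so the clean-cut property holds, yet the closure of the large complementary region is $\R^2\ssm\set{(x,y)}{x>0, \, y>0}$, which is not convex. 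Ruling out such configurations for $\s$-shards requires additional input about their affine supports (which hyperplanes $x_i-x_j=c$ can meet, and with which orientations), and your sketch never invokes it. Two lesser issues: removing a subarc-maximal $\alpha_0$ deletes a codimension-$1$ shard that is in general subdivided by the remaining walls into several facets between several different pairs of cells, so several merges occur at once, not just ``the two adjacent cells''; and the claim that ``$\shard[\beta]$ itself would separate $U_C$'' is asserted rather than proved --- $\shard[\beta]$ might itself stop short inside the cell, and one needs a genuine descent down to the minimal basic $\s$-arcs (whose shards are entire affine hyperplanes). The route can likely be completed, essentially by redeveloping Reading's shard theory in the $\s$-setting, but the tropical proof the paper uses sidesteps exactly these difficulties.
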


\begin{proof}
The proof uses tropical geometry to show that the quotient foam is the polyhedral complex induced by a certain arrangement of tropical hypersurfaces, see \cref{thm:quotientFoamTropical}.
\end{proof}

\begin{proposition}
\label{prop:quotientFoam3}
The Hasse diagram of the quotient~$W_\s / {\equiv}$ is isomorphic to the dual graph of the quotient foam~$\quotientFoam$, oriented in the direction~$\b{\omega}$.
\end{proposition}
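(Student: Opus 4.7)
The plan is to combine \cref{coro:sFoamsWeakOrder} (which gives the analogous statement for the trivial congruence) with the two equivalent descriptions of~$\quotientFoam$ in \cref{def:quotientFoam} and \cref{prop:quotientFoam1}. I would proceed in three steps: match vertices, match edges, and match orientations.

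For the vertices, the maximal cells of~$\quotientFoam$ are by \cref{def:quotientFoam}(i) obtained by glueing maximal cells of~$\sFoam$ corresponding to $\s$-trees in the same class of~$\equiv$, hence they are in natural bijection with the congruence classes of~$\equiv$, \ie with the elements of the quotient lattice~$W_\s/{\equiv}$. For the edges, I would argue that two distinct maximal cells~$C_{[\tree]}$ and~$C_{[\tree']}$ of~$\quotientFoam$ share a codimension-$1$ wall if and only if there exist representatives $\tree\in[\tree]$ and $\tree'\in[\tree']$ whose closed fibers~$\closedFiber{\tree}$ and~$\closedFiber{\tree'}$ are adjacent in~$\sFoam$. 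By \cref{coro:dualGraph} and \cref{prop:sWeakOrderCoverRelations}, this happens exactly when $\{\tree,\tree'\}$ forms a cover relation in~$W_\s$, and since~$[\tree]\ne[\tree']$ this cover relation is not contracted by~$\equiv$, so it descends to a cover relation between~$[\tree]$ and~$[\tree']$ in~$W_\s/{\equiv}$. Conversely, any cover relation in~$W_\s/{\equiv}$ lifts to a non-contracted cover relation in~$W_\s$ (because congruence classes are intervals), yielding an adjacency in~$\quotientFoam$ by the same argument. If several cover relations of~$W_\s$ connect the same pair of congruence classes, they all glue into the same codimension-$1$ adjacency in~$\quotientFoam$ and contribute to the same cover relation in the quotient, so the correspondence is well-defined.

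For the orientations, \cref{coro:sFoamsWeakOrder} asserts that crossing a codimension-$1$ wall of~$\sFoam$ in the direction~$\b{\omega}$ corresponds to performing a left rotation, \ie going up in~$W_\s$. Since the glueing operation of \cref{def:quotientFoam}(i) only identifies maximal cells inside a common class of~$\equiv$, this orientation descends unambiguously to the dual graph of~$\quotientFoam$ and matches the orientation of the Hasse diagram of~$W_\s/{\equiv}$ inherited from~$W_\s$.

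The only point that requires a little care, rather than a real obstacle, is checking that codimension-$1$ adjacency in the glued complex corresponds precisely to non-contracted cover relations in~$W_\s$ and not to some coarser relation coming from longer chains collapsing inside a class; this is already handled by \cref{prop:quotientFoam1}, which identifies the walls of~$\quotientFoam$ with the $\s$-shards of the arcs in~$\c{A}_\equiv$, the very arcs that encode uncontracted covers via \cref{prop:canonicalJoinRepresentation} and \cref{coro:sWeakOrderCongruenceLattice}.
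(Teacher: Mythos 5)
Your proof is correct and follows essentially the same route as the paper's: the paper simply lists the chain of equivalences ($\tree[C] \iscovered \tree[C]'$ in the quotient $\iff$ exist representatives with $\tree \iscovered \tree'$ in $W_\s$ $\iff$ exist representatives with adjacent closed fibers and $\b{\omega}$ pointing the right way $\iff$ $\polytope{C}$ and $\polytope{C}'$ adjacent with $\b{\omega}$ pointing the right way), while you unfold the same equivalences one at a time with a bit more commentary on why covers in $W_\s/{\equiv}$ lift and on why multiple $\s$-foam walls glue consistently. No genuine gap and no genuinely different idea.
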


\begin{proof}
By \cref{def:quotientFoam}\,(i), the maximal cells of the quotient foam~$\quotientFoam$ are labeled by the equivalence classes of~$\equiv$.
Consider now two maximal cells~$\polytope{C}$ and~$\polytope{C}'$ corresponding to two equivalent classes~$\tree[C]$ and~$\tree[C]'$ of~$\equiv$.
Then the following are equivalent: 
\begin{itemize}
\item $\tree[C] \lessdot \tree[C]'$ in the quotient~$W_\s / {\equiv}$,
\item there exist two $\s$-trees~$\tree \in \tree[C]$ and~$\tree' \in \tree[C]'$  such that $\tree \lessdot \tree'$ in the $\s$-weak order, 
\item there exist two $\s$-trees~$\tree \in \tree[C]$ and~$\tree' \in \tree[C]'$ such that~$\closedFiber{\tree}$ and~$\closedFiber{\tree'}$ are adjacent, and $\omega$ points from~$\closedFiber{\tree}$ to~$\closedFiber{\tree'}$,
\item $\polytope{C}$ and~$\polytope{C}'$ are adjacent, and $\omega$ points from~$\polytope{C}$ to~$\polytope{C}'$.
\qedhere
\end{itemize}
\end{proof}

%%%%%%%%%%%%%%%%%

\subsection{Shardoplexes}
\label{subsec:shardPolytopalComplexes}

Generalizing \cref{def:shardPolytopes}, we now associate to each $\s$-arc~$\alpha$ a polytopal complex~$\shardoplex$, that we call the \defn{$\alpha$-shardoplex}.
To construct this polytopal complex, we associate a polytope (not necessarily full-dimensional) to each $\s$-arc and $\s$-trunk, in such a way that the polytopes corresponding to the same $\s$-arc but different \mbox{$\s$-trunks} glue nicely.
These polytopes are constructed as certain faces of shard polytopes of~\cite{PadrolPilaudRitter}, see \cref{def:shardPolytopes}.
Recall from \cref{rem:bush}\,\eqref{item:parametrization} our labeling of the $\s$-trunks by~$\sTrunks$, and from \cref{exm:grid} the geometric description of the corresponding minimal cells of the $\s$-foam~$\sFoam$.

\begin{definition}
\label{def:localShardPolytope}
Fix an $\s$-arc~$\alpha \eqdef (i, j, A, B, r)$, let~$\tilde\alpha \eqdef (i, j, A, B)$ denote the corresponding classical arc, and let~$\b{q} \in \sTrunks$.
The \defn{local shard polytope}~$\localShardPolytope$ is the face of the shard polytope~$\shardPolytope[\tilde\alpha]$ maximizing the scalar product with the vector
\(
\sum_{\ell \in {]i,j]}} \big( q_i - q_\ell + r - 1 + \sum_{k \in B \cap {]i,\ell[}} \max(0, s_k-1) \big) \b{e}_\ell.
\)
\end{definition}

\begin{remark}
\label{rem:abusing1}
In \cref{def:localShardPolytope}, note that $\tilde\alpha \eqdef (i, j, A, B)$ is strictly speaking not really an arc since~$A \sqcup B = \set{k \in {]i,j[}}{s_k \ne 0}$ may differ from~$]i,j[$ if~$\s$ contains some~$0$.
However, \cref{def:shardPolytopes} for the shard polytope~$\shardPolytope[\tilde\alpha]$ still holds, and was already considered in~\cite[Rem.~43]{PadrolPilaudRitter} as ``pseudoshard polytope''.
\end{remark}

\begin{proposition}
\label{prop:shardoplex}
For any $\s$-arc~$\alpha$, the collection of all local shard polytopes~$\localShardPolytope$ for~$\b{q} \in \sTrunks$, together with all their faces, form a polyhedral complex~$\shardoplex$ that we call the \defn{shardoplex} of~$\alpha$.
\end{proposition}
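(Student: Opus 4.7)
The plan is to verify that $\shardoplex$ satisfies the axioms of a polyhedral complex directly, using only the elementary fact that any subcollection of faces of a single polytope which is closed under taking faces automatically forms a polyhedral complex.

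First, I would observe that by \cref{def:localShardPolytope} each local shard polytope $\localShardPolytope$ is, as the set of maximizers of a linear functional over $\shardPolytope[\tilde\alpha]$, a face of the shard polytope $\shardPolytope[\tilde\alpha]$. Consequently every face of a local shard polytope is itself a face of $\shardPolytope[\tilde\alpha]$, so every cell declared to lie in $\shardoplex$ is a polytope, and the collection~$\shardoplex$ sits inside the face complex of the single ambient polytope $\shardPolytope[\tilde\alpha]$.

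With that in hand, the remaining two axioms of a polyhedral complex become routine. Closure of $\shardoplex$ under taking faces holds by construction, since $\shardoplex$ is defined to include all faces of every $\localShardPolytope$. For closure under pairwise intersection, pick two cells $F,F' \in \shardoplex$. Both are faces of $\shardPolytope[\tilde\alpha]$, and the standard fact that the intersection of two faces of a polytope is a common face of both yields that $F \cap F'$ is a face of each of $F$ and $F'$. In particular $F \cap F'$, viewed as a face of the local shard polytope containing $F$, already belongs to $\shardoplex$, so intersections stay in the collection.

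I do not anticipate a serious obstacle here: the content of the proposition really is the observation that a family of faces of a fixed polytope, closed under face containment, automatically inherits the structure of a polyhedral complex. The finer geometric statements that justify the name \emph{shardoplex}---in particular the fact that the walls of the dual complex of~$\shardoplex$ contain the $\s$-shard~$\shard$ and are contained in the $\s$-shards of its subarcs (\cref{prop:shard_trophyp})---are deferred, since they require the tropical-geometry machinery developed in \cref{sec:tropicalGeometry} to describe the dual polyhedral complex.
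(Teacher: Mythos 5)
Your proof is correct and follows the same approach as the paper's: both rest on the elementary observation that every $\localShardPolytope$ is by \cref{def:localShardPolytope} a face of the single ambient polytope $\shardPolytope[\tilde\alpha]$, so the collection (closed under faces by construction) automatically inherits the polyhedral-complex axioms because faces of a fixed polytope intersect along common faces. The only thing the paper's proof does in addition is exhibit a concrete $\b{q} \in \sTrunks$ for which $\localShardPolytope = \shardPolytope[\tilde\alpha]$, showing that the ambient shard polytope is itself a cell of $\shardoplex$; that observation is not needed for the bare statement here, but it is what makes the support of $\shardoplex$ equal to (a translate of) $\shardPolytope[\tilde\alpha]$, which is used later in \cref{prop:quotientoplex3}.
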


\begin{proof}
The union of the sets of faces of the local shard polytopes~$\localShardPolytope$ for~$\b{q} \in \sTrunks$ is a set of faces of the shard polytope~$\shardPolytope[\tilde\alpha]$ that contains~$\shardPolytope[\tilde\alpha]$ itself. 
Indeed, we can take for example $\b{q}\in\sTrunks$ such that $q_\ell = 1$ for all $\ell\in [i] \cup {]j,n]}$ and $q_\ell = r + \sum_{k \in B \cap {]i,\ell[}} \max(0, s_k-1)$ for all $\ell \in {]i,j]}$ to have~$\localShardPolytope=\shardPolytope[\tilde\alpha]$.
Hence the collection of local shard polytopes with their faces indeed form a polytopal complex.
\end{proof}

Generalizing \cref{prop:shardPolytopes}, the main feature of the $\alpha$-shardoplex~$\shardoplex$ is that the union of the walls of its dual polyhedral complex contains the $\s$-shard~$\shard$ and is contained in the union of the $\s$-shards~$\shard[\beta]$ over all subarcs~$\beta$ of the $\s$-arc~$\alpha$.
This property is properly stated in \cref{prop:shard_trophyp} in terms of tropical geometry.

%%%%%%%%%%%%%%%%%

\subsection{Quotientoplexes}
\label{subsec:sQuotientopes}

We now generalize \cref{thm:quotientopes} to the $\s$-weak order.
For a congruence~$\equiv$ of the $\s$-weak order, we define a polytopal complex~$\quotientoplex$ using Minkowski sums of the $\alpha$-shardoplexes of \cref{prop:shardoplex}, such that the oriented graph of~$\quotientoplex$ is isomorphic to the Hasse diagram of the quotient~$W_\s / {\equiv}$.
See \cref{fig:MinkowskiSums,fig:sQuotientopeLattice,fig:1211-quotientopes,fig:2101-quotientopes,fig:1432-quotientopes,fig:2321-quotientopes} for some illustrations.
We first need to explain what we mean by Minkowski sums here, even if it will be clear from \cref{sec:tropicalGeometry}.

\begin{figure}
	\capstart
	\centerline{\includegraphics[scale=1.5]{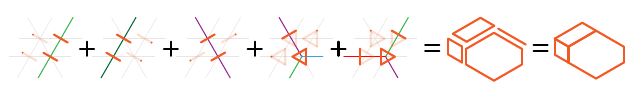}}
	\centerline{\includegraphics[scale=1.5]{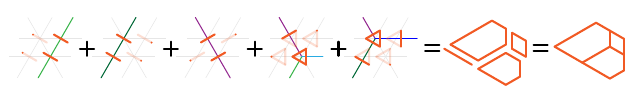}}
	\centerline{\includegraphics[scale=1.5]{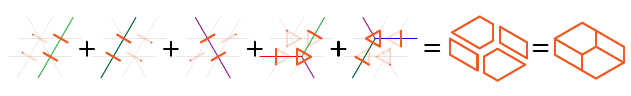}}
	\caption{Some quotientoplexes of the $(2,1,0)$-weak order, obtained as Minkowski sums of shardoplexes.}
	\label{fig:MinkowskiSums}
\end{figure}

\begin{lemma}
\label{lem:MinkowskiSumPolyhedralComplex}
Let~$\polytope{S} \eqdef (\polytope{S}^\b{q})_{\b{q} \in \sTrunks}$ and~$\polytope{T} \eqdef (\polytope{T}^\b{q})_{\b{q} \in \sTrunks}$ be such that for any~$\b{p} \ne \b{q} \in \sTrunks$, there is~$\mu, \nu \in \R$ such that~$\dotprod{\b{q}-\b{p}}{\b{s}} - \mu$ (resp.~$\dotprod{\b{q}-\b{p}}{\b{t}} - \nu$) is non-negative for all~$\b{s} \in \polytope{S}^\b{p}$ (resp.~$\b{t} \in \polytope{T}^\b{p}$) and non-positive for all~$\b{s} \in \polytope{S}^\b{q}$ (resp.~$\b{t} \in \polytope{T}^\b{q}$).
If~$\polytope{S}$ and~$\polytope{T}$, together with all their faces, form two polytopal complexes, then so does their \defn{Minkowski sum}~$\polytope{S} + \polytope{T} \eqdef (\polytope{S}^\b{q} + \polytope{T}^\b{q})_{\b{q} \in \sTrunks}$.
\end{lemma}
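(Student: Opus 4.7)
The plan is to verify the three defining axioms of a polytopal complex for the cellwise Minkowski sum: each cell is a polytope, the collection is closed under taking faces, and any two cells meet in a common face. The first two are immediate from the construction, since the Minkowski sum of two polytopes is a polytope and all faces are included by fiat. Hence the real content is the intersection axiom, and by a standard reduction it suffices to check it for two top cells $\polytope{S}^\b{p} + \polytope{T}^\b{p}$ and $\polytope{S}^\b{q} + \polytope{T}^\b{q}$ for distinct $\b{p}, \b{q} \in \sTrunks$.

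The key mechanism I would exploit is that the two separating inequalities add up. For any $\b{s} + \b{t} \in \polytope{S}^\b{p} + \polytope{T}^\b{p}$, summing the separating inequalities gives $\dotprod{\b{q}-\b{p}}{\b{s}+\b{t}} \ge \mu + \nu$, and dually $\dotprod{\b{q}-\b{p}}{\b{s}'+\b{t}'} \le \mu + \nu$ for $\b{s}' + \b{t}' \in \polytope{S}^\b{q} + \polytope{T}^\b{q}$. Hence the two Minkowski sums lie on opposite closed sides of the hyperplane $H \eqdef \{\dotprod{\b{q}-\b{p}}{\cdot} = \mu + \nu\}$, and their intersection is forced into $H$. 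By the tightness of these inequalities, any point $\b{x}$ of the intersection admits decompositions $\b{x} = \b{s}_1 + \b{t}_1 = \b{s}_2 + \b{t}_2$ with $\b{s}_1 \in F_S^\b{p}$, $\b{t}_1 \in F_T^\b{p}$, $\b{s}_2 \in F_S^\b{q}$, $\b{t}_2 \in F_T^\b{q}$, where $F_S^\b{p} \eqdef \polytope{S}^\b{p} \cap \{\dotprod{\b{q}-\b{p}}{\cdot} = \mu\}$ and analogously for the other faces.

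To close the argument, I would identify each equality face with the common face of the underlying polytopal complex: concretely, $F_S^\b{p} = F_S^\b{q} = \polytope{S}^\b{p} \cap \polytope{S}^\b{q}$, and likewise $F_T^\b{p} = F_T^\b{q} = \polytope{T}^\b{p} \cap \polytope{T}^\b{q}$. Granted this, the intersection becomes $(\polytope{S}^\b{p} \cap \polytope{S}^\b{q}) + (\polytope{T}^\b{p} \cap \polytope{T}^\b{q})$, which is a Minkowski sum of faces and hence a face of both summands, giving exactly the common face required by the axiom.

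The main obstacle is precisely this face identification. A priori, the equality face $F_S^\b{p}$ could strictly contain the common face $\polytope{S}^\b{p} \cap \polytope{S}^\b{q}$: the separator $\mu$ only needs to be some value sandwiched between the extrema on the two cells, and the polytope $\polytope{S}^\b{p}$ could have a larger face on $\{\dotprod{\b{q}-\b{p}}{\cdot} = \mu\}$ than what it shares with $\polytope{S}^\b{q}$. Resolving this requires the polytopal complex structure of $\polytope{S}$ to force $\mu$ and hence $F_S^\b{p}$ to be tight. The cleanest route, which is the one I would adopt, is via the tropical interpretation developed in \cref{sec:tropicalGeometry}: the polytopal complex $\polytope{S}$ is the dual complex of a tropical polynomial, the separators $\mu$ coincide with the differences of heights of its monomials, and cellwise Minkowski sums correspond to summing the underlying tropical polynomials, under which the polytopal complex axioms are preserved automatically.
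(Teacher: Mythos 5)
Your strategy matches the paper's step for step: reduce to the intersection of two cells~$\polytope{X} \subseteq \polytope{S}^\b{p} + \polytope{T}^\b{p}$ and~$\polytope{Y} \subseteq \polytope{S}^\b{q} + \polytope{T}^\b{q}$, add the separating inequalities so that~$\polytope{X} \cap \polytope{Y}$ is confined to the hyperplane~$\dotprod{\b{q}-\b{p}}{\cdot} = \mu + \nu$, decompose the tight faces of the Minkowski sums, and try to identify the intersection with a Minkowski sum of common faces. The gap you flag is genuine, and it is in fact present, unflagged, in the paper's own proof, whose final sentence asserts~$\polytope{X} \cap \polytope{Y} = \polytope{F} + \polytope{G}$ (with~$\polytope{F}$, $\polytope{G}$ the respective intersections of the tight faces of~$\polytope{S}^\b{p}, \polytope{S}^\b{q}$ and of~$\polytope{T}^\b{p}, \polytope{T}^\b{q}$) with no further argument. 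That equality does not follow from what precedes: in general~$(A + B) \cap (A' + B')$ strictly contains~$(A \cap A') + (B \cap B')$ even when~$A \cap A'$ is a common face of~$A, A'$ and~$B \cap B'$ a common face of~$B, B'$. The pairwise separation hypothesis as written does not seem strong enough to rule this out. For instance, in~$\R^3$ with~$\b{q} - \b{p} = \b{e}_1$ and~$\mu = \nu = 0$, take~$\polytope{S}^\b{p} = \conv\{\b{0}, \b{e}_2, \b{e}_1\}$, $\polytope{S}^\b{q} = \conv\{\b{0}, \b{e}_3, -\b{e}_1\}$, $\polytope{T}^\b{p} = \conv\{\b{0}, 2\b{e}_3, \b{e}_1\}$, $\polytope{T}^\b{q} = \conv\{\b{0}, \b{e}_2, -\b{e}_1\}$; every hypothesis holds, yet the two cells~$\polytope{S}^\b{p}+\polytope{T}^\b{p}$ and~$\polytope{S}^\b{q}+\polytope{T}^\b{q}$ meet in the square~$\{0\} \times [0,1]^2$, which is a proper non-face subset of the~$\{x_1 = 0\}$ face~$\{0\} \times [0,1] \times [0,2]$ of the former.

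One adjustment to where you locate the difficulty: what is needed is neither~$F_S^\b{p} = F_S^\b{q}$ nor~$F_S^\b{p} = \polytope{S}^\b{p} \cap \polytope{S}^\b{q}$. In general the tight faces are distinct and each strictly contains the common face, and the polytopal complex hypothesis already ensures that they meet in a common face; also, tightness of~$\mu$ is automatic whenever~$\polytope{S}^\b{p} \cap \polytope{S}^\b{q} \neq \varnothing$. What can actually fail is the subsequent identification of~$(F_S^\b{p} + F_T^\b{p}) \cap (F_S^\b{q} + F_T^\b{q})$ with~$(F_S^\b{p} \cap F_S^\b{q}) + (F_T^\b{p} \cap F_T^\b{q})$. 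Your proposed remedy is nonetheless the right one: the shardoplexes carry a single coherent lifting function, a strictly stronger datum than the pairwise separators, and \cref{thm:quotientoplexTropical} recovers the polytopal complex structure of the quotientoplex directly from tropical duality (\cref{lem:product_troppol} together with \cref{thm:tropical_dual}) without ever going through \cref{lem:MinkowskiSumPolyhedralComplex}. To salvage the lemma as a stand-alone statement, one would strengthen its hypothesis to require potentials~$h_{\polytope{S}}, h_{\polytope{T}} : \sTrunks \to \R$ with~$\mu = h_{\polytope{S}}(\b{q}) - h_{\polytope{S}}(\b{p})$ and~$\nu = h_{\polytope{T}}(\b{q}) - h_{\polytope{T}}(\b{p})$ for all pairs, which is exactly the data of the tropical polynomial, or equivalently of a regular mixed subdivision.
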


\begin{proof}
Let~$\polytope{X}$ and~$\polytope{Y}$ be two faces of~$\polytope{S} + \polytope{T}$ whose intersection is non-empty.
Let~$\b{p}, \b{q} \in \sTrunks$ be such that~$\polytope{X}$ (resp.~$\polytope{Y}$) is a face of~$\polytope{S}^\b{p} + \polytope{T}^\b{p}$ (resp.~of~$\polytope{S}^\b{q} + \polytope{T}^\b{q}$). 
Let~$\mu, \nu \in \R$ be as in the statement.
Then~$\dotprod{\b{q}-\b{p}}{\b{s}+\b{t}} - (\mu + \nu)$ is non-negative for any~$\b{s}+\b{t} \in \polytope{S}^\b{p} + \polytope{T}^\b{p}$ and non-positive for~$\b{s}+\b{t} \in \polytope{S}^\b{q} + \polytope{T}^\b{q}$.
Hence, this dot product must vanish on the intersection~$\polytope{X} \cap \polytope{Y}$.
We conclude that the intersection~$\polytope{X} \cap \polytope{Y}$ lies in the face of~$\polytope{X}$, hence of~$\polytope{S}^\b{p} + \polytope{T}^\b{p}$ (resp.~of~$\polytope{Y}$, hence of~$\polytope{S}^\b{q} + \polytope{T}^\b{q}$) maximizing the direction~$\b{q} - \b{p}$ (resp.~$\b{p} - \b{q}$).
By standard properties of Minkowski sums, the latter is the Minkowski sum of the faces of~$\polytope{S}^\b{p}$ and~$\polytope{T}^\b{p}$ (resp.~of~$\polytope{S}^\b{q}$ and~$\polytope{T}^\b{q}$) maximizing the direction~$\b{q} - \b{p}$ (resp.~$\b{p} - \b{q}$).
Since~$\polytope{S}$ (resp.~$\polytope{T}$) form polytopal complexes, these faces of~$\polytope{S}^\b{p}$ and~$\polytope{S}^\b{q}$ (resp.~of~$\polytope{T}^\b{p}$ and~$\polytope{T}^\b{q}$) intersect along a face~$\polytope{F}$ (resp.~$\polytope{G}$) of both.
We conclude that~$\polytope{X} \cap \polytope{Y} = \polytope{F} + \polytope{G}$ is a face of both~$\polytope{X}$ and~$\polytope{Y}$.
\end{proof}

As the shardoplexes clearly satisfy the conditions of \cref{lem:MinkowskiSumPolyhedralComplex}, we can apply \cref{lem:MinkowskiSumPolyhedralComplex} to define Minkowski sums of shardoplexes.
Note that we will see in \cref{thm:quotientoplexTropical} an alternative and more direct definition for the quotientoplex~$\quotientoplex$ in terms of tropical geometry.

\begin{definition}
\label{def:quotientoplex}
For a congruence~$\equiv$ of the $\s$-weak order~$W_\s$, the \defn{quotientoplex}~$\quotientoplex$ is the Minkowski sum of (any positive scaling of) the shardoplexes~$\shardoplex$ over all $\s$-arcs~$\alpha$ in~$\c{A}_\equiv$.
\end{definition}

If we want to make explicit the scaling coefficients, we denote by~$\quotientoplex(\b{\lambda})$ the quotientoplex obtained as the Minkowski sum~$\sum_{\alpha \in \c{A}_\equiv} \lambda_\alpha \, \shardoplex$ for~$\b{\lambda} \eqdef (\lambda_\alpha)_{\alpha \in \c{A}_\equiv}$ with~${\lambda_\alpha > 0}$.

We now state a generalization of \cref{thm:quotientopes}.
A more refined version will be stated in \cref{thm:quotientFoamTropical} and \cref{thm:quotientoplexTropical} using tropical geometry.

\begin{proposition}
\label{prop:quotientoplex1}
There is an inclusion reversing bijection~$\psi$ from the faces of the quotient foam~$\quotientFoam$ to the faces of the quotientoplex~$\quotientoplex$ such that~$\polytope{F}$ and~$\psi(\polytope{F})$ are orthogonal.
\end{proposition}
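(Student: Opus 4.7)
The plan is to obtain $\psi$ as the classical polytope/normal-fan duality, made global through the tropical formalism that \cref{sec:tropicalGeometry} develops. From that perspective, each $\s$-arc $\alpha \in \c{A}_\equiv$ carries a tropical polynomial whose corner locus supports the $\s$-shard $\shard$ (this is what \cref{prop:shard_trophyp} articulates) and whose (weighted) Newton polytope is $\shardoplex$. Taking the tropical product with weights $\lambda_\alpha > 0$ yields a single tropical polynomial $F$ whose corner locus is $\bigcup_{\alpha \in \c{A}_\equiv} \shard$, which is the union of walls of $\quotientFoam$ by \cref{prop:quotientFoam1}, and whose Newton polytope with induced regular subdivision is $\quotientoplex$. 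The duality between the linearity domains of a tropical polynomial and the cells of its dual subdivision then provides the desired inclusion-reversing, orthogonal bijection between faces of $\quotientFoam$ and faces of $\quotientoplex$.

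To make this rigorous, I would work in the $\sTrunks$-grid of \cref{exm:grid}: $\R^n$ is partitioned by the parallel flats $\fiber{\trunk_\b{q}}$ of common dimension $n + T_n - S_n$. On each such flat, the restricted tropical polynomial has for its domains of linearity the normal cones of $\quotientoplex^\b{q} \eqdef \sum_{\alpha \in \c{A}_\equiv} \lambda_\alpha \, \localShardPolytope$, because freezing the coordinates $(x_1 - x_\ell)_{\ell > 1}$ at the values dictated by $\b{q}$ selects the face $\localShardPolytope$ of $\shardPolytope[\tilde\alpha]$ distinguished in \cref{def:localShardPolytope}. Inside each flat, the standard polytope/normal-fan duality yields an inclusion-reversing, orthogonal bijection between the faces of $\quotientFoam$ that meet (the relative interior of) that flat and the faces of $\quotientoplex^\b{q}$.

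The main obstacle is gluing these slice-bijections into a globally well-defined $\psi$ and checking orthogonality along the boundaries where adjacent flats meet. The compatibility hypothesis of \cref{lem:MinkowskiSumPolyhedralComplex} is precisely the combinatorial input needed: whenever two cells of $\quotientFoam$ sit in adjacent trunk fibers $\fiber{\trunk_\b{p}}$ and $\fiber{\trunk_\b{q}}$, the local Minkowski summands share a common face in the direction $\b{q}-\b{p}$, and this produces the matching face of $\quotientoplex$. Tropically this is the statement that crossing a grid wall of $\sTrunks$ corresponds to a Newton-edge of $F$ in the direction $\b{q}-\b{p}$. Verifying orthogonality then reduces to the observation that the lineality acquired by $\quotientoplex^\b{p} \cap \quotientoplex^\b{q}$ in the direction $\b{q}-\b{p}$ is orthogonal to the extra affine freedom gained by the corresponding stratum of $\quotientFoam$ as it crosses between the two fibers. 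Once this bookkeeping is in place, $\psi$ is assembled unambiguously and orthogonality follows cell by cell from the classical duality.
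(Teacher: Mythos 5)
Your proposal follows the same tropical-duality route as the paper: identify the quotient foam with the tropical dual of a regular subdivision, identify the quotientoplex with the cells of that same subdivision, and read off the inclusion-reversing orthogonal bijection from the McMullen-type strong duality of \cref{thm:tropical_dual} and \cref{rem:strongDuality}. The paper packages this as a one-line deduction from \cref{thm:quotientFoamTropical} and \cref{thm:quotientoplexTropical}, and the trunk-grid slicing and gluing you spell out is precisely the content of the paper's proof of \cref{thm:quotientoplexTropical}, so you are re-deriving that intermediate result rather than taking a genuinely different path.
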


\begin{proof}
See \cref{rem:proof_quotientoplex1}.
\end{proof}

\begin{proposition}
\label{prop:quotientoplex2}
For any congruence~$\equiv$ of the $\s$-weak order~$W_\s$, the Hasse diagram of the quotient~$W_\s / {\equiv}$ is isomorphic to the skeleton of the quotientoplex~$\quotientoplex$.
\end{proposition}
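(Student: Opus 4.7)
My plan is to combine the duality of Proposition~\ref{prop:quotientoplex1} with the cover relation description of Proposition~\ref{prop:quotientFoam3}. First, since $\psi$ is an inclusion-reversing bijection between the faces of $\quotientFoam$ and the faces of $\quotientoplex$ (and the two complexes live in the same ambient dimension), it will send the maximal ($n$-dimensional) cells of $\quotientFoam$ to the vertices of $\quotientoplex$ and the codimension-$1$ cells of $\quotientFoam$ to the edges of $\quotientoplex$. Inclusion reversal then identifies the incidence of a codim-$1$ cell $\polytope{W}$ with its two adjacent maximal cells $\polytope{C}, \polytope{C}'$ of $\quotientFoam$ with the incidence of the edge $\psi(\polytope{W})$ with its two endpoints $\psi(\polytope{C}), \psi(\polytope{C}')$ in $\quotientoplex$. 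This already produces an isomorphism between the undirected skeleton of $\quotientoplex$ and the undirected dual graph of $\quotientFoam$.

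Second, I will check that this isomorphism respects the orientation by $\b{\omega}$. The orthogonality clause in Proposition~\ref{prop:quotientoplex1} guarantees that the displacement $\psi(\polytope{C}') - \psi(\polytope{C})$ spans the line normal to $\polytope{W}$. By the explicit tropical construction of $\psi$ developed in \cref{sec:tropicalGeometry} (which realizes the usual polar sign convention between a polyhedral complex and its dual), this displacement points out of the closed halfspace of $\polytope{W}$ containing $\polytope{C}$ and into the halfspace containing $\polytope{C}'$. Consequently, $\langle \b{\omega}, \psi(\polytope{C}') - \psi(\polytope{C}) \rangle$ has the same sign as the inner product of $\b{\omega}$ with any vector crossing $\polytope{W}$ from $\polytope{C}$ into $\polytope{C}'$, so the two $\b{\omega}$-orientations agree on every edge.

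Finally, Proposition~\ref{prop:quotientFoam3} identifies the $\b{\omega}$-oriented dual graph of $\quotientFoam$ with the Hasse diagram of $W_\s/{\equiv}$, which concludes the proof. The only step requiring care is the sign check in the orientation matching; but this is a formal consequence of the tropical construction of $\psi$ in \cref{sec:tropicalGeometry} and does not involve any new combinatorics specific to the $\s$-weak order.
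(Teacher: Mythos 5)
Your proposal is correct and follows the same route as the paper, whose proof of Proposition~\ref{prop:quotientoplex2} is literally the one-liner ``This follows from Propositions~\ref{prop:quotientFoam3} and~\ref{prop:quotientoplex1}.'' You have simply filled in the details the paper leaves implicit, namely the dimension complementarity (maximal cells of the foam $\leftrightarrow$ vertices of the quotientoplex, walls $\leftrightarrow$ edges) and the $\b{\omega}$-orientation sign check, both of which are indeed formal consequences of the tropical duality of Section~\ref{sec:tropicalGeometry}.
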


\begin{proof}
This follows from \cref{prop:quotientFoam3,prop:quotientoplex1}.
\end{proof}

\begin{proposition}
\label{prop:quotientoplex3}
For an $\s$-arc~$\alpha \eqdef (i, j, A, B, r)$, denote by~$\tilde\alpha \eqdef (i, j, A, B)$ the corresponding classical arc.
For a congruence~$\equiv$ of the $\s$-weak order~$W_\s$, denote by~$\tilde\equiv$ the corresponding congruence of the weak order~$W_n$, with down set of arcs~$\c{A}_{\tilde\equiv} \eqdef \set{\tilde\alpha}{\alpha \in \c{A}_\equiv}$.
Consider~$\b{\lambda} \eqdef (\lambda_\alpha)_{\alpha \in \c{A}_\equiv}$ with~${\lambda_\alpha > 0}$, and let~$\tilde{\b{\lambda}} \eqdef (\tilde\lambda_{\tilde\alpha})_{\tilde\alpha \in \c{A}_{\tilde\equiv}}$ with~$\tilde\lambda_{\tilde\alpha} \eqdef \sum_\alpha \lambda_\alpha$ where the sum ranges over all $\s$-arcs~$\alpha$ which project to~$\tilde\alpha$.
Then the quotientoplex~$\quotientoplex(\b{\lambda})$ is a polytopal subdivision of (a translate of) the quotientope~$\quotientope[\tilde\equiv](\tilde{\b{\lambda}})$.
\end{proposition}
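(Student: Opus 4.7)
The plan is to identify $\quotientoplex(\b{\lambda})$ as a regular polytopal subdivision of $\quotientope[\tilde\equiv](\tilde{\b{\lambda}})$, relying on the tropical-geometric machinery developed in \cref{sec:tropicalGeometry}. First, I would rewrite the quotientope by regrouping $\s$-arcs according to their projection $\alpha \mapsto \tilde\alpha$: using \cref{thm:quotientopes},
\[
\quotientope[\tilde\equiv](\tilde{\b{\lambda}}) \;=\; \sum_{\tilde\alpha \in \c{A}_{\tilde\equiv}} \tilde\lambda_{\tilde\alpha} \, \shardPolytope[\tilde\alpha] \;=\; \sum_{\alpha \in \c{A}_\equiv} \lambda_\alpha \, \shardPolytope[\tilde\alpha].
\]
Since each local shard polytope $\localShardPolytope$ is a face of $\shardPolytope[\tilde\alpha]$ by \cref{def:localShardPolytope}, every maximal cell $\sum_\alpha \lambda_\alpha \localShardPolytope$ of $\quotientoplex(\b{\lambda})$ is automatically contained (up to a global translation) in this Minkowski sum.

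Next I would establish that these cells tile the quotientope. The crucial point here is coherence of the direction vectors: although the vector $\b{v}(\alpha, \b{q})$ in \cref{def:localShardPolytope} depends explicitly on both $\alpha$ and $\b{q}$, its $\b{q}$-dependent part is (the restriction to the support $\{i,\dots,j\}$ of) a single linear functional $\phi_\b{q}$ on $\R^n$ determined only by $\b{q}$, while its $\alpha$-dependent part corresponds to the translation needed to place $\shardPolytope[\tilde\alpha]$ compatibly within the Minkowski sum. As a consequence, $\sum_\alpha \lambda_\alpha \localShardPolytope$ is, up to a fixed translation, the face of $\quotientope[\tilde\equiv](\tilde{\b{\lambda}})$ selected by $\phi_\b{q}$, and letting $\b{q}$ range over $\sTrunks$ produces the maximal faces of a single regular mixed subdivision of the quotientope.

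Finally, I would invoke the tropical duality from \cref{sec:tropicalGeometry}: there the quotient foam $\quotientFoam$ is realized as the underlying complex of an arrangement of tropical hypersurfaces built from the $\s$-shards $\shard$, and its dual polyhedral complex is, by construction, $\quotientoplex(\b{\lambda})$. Completeness of $\quotientFoam$ (\cref{prop:quotientFoam2}) then dualises to say that the cells $\sum_\alpha \lambda_\alpha \localShardPolytope$ cover the Newton polytope of the tropical product, which is exactly $\quotientope[\tilde\equiv](\tilde{\b{\lambda}})$ under the identification above. That they meet along common faces is already ensured by \cref{lem:MinkowskiSumPolyhedralComplex}.

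The main obstacle is the second step: establishing that the $\alpha$-dependent constants in $\b{v}(\alpha, \b{q})$ match precisely the translations dictated by the equations of the $\s$-shard $\shard$ from \cref{def:sShard}, so that the Minkowski pieces truly assemble into a single coherent regular subdivision. Once this bookkeeping is reconciled, the subdivision property is a formal consequence of tropical duality and of the completeness of $\quotientFoam$.
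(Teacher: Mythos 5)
Your overall plan is sound and arrives at the right conclusion via the right tools, but the paper's actual proof is considerably shorter and avoids the bookkeeping you flag as the ``main obstacle.'' The paper simply observes: (a) the support (union of all cells) of each shardoplex~$\shardoplex$ is the shard polytope~$\shardPolytope[\tilde\alpha]$ (this is immediate from the proof of \cref{prop:shardoplex}, which exhibits a~$\b{q}$ with~$\localShardPolytope = \shardPolytope[\tilde\alpha]$); (b) hence the support of the Minkowski sum~$\quotientoplex(\b{\lambda}) = \sum_\alpha \lambda_\alpha \shardoplex$ is the Minkowski sum~$\sum_\alpha \lambda_\alpha \shardPolytope[\tilde\alpha]$, which after regrouping over projections equals~$\quotientope[\tilde\equiv](\tilde{\b{\lambda}})$. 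That the cells meet face-to-face is already \cref{lem:MinkowskiSumPolyhedralComplex}, and that they actually cover the support is precisely what \cref{thm:quotientoplexTropical} provides (the quotientoplex is the regular subdivision of that Minkowski sum induced by the liftings~$\lift_\alpha$, and a regular subdivision covers the polytope by definition).

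Two comments on your middle paragraph. First, the claim that ``$\sum_\alpha \lambda_\alpha \localShardPolytope$ is, up to a fixed translation, the \emph{face} of~$\quotientope[\tilde\equiv](\tilde{\b{\lambda}})$ selected by~$\phi_\b{q}$'' is not quite right: since the~$\alpha$-dependent offset~$\point c_\alpha$ (the $r-1+\sum_{k\in B\cap{]i,\ell[}}\max(0,s_k-1)$ term) varies across summands, the selected pieces of the~$\shardPolytope[\tilde\alpha]$ are faces in \emph{different} directions, so their Minkowski sum is in general \emph{not} a face of~$\sum_\alpha \lambda_\alpha\shardPolytope[\tilde\alpha]$. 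Rather, it is a cell of the regular subdivision determined by the liftings~$\lift_\alpha$ --- the offset~$\point c_\alpha$ is absorbed into the lifting, not into a translation. Second, because of this, the ``main obstacle'' you identify is not an obstacle: once you invoke \cref{thm:quotientoplexTropical} (which you do, under the name ``tropical duality''), the identification of~$\quotientoplex(\b{\lambda})$ with a regular subdivision of the full Minkowski sum is already established, and no further reconciliation of constants is needed. So the correct diagnosis is that the heavy lifting is all in \cref{thm:quotientoplexTropical}; once that is in place, \cref{prop:quotientoplex3} is the two-line support computation.
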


\begin{proof}
For each~$\s$-arc~$\alpha$, the support of the $\alpha$-shardoplex~$\shardoplex$ is (a translate of) the shard polytope~$\shardPolytope[\tilde\alpha]$.
Hence, the support of the Minkowski sum~$\quotientoplex(\b{\lambda}) \eqdef \sum_{\alpha \in \c{A}_\equiv} \lambda_\alpha \shardoplex$ is (a translate of) the Minkowski sum~$\sum_{\alpha \in \c{A}_\equiv} \lambda_\alpha \, \shardPolytope[\tilde\alpha] = \sum_{\tilde\alpha \in \c{A}_{\tilde\equiv}} \big( \sum_{\alpha} \lambda_\alpha \big) \shardPolytope[\tilde\alpha] = \sum_{\tilde\alpha \in \c{A}_{\tilde\equiv}} \tilde\lambda_{\tilde\alpha} \, \shardPolytope[\tilde\alpha] \defeq \quotientope[\tilde\equiv](\tilde{\b{\lambda}})$.
\end{proof}

\begin{remark}
\label{rem:abusing2}
In \cref{prop:quotientoplex3}, note that~$\tilde\equiv$ is strictly speaking not really a congruence of the weak order, for the same reason as \cref{rem:abusing1}.
However, the definition of~$\quotientope[\tilde\equiv](\tilde{\b{\lambda}})$ as a Minkowski sum of pseudoshard polytopes~$\sum_{\tilde\alpha} \tilde\lambda_{\tilde\alpha} \, \shardPolytope[\tilde\alpha]$ is still valid.
\end{remark}

Applying \cref{prop:quotientoplex2,prop:quotientoplex3} to the trivial congruence (where each congruence class contains a single $\s$-tree), we obtain the following statement, answering a question of C.~Ceballos and V.~Pons~\cite{CeballosPons-sWeakOrderI,CeballosPons-sWeakOrderII}.
We note that this question was partially solved in~\cite{DLeonMoralesPhilippeTamayoYip} in the case when~$\s$ contains no~$0$ entry, with a very different method based on a combination of flow polytopes, tropical geometry, and Cayley embedding.

\begin{corollary}
\label{coro:quotientoplexTrivial}
For any $\s$, the Hasse diagram of the $\s$-weak order~$W_\s$ is isomorphic to the oriented skeleton of a polytopal subdivision of a polytope combinatorially equivalent to the zonotope~$\Zono \eqdef \sum_{1 \le i < j \le n} s_i \conv\{\b{e}_i, \b{e}_j\}$.
\end{corollary}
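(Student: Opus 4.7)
The plan is to apply Propositions~\ref{prop:quotientoplex2} and~\ref{prop:quotientoplex3} to the trivial congruence $\equiv$ on the $\s$-weak order, for which every congruence class is a singleton, $W_\s/{\equiv} = W_\s$, and $\c{A}_\equiv$ consists of all $\s$-arcs. Proposition~\ref{prop:quotientoplex2} then identifies the oriented skeleton of the quotientoplex $\quotientoplex$ with the Hasse diagram of $W_\s$, and Proposition~\ref{prop:quotientoplex3} exhibits $\quotientoplex$ as a polytopal subdivision of the quotientope $\quotientope[\tilde\equiv](\tilde{\b{\lambda}})$. Taking $\lambda_\alpha = 1$ for every $\s$-arc $\alpha = (i,j,A,B,r)$, each pseudoarc $\tilde\alpha = (i,j,A,B)$ receives weight $\tilde\lambda_{\tilde\alpha} = s_i$, so the support polytope is
\[
P \eqdef \sum_{\tilde\alpha \in \c{A}_{\tilde\equiv}} s_i \, \shardPolytope[\tilde\alpha].
\]

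It remains to prove that $P$ is combinatorially equivalent to the zonotope $\Zono = \sum_{1 \le i < j \le n} s_i \conv\{\b{e}_i, \b{e}_j\}$. I would argue this at the level of normal fans, since both polytopes are generalized permutahedra. The normal fan of $\Zono$ is the coarsening of the braid fan by the hyperplane arrangement $\c{H}_\s \eqdef \bigcup_{1 \le i < j \le n,\, s_i \ge 1} \{x_i = x_j\}$. For $P$, using Proposition~\ref{prop:shardPolytopes} (extended to pseudoshard polytopes via Remark~\ref{rem:abusing1}) for each summand $\shardPolytope[\tilde\alpha]$ and the fact that the walls of a Minkowski sum form the union of walls of the summands, the walls of the normal fan of $P$ form the union $\bigcup_{\tilde\alpha \in \c{A}_{\tilde\equiv}} \shard[\tilde\alpha]$; each such pseudoshard lies inside the hyperplane $\{x_i = x_j\}$ with $s_i \ge 1$, and conversely each such hyperplane is covered when the partition $(A,B)$ ranges over all partitions of $\{k \in {]i,j[} : s_k \ne 0\}$. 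Thus the normal fans of $P$ and $\Zono$ coincide.

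The main obstacle will be the pseudoarc setting arising when $\s$ contains zero entries: $\c{A}_{\tilde\equiv}$ is then not a classical subarc-closed family of arcs, and Proposition~\ref{prop:shardPolytopes} has to be transferred to pseudoshard polytopes. This is a routine extension of the theory of~\cite{PadrolPilaudRitter}, but one must verify carefully that the walls of each pseudoshard polytope lie inside hyperplanes of $\c{H}_\s$ (despite subarcs potentially having left endpoints $i'$ with $s_{i'} = 0$), and that the union of pseudoshards of arcs in $\c{A}_{\tilde\equiv}$ actually fills $\c{H}_\s$. Once this book-keeping is in place, the agreement of normal fans yields $P \simeq \Zono$ combinatorially, and combining with Proposition~\ref{prop:quotientoplex2} completes the proof of the corollary.
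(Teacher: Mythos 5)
Your proposal is correct and takes essentially the same approach as the paper's proof: apply \cref{prop:quotientoplex2,prop:quotientoplex3} to the trivial congruence and then show that the normal fan of the quotientope~$\quotientope[\tilde\equiv](\tilde{\b{\lambda}})$ coincides with the arrangement of hyperplanes~$\{x_i = x_j\}$ with~$s_i \ne 0$, which is the normal fan of~$\Zono$. The paper states this more tersely (and without fixing~$\lambda_\alpha = 1$, since any positive scaling yields a normally equivalent polytope); also note that your flagged worry about subarcs with~$s_{i'} = 0$ never actually arises, because the edges of~$\shardPolytope[\tilde\alpha]$ lie in directions~$\b{e}_{i'} - \b{e}_{j'}$ with~$i' \in \{i\} \cup A$, so~$s_{i'} \ne 0$ is automatic.
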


\begin{proof}
By \cref{prop:quotientoplex2,prop:quotientoplex3}, the Hasse diagram of the $\s$-weak order~$W_\s$ is isomorphic to a polyhedral subdivision of~$\quotientope[\tilde\equiv](\tilde{\b{\lambda}})$, where~$\tilde\equiv$ is the projection of the trivial congruence~$\equiv$ of the $\s$-weak order.
The normal fan of~$\quotientope[\tilde\equiv](\tilde{\b{\lambda}})$ is the arrangement of the hyperplanes~$\set{\b{x} \in \R^n}{x_i = x_j}$ for all~$1 \le i < j \le n$ such that there exists an $\s$-arc of the form~$(i, j, A, B, r)$, that is, such that~$s_i \ne 0$.
We conclude that~$\quotientope[\tilde\equiv](\tilde{\b{\lambda}})$ and~$\Zono$ are normally equivalent, hence combinatorially equivalent.
\end{proof}

\begin{remark}
\label{rem:dilationFactors}
In fact, the Minkowski sum of the shardoplexes~$\shardoplex$ for all $\s$-arcs~$\alpha$, with coefficients~$\lambda_\alpha = 1$ is a polytopal subdivision of the zonotope
\[
\sum_{1 \le i < j \le n} \one_{s_i \ne 0}  \Big( \sum_{1 \le g \le i} s_g \, 2^{\#\set{h \in {]g,i[}}{s_h \ne 0}} \Big)  \Big( 1 + \one_{s_j \ne 0}  \sum_{j < \ell \le n} 2^{\#\set{k \in {]j,\ell[}}{s_k \ne 0}} \Big)  \conv\{\b{0}, \b{e}_i -\b{e}_j\}.
\]
% int(s[q-1] != 0) * (1 + int(s[p-1] != 0) * add(2**len([w for w in range(u+1, p) if s[w-1]]) for u in range(1, p))) * add(2**len([w for w in range(q+1, v) if s[w-1]]) * s[v-1] for v in range(q,n+1))
For instance, when~$\s = (1, \dots, 1)$, this zonotope
\[
\sum_{1 \le i < j \le n} 2^{n-j+i-1} \conv\{\b{0}, \b{e}_i - \b{e}_j\}
\]
is the sum of all the shard polytopes illustrated in \cite[Fig.~14]{PadrolPilaudRitter}.
\end{remark}

\cref{fig:sQuotientopeLattice} illustrates the quotientoplexes of all congruences of the $(1,2,0)$- and $(2,1,0)$-weak orders.
In \cref{fig:1211-quotientopes,fig:2101-quotientopes,fig:1432-quotientopes,fig:2321-quotientopes}, we have represented the quotientoplexes for the trivial and the sylvester congruences of some $\s$-weak orders.

%\afterpage{
\begin{figure}[p]
	\capstart
	\centerline{\raisebox{1.2cm}{\includegraphics[scale=.8]{120-quotientopeLattice}} \hspace{.5cm} \includegraphics[scale=.8]{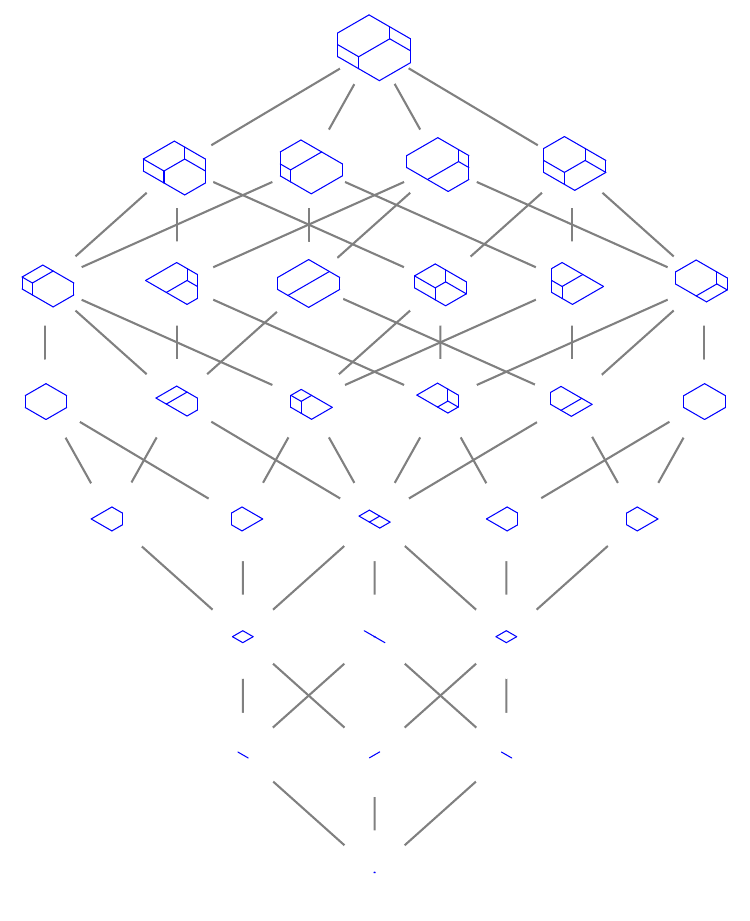}}
	\caption{The congruence lattice of the $\s$-weak order~$W_\s$ for $\s = (1,2,0)$ (left) and~$\s = (2,1,0)$ (right), where each congruence~$\equiv$ is replaced by its quotientoplex~$\quotientope$. See also \cref{fig:sWeakOrderCongruenceLattice,fig:sQuotientFoamLattice}.}
	\label{fig:sQuotientopeLattice}
\end{figure}

\begin{figure}
	\capstart
	\centerline{\includegraphics[scale=.25]{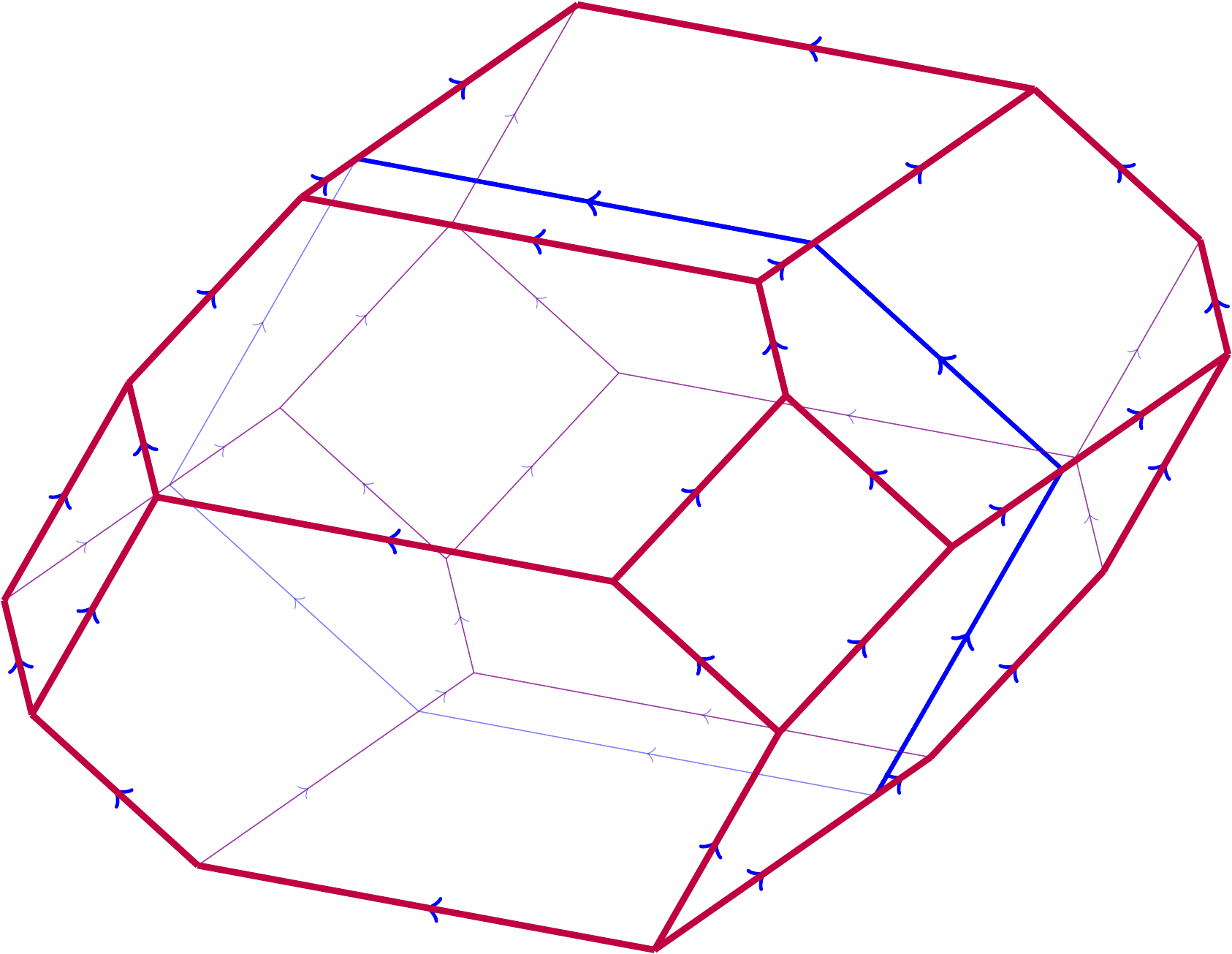}\hspace{-.2cm}\includegraphics[scale=.25]{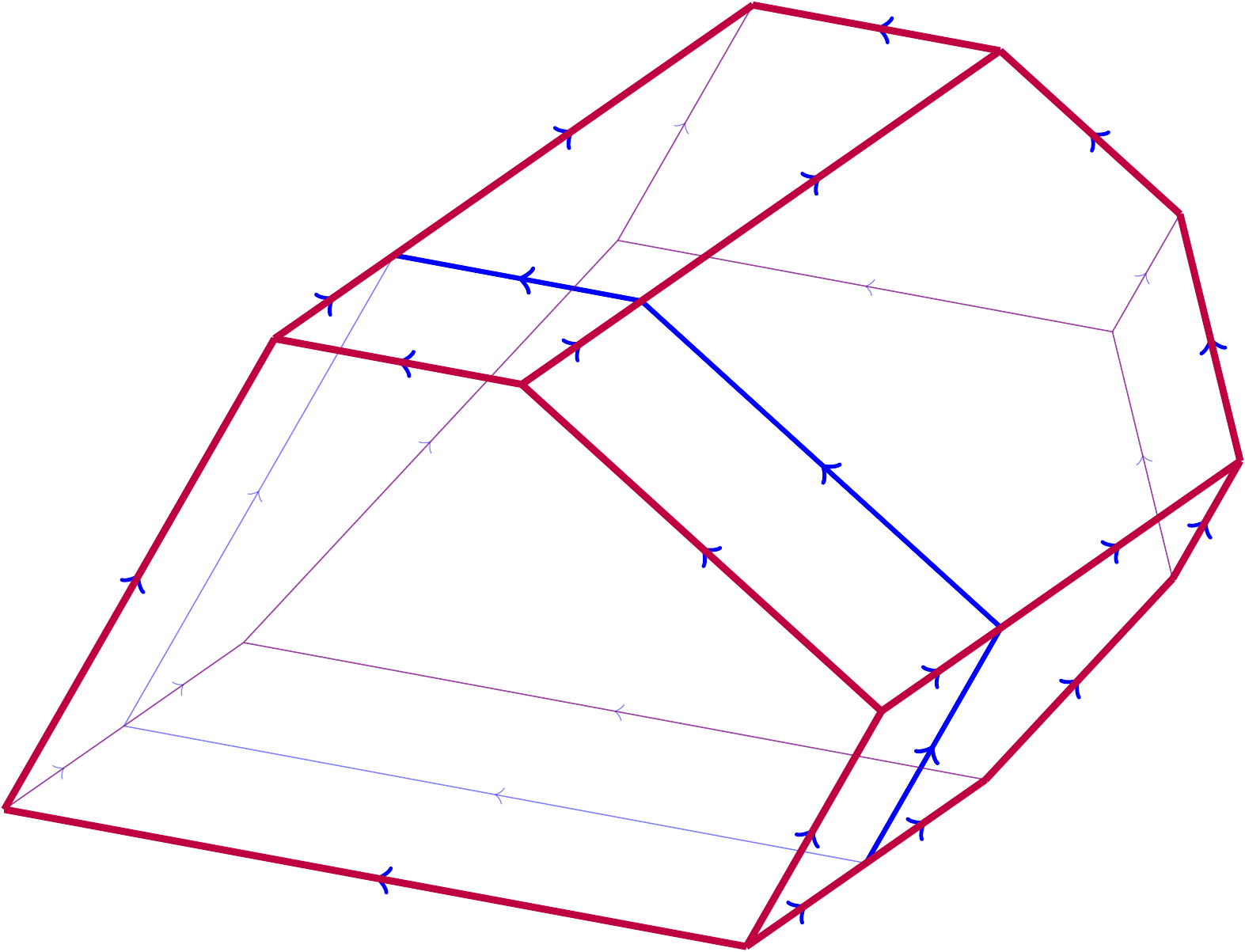}}
%	\centerline{\includegraphics[scale=.3]{}\hspace{.5cm}\includegraphics[scale=.3]{}}
	\caption{The $(1,1,2,1)$-permutahedron and the $(1,1,2,1)$-associahedron.}
	\label{fig:1211-quotientopes}
\end{figure}

\begin{figure}
	\capstart
	\centerline{\includegraphics[scale=.22]{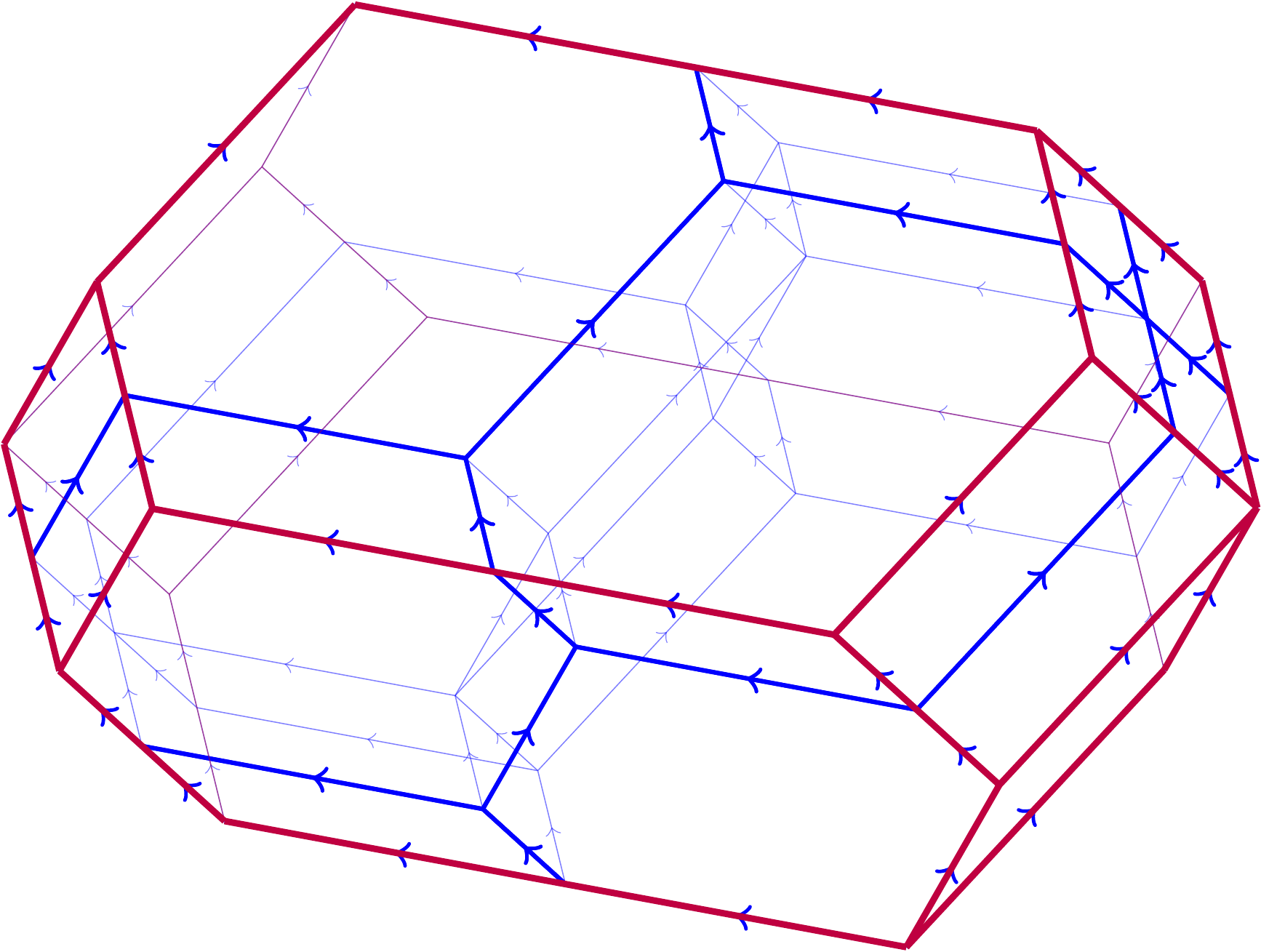}\includegraphics[scale=.22]{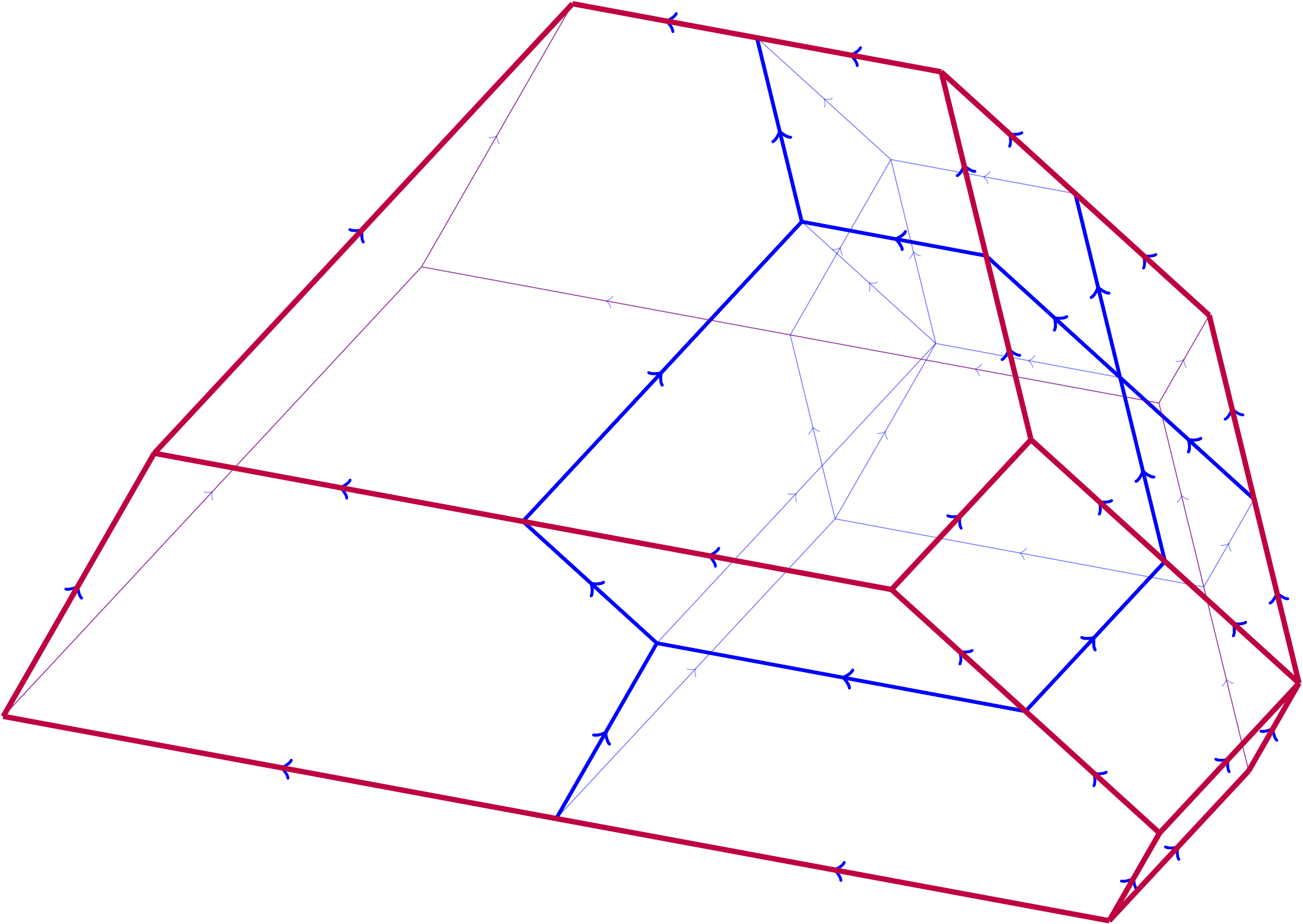}}
%	\centerline{\includegraphics[scale=.22]{}\hspace{.5cm}\includegraphics[scale=.22]{}}
	\caption{The $(2,1,0,1)$-permutahedron and the $(2,1,0,1)$-associahedron.}
	\label{fig:2101-quotientopes}
\end{figure}

\begin{figure}
	\capstart
	\centerline{\includegraphics[scale=.13]{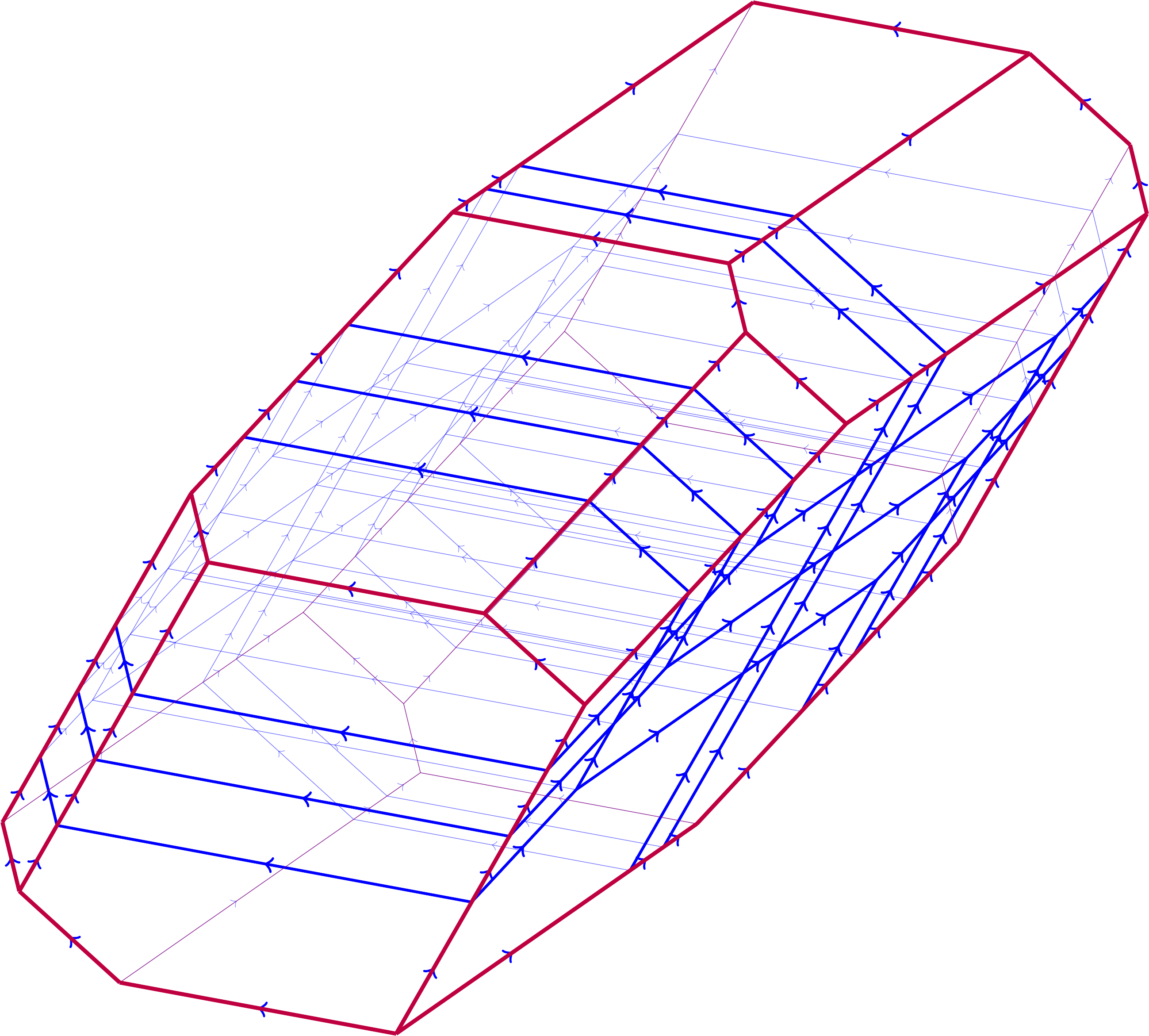}\hspace{-.2cm}\includegraphics[scale=.13]{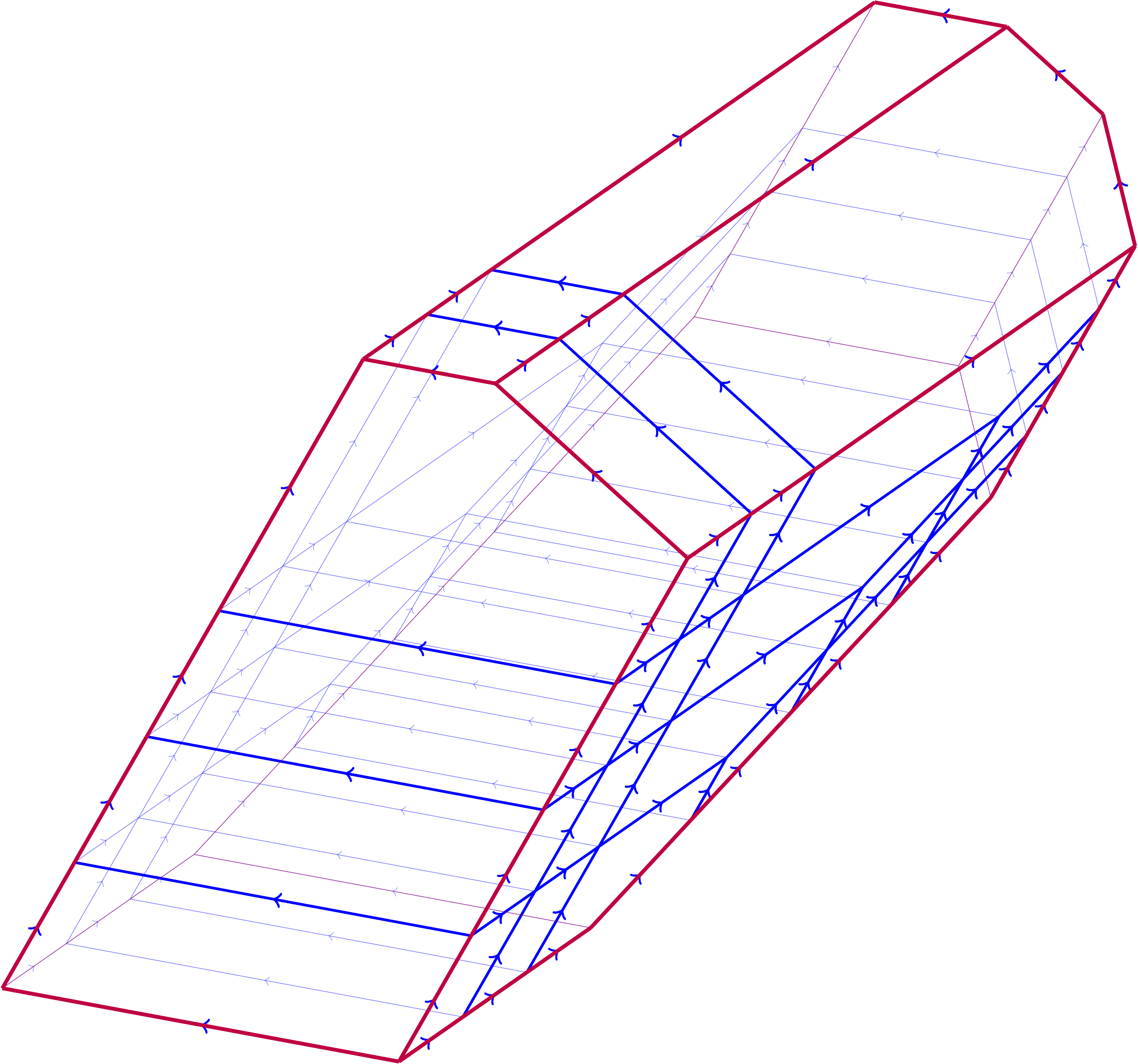}}
%	\centerline{\includegraphics[scale=.15]{}\hspace{1cm}\includegraphics[scale=.15]{}}
	\caption{The $(1,4,3,2)$-permutahedron and the $(1,4,3,2)$-associahedron.}
	\label{fig:1432-quotientopes}
\end{figure}

\begin{figure}
	\capstart
	\centerline{\includegraphics[scale=.13]{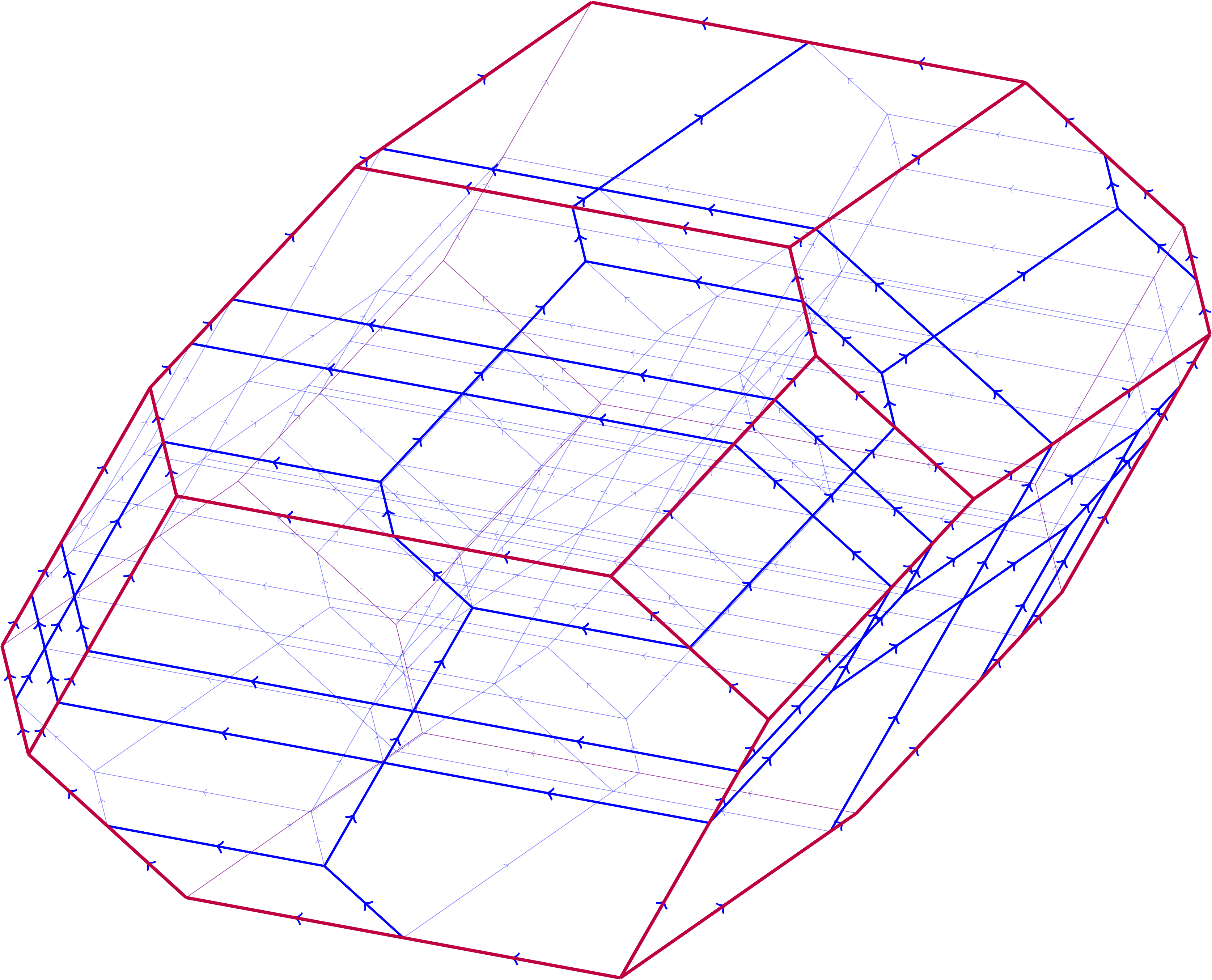}\hspace{-.2cm}\includegraphics[scale=.13]{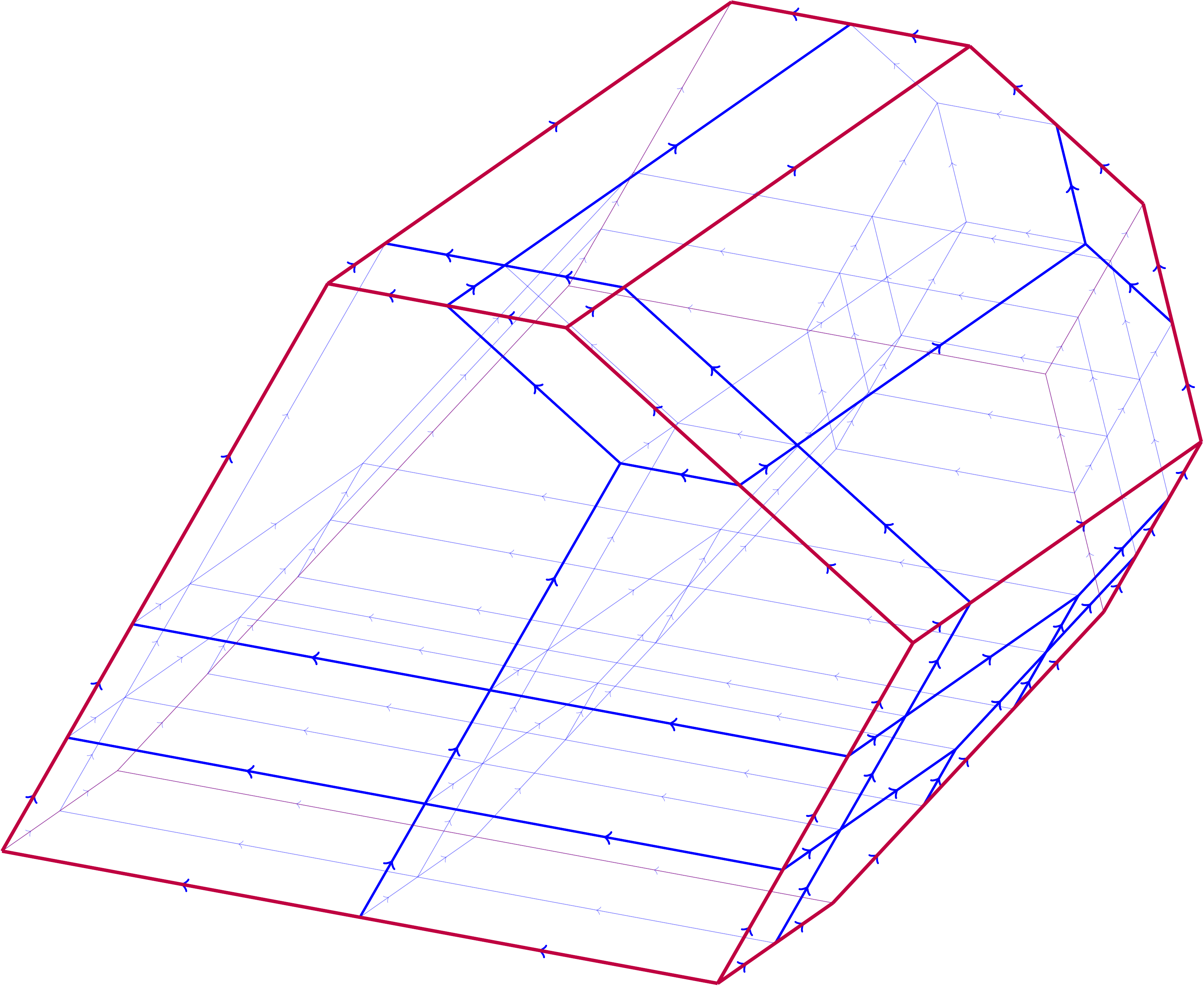}}
%	\centerline{\includegraphics[scale=.15]{}\hspace{1cm}\includegraphics[scale=.15]{}}
	\caption{The $(2,3,2,1)$-permutahedron and the $(2,3,2,1)$-associahedron.}
	\label{fig:2321-quotientopes}
\end{figure}
%}

\begin{remark}
\label{rem:acyclicReorientationLattices}
Consider~$\b{s} \in \{0,1\}^n$, and define the directed graph~$D_\b{s}$ with vertices~$[n]$ and arcs $\set{(i,j)}{1 \le i < j \le n \text{ and } s_i \ne 0}$.
As~$T_j \le 2$ for all~$j \in [n]$, we have~${\prod_{j \in [n]} \max(1, T_{j-1}-1) = 1}$, hence a single $\s$-trunk. Consequently
\begin{itemize}
\item the $\s$-weak order is the acyclic reorientation lattice of~$D_\b{s}$ studied in~\cite{Pilaud-acyclicReorientationLattices} (one can check that~$D_\b{s}$ is indeed a vertebrate and skeletal acyclic directed graph as defined in~\cite{Pilaud-acyclicReorientationLattices}),
\item the $\s$-arcs correspond to the ropes of~$D_\b{s}$, the non-crossing $\s$-arc diagrams correspond to the non-crossing rope diagrams of~$D_\b{s}$, and the subarc order coincide with the subrope order of~$D_\b{s}$ (see~\cite{Pilaud-acyclicReorientationLattices} for the definitions),
\item the $\s$-foam is the graphical arrangement of~$D_\b{s}$, and the $\s$-permutahedron is a graphical zonotope of~$D_\b{s}$ (a Minkowski sum of positive dilates of the segments~$[\b{e}_i, \b{e}_j]$ for all arcs~$(i,j)$ of~$D_\s$, where the dilation factors are directly obtained from \cref{rem:dilationFactors}).
\end{itemize}
\end{remark}

%%%%%%%%%%%%%%%%%%%%%%%%%%%%%%%%%%%%%%

\section{Tropical geometry}
\label{sec:tropicalGeometry}

In this section, we prove the results of \cref{sec:sQuotientopes} via tropical geometry.

%%%%%%%%%%%%%%%%%

\subsection{Recollection \ref{sec:tropicalGeometry}: Polytopal subdivisions and tropical duality}
\label{subsec:tropicalDual}

Tropical geometry offers a convenient setting to dualize regular polyhedral subdivisions, in a sense that we define below. 
This section is based on the work of Joswig in~\cite[Chap.~1]{Joswig21} and~\cite{Joswig16} (except that we define the tropical addition with $\max$ rather than $\min$).

The \defn{tropical semiring} is the set $\tropring\eqdef \R\cup\{\infty\}$ equipped with the \defn{tropical addition} $x\oplus y \eqdef \max(x,y)$ and the \defn{tropical multiplication} $x\odot y \eqdef x+y$.
A \defn{tropical polynomial} on $d$ variables is any function~$\troppol:\R^d\to \R$ of the form
\[
\troppol(\point x) = \bigoplus_{i\in [n]} c_i\odot {\point x}^{\point a_i}
=\max\set{c_i+\sprod{\point a_i}{\point x}}{i\in [n]},
\]
where $n\in \N$ and for all $i\in [n]$, $c_i\in \tropring$ and $\point a_i\in \Z^d$. 
This is a convex piecewise affine function. Note that $\troppol$ is not uniquely determined by the exponents $\point a_i$ and the coefficients $c_i$.

The \defn{tropical hypersurface} defined by~$\troppol$, or \defn{vanishing locus} of $\troppol$, is~the~set
$$\trophyp \eqdef \left\{ \point x\in \R^d\, |\, \text{the maximum of $F(\point x)$ is attained at least twice}\right\}.$$
It is the image codimension-2-skeleton of the \defn{dome} 
$$\tropdome\eqdef \left\{(\point x,y)\in \R^{d+1}\mid \point x\in \R^d, \, y\in \R, \,  y\geq \troppol(\point x) \right\}$$ under the orthogonal projection that omits the last coordinate \cite[Coro.~1.6]{Joswig21}. 

The \defn{cells} of $\trophyp$ are the projections of the faces of $\tropdome$ (here we include the regions of~$\R^d$ delimited by $\trophyp$ as its $d$-dimensional cells.
In fact we are considering the normal complex~$NC(\troppol)$ defined in~\cite[after Exm.~1.7]{Joswig21}).

Note that the cells of $\trophyp$ are invariant under multiplying all $c_i$ and $\point a_i$ by a same scalar $\lambda\in \R$.

Let $\pc=\{\point a_1, \ldots, \point a_n\}$ be a point configuration in $\R^d$ with integer coordinates vertices, 
and $\lift:[n] \to \R$ a \defn{lifting function}.
Such a point configuration together with its lifting function $\lift$ define: 
\begin{itemize}
\item the \defn{regular} subdivision $\subdiv $ of $\pc$ obtained by taking the image of the upper faces of $\conv(\{(\point a_1, \lift(1)), \ldots, (\point a_n, \lift(n))\})$ by the projection that forgets the last coordinate,
\item the \defn{tropical polynomial} $$\troppol(\point x)=\bigoplus_{i\in [n]} \lift(i) \odot \point x^{\point a_i}=\max\left\{\lift(i) + \langle \point a_i, \point x\rangle \, |\, i\in [n] \right\},$$
where $\point x\in \R^d$.
\end{itemize}

We say that $\trophyp$ is the \defn{tropical dual} of the subdivision $\subdiv$ with lifting function~$\lift$ since we have the following theorem.

\begin{theorem}[{\cite[Theorem 1.13]{Joswig21}}]\label{thm:tropical_dual}
There is a bijection between the $k$-dimensional cells of $\subdiv$ and the $(d-k)$-dimensional cells of $\trophyp$, which reverses the inclusion order. 
\end{theorem}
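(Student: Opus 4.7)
The plan is to read both the subdivision $\subdiv$ and the tropical hypersurface $\trophyp$ from the single datum of the affine functions $\phi_i(\point x) \eqdef \lift(i) + \langle \point a_i, \point x\rangle$, and to obtain the bijection by sending the face of $\subdiv$ indexed by $I \subseteq [n]$ to the cell of $\trophyp$ where precisely the $\phi_i$ with $i \in I$ attain the maximum in $\troppol$.

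First, I would recall that a face of $\subdiv$ is the image under the projection $\R^{d+1}\to\R^d$ of an upper face of the lifted polytope $\conv\{(\point a_i, \lift(i)) : i \in [n]\}$. Since the upper faces of this polytope maximize linear functionals of the form $(\point x,y)\mapsto \langle \point c,\point x\rangle-y$, the $k$-dimensional faces of $\subdiv$ are exactly the polytopes $\conv\{\point a_i : i \in I\}$ for $I=\arg\max_{i\in[n]} \phi_i(\point c)$, as $\point c$ ranges over $\R^d$ and $\dim\conv\{\point a_i : i \in I\}=k$. On the tropical side, the cells of $\trophyp$ are by construction the connected strata of $\R^d$ on which the active index set $J(\point x) \eqdef \arg\max_{i \in [n]} \phi_i(\point x)$ is constant (with $d$-dimensional cells corresponding to $|J(\point x)|=1$). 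Hence the same subsets $I\subseteq[n]$ index the cells on both sides, and the map $I \mapsto \{\point x \in \R^d : J(\point x) \supseteq I\}$ is a bijection between the face sets of $\subdiv$ and of $\trophyp$.

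For the dimension count, the stratum $\{\point x : J(\point x)\supseteq I\}$ is defined by the linear equations $\phi_i(\point x)=\phi_{i'}(\point x)$ for $i,i'\in I$, which cut out an affine subspace of codimension equal to $\dim\conv\{\point a_i:i\in I\}$ in $\R^d$; thus a $k$-dimensional face of $\subdiv$ maps to a $(d-k)$-dimensional cell of $\trophyp$. Inclusion reversal is then immediate from the fact that enlarging $I$ enlarges the associated face of the subdivision but shrinks the corresponding stratum of $\trophyp$.

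The main obstacle will be to check that these strata actually assemble into a polyhedral complex dual to $\subdiv$, not merely a set-theoretic partition. This is where the dome $\tropdome$ intervenes: it is a convex unbounded polyhedron in $\R^{d+1}$ whose face lattice projects, via the map omitting the last coordinate, onto the cell complex of $\trophyp$ in an inclusion-preserving way. Standard convex-geometric polarity between the upper envelope of the hyperplanes $y=\phi_i(\point x)$ and the lower convex hull of the points $(\point a_i,\lift(i))$ then puts the face lattice of $\tropdome$ in inclusion-reversing bijection with the upper face lattice of $\conv\{(\point a_i, \lift(i))\}$, which is precisely the correspondence $I\leftrightarrow I$ described above.
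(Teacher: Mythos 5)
The paper does not prove this statement: it is quoted verbatim from Joswig's book \cite[Theorem 1.13]{Joswig21}, so there is no in-paper argument to compare your proposal against. Judged on its own, your argument is the standard Legendre-type duality between the upper envelope of the lifted affine functions and the upper faces of the lifted point configuration, and it is essentially correct. The bijection $I \mapsto \{\point x : J(\point x) \supseteq I\}$, the codimension count via the independent differences $\point a_i - \point a_{i'}$ for $i,i' \in I$, the inclusion reversal, and the use of the dome $\tropdome$ to guarantee that the strata assemble into a genuine polyhedral complex are all the right ingredients (and match the route taken in Joswig's book). Two small sign slips are worth flagging: upper faces of $\conv\{(\point a_i, \lift(i))\}$ are selected by linear functionals $(\point x, y) \mapsto \langle \point c, \point x\rangle + y$ (with $+y$, not $-y$; with $-y$ you would pick up the lower faces and the argmax would select $\langle \point c, \point a_i\rangle - \lift(i)$ rather than $\phi_i(\point c)$); and in the last paragraph the polarity should be stated against the \emph{upper} convex hull of the points $(\point a_i, \lift(i))$, consistent with the paper's convention that $\subdiv$ is built from upper faces. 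Neither slip affects the validity of the argument once the signs are made consistent. One could also sharpen the phrase ``connected strata on which $J(\point x)$ is constant'' to ``closures of the loci where $J(\point x)=I$'', since cells are closed, but you correct this implicitly when you write the cell as $\{\point x : J(\point x) \supseteq I\}$.
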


\begin{remark}
\label{rem:strongDuality}
This duality corresponds to what P.~McMullen calls \emph{strong duality} in \cite[Sects.~6~\&~7]{McMullen} (see in particular~\cite[Thm.~7.1]{McMullen}). 
It implies that the affine spans of dual cells are orthogonal.
\end{remark}

We now look at the case of Minkowski sum. 
We consider a family of point configurations $\pc_1, \ldots, \pc_k$ in $\R^d$ such that for all $j\in [k]$, $\pc_j=\{\point a_{j, 1}, \ldots, \point a_{j, m_j}\}$ is a point configuration in $\R^d$ with integer coordinate vertices  and $\lift_j:[m_j] \to \R$ is an associated lifting function.
Their Minkowski sum is the point configuration ${\pc}\eqdef \sum \pc_i = \set{\point a_{1, i_1} + \ldots + \point a_{k, i_k}}{(i_1, \ldots, i_k)\in [m_1]\times \ldots \times [m_k]}$ in  $\R^d$ 
with the lifting function ${\lift}:[m_1]\times\ldots\times [m_k] \to \R$ such that 
${\lift}(i_1, \ldots, i_k)=\sum_{j \in [k]} \lift_j(i_j)$.

The corresponding tropical polynomial is
\begin{align*}
{\troppol}(\point x)
&= \bigoplus_{\substack{(i_1, \ldots, i_k) \\ \in [m_1]\times \ldots \times [m_k]}} {\lift}(i_1, \ldots, i_k)\odot \point x^{\sum_{j \in [k]} \point a_{j, i_j}}
= \bigoplus_{\substack{(i_1, \ldots, i_k) \\ \in [m_1]\times \ldots \times [m_k]}} \bigodot_{j \in [k]} \lift_j(i_j)\odot \point x^{\point a_{j, i_j}}\\
&= \bigodot_{j \in [k]} \bigoplus_{i_j\in [m_j]} \lift_j(i_j)\odot \point x^{\point a_{j, i_j}}
= \bigodot_{j \in [k]} \troppol_j(\point x),
\end{align*}
where $\troppol_j$ is the tropical polynomial associated to $\pc_j$ with lifting $\lift_j$.

\begin{lemma}[{\cite[Lem.~6]{Joswig16}}]
\label{lem:product_troppol}
Let $\troppol$ be a tropical polynomial obtained as a factorization ${\troppol=\bigodot_{j \in [k]} \troppol_j}$. 
Then, the vanishing locus $\trophyp[\troppol]$ is obtained by taking the union of the vanishing loci $\trophyp[\troppol_j]$ for~$j\in [k]$ and the cells of $\trophyp[\troppol]$ are the intersections of the cells of all $\trophyp[\troppol_j]$ for~$ j\in [k]$. 
We say that these cells are \defn{induced} by the arrangement of tropical hypersurfaces $\left\{\trophyp[\troppol_j] \, |\, j\in [k]\right\}$.
\end{lemma}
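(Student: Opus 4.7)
The plan is to unfold the definition of the tropical hypersurface in terms of which monomials attain the maximum, and to exploit the distributivity of tropical multiplication (ordinary addition) over tropical addition (maximum).

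First, I would rewrite the identity $\troppol = \bigodot_{j \in [k]} \troppol_j$ in classical notation as
\[
\troppol(\point x) = \sum_{j \in [k]} \troppol_j(\point x) = \sum_{j \in [k]} \max_{i_j \in [m_j]} \big( \lift_j(i_j) + \sprod{\point a_{j,i_j}}{\point x} \big).
\]
Since the sum of maxima equals the maximum of sums (over independent indices), the right-hand side equals
\[
\max_{(i_1,\dots,i_k) \in [m_1] \times \dots \times [m_k]} \sum_{j \in [k]} \big( \lift_j(i_j) + \sprod{\point a_{j,i_j}}{\point x} \big),
\]
which is exactly the expansion of $\troppol$ as a tropical polynomial in the monomials indexed by $(i_1,\dots,i_k)$. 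For each $\point x \in \R^d$, let $M_j(\point x) \subseteq [m_j]$ denote the set of indices attaining the maximum in $\troppol_j(\point x)$. The key observation is that the set of tuples attaining the maximum in $\troppol(\point x)$ is precisely the product $M_1(\point x) \times \dots \times M_k(\point x)$: indeed, a tuple $(i_1,\dots,i_k)$ attains the maximum in $\troppol$ at $\point x$ if and only if each $i_j$ attains the maximum in $\troppol_j$ at $\point x$, because each term in the sum can be maximized independently.

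From this description I obtain both statements. For the vanishing locus: the maximum in $\troppol(\point x)$ is attained at least twice if and only if $\prod_j |M_j(\point x)| \ge 2$, which happens if and only if $|M_j(\point x)| \ge 2$ for some $j$, that is, if and only if $\point x \in \trophyp[\troppol_j]$ for some $j$. This gives $\trophyp[\troppol] = \bigcup_{j \in [k]} \trophyp[\troppol_j]$. For the cell structure: two points lie in the relative interior of the same cell of the normal complex $NC(\troppol)$ if and only if they share the same set of maximizing tuples, which by the product description happens if and only if they share the same set $M_j$ for every $j$, \ie they lie in the relative interior of the same cell of each $NC(\troppol_j)$. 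Hence the cells of $\trophyp[\troppol]$ are exactly the non-empty intersections of cells of the $\trophyp[\troppol_j]$.

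There is no serious obstacle here: the proof is a direct manipulation of the defining formulas. The only point that deserves care is the justification that the set of maximizing tuples factors as a product of the individual maximizing sets, which is where the independence of the indices $i_j$ across the factors $\troppol_j$ is used. Once this product decomposition is in place, both conclusions follow formally.
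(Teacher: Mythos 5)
Your proof is correct and complete. The paper does not reproduce a proof of this result — it simply cites Joswig's Lemma 6 — and your argument, unfolding the definitions and observing that the set of maximizing tuples of $\troppol$ at a point $\point x$ factors as the product $M_1(\point x)\times\cdots\times M_k(\point x)$ (so that the maximum is attained more than once iff some factor has more than one maximizer, and two points lie in the same cell of $NC(\troppol)$ iff they lie in the same cell of each $NC(\troppol_j)$), is exactly the standard proof.
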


We have the following statement as a consequence of the previous discussion on Minkowski sum and of Theorem~\ref{thm:tropical_dual}. 

\begin{theorem}\label{thm:arrangement_tropical_hypersurfaces}
The tropical dual of the mixed subdivision ${\subdiv}$ of a Minkowski sum of point configurations $\set{\pc_j}{j\in [k]}$ each with a lifting function $\lift_j$ is the polyhedral complex of cells induced by the arrangement of tropical hypersurfaces $\set{\trophyp[\troppol_j]}{j\in [k]}$.
\end{theorem}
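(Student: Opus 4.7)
The plan is to combine the two results recalled just before the statement, namely Theorem~\ref{thm:tropical_dual} (tropical duality for a single regular subdivision) and Lemma~\ref{lem:product_troppol} (the vanishing locus of a tropical product factorization). The discussion preceding the statement has already done most of the bookkeeping: it exhibits the tropical polynomial associated to the Minkowski sum configuration $\pc = \sum_j \pc_j$ equipped with the summed lifting $\lift(i_1,\dots,i_k) = \sum_j \lift_j(i_j)$, and shows by a direct distributivity computation that this polynomial equals $\troppol = \bigodot_{j \in [k]} \troppol_j$. So in effect I only need to glue these ingredients together.

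First I would invoke Theorem~\ref{thm:tropical_dual} applied to the point configuration $\pc$ with lifting $\lift$: this gives a bijection, reversing inclusion, between the cells of the regular (mixed) subdivision $\subdiv$ and the cells of $\trophyp[\troppol]$. At this point the statement is reduced to identifying the polyhedral complex $\trophyp[\troppol]$ with the polyhedral complex induced by the arrangement of tropical hypersurfaces $\set{\trophyp[\troppol_j]}{j \in [k]}$. Second I would appeal to Lemma~\ref{lem:product_troppol} applied to the factorization $\troppol = \bigodot_{j \in [k]} \troppol_j$ established above: its conclusion is exactly that, as sets, $\trophyp[\troppol] = \bigcup_j \trophyp[\troppol_j]$, and that the cells of $\trophyp[\troppol]$ are the common refinements of the cells of the individual $\trophyp[\troppol_j]$, i.e.\ the cells induced by the arrangement. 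Chaining the two identifications produces the desired bijection between the cells of~$\subdiv$ and the cells of the arrangement.

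I do not expect any substantial obstacle here, since both building blocks are already stated in the excerpt and the factorization $\troppol = \bigodot_j \troppol_j$ is verified just above the theorem. The only point worth double-checking is a matter of conventions: one should make sure the ``cells'' on both sides are parsed in the same way — on the subdivision side, all faces of $\subdiv$ (including $\pc$ itself as the top cell); on the tropical side, all projections of faces of the dome, which include the full-dimensional complement regions of $\trophyp[\troppol]$ as $d$-cells, as emphasized in \cite[after Exm.~1.7]{Joswig21}. With this normalization in place, the dimension-reversing bijection provided by Theorem~\ref{thm:tropical_dual} matches precisely the cell structure described by Lemma~\ref{lem:product_troppol}, and the proof is complete in a few lines.
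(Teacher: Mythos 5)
Your argument is exactly the one the paper intends: the paper presents this theorem as a direct consequence of the preceding discussion (which establishes the factorization $\troppol = \bigodot_j \troppol_j$ and records Lemma~\ref{lem:product_troppol}) together with Theorem~\ref{thm:tropical_dual}, and you simply make that chaining explicit, along with a sensible clarification of the cell-counting conventions. There is nothing to add or correct.
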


%%%%%%%%%%%%%%%%%

\subsection{Shard tropical hypersurfaces and polynomials}

\begin{definition}
Let $\alpha \eqdef (i,j,A,B,t)$ be an $\s$-arc, and $\tilde{\alpha} \eqdef (i,j,A,B)$ the corresponding classical arc. 
For each $\tilde\alpha$-alternate matching $\altmatch \eqdef \{i_1<j_1<\ldots <i_q<j_q\}$ in $\altmatchset[\tilde\alpha]$, we define the lifting $\lift_\alpha(\altmatch) \eqdef -(r-1)\one_{i_1=i} - \sum_{p\in [q]} \sum_{k\in B\cap {]i_p,j_p[}} \max(0, s_k-1)$. 

This lifting gives rise to the tropical polynomial
\[\troppol_{\alpha} (\point x) = \bigoplus_{\altmatch \in \altmatchset[\tilde\alpha]} \lift_\alpha(\altmatch) \odot \point x^{\charvect_\altmatch},\]
with associated tropical hypersurface $\trophyp[\troppol_{\alpha}]$.
\end{definition}

\begin{lemma}
\label{lem:shardoplexTropical}
The lifting $\lift_\alpha$ induces the trivial subdivision of the shard polytope~$\shardPolytope[\tilde\alpha]$. 
Moreover, the unique minimal cell of~$\trophyp[\troppol_\alpha]$ is the following subspace, of dimension $n-(j-i)$:
\begin{equation*}
\label{eq:trophyp_translation}
\polytope{C}^{\min}_{\alpha} \eqdef \set{\point x \in \R^n}{
\begin{aligned}
x_i - x_j &= r - 1 + \sum_{k \in B} \max(0, s_k-1), \\
x_i - x_a &= r - 1 + \sum_{k \in B \cap {]i,a[}} \max(0, s_k-1) \text{ for all~$a \in A$},\\
x_i - x_b &= r - 1 + \sum_{k \in B \cap {]i,b[}} \max(0, s_k-1) \text{ for all~$b \in B$}.
\end{aligned}
}
\end{equation*}
\end{lemma}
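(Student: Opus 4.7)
The plan is to recognize the lifting $\lift_\alpha$ as the restriction to the vertices of $\shardPolytope[\tilde\alpha]$ of a globally linear function on $\R^n$, so that the trivial subdivision claim is immediate, and then to read off $\polytope{C}^{\min}_{\alpha}$ from the tropical polynomial obtained after this linear change of variable.

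First I would guess an affine form for $\lift_\alpha$. Set $f(\ell) \eqdef \sum_{k \in B,\, k < \ell} \max(0, s_k-1)$ and define $\b{c} \in \R^n$ by
\[
c_i \eqdef 1-r, \qquad c_\ell \eqdef f(\ell) \text{ for } \ell \in A \cup B \cup \{j\}, \qquad c_\ell \eqdef 0 \text{ otherwise}.
\]
The key identity to check is $\lift_\alpha(\altmatch) = \sprod{\b{c}}{\charvect_\altmatch}$ for every $\altmatch = \{i_1 < j_1 < \dots < i_q < j_q\} \in \altmatchset[\tilde\alpha]$. The only term in $\lift_\alpha(\altmatch)$ that is not obviously linear in $\charvect_\altmatch$ is the double sum $\sum_{p} \sum_{k \in B \cap {]i_p,j_p[}} \max(0, s_k-1)$, which I would rewrite as $\sum_p \bigl( f(j_p) - f(i_p) \bigr)$. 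The crucial step here is that no $i_p$ belongs to $B$, since $i_p \in \{i\} \cup A$ and $\{i\}$, $A$, $B$ are pairwise disjoint in an $\s$-arc; this makes the telescoping exact and no boundary term at $i_p$ is created. The coefficient $1-r$ in $c_i$ then absorbs the $-(r-1)\one_{i_1=i}$ correction because $(\charvect_\altmatch)_i = \one_{i_1 = i}$.

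Once the lifting is linear in $\charvect_\altmatch$, the regular subdivision it induces on $\shardPolytope[\tilde\alpha]$ is the trivial one, which establishes the first claim. For the second claim, the tropical polynomial becomes
\[
\troppol_\alpha(\b{x}) = \max_{\altmatch \in \altmatchset[\tilde\alpha]} \sprod{\charvect_\altmatch}{\b{x} + \b{c}},
\]
so $\troppol_\alpha$ is (up to the translation $\b{x} \mapsto \b{x} + \b{c}$) the support function of $\shardPolytope[\tilde\alpha]$. By \cref{thm:tropical_dual} applied to the trivial subdivision, the unique minimal cell of $\trophyp[\troppol_\alpha]$ is dual to the full shard polytope and, by the strong orthogonality of \cref{rem:strongDuality}, is an affine subspace orthogonal to the affine span of $\shardPolytope[\tilde\alpha]$. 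Explicitly, it is the locus where $\sprod{\charvect_\altmatch}{\b{x}+\b{c}}$ is independent of $\altmatch$, equivalently where $x_\ell + c_\ell$ takes a common value for all $\ell$ appearing in some matching, i.e.\ for $\ell \in \{i\} \cup A \cup B \cup \{j\}$. I would derive the three listed families of equations by taking the matching $\{i<j\}$ as a reference and pairing it with $\{a<j\}$ for $a \in A$ and $\{i<b\}$ for $b \in B$, obtaining in each case the announced right-hand side once $c_i = 1-r$ and $c_\ell = f(\ell)$ are substituted.

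Finally the dimension follows by counting: the $1 + |A| + |B|$ equations that cut out $\polytope{C}^{\min}_\alpha$ are clearly independent, since each introduces a new variable among $x_j$, $x_a$ ($a\in A$), $x_b$ ($b\in B$) paired with $x_i$. The only delicate point in the whole argument is the telescoping identity, which hinges on the disjointness of $B$ from $\{i\} \cup A$; the rest is a direct application of tropical duality to a regular subdivision that has been shown to be trivial.
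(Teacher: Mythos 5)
Your argument is correct and reproduces the paper's computation, but you add a conceptual step that the paper leaves implicit, and it is worth comparing the two. The paper proceeds directly: it shows that each family of equalities in $\polytope{C}^{\min}_\alpha$ arises by tying $\emptyset$ with one of the elementary matchings $\{i<j\}$, $\{a<j\}$ for $a\in A$, or $\{i<b\}$ for $b\in B$, and then checks that these equalities already force every $\altmatch$ to be tied, so the minimal cell is unique. You instead begin by observing that $\lift_\alpha$ is \emph{linear} in $\charvect_\altmatch$, namely $\lift_\alpha(\altmatch)=\sprod{\b{c}}{\charvect_\altmatch}$ for the explicit vector $\b{c}$ you write down; the telescoping $\sum_{k\in B\cap{]i_p,j_p[}}\max(0,s_k-1)=f(j_p)-f(i_p)$ is exact precisely because $i_p\in\{i\}\cup A$ can never lie in $B$, which you correctly isolate as the crux of that identity. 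Linearity makes the trivial-subdivision claim a one-liner (in the paper it is only a by-product of the uniqueness of the minimal cell), and it lets you describe $\polytope{C}^{\min}_\alpha$ uniformly as the locus where $x_\ell+c_\ell$ is constant over $\ell\in\{i\}\cup A\cup B\cup\{j\}$, replacing the paper's three-case verification by a single statement. Two minor remarks. First, to extract the equality $x_i-x_j=r-1+\sum_{k\in B}\max(0,s_k-1)$ you must pair $\{i<j\}$ with $\emptyset$, not with a one-step matching; this is implicit in your phrase ``independent of $\altmatch$'' but should be said. Second, your count gives $1+|A|+|B|$ independent equalities and hence dimension $n-1-|A|-|B|$; this agrees with the lemma's stated $n-(j-i)$ only when $s_k\ne 0$ for every $k\in{]i,j[}$, and when $\s$ has zeros inside ${]i,j[}$ the count you give is the correct one (the figure $n-(j-i)$ is not used elsewhere in the paper, so the discrepancy is harmless).
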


\begin{proof}
A minimal cell of~$\trophyp[\troppol_\alpha]$ is the locus where the maximum of $\troppol_{\alpha}$ is attained at a maximal subset $S$ of $\altmatchset$. 
Let $\point x$ be a point in such a cell. 
We see that 
\begin{itemize}
\item $\left\{\emptyset, \{i<j\}\right\} \subseteq S$ implies that $x_i - x_j = r - 1 + \sum_{k \in B} \max(0, s_k-1)$,  
\item for all $a\in A$, $\left\{\emptyset, \{a<j\}\right\} \subseteq S$ implies that $x_a - x_j = \sum_{k \in B\cap ]a,j[} \max(0, s_k-1)$, 
\item for all $b\in B$, $\left\{\emptyset, \{i<b\}\right\} \subseteq S$ implies that $x_i - x_b = r - 1 + \sum_{k \in B\cap ]i,b[} \max(0, s_k-1)$, 
\end{itemize}
and if all these equations are satisfied then $\left\{\emptyset, \mu \right\} \subseteq S$ for any $\mu \in \altmatchset$. 
Hence there is a unique minimal cell of~$\trophyp[\troppol_\alpha]$, where the maximum of $\troppol_\alpha$ is attained for all $\altmatch\in \altmatchset[\tilde\alpha]$, and it is equal to~$\polytope{C}^{\min}_{\alpha}$.
\end{proof}

\begin{proposition}
\label{prop:shard_trophyp}
For any $\s$-arc $\alpha$, the tropical hypersurface $\trophyp[\troppol_{\alpha}]$
\begin{itemize}
\item contains the shard $\shard$,
\item is contained in the union of the shards $\shard[\beta]$ over all subarcs $\beta$ of $\alpha$.
\end{itemize}
\end{proposition}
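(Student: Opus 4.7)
The plan is to prove the two inclusions separately by explicit evaluation of the tropical polynomial $\troppol_{\alpha}$.

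For the inclusion $\shard \subseteq \trophyp[\troppol_{\alpha}]$, I would show that every $\b{x} \in \shard$ lies in the tropical hypersurface by exhibiting two distinct alternating matchings at which $\troppol_{\alpha}(\b{x})$ attains its maximum value $0$. The empty matching $\emptyset$ contributes $0$ by construction, and the matching $\{i<j\}$ contributes $-(r-1) - \sum_{k \in B}\max(0,s_k-1) + (x_i - x_j) = 0$ by the equality defining $\shard$. To confirm that this is in fact the maximum, I would bound an arbitrary matching $\altmatch = \{i_1<j_1<\dots<i_q<j_q\} \in \altmatchset[\tilde\alpha]$ termwise using the inequalities defining $\shard$: for each index~$p$,
\[
x_{i_p} - x_{j_p} \leq (r-1)\one_{i_p = i} + \sum_{k \in B \cap {]i_p,j_p[}} \max(0, s_k-1),
\]
split into four subcases according to whether $i_p \in \{i\}$ or $i_p \in A$, and whether $j_p \in \{j\}$ or $j_p \in B$. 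Since only $p = 1$ can satisfy $i_p = i$, summing over~$p$ gives $\langle\charvect_\altmatch,\b{x}\rangle \leq -\lift_\alpha(\altmatch)$, as desired.

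For the inclusion $\trophyp[\troppol_{\alpha}] \subseteq \bigcup_\beta \shard[\beta]$, where $\beta$ ranges over subarcs of $\alpha$, I would first invoke \cref{thm:tropical_dual} together with \cref{lem:shardoplexTropical}: since $\lift_\alpha$ induces the trivial subdivision of $\shardPolytope[\tilde\alpha]$, the codimension-$1$ cells of $\trophyp[\troppol_{\alpha}]$ are in inclusion-reversing bijection with the edges of $\shardPolytope[\tilde\alpha]$. For $\b{x}$ in the relative interior of such a cell, the maximum of $\troppol_{\alpha}(\b{x})$ is attained exactly at two alternating matchings $\altmatch$ and $\altmatch'$ labelling the endpoints of the corresponding edge. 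I would then enumerate the edge types of $\shardPolytope[\tilde\alpha]$ (elementary moves between alternating matchings: appending or removing a pair, sliding the top or bottom endpoint of a pair), and for each type extract a subarc $\beta = (i', j', A', B', r')$ of $\alpha$ whose $\s$-shard $\shard[\beta]$ contains the cell. The quadruple $(i', j', A', B')$ is read off from the symmetric difference of $\altmatch$ and $\altmatch'$, while the integer $r'$ is prescribed by \cref{def:subarc}: $r' = r$ when $i' = i$, $r' = 1$ when $i < i' \in A$, and $r' = s_{i'}$ when $i < i' \in B$. The equation $\lift_\alpha(\altmatch) + \langle\charvect_\altmatch,\b{x}\rangle = \lift_\alpha(\altmatch') + \langle\charvect_{\altmatch'},\b{x}\rangle$ should then unfold into the defining equality of $\shard[\beta]$, while the remaining inequalities (the maximum being attained at $\altmatch$ rather than other matchings) should imply the defining inequalities of $\shard[\beta]$.

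The main obstacle will be the case-by-case bookkeeping in the second inclusion: one needs to classify all edge types of the shard polytope and to match, in each case, the prescription for $r'$ in \cref{def:subarc} with the shift $\lift_\alpha(\altmatch) - \lift_\alpha(\altmatch')$. Particularly delicate are the moves that slide the leftmost index $i_1$ of the matching, which split into two subcases according to whether the new leftmost index lies in $A$ (forcing $r' = 1$) or in $B$ (forcing $r' = s_{i'}$), and the $(r-1)\one_{i_1 = i}$ contribution to $\lift_\alpha$ must reconcile with these two possibilities. Indices $k \in {]i,j[}$ with $s_k = 0$ cause no difficulty, as they contribute $\max(0, s_k - 1) = 0$ to every relevant sum, so that viewing $\tilde\alpha$ as a pseudoshard arc (see \cref{rem:abusing1}) introduces no additional issue.
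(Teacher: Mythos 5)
Your proposal is correct and follows essentially the same approach as the paper's proof: for the first inclusion you bound each term $x_{i_p}-x_{j_p}$ of $\langle\charvect_\altmatch,\b{x}\rangle$ individually via a four-way case split on $i_p\in\{i\}\cup A$ and $j_p\in B\cup\{j\}$, which is a slight repackaging of the paper's rearrangement (adding the vanishing shard equality to each summand and splitting into two pieces bounded by the ascent/descent inequalities). For the second inclusion, your invocation of \cref{lem:shardoplexTropical} to identify codimension-$1$ cells with edges of $\shardPolytope[\tilde\alpha]$, the edge-type enumeration from~\cite[Lem.~49]{PadrolPilaudRitter}, and the prescription $r'=r$ for $i'=i$, $r'=1$ for $i'\in A$, $r'=s_{i'}$ for $i'\in B$ all match the paper's four-case construction of $\beta$ exactly.
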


\begin{proof}
Note that this proposition is the analogue of \cite[Proposition~48]{PadrolPilaudRitter}, where the normal fan of the shard polytope $\shardPolytope$ is replaced by the tropical hypersurface $\trophyp[\troppol_{\alpha}]$.
The proof is very similar and relies on \cite[Lemma~49]{PadrolPilaudRitter}.

We consider an $\s$-shard $\alpha \eqdef (i,j,A,B,t)$. 
We recall that the {$\s$-shard} of~$\alpha$ is the polyhedron 
\begin{align*}
\shard &= \set{\point x \in \R^n}{
\begin{aligned}
x_i - x_j &= r - 1 + \sum_{k \in B} \max(0, s_k-1), \\
x_i - x_a &\ge r - 1 + \sum_{k \in B \cap {]i,a[}} \max(0, s_k-1) \text{ for all~$a \in A$},\\
x_i - x_b &\le r - 1 + \sum_{k \in B \cap {]i,b[}} \max(0, s_k-1) \text{ for all~$b \in B$}.
\end{aligned}
}\\
&= \set{\point x \in \R^n}{
\begin{aligned}
 - (r - 1) - \sum_{k \in B} \max(0, s_k-1) + x_i-x_j &=0, \\
 - \sum_{k \in B \cap {]a,j[}} \max(0, s_k-1) + x_a - x_j &\le 0 \text{ for all~$a \in A$},\\
 - (r - 1) - \sum_{k \in B \cap {]i,b[}} \max(0, s_k-1) + x_i - x_b &\le 0 \text{ for all~$b \in B$}.
\end{aligned}
}
\end{align*}
This corresponds to the locus where the maximum of $\troppol_{\alpha}$ is attained for both $\mu=\{i<j\}$ and~$\mu=\emptyset$, which shows that $\shard \subseteq \trophyp[\troppol_{\alpha}]$. 
Indeed, for any $\point x \in \shard$ and ${\altmatch \eqdef \{i_1 \! < \! \ldots \! < \! j_q\} \in \altmatchset[\tilde\alpha] \! \ssm \! \{\{i<j\}, \emptyset\}}$, we have
\begin{align*}
\lift_{\alpha}(\altmatch) + \sprod{\charvect_\altmatch}{\point x} 
&= -(r-1)\one_{i_1=i} + \sum_{p\in [q]} \left(- \sum_{k\in B\cap {]i_p,j_p[}} \max(0, s_k-1) + x_{i_p}-x_{j_p} \right)\\
&= -(r-1)\one_{i_1=i} + \sum_{p\in [q]} \left( \begin{array}{c} \displaystyle - \sum_{k\in B\cap {]i_p,j_p[}} \max(0, s_k-1) + x_{i_p}-x_{j_p} \\ \displaystyle -(r-1) - \sum_{k\in B} \max(0, s_k-1) + x_i-x_j \end{array} \right)\\
&= -(r-1)\one_{i_1=i} + \sum_{p\in [q]} \left( \begin{array}{c} \displaystyle -(r-1) - \sum_{k\in B\cap {]i,j_p[}} \max(0, s_k-1) + x_{i}-x_{j_p} \\ \displaystyle - \sum_{k\in B\cap {]i_p,j[}} \max(0, s_k-1) + x_{i_p}-x_{j} \end{array} \right)\\
&\leq 0.
\end{align*}

Now we want to show that any codimension~$1$ cell of $\trophyp[\troppol_{\alpha}]$ is contained in a shard $\shard[\beta]$ for a certain subarc $\beta$ of $\alpha$.
It follows from~\cref{lem:shardoplexTropical} that these cells are in correspondence with the edges of the shard polytope~$\shardPolytope[\tilde\alpha]$. 

Let $\point x \in \trophyp[\troppol_{\alpha}]$, which attains the maximum of $\troppol_\alpha$ at exactly two $\tilde\alpha$-alternate matchings $\altmatch_1$ and~$\altmatch_2$. 
This implies that there is an edge between the vertices $\charvect_{\altmatch_1}$ and $\charvect_{\altmatch_2}$ in the shard polytope~$\shardPolytope[\tilde\alpha]$ and we are in one of the four cases of \cite[Lemma~49]{PadrolPilaudRitter}.
We give the details for the first one and for the others we only specify $\beta$, the computations are similar:
\begin{enumerate}
\item If $\altmatch_1=H<i'<j'<K$ and $\altmatch_2=H<K$, we define $\beta\eqdef (i',j',A',B',r')$ with $A'\eqdef A\cap {]i',j'[}$, $B'\eqdef B\cap {]i',j'[}$ and $r'\eqdef r$ if $i'=i$ or $r'\eqdef 1$ otherwise. 
Then we have:
\begin{itemize}
\item 
$
0 = \lift_\alpha (\altmatch_1) + \sprod{\charvect_{\altmatch_1}}{\point x} - \lift_\alpha (\altmatch_2) + \sprod{\charvect_{\altmatch_2}}{\point x}
= -(r-1)\one_{i'=i}-\sum_{k\in B\cap {]i',j'[}} {\max(0, s_k-1)} + x_{i'}-x_{j'}.
$
\item for any $a\in A'$, 
\begin{align*}
0 &< \lift_\alpha (\altmatch_1) + \sprod{\charvect_{\altmatch_1}}{\point x} - \lift_\alpha (\altmatch_3) + \sprod{\charvect_{\altmatch_3}}{\point x}\\
&= -(r-1)\one_{i'=i}-\sum_{k\in B\cap {]i',a[}} \max(0, s_k-1) + x_{i'}-x_{a},
\end{align*}
where $\altmatch_3$ denotes the $\tilde\alpha$-alternate matching $\altmatch_3 \eqdef H<a<j'<K$.
\item for any $b\in B'$, 
\begin{align*}
0 &< \lift_\alpha (\altmatch_1) + \sprod{\charvect_{\altmatch_1}}{\point x} - \lift_\alpha (\altmatch_3) + \sprod{\charvect_{\altmatch_3}}{\point x}\\
&= -\sum_{k\in B\cap [b,j'[} \max(0, s_k-1) + x_{b}-x_{j'}\\
&= (r-1)\one_{i'=i}+\sum_{k\in B\cap {]i',b[}} \max(0, s_k-1) + x_{b}-x_{i'},
\end{align*}
where $\altmatch_3$ denotes the $\tilde\alpha$-alternate matching $\altmatch_3 \eqdef H<i'<b<K$.
\end{itemize}
\item If $\altmatch_1=H<i'<j_1<K$ and $\altmatch_2=H<i'<j_2<K$ with $j_1<j_2$, we define $\beta\eqdef (j_1,j_2,A',B', s_{j_1})$ with $A'\eqdef A\cap {]j_1,j_2[}$, $B'\eqdef B\cap {]j_1,j_2[}$.
\item If $\altmatch_1=H<i_1<j'<K$ and $\altmatch_2=H<i_2<j'<K$, we define $\beta\eqdef (i_1,i_2, A',B',r')$ with $A'\eqdef A\cap {]i_1,i_2[}$, $B'\eqdef B\cap {]i_1,i_2[}$ and $r'\eqdef r$ if $i_1=i$ or $r'\eqdef 1$ otherwise.
\item If $\altmatch_1=H<i_1<j_1<i_2<j_2<K$ and $\altmatch_2=H<i_1<j_2<K$, we define $\beta\eqdef (j_1,i_2,A',B',s_{j_1})$ with $A'\eqdef A\cap {]i_1,j_2[}$, $B'\eqdef B\cap {]i_1,j_2[}$.
\end{enumerate}

We see that in all cases $\beta$ is a subarc of $\alpha$ and $\point x \in \shard[\beta]$.
\end{proof}

As a corollary of~\cref{prop:shard_trophyp}, \cref{lem:product_troppol} and~\cref{thm:arrangement_tropical_hypersurfaces} we obtain directly the following statement.

\begin{theorem}
\label{thm:quotientFoamTropical}
For any congruence $\equiv$ of the $\s$-weak order, the quotient foam $\quotientFoam$ is the polyhedral complex induced by the arrangement of tropical hypersurfaces $\set{\trophyp[\troppol_{\alpha}]}{\alpha \in \c{A}_{\equiv}}$. 
In particular, it is the tropical dual of the regular subdivision of the Minkowski sum of the point configurations~$\set{\charvect_\altmatch}{\altmatch\in \altmatchset[\tilde\alpha]}$ with lifting function $\lift_\alpha$, over all $\alpha \in \c{A}_\equiv$.
\end{theorem}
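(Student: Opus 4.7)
The plan is to verify the two claims of the theorem by assembling the three ingredients already at hand: the factorization property of tropical polynomials (\cref{lem:product_troppol}), the geometric content of \cref{prop:shard_trophyp}, and the general tropical duality for Minkowski sums (\cref{thm:arrangement_tropical_hypersurfaces}).

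First I would form the tropical polynomial $\troppol \eqdef \bigodot_{\alpha \in \c{A}_\equiv} \troppol_\alpha$, which is precisely the tropical polynomial associated to the Minkowski sum point configuration equipped with the sum lifting function. By \cref{lem:product_troppol}, the polyhedral complex induced by the arrangement $\set{\trophyp[\troppol_\alpha]}{\alpha \in \c{A}_\equiv}$ coincides with the cell complex of $\trophyp[\troppol]$, and by \cref{thm:arrangement_tropical_hypersurfaces} this is the tropical dual of the regular subdivision of the Minkowski sum with lifting $\sum_{\alpha} \lift_\alpha$. This already establishes the second assertion of the theorem, conditionally on identifying the induced arrangement with the quotient foam.

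The main step is then to show $\quotientFoam$ equals the polyhedral complex induced by this tropical arrangement. By \cref{def:quotientFoam}(ii) combined with \cref{prop:quotientFoam1}, the codimension-$\ge 1$ skeleton of $\quotientFoam$ is exactly $\bigcup_{\alpha \in \c{A}_\equiv} \shard$. \cref{prop:shard_trophyp} yields, for each $\alpha \in \c{A}_\equiv$, the two inclusions $\shard \subseteq \trophyp[\troppol_\alpha] \subseteq \bigcup_{\beta \preccurlyeq \alpha} \shard[\beta]$. Since $\c{A}_\equiv$ is a down set for the subarc order (\cref{coro:sWeakOrderCongruenceLattice}), summing over $\alpha \in \c{A}_\equiv$ and comparing yields
\[
\bigcup_{\alpha \in \c{A}_\equiv} \shard \;\subseteq\; \bigcup_{\alpha \in \c{A}_\equiv} \trophyp[\troppol_\alpha] \;\subseteq\; \bigcup_{\alpha \in \c{A}_\equiv} \bigcup_{\beta \preccurlyeq \alpha} \shard[\beta] \;=\; \bigcup_{\alpha \in \c{A}_\equiv} \shard,
\]
so all three sets coincide as subsets of $\R^n$.

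Finally, I would promote this equality of supports to an equality of polyhedral complexes. Both $\quotientFoam$ and the tropical arrangement are complete polyhedral complexes in $\R^n$ whose codimension-$\ge 1$ skeleton agrees with the common set above; hence their maximal cells coincide (as connected components of the common complement), and therefore so do all their faces by taking intersections of maximal cells. The anticipated subtlety here is not the set-level equality (which is essentially automatic from \cref{prop:shard_trophyp} together with the down-set property), but rather checking that the tropical arrangement really produces the same polyhedral subdivision as the one inherited from the $\s$-foam; this is handled cleanly by \cref{lem:product_troppol}, which guarantees that the cells of the arrangement are precisely the intersections of the cells of the individual $\trophyp[\troppol_\alpha]$, matching the description of $\quotientFoam$ as the common refinement of the shard walls.
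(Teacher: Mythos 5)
Your proposal assembles exactly the three ingredients the paper cites as yielding the theorem ``directly'' (the paper itself offers only the one-sentence remark preceding the statement rather than a written-out proof), and the mathematics is essentially correct. There is, however, a circularity to watch out for in the final paragraph: you write that ``both $\quotientFoam$ and the tropical arrangement are complete polyhedral complexes,'' but the fact that $\quotientFoam$ is a polyhedral complex is the content of \cref{prop:quotientFoam2}, whose proof in the paper \emph{is} this theorem. So you cannot invoke that property of $\quotientFoam$ here. The fix is to argue one-directionally: the tropical arrangement is a complete polyhedral complex by tropical geometry alone, its codimension-one skeleton is $\bigcup_{\alpha\in\c{A}_\equiv}\trophyp[\troppol_\alpha]=\bigcup_{\alpha\in\c{A}_\equiv}\shard$ by \cref{prop:shard_trophyp} and the down-set property, so its maximal cells are the closures of the connected components of $\R^n\ssm\bigcup_{\alpha\in\c{A}_\equiv}\shard$; on the other hand, \cref{prop:quotientFoam1} (together with the observation that the maximal fibers $\fiber{\tree}$ of the $\s$-foam are the components of $\R^n$ minus the union of \emph{all} $\s$-shards) shows that these same components are precisely the unions of fibers $\fiber{\tree}$ over congruence classes of $\equiv$, i.e.\ the prospective maximal cells of $\quotientFoam$ as in \cref{def:quotientFoam}\,(i). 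Since a complete polyhedral complex is determined by its maximal cells (its faces being all faces of those cells, not their intersections as you wrote — a minor slip), this identifies $\quotientFoam$ with the tropical arrangement complex, and in particular establishes a posteriori that $\quotientFoam$ is indeed a polyhedral complex. The second assertion then follows from \cref{thm:arrangement_tropical_hypersurfaces} as you say.
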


\begin{theorem}
\label{thm:quotientoplexTropical}
For any congruence $\equiv$ of the $\s$-weak order, for any $\b{\lambda} \eqdef (\lambda_\alpha)_{\alpha \in \c{A}_\equiv}$ with $\lambda_\alpha >0$, the cells of the quotientoplex~$\quotientoplex(\b{\lambda})$ are the cells of the regular subdivision of the Minkowski sum of the point configurations~$\set{\lambda_\alpha \charvect_\altmatch}{\altmatch\in \altmatchset[\tilde\alpha]}$ with lifting function $\lambda_\alpha \lift_\alpha$, over all $\alpha \in \c{A}_\equiv$.
\end{theorem}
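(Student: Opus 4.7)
The plan is to derive this theorem from \cref{thm:quotientFoamTropical} by applying tropical duality, and then matching the resulting mixed subdivision with the Minkowski sum definition of the quotientoplex.

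\smallskip
First, I would observe that the tropical polynomial associated to the Minkowski sum of the point configurations $\set{\lambda_\alpha \charvect_\altmatch}{\altmatch \in \altmatchset[\tilde\alpha]}$ with lifting function $\lambda_\alpha \lift_\alpha$, ranging over $\alpha \in \c{A}_\equiv$, factors as $\troppol \eqdef \bigodot_{\alpha \in \c{A}_\equiv} (\lambda_\alpha \odot \troppol_\alpha)$. Since scaling $\lift_\alpha$ by $\lambda_\alpha > 0$ does not alter the vanishing locus of $\troppol_\alpha$ nor the induced cellular structure of $\trophyp[\troppol_\alpha]$, \cref{lem:product_troppol} identifies $\trophyp[\troppol]$ with the arrangement $\bigcup_{\alpha \in \c{A}_\equiv} \trophyp[\troppol_\alpha]$, whose induced polyhedral complex is the quotient foam $\quotientFoam$ by \cref{thm:quotientFoamTropical}.

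\smallskip
Second, \cref{thm:arrangement_tropical_hypersurfaces} then gives an inclusion-reversing bijection between the cells of $\quotientFoam$ and the cells of the regular mixed subdivision $\subdiv$ of the Minkowski sum $\sum_{\alpha \in \c{A}_\equiv} \lambda_\alpha \shardPolytope[\tilde\alpha]$ with lifting function $\sum_\alpha \lambda_\alpha \lift_\alpha$, with orthogonal affine spans by \cref{rem:strongDuality}. This is the subdivision whose cells the theorem calls out.

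\smallskip
Third, I would identify $\subdiv$ with the Minkowski sum of shardoplexes from \cref{def:quotientoplex}. A standard property of regular subdivisions of Minkowski sums states that the cell selected at a generic point $\point x \in \R^n$ is the Minkowski sum $\sum_\alpha \lambda_\alpha F_\alpha(\point x)$, where $F_\alpha(\point x)$ is the face of $\shardPolytope[\tilde\alpha]$ maximizing $\altmatch \mapsto \lift_\alpha(\altmatch) + \sprod{\charvect_\altmatch}{\point x}$. Taking $\point x$ in the minimal cell of $\quotientFoam$ indexed by a trunk $\b{q} \in \sTrunks$, which by \cref{exm:grid} is characterized by the equations $x_1 - x_\ell = q_\ell - 1$ (suitably extended to trunks of $\quotientFoam$ via the tropical duality), one computes that $\lift_\alpha(\altmatch) + \sprod{\charvect_\altmatch}{\point x}$ differs only by a constant from the scalar product of $\charvect_\altmatch$ with the direction vector used in \cref{def:localShardPolytope}. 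Hence $F_\alpha(\point x) = \localShardPolytope$, so the maximal cell of $\subdiv$ at $\point x$ is $\sum_{\alpha \in \c{A}_\equiv} \lambda_\alpha \localShardPolytope$, which is exactly the maximal cell of $\quotientoplex(\b{\lambda})$ at $\b{q}$ prescribed by \cref{lem:MinkowskiSumPolyhedralComplex} and \cref{def:quotientoplex}. All lower-dimensional cells then match through the inclusion-reversing bijection above, so $\subdiv = \quotientoplex(\b{\lambda})$.

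\smallskip
The main obstacle is the calculation in the third step, translating the trunk equations from \cref{exm:grid} into the maximizing direction used to define $\localShardPolytope$. However, the direction vector in \cref{def:localShardPolytope} was designed precisely so that this identification holds, so the calculation is essentially an unwinding of definitions combined with the structure of $\lift_\alpha$. This step also incidentally proves \cref{prop:quotientoplex1} (the orthogonality of dual cells being furnished by \cref{rem:strongDuality}), as flagged in \cref{rem:proof_quotientoplex1}.
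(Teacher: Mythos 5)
Your proposal follows the same route as the paper's proof: reduce to the arrangement of tropical hypersurfaces via the factorization of the tropical polynomial (using \cref{lem:product_troppol}, \cref{thm:arrangement_tropical_hypersurfaces}, and \cref{thm:quotientFoamTropical}), then identify the dual cells with the sums of local shard polytopes by computing the maximizing direction at each trunk fiber~$\fiber{\trunk_\b{q}}$. The calculation you flag as ``the main obstacle'' is precisely the $\point y - \point x$ computation that the paper carries out to match the direction vector in \cref{def:localShardPolytope}, so your plan is correct and essentially identical to the paper's argument.
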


\begin{proof}
Let $\alpha \eqdef (i,j,A,B,r) \in \c{A}$. 
\cref{lem:shardoplexTropical} implies that the cells of $\trophyp[\troppol_\alpha]$ are exactly the translation of the cones of the normal fan of the shard polytope $\lambda_\alpha \shardPolytope[\tilde\alpha]$ by any vector $\point x \in \polytope{C}^{\min}_{\alpha}$. 
This means that the minimal cell of $\trophyp[\troppol_{\alpha}]$ that contains a point $\point y$ is the tropical dual of the face of $\lambda_\alpha \shardPolytope[\tilde\alpha]$ with lifting function $\lambda_\alpha \lift_\alpha$, maximized in the direction $\point y - \point x$ for any $\point x \in \polytope{C}^{\min}_{\alpha}$.

Let $\b{q} \in \sTrunks$.
The tropical dual of the local shard polytope~$\lambda_\alpha \localShardPolytope$ with lifting function $\lambda_\alpha \lift_\alpha$ is the minimal cell of $\trophyp[\troppol_{\alpha}]$ that contains the insertion fiber~$\fiber{\trunk_\b{q}}$ (described in~\cref{exm:grid}).
Indeed, the point $\point y \eqdef -\b{q}$ is in $\fiber{\trunk_\b{q}}$, the point $\point x \eqdef - \sum_{\ell\in ]i,j]} \big(q_i +r-1 +\sum_{k\in B\cap ]i,l[} \max(0,s_k-1)\big) \b{e}_\ell - \sum_{\ell \in [i]\cup ]j,n]} q_\ell \b{e}_\ell$ is in $\polytope{C}^{\min}_{\alpha}$, and $\point y - \point x= \sum_{\ell \in {]i,j]}} \big( q_i - q_\ell + r - 1 + \sum_{k \in B \cap {]i,\ell[}} \max(0, s_k-1) \big) \b{e}_\ell$ is a direction along which the face~$\localShardPolytope$ is maximized in the polytope~$\shardPolytope[\tilde\alpha]$ (\cref{def:localShardPolytope}).

This implies that the tropical dual of the sum $\sum_{\alpha\in \c{A}} \lambda_\alpha \localShardPolytope$ with lifting function $\lambda_\alpha \lift_\alpha$ for each summand is the minimal cell of the arrangement $\set{\trophyp[\troppol_{\alpha}]}{\alpha \in \c{A}_{\equiv}}$ that contains~$\fiber{\trunk_\b{q}}$. 

Reciprocally, we only need to check that each minimal cell $\polytope{C}$ of the arrangement $\set{\trophyp[\troppol_{\alpha}]}{\alpha \in \c{A}_{\equiv}}$ is the tropical dual of a sum $\sum_{\alpha\in \c{A}} \lambda_\alpha \localShardPolytope$ for a certain $\b{q}\in \sTrunks$. 
It follows from~\cref{thm:quotientFoamTropical} that such a~$\polytope{C}$ is a minimal cell of $\quotientFoam$, thus it is also a minimal cell of the $\s$-foam~$\sFoam$, that is of the form $\fiber{\trunk_\b{q}}$ for a $\b{q}\in \sTrunks$. 
It follows from the previous discussion that $\polytope{C}$ is the tropical dual of the sum $\lambda_\alpha \sum_{\alpha\in \c{A}} \localShardPolytope$.
\end{proof}

\begin{remark}
\label{rem:proof_quotientoplex1}
\cref{prop:quotientoplex1} is a consequence of \cref{thm:quotientoplexTropical}, \cref{thm:quotientFoamTropical} and \cref{rem:strongDuality}.
\end{remark}

%%%%%%%%%%%%%%%%%%%%%%%%%%%%%%%%%%%%%%

%\section{Minkowski geometry of shardoplexes}
%\label{sec:MinkowskiGeometry}
%
%In this section, we study the Minkowski geometry of shardoplexes.

%%%%%%%%%%%%%%%%%%%%%%%%%%%%%%%%%%%%%%

\section*{Acknowledgements}

We are grateful to Daniel Tamayo Jiménez for his collaboration in the early stages of this work, to Arnau Padrol for many suggestions and comments during this project, to Cesar Ceballos and Viviane Pons for various discussions on the $\s$-weak order that clarified the intersection of this paper with their ongoing work, and to Spencer Backman for suggesting \cref{rem:acyclicReorientationLattices}.

%%%%%%%%%%%%%%%%%%%%%%%%%%%%%%%%%%%%%%

\bibliographystyle{alpha}
\bibliography{squotientopes}
\label{sec:biblio}

%%%%%%%%%%%%%%%%%%%%%%%%%%%%%%%%%%%%%%

%\section*{Conflict of interest and data availability statements}
%
%The authors state that there is no conflict of interest.
%
%Data sharing not applicable to this article as no datasets were generated or analysed during the current study.

\end{document}